\newcolumntype{?}{!{\vrule width 1pt}}
\numberwithin{equation}{section}
\definecolor{DarkBlue}{RGB}{20,20,200}
\definecolor{DarkGreen}{RGB}{20,120,20}
\let\OLDthebibliography\thebibliography
\renewcommand\thebibliography[1]{
  \OLDthebibliography{#1}
  \setlength{\parskip}{1.5pt}
  \setlength{\itemsep}{1.5pt plus 0.3ex}
}
\renewcommand{\kappa}{\varkappa}
\renewcommand{\phi}{\varphi}
\newcommand{\e}{\varepsilon}
\newcommand{\NN}{\mathbb{N}}
\newcommand{\ZZ}{\mathbb{Z}}
\newcommand{\QQ}{\mathbb{Q}}
\newcommand{\RR}{\mathbb{R}}
\newcommand{\CC}{\mathbb{C}}
\renewcommand{\L}{\mathcal{L}}
\newcommand{\Lw}{\L _{\omega_1, \omega}}
\newcommand{\Mod}{{\rm Mod}}
\renewcommand{\d}{{\rm d}}
\renewcommand{\ll }{\langle\hspace{-.7mm}\langle }
\newcommand{\rr }{\rangle\hspace{-.7mm}\rangle }
\newcommand{\diam}{\operatorname{diam}}
\renewcommand{\H}{\mathcal H}
\newcommand{\act}{\curvearrowright}
\newcommand{\la}{\langle}
\newcommand{\ra}{\rangle}
\newcommand{\lA}{\preccurlyeq_{\scriptscriptstyle A}}
\newcommand{\eA}{\sim_{\scriptscriptstyle A}}
\newcommand{\lPL}{\preccurlyeq_{\scriptscriptstyle PL}}
\newcommand{\ePL}{\sim_{\scriptscriptstyle PL}}
\newcommand{\sGS}{\sim_{G\act S}}
\newtheorem{thm}{Theorem}[section]
\newtheorem*{thm*}{Theorem}
\newtheorem{cor}[thm]{Corollary}
\newtheorem{lem}[thm]{Lemma}
\newtheorem{prop}[thm]{Proposition}
\newtheorem{prob}[thm]{Problem}
\theoremstyle{definition}
\newtheorem{defn}[thm]{Definition}
\theoremstyle{remark}
\newtheorem{rem}[thm]{Remark}
\newtheorem{ex}[thm]{Example}
\newfont{\eufm}{eufm10}
\begin{document}
\title{\vspace*{-5mm}Classifying group actions on hyperbolic spaces}
\author{D. Osin, K. Oyakawa}
\date{\vspace*{-3mm}\it With an appendix by D. Osin and A. Rapinchuk}

\maketitle
\vspace*{-6mm}

\begin{abstract}
For a given group $G$, it is natural to ask whether one can classify all isometric $G$-actions on Gromov hyperbolic spaces. We propose a formalization of this problem utilizing the complexity theory of Borel equivalence relations. In this paper, we focus on actions of general type, i.e., non-elementary actions without fixed points at infinity. Our main result is the following dichotomy: for every countable group $G$, either all general type actions of $G$ on hyperbolic spaces can be classified by an explicit invariant ranging in an infinite-dimensional projective space, or they are unclassifiable in a very strong sense. In terms of Borel complexity theory, we show that the equivalence relation associated with the classification problem is either smooth or $K_\sigma$ complete. Special linear groups $SL_2(F)$, where $F$ is a countable field of characteristic $0$, satisfy the former alternative, while non-elementary hyperbolic (and, more generally, acylindrically hyperbolic) groups satisfy the latter. In the course of proving our main theorem, we also obtain results of independent interest that offer new
insights into algebraic and geometric properties of groups admitting general type actions on hyperbolic spaces.
\end{abstract}



\section{Introduction}


In geometric group theory, groups are often studied through their actions on metric spaces. This approach is particularly effective when the action is ``sufficiently non-trivial" and the metric space exhibits some form of negative curvature. The origins of these ideas trace back to Klein and Poincar\'e, but the modern theory began to take shape in the 1980s when Gromov introduced the concept of an abstract hyperbolic space \cite{Gro}.

Much of the early work on groups acting on hyperbolic spaces was focused on proper cocompact actions, which gave rise to the rich theory of hyperbolic groups. However, many general ideas are applicable beyond these settings, and considering actions of a given group $G$ on different hyperbolic spaces becomes vital in order to recover various features of $G$ (see, for example, \cite{B,F,Osi18}). A fundamental question in this approach is the following.

\begin{prob}\label{prob}
Given a group $G$, classify all isometric actions of $G$ on hyperbolic spaces.
\end{prob}

Our main goal is to formalize this problem and study its complexity using the language of Borel equivalence relations. Over the past three decades, the study of definable equivalence relations has been an active area of research in descriptive set theory. It provides a foundation for analyzing the complexity of classification problems arising in various areas of mathematics. A comprehensive discussion of recent developments can be found in the monographs \cite{Kan} and \cite{Kec24}; for results directly related to group theory, we refer to the survey \cite{ST} and papers \cite{CMMS,Tho13,Tho15}.

In this paper, we focus on general type group actions on hyperbolic spaces, as these are the most useful from the geometric group theory point of view. 

\begin{defn} Recall that an isometric action of a group $G$ on a hyperbolic space $S$ is said to be of \emph{general type} if the limit set of $G$ is infinite and $G$ does not fix any point of the Gromov boundary $\partial S$ (equivalently, $G$ contains two loxodromic isometries with disjoint pairs of fixed points on $\partial S$). 
    Following the standard terminology (see, for example, \cite{MT}), we call a group $G$ \emph{weakly hyperbolic} if it admits a general type action on a hyperbolic space.
\end{defn} 
By a result of Sisto, a group $G$ is weakly hyperbolic if and only if  there exists a generating set $X$ of $G$ such that the Cayley graph $Cay (G,X)$ is hyperbolic and the standard action of $G$ on $Cay(G,X)$ is of general type (see Corollary \ref{Cor:Sisto} for details).

Informally, our results can be summarized as follows. With every countable group $G$, we associate an equivalence relation on a standard Borel space that serves as a formalization of Problem \ref{prob}. In fact, our approach can be used to formalize classification problems for group actions on other types of spaces whose geometry is definable in a certain infinitary logic (see Proposition \ref{Prop:LE}).

Our main result -- Theorem \ref{Thm:main} -- gives a complete classification of possible complexity levels of Problem \ref{prob} for general type actions up to Borel bi-reducibility. More precisely, we prove that every countable weakly hyperbolic group falls into one of the following two classes.

\begin{itemize}
\item {\bf Isotropic groups.} For every group in this category, general type actions on hyperbolic spaces can be classified by an explicit invariant taking values in an infinite dimensional projective space. In terms of Borel complexity theory, this means that the natural equivalence relation associated with the classification problem is smooth. Furthermore, we show that all smooth Borel equivalence relations can be realized in this way up to Borel bi-reducibility. The simplest examples of isotropic weakly hyperbolic groups are $SL_2(F)$, where $F$ is a countable subfield of $\CC$.

\item {\bf Anisotropic groups.} General type actions of these groups on hyperbolic spaces are unclassifiable in a very strong sense. Namely, the equivalence relation associated with the classification problem is $K_\sigma$ complete; in particular, it cannot be induced by a Polish group action. This class includes all non-elementary hyperbolic (and, more generally, all acylindrically hyperbolic) groups as well as many other examples.
\end{itemize}

To justify our terminology, we mention that for any cobounded general type action of an isotropic group $G$ on a hyperbolic space $S$, all directions in $S$ are ``coarsely equivalent" modulo the $G$-action. In contrast, every anisotropic group $G$ admits a hyperbolic Cayley graph that can be $G$-equivariantly compressed in infinitely many ``independent directions" resulting in a plethora of general type $G$-actions on hyperbolic spaces; this property is precisely what accounts for the extreme complexity of Problem \ref{prob} in the anisotropic case.

In the course of proving our main theorem, we also obtain several results that appear to be of independent interest from the group theoretic perspective. These include rigidity of group actions on hyperbolic spaces (see Theorems \ref{Thm:LoxRigid} and \ref{Thm:LoxRigAct}) reminiscent of the work of Alperin–Bass and Culler–Morgan on groups acting on $\RR$-trees \cite{AB,CM}, a characterization of isotropic group actions on hyperbolic spaces in terms of their boundary dynamics (Theorem \ref{Thm:IsoEquiv}), and various conditions of algebraic and analytic nature that allow us to classify weakly hyperbolic groups into one of the two categories described above (see, for example, Propositions \ref{Prop:UP} and \ref{Prop:qm-intr}).

Finally, we would like to mention that our work is closely related to the paper \cite{ABO} and its successors \cite{ABR23, ABR24, Bal, BFGS}, where the authors made another attempt to formalize Problem \ref{prob} by introducing the \emph{poset of hyperbolic structures} on a group. Notably, for all the groups studied in these papers, the poset of general type hyperbolic structures was either remarkably simple or highly complex. We show that this is not a mere accident; in fact, this dichotomy holds for all countable groups.

In the next section, we review the necessary background and provide a more detailed discussion of our main results.

\paragraph{Acknowledgments.} The first author has been supported by the NSF grant DMS-2405032 and the Simons Fellowship in Mathematics MP-SFM-00005907.


\section{Background and main results}


\paragraph{2.1. A Borel approach to classification problems.} The study of orbit equivalence relations induced by group actions has a long history in the context of dynamical systems and operator algebras. In the 1990s, foundational work by Becker, Dougherty, Harrington, Hjorth, Jackson, Kechris, Louveau and others spurred the development of a more abstract theory of definable equivalence relations at the interface of topology and logic. Over the past three decades, this theory has seen rapid development and now serves as a standard tool for analyzing the complexity of various classification problems arising in algebra, geometry, topology, functional analysis, and dynamical systems. We begin with a brief review of the relevant background.

In the framework of Borel complexity theory, \emph{classification problems} are modelled by pairs $(X, E)$, where $X$ is a topological (usually Polish) space and $E$ is a definable (usually Borel or analytic) equivalence relation on $X$; to compare their difficulty, one introduces the following.

\begin{defn}
Let  $E$ and $F$ be two binary relations on topological spaces $X$ and $Y$, respectively. One says that the relation $E$ is \emph{Borel reducible} to $F$ and writes $E\preccurlyeq _B F$ if there exists a Borel map $f\colon X\to Y$ such that
$$
aEb \;\; \Longleftrightarrow f(a)Ff(b) \;\;\; \forall\, a,b\in X.
$$
Further, $E$ and $F$ are \emph{Borel bi-reducible} (denoted by $E\sim _B F$) if $E\preccurlyeq_B F$ and $F\preccurlyeq_B E$.
\end{defn}

The idea behind this definition is that ``explicitly definable" (in any reasonable sense) maps  between topological spaces are typically Borel, and vice versa. Therefore, when $E$ and $F$ are equivalence relations, the map $f$ provides an ``explicit" reduction of the classification problem $(X, E)$ to the (possibly more complicated) classification problem $(Y, F)$.

Recall that a topological space $X$ is \emph{Polish} if it is separable and completely metrizable. A \emph{Borel isomorphism} between Polish spaces $X$ and $Y$ is a bijective Borel map $f\colon X\to Y$ (in this case, $f^{-1}$ is also Borel \cite[Theorem 14.12]{Kec}). By Kuratowski's theorem \cite[Theorem 15.6]{Kec}, every Polish space is Borel isomorphic to either $\mathbb R$ or a discrete space.

A binary relation $R$ on a Polish space $X$ is \emph{Borel} (respectively, \emph{analytic}) if $R$ is a Borel (respectively, analytic) subset of $X\times X$. We are especially interested in Borel equivalence relations, abbreviated BERs. 

\begin{defn}\label{def:smooth}
A BER on a topological space is said to be \emph{smooth} (or \emph{concretely classifiable}) if it is Borel reducible to the equality relation on a Polish space.
\end{defn}

Kuratowski's theorem mentioned above and Silver's dichotomy \cite[Theorem 35.20]{Kec} imply that every smooth BER on a Polish space is Borel bi-reducible with the equality relation on a discrete space of cardinality $n\in \NN$, a countably infinite discrete space, or $\RR$ (denoted by $=_n$, $=_\NN$, and $=_\RR$, respectively); furthermore, these relations form an initial segment of the poset of BERs considered up to Borel bi-reducibility.

\begin{figure}
 \centering \hspace*{7mm} \small\def\svgscale{.85}{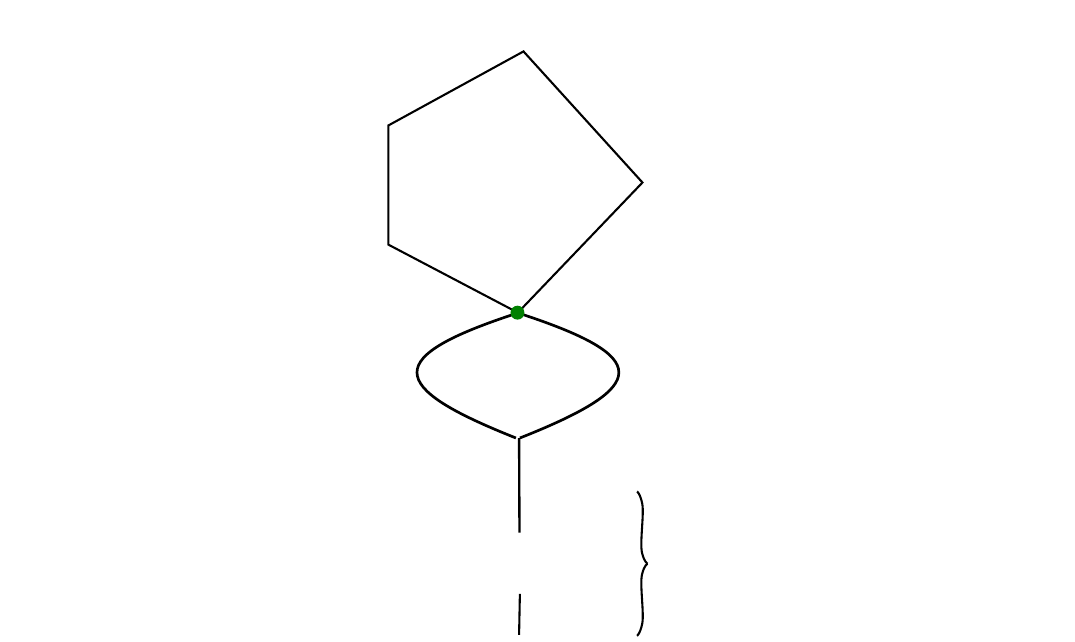}\\
  \caption{Complexity degrees of some classification problems}\label{Fig1}
\end{figure}

In contrast, the universe of non-smooth BERs has a complex and intricate structure. The simplest example of a non-smooth BER is the orbit equivalence relation (OER) of the natural action $\QQ \curvearrowright \RR$, called the \emph{Vitali relation}. It is also the smallest non-smooth BER with respect to $\preccurlyeq_B$ (see \cite{HKL}). Above it lies a very rich set of BERs with countable equivalence classes known as \emph{countable BERs}. Adams and Kechris \cite{AK} showed that the poset of Borel bi-reducibility classes of countable BERs has $2^{\aleph_0}$ elements and contains the largest element (usually denoted by $E_\infty$)  represented by the OER of the natural action of the free group $F_2$ of rank $2$   on $2^{F_2}$.

By the Feldman--Moore theorem \cite{FM}, every countable BER on a Polish space $X$ arises as an OER of an action of a countable group by Borel automorphisms of $X$. Passing to more general \emph{Polish group actions} (i.e., Borel actions of Polish groups on Polish spaces) yields a rich class of analytic OERs, which are not necessarily Borel. 

From the classification point of view, of special interest are the isomorphism relations on the spaces of countable structures of a fixed signature, which occur as the OERs of continuous actions of the Polish group $Sym(\NN)$, called the \emph{logic action} (for details, see \cite[Section 2.5]{BK}).

\begin{defn}
An equivalence relation $E$ on a topological space is \emph{classifiable by countable structures} if $E$ is Borel reducible to the isomorphism relation on the spaces of countable structures of a certain countable signature.
\end{defn}

All countable BERs and, more generally, all OERs arising from Borel actions of closed subgroups of $Sym(\NN)$ on Polish spaces belong to this class \cite[Theorem 12.3.3]{Kan}. In fact, all OERs of Borel actions of $Sym(\NN)$ on Polish spaces are Borel reducible to the isomorphism relation on the space of countable graphs \cite[Theorem 12.4.1]{Kan}. Other notable classes admitting maximal (also called \emph{complete}) elements with respect to Borel reducibility are the OERs induced by Polish group action \cite[Corollary 2.6.8]{BK} and the class of all analytic relations on Polish spaces (see \cite{FLR} and references therein).

Finally, we recall the definition of two relations introduced by Rosendal \cite{Ros}, which will play an important role in our paper.

\begin{defn}\label{Def:EKs}
Let $\Pi$ denote the set of sequences $\prod_{n\in \NN}\{ 1, \ldots, n\}$ endowed with the product topology. By $Q_{K_\sigma}$ we denote the relation on $\Pi$ defined by the rule
$$
rQ_{K_\sigma} s \;\; \Longleftrightarrow\;\; \sup\limits_{n\in \NN} (r(n)-s(n))<\infty.
$$
Clearly $Q_{K_\sigma}$ is a quasi-order. By $E_{K_\sigma}$ we denote the induced equivalence relation on $\Pi$; thus,  $$r E_{K_\sigma} s \;\; \Longleftrightarrow\;\; \sup_{n\in \NN} |r(n)-s(n)|<\infty.$$
\end{defn}

Recall that a subset of a topological space is said to be $K_\sigma$ if it is a countable union of compact subsets. It is straightforward to verify that both $Q_{K_\sigma}$ and $E_{K_\sigma}$ are $K_\sigma $ subsets of $\Pi\times \Pi$. We will need the following theorem. The first claim is due to Rosendal \cite{Ros}; the second claim follows from the first one and \cite[Theorem 4.2]{KL}.

\begin{thm}[Rosendal, Kechris--Louveau]\label{Thm:RKL}

\begin{enumerate}
\item[(a)] Every $K_\sigma$ quasi-order (respectively, equivalence relation) on a Polish space is Borel reducible to $Q_{K_\sigma}$ (respectively, $E_{K_\sigma}$).

\item[(b)]  $E_{K_\sigma}$ cannot be Borel reduced to an OER of a Polish group action; in particular, $E_{K_\sigma}$ is not classifiable by countable structures.
\end{enumerate}
\end{thm}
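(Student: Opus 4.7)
My plan is to prove the two parts separately, as they rely on different machinery: part (a) is a structural universality result for $K_\sigma$ relations, while part (b) reduces to a well-known obstruction for classifiability by Polish group actions.

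For part (a), I would begin by exhausting the $K_\sigma$ quasi-order $P \subseteq X \times X$ as an increasing union $P = \bigcup_n K_n$ of compact sets. Transitivity of $P$ means the composition $K_n \circ K_n = \{(x, z) : \exists y,\, (x,y), (y, z) \in K_n\}$ is again compact and contained in $P$, so after re-indexing I may arrange $K_n \circ K_n \subseteq K_{n+1}$ and $K_n \subseteq K_{n+1}$. I would then fix a countable dense sequence $(d_k)$ in $X$ and define a reduction $\phi : X \to \Pi$ by letting $\phi(x)(n)$ encode, in truncated form, how deep $x$ lies in the $K_n$-layers above the pivots $d_k$, using a bijective bookkeeping $\NN \leftrightarrow \NN \times \NN$ so that the values fit into $\{1, \ldots, n\}$. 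The composition-compatible exhaustion is precisely the mechanism that converts $(x,y) \in P$ (i.e., $(x,y) \in K_N$ for some fixed $N$) into a uniform bound $\phi(x)(n) - \phi(y)(n) \leq N$, while a failure of $(x,y) \in P$ would exhibit some pivot $d_k$ with $(d_k, x) \in K_n$ but $(d_k, y) \notin K_m$ for arbitrarily large $m$, breaking any uniform bound. Borel measurability of $\phi$ follows from Borelness of each $K_n$. The equivalence relation case then follows by symmetrization.

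For part (b), the plan is to produce an explicit Borel reduction $E_1 \preccurlyeq_B E_{K_\sigma}$, where $E_1$ is the eventual-equality relation on $2^\NN$, and then invoke the Kechris--Louveau theorem \cite[Theorem 4.2]{KL} asserting that $E_1$ is not Borel reducible to any OER of a Polish group action. The reduction is essentially a one-line construction: define $\phi \colon 2^\NN \to \Pi$ by $\phi(x)(1) = 1$ and, for $n \geq 2$, $\phi(x)(n) = n$ if $x_n = 0$ and $\phi(x)(n) = 1$ if $x_n = 1$. If $x$ and $y$ agree from some index on, then $\phi(x)(n) = \phi(y)(n)$ eventually, so $\sup_n |\phi(x)(n) - \phi(y)(n)| < \infty$; conversely, if $x_n \ne y_n$ at infinitely many $n$, then $|\phi(x)(n) - \phi(y)(n)| = n - 1$ at those indices, so the supremum is infinite. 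The map is manifestly continuous, hence Borel, and the second clause (no classification by countable structures) then follows from the first since $\mathrm{Sym}(\NN)$ is Polish.

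The main obstacle, as I see it, lies entirely in part (a): constructing the composition-compatible exhaustion and packaging the depth measurement into $\prod_n \{1, \ldots, n\}$ so that the reduction is genuinely Borel and the quasi-order is captured exactly. The tight ceiling $\phi(x)(n) \leq n$ imposed by the target space forces one to distribute information across many coordinates of $\Pi$ and to coordinate the pivot points $(d_k)$ with the scales $K_n$ in a careful bookkeeping scheme; this is the delicate technical core of Rosendal's construction. Part (b), by contrast, is essentially immediate once one accepts the Kechris--Louveau obstruction for $E_1$.
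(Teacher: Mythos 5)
The paper does not actually prove this theorem; it records it as a black box, citing Rosendal \cite{Ros} for part (a) and deriving part (b) from part (a) together with \cite[Theorem 4.2]{KL}. So there is no internal proof to compare against, but your proposal can still be assessed on its own terms, and it contains a genuine error in part (b).

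You define $E_1$ as the eventual-equality relation on $2^\NN$, and the reduction $\phi$ you construct is indeed a correct Borel reduction of that relation to $E_{K_\sigma}$. But eventual equality on $2^\NN$ is $E_0$, not $E_1$. The Kechris--Louveau theorem you want to invoke concerns $E_1$, which is eventual equality of sequences in $(2^\NN)^\NN$ (equivalently $\RR^\NN$) --- i.e., each ``coordinate'' is itself an infinite binary string, not a single bit. The distinction is fatal for the argument: $E_0$ is a countable Borel equivalence relation (it is Borel bi-reducible with the Vitali relation $\QQ \act \RR$), hence it \emph{is} an orbit equivalence relation of a Polish group action and \emph{is} classifiable by countable structures. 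Showing $E_0 \preccurlyeq_B E_{K_\sigma}$ therefore yields no obstruction at all. To repair the argument you must work with the genuine $E_1$ on $(2^\NN)^\NN$, which is $K_\sigma$ (it is an increasing union of closed subsets of the compact space $(2^\NN)^\NN \times (2^\NN)^\NN$), so the cleanest route is exactly what the paper does: apply part (a) to conclude $E_1 \preccurlyeq_B E_{K_\sigma}$ and then invoke \cite[Theorem 4.2]{KL}. Your one-line coordinatewise reduction does not adapt directly, since the input to $\phi$ at stage $n$ is now an element of $2^\NN$ rather than a bit.

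Your sketch of part (a) captures the right flavor of Rosendal's argument --- exhaust $P = \bigcup_n K_n$ with $K_n \circ K_n \subseteq K_{n+1}$, measure ``depth'' against dense pivots, and pack the data into $\prod_n \{1,\dots,n\}$ --- and you are candid that the bookkeeping is the hard technical core. As written it is a plausible plan rather than a proof, but since the paper itself defers to \cite{Ros}, a full reconstruction is not really what is being asked for.
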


Our discussion of Borel complexity degrees is summarized on Figure \ref{Fig1}.

\paragraph{2.2. Classifying geometric structures on groups.}
There are two standard ways to introduce a geometric structure on a group. First, we can choose a generating set and consider the associated word metric. Second, we can fix an isometric group action on a metric space and consider the pseudometric on the group induced by the orbit map. These two approaches are linked by the well-known Svarc–Milnor lemma, which shows that classifying cobounded actions of a given group $G$ on geodesic metric spaces up to coarsely $G$-equivariant quasi-isometry is essentially the same as classifying metrics on $G$ arising from generating sets up to bi-Lipschitz equivalence. In this section, we focus on the latter classification problem, as it is somewhat easier to formalize. The more general case of arbitrary group actions on metric spaces is discussed in Section \ref{Sec:CGA}.

Let $G$ be a group and let $$Gen(G)=\{ X\subseteq G \mid \la X\ra = G\}$$ denote the set of all generating sets of $G$. We think of $Gen(G)$ as a subspace of the product space $2^G$ and endow it with the induced topology. 
For a generating set $X\in Gen(G)$, we denote by $ |\cdot|_X$ and $\d_X$ the corresponding word length and word metric on $G$, respectively. Further, by $Cay(G,X)$ we denote the Cayley graph of $G$ equipped with the standard metric, also denoted by $\d_X$, obtained by identifying every edge with $[0,1]$. 

The following definition was proposed in \cite{ABO}. 

\begin{defn}\label{Def:ABO}
Let $X, Y\in Gen(G)$. We say that $X$ is \emph{dominated} by $Y$ and write $X\preccurlyeq Y$ if the identity map $(G, \d_Y) \to (G, \d_X)$ is Lipschitz (equivalently, $\sup_{y\in Y} |y|_X < \infty$.)
Further, we say that $X$ and $Y$ are \emph{equivalent} and write  $X\sim Y$ if $X\preccurlyeq Y$ and $Y\preccurlyeq X$. \end{defn}

Clearly, $\preccurlyeq $ is a quasi-order and $\sim$ is an equivalence relation on $Gen(G)$. Informally, we order elements $X\in Gen(G)$ according to the ``size" of the metric spaces $(G, \d_X)$. 

\begin{ex} \label{Ex:Gen}  For any group $G$, the generating set $X=G$ is dominated by any other generating set. If $G$ is finitely generated, all finite generating sets of $G$ are equivalent and dominate all other generating sets. 
\end{ex}

Recall that any isometric action of a group $G$ on a hyperbolic space $S$ induces an action of $G$ on the Gromov boundary $\partial S$ by homeomorphisms. We say that a hyperbolic Cayley graph $Cay(G,X)$ is of \emph{general type} if $|\partial Cay(G,X)|=\infty$ and $G$ does not fix any point of $\partial Cay(G,X)$. Along with the space $Gen(G)$, we consider its subspaces
$$
Hyp(G)=\{ X\in Gen(G) \mid Cay(G,X) {\rm \; is \; hyperbolic}\},
$$
and 
$$
Hyp_{gt}(G)=\{ X\in Hyp(G) \mid Cay(G,X) {\rm \; is\; of\; general\; type}\}.
$$

In the study of Borel equivalence relations up to Borel reducibility, it is customary to forget the topology of the ambient space and work in the category of Borel spaces. Recall that a \emph{Borel space} is a pair $(B,\Sigma)$, where $B$ is a set and $\Sigma$ is a $\sigma$-algebra of subsets of $B$. A Borel space $(B,\Sigma)$ is \emph{standard} if it admits a \emph{Polishing}; that is, there exists a Polish topology on $B$ such that $\Sigma$ is precisely the set of all Borel subsets of $B$.

Clearly, the Borel bi-reducibility class of a given equivalence relation on a standard Borel space $B$ is independent of the choice of a particular Polishing of $B$. Recall also that for every Borel subset $B$ of a Polish space $P$, the collection $\Sigma =\{ A\subseteq B\mid A {\rm \; is \; Borel\; in \;} P\}$ defines the structure of a standard Borel space on $B$ \cite[Corollary 13.4]{Kec}. Therefore, the analysis of Borel complexity discussed above can be expanded to equivalence relations on Borel subsets of Polish spaces. The result below provides a formalization of Problem \ref{prob} in these settings.

\begin{prop} \label{Prop:Hyp}
Let $G$ be a countable group.
\begin{enumerate}
    \item[(a)] The space $Gen(G)$ is Polish.
    \item[(b)] The relations $\preccurlyeq $ and $\sim$ on $Gen(G)$ are Borel.
    \item[(c)] $Hyp(G)$ and $Hyp_{gt}(G)$ are Borel subsets of $Gen(G)$. 
\end{enumerate}
\end{prop}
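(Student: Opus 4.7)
The plan is to verify each assertion by exhibiting the relevant sets and relations as countable Boolean combinations of basic Borel conditions inside the ambient Polish space $2^G$.

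Part (a) is a direct application of Alexandrov's theorem: $2^G$ is Polish, and for each $g\in G$ the set $U_g:=\{X\in 2^G\colon g\in \la X\ra\}$ is open, because $g\in \la X\ra$ is witnessed by a single finite word in $X\cup X^{-1}$, a condition depending on only finitely many coordinates of $X$. Hence $Gen(G)=\bigcap_{g\in G}U_g$ is a $G_\delta$ subset of $2^G$, and therefore Polish. For part (b), the key observation is that for each $g\in G$ and $n\in \NN$ the set $\{X\in 2^G\colon |g|_X\le n\}$ is a finite union of basic clopen sets — one for each factorization of $g$ of length at most $n$ — hence open. So the word length $X\mapsto |g|_X$ is a Borel function, and then
\[X\preccurlyeq Y\iff \exists N\in \NN\,\forall y\in G\colon\ y\notin Y\text{ or } |y|_X\le N\]
presents $\preccurlyeq$ as a countable union of countable intersections of Borel sets; and $\sim\,=\,\preccurlyeq\cap\preccurlyeq^{-1}$ is Borel as a consequence.

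For part (c), I would reduce both geometric conditions to countable Boolean combinations of Borel predicates involving the word lengths $|g|_X$. A connected graph with the standard length metric is Gromov hyperbolic iff, for some $\delta\ge 0$, the four-point condition
\[(a|b)_w\ge \min\{(a|c)_w,(b|c)_w\}-\delta\]
holds for all vertices $a,b,c,w$, where $(a|b)_w:=\tfrac{1}{2}(\d_X(w,a)+\d_X(w,b)-\d_X(a,b))$. Since each Gromov product is a Borel function of $X$,
\[Hyp(G)=\bigcup_{n\in\NN}\,\bigcap_{a,b,c,w\in G}\bigl\{X\colon(a|b)_w\ge \min\{(a|c)_w,(b|c)_w\}-n\bigr\}\]
is Borel. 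Next, by the parenthetical characterization in the definition of general type, $X\in Hyp_{gt}(G)$ iff $X\in Hyp(G)$ and there exist $g,h\in G$ acting loxodromically on $Cay(G,X)$ with disjoint fixed-point pairs on $\partial Cay(G,X)$. Subadditivity of $n\mapsto |g^n|_X$ yields $\tau_X(g)=\inf_n |g^n|_X/n$, so $g$ is loxodromic iff $\exists k\in \NN\,\forall n\ge 1\colon k|g^n|_X\ge n$, an $F_\sigma$ condition on $X$. Independence of two loxodromic elements $g,h\in G$ is then equivalent to
\[\sup_{n,m\ge 1}(g^{\e_1 n}|h^{\e_2 m})_e<\infty\qquad \text{for every }(\e_1,\e_2)\in\{\pm 1\}^2,\]
using the standard fact that two sequences in a hyperbolic space converge to the same boundary point iff their pairwise Gromov products tend to infinity. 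Each such condition is Borel in $X$, and a countable union over pairs $g,h\in G$ exhibits $Hyp_{gt}(G)$ as a Borel subset of $Hyp(G)$.

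The step I expect to require the most attention is the boundary-dynamics argument in part (c): converting the topological statement ``$g$ and $h$ have disjoint fixed-point pairs on $\partial Cay(G,X)$'' into the simultaneous boundedness of the four sequences of Gromov products computed from word lengths. Once this geometric translation is secured, the remainder is routine bookkeeping of countable unions, intersections, and complements indexed by the countable set $G$.
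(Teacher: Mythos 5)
Your proofs of (a) and (b) are essentially identical to the paper's (which handles them inside Proposition~\ref{Prop:PLGen}): $Gen(G)$ is $G_\delta$ in $2^G$, the word-length functions are Borel, and $\preccurlyeq$ is an $F_\sigma$-type combination of these. One small correction: the set $\{X \in 2^G : |g|_X \le n\}$ is a \emph{countable} (not finite) union of basic clopen sets, one per length-$\le n$ factorization of $g$ over $G$; it is therefore open, which is all you need, but it is not clopen in general. Similarly, $g \in \langle X\rangle$ is a countable disjunction of conditions each depending on finitely many coordinates, not a single such condition.

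For part (c) you take a genuinely different route from the paper, and it is the more elementary of the two. The paper first proves that $HypPL(G)$ and $HypPL_{gt}(G)$ are Borel subsets of the space $PL(G)$ of pseudo-length functions (Propositions~\ref{Prop:HypPL} and~\ref{Prop:HypPLBorel}), then pulls back along the Borel map $X\mapsto\ell_X$ (Corollary~\ref{Cor:Hyp}). To show that the four-point condition on a pseudo-length actually yields a $G$-action on a genuine hyperbolic \emph{geodesic} space, the paper invokes Isbell's injective hull; to show that the four-point and general-type conditions are Borel, it sets up $\L_{\omega_1,\omega}$ and proves a Lopez--Escobar-type theorem (Proposition~\ref{Prop:LE}). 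You sidestep both pieces of machinery: since $Cay(G,X)$ is already a geodesic graph whose vertices form a $\tfrac12$-net, the four-point condition on vertices characterizes hyperbolicity directly, and the independence of loxodromics is a boundedness condition on Gromov products in all four sign combinations, which you correctly check. Each ingredient is an explicit countable Boolean combination of Borel sets. The trade-off is that the paper's detour through $PL(G)$ and $\L_{\omega_1,\omega}$ is not idle: $HypPL(G)$, $HypPL_{gt}(G)$, and the Lopez--Escobar machinery are reused later (in Theorem~\ref{Thm:main-full} and the remark after Corollary~\ref{Cor:Hyp}), so the paper gets a more general toolkit while your argument is leaner for the statement at hand. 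Your translation of ``disjoint fixed-point pairs'' into $\sup_{n,m\ge 1}(g^{\e_1 n},h^{\e_2 m})_1<\infty$ for all $(\e_1,\e_2)\in\{\pm1\}^2$ is correct -- the ``two sequences converge to distinct boundary points iff the Gromov products stay bounded'' dichotomy holds once hyperbolicity is known, and checking all four combinations is exactly what is needed for independence.
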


We are now ready to state our main theorem. For a more general result encompassing all general type group actions on hyperbolic spaces, see Theorem \ref{Thm:main-full}.

\begin{thm}\label{Thm:main}
For any countable weakly hyperbolic group $G$, the restriction of $\sim$ to $Hyp_{gt}(G)$ is Borel bi-reducible to one of the relations $=_1$, $=_2$, $\ldots$, $=_\NN$, $=_\RR$, or $E_{K_\sigma}$, and all possibilities can be realized. In particular, the restriction of $\sim$ to $Hyp_{gt}(G)$ is either smooth or not classifiable by countable structures.
\end{thm}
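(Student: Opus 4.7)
The strategy is a dichotomy: either $G$ is \emph{isotropic}, in which case the restriction of $\sim$ to $Hyp_{gt}(G)$ is smooth, or $G$ is \emph{anisotropic}, in which case it is Borel bi-reducible to $E_{K_\sigma}$. I would organize the argument in three stages: an upper bound, the two sides of the dichotomy, and realization.

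For the upper bound, I would show that the restriction of $\sim$ to $Hyp_{gt}(G)$ is Borel reducible to $E_{K_\sigma}$. Fix an enumeration $\{g_n\}_{n \in \NN}$ of $G \setminus \{1\}$ and note that $X \sim Y$ iff there is a constant $C$ with $C^{-1}|g|_Y \leq |g|_X \leq C|g|_Y$ for every $g \in G$; passing to logarithms converts this into an additive bounded-difference condition on the sequences $(\log|g_n|_X)_n$. A suitable capping and discretization of these logarithms yields a Borel map into $\Pi$ that realizes $\sim$ as a $K_\sigma$ equivalence relation, after which Theorem \ref{Thm:RKL}(a) furnishes the desired reduction.

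For the isotropic case, I would use the boundary-dynamics characterization of Theorem \ref{Thm:IsoEquiv} together with the rigidity results of Theorems \ref{Thm:LoxRigid} and \ref{Thm:LoxRigAct} to produce, for each $X \in Hyp_{gt}(G)$, a canonical translation-length function $\ell_X \colon G \to \RR_{\geq 0}$ with the property that $X \sim Y$ iff $\ell_X$ and $\ell_Y$ are positive scalar multiples of each other. Since $\RR^G$ is Polish, its projectivization is a standard Borel space, and the Borel map $X \mapsto [\ell_X]$ reduces $\sim$ to equality; hence the restriction of $\sim$ to $Hyp_{gt}(G)$ is smooth. Silver's dichotomy together with Kuratowski's theorem then force it to be Borel bi-reducible with exactly one of $=_1, =_2, \ldots, =_\NN, =_\RR$.

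For the anisotropic case, I would construct a Borel reduction from $E_{K_\sigma}$ to the restriction of $\sim$ to $Hyp_{gt}(G)$. The failure of the rigidity characterization in Theorem \ref{Thm:IsoEquiv} should furnish a base generating set $X_0 \in Hyp_{gt}(G)$ and a sequence $\{h_n\}_{n\in\NN}$ of loxodromic elements whose translation lengths can be independently adjusted. For each $r \in \Pi$, define $X_r \in Hyp_{gt}(G)$ by adjoining to $X_0$ elements that shorten the length of $h_n$ by a factor of roughly $2^{r(n)}$. Propositions \ref{Prop:UP} and \ref{Prop:qm-intr} should guarantee that each $X_r$ remains a general-type hyperbolic generating set and that the compressions are mutually independent, so that $r \mapsto X_r$ is Borel and satisfies $X_r \sim X_s$ iff $\sup_n |r(n) - s(n)| < \infty$. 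Combined with the upper bound, this yields Borel bi-reducibility with $E_{K_\sigma}$. This construction is the main obstacle: one must preserve hyperbolicity and the general-type property under infinitely many simultaneous geometric modifications and verify independence of the compression parameters, which will require delicate small-cancellation-style arguments. Finally, realization of all listed classes is achieved by explicit examples: $SL_2(F)$ for countable subfields $F \subseteq \CC$ realizes the isotropic cases, with the specific bi-reducibility class determined by the arithmetic of $F$, while every non-elementary acylindrically hyperbolic group is anisotropic and realizes $E_{K_\sigma}$.
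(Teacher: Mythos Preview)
Your overall architecture matches the paper exactly: upper bound via a $K_\sigma$ reduction, the isotropic/anisotropic dichotomy, smoothness via projectivized translation length functions in the isotropic case, and a Borel embedding of $(\Pi, Q_{K_\sigma})$ into $(Hyp_{gt}(G), \preccurlyeq)$ in the anisotropic case. The paper packages the last step as Proposition~\ref{Prop:EmbPi} and Corollary~\ref{Cor:PiEmb}.

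There is, however, a genuine misattribution in your anisotropic step. Propositions~\ref{Prop:UP} and~\ref{Prop:qm-intr} are \emph{classification criteria} (uniform perfection forces isotropy; an unbounded subordinate quasi-morphism forces anisotropy). They say nothing about preserving hyperbolicity or the general-type property under the compression construction, nor about independence of the compression parameters. The actual machinery the paper uses here is quite different: starting from a single pair of non-equivalent loxodromics, Proposition~\ref{Prop:BF} manufactures an infinite sequence $(g_n)$ with uniform quasi-axes and a quantitative small-cancellation property (condition~(\ref{Eq:SC})), proved via Bestvina--Fujiwara quasi-morphisms used \emph{geometrically}. Hyperbolicity of the compressed generating sets $W_{(n_i)}$ is then obtained from a Bowditch--Kapovich--Rafi criterion (Proposition~\ref{Prop:Bow}), and the crucial two-sided estimate $\lceil k/n_j\rceil \ge |g_j^k|_{W} \ge \alpha k/n_j - 2$ (Lemma~\ref{lem:prelim for embedding K_sigma}) comes from Olshanskii's polygon lemma together with the small-cancellation bound. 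You correctly flag this as ``the main obstacle'', but the propositions you cite will not help you cross it.

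A smaller point on realization: $SL_2(F)$ does \emph{not} realize all the smooth classes. For instance $SL_2(\ZZ)$ is anisotropic (its action on $\mathbb{H}^2$ is proper), and the paper does not claim any $SL_2(F)$ realizes $=_n$ for finite $n$. The finite and countable cases are handled instead by the groups $I^n$ of Example~\ref{Ex:I} (or irreducible lattices in products of rank-one factors), while purely transcendental $SL_2(\QQ(x_1,\ldots))$ realizes $=_{\RR}$.
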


The proof of this result relies on a dichotomy for weakly hyperbolic groups, which appears to be of independent interest; we discuss it in more detail below.

\paragraph{2.3. Isotropic vs. anisotropic weakly hyperbolic groups.}
We begin with a general definition, which makes sense for any action of a group $G$ on a metric space $S$. Informally, it captures the idea that all directions in $S$, represented by equidistant pairs of points,  are ``coarsely the same" modulo the $G$-action (see Fig.~\ref{Fig2}).

\begin{defn}\label{Def:Iso}
An action of a group $G$ on a metric space $(S, \d_S)$ is \emph{isotropic} if there exists $D\ge 0$ (called the \textit{isotropy constant} of the action) such that, for any points $x,y,x^\prime, y^\prime \in S$ satisfying the equality $\d_S(x,y)=\d_S(x^\prime, y^\prime)$, there is $g\in G$ such that
$$
\max\{\d_S(gx,x^\prime),\, \d_S(gy,y^\prime)\}\le D.
$$
\end{defn}

\begin{figure}
 \centering 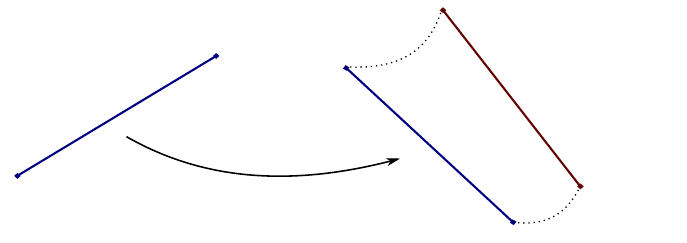\\
  \caption{Isotropic group action}\label{Fig2}
\end{figure}
\begin{ex}
The natural action of the infinite dihedral group on $\RR$ is isotropic.
\end{ex}

Recall that an action of a group $G$ on a topological space $T$ is \emph{minimal} if the $G$-orbit of every point is dense in $T$.  Isotropic group actions on hyperbolic spaces can be alternatively characterized as follows.

\begin{thm}\label{Thm:IsoEquiv}
For any general type action of a group $G$ on a hyperbolic space $S$, the following conditions are equivalent.
\begin{enumerate}
\item[(a)] The action is isotropic.
\item[(b)] The action is cobounded and the induced action of $G$ on $\{ (x,y)\in \partial S \times \partial S \mid x\ne y\}$ is minimal.
\item[(c)] There exist a minimal element $X$ of $Hyp_{gt}(G)$ and a $G$-equivariant quasi-isometry $(G, \d_X)\to S$. 
\end{enumerate}
\end{thm}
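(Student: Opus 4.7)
The plan is to prove Theorem \ref{Thm:IsoEquiv} via the cyclic chain of implications $(a) \Rightarrow (b) \Rightarrow (c) \Rightarrow (a)$.

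For $(a) \Rightarrow (b)$, coboundedness falls out immediately from applying the isotropy hypothesis to degenerate pairs $x = y$ and $x^\prime = y^\prime$: for any two points of $S$ one obtains $g \in G$ moving one $D$-close to the other. The minimality of the induced $G$-action on $\{(\xi, \eta) \in \partial S \times \partial S \mid \xi \ne \eta\}$ is the more delicate part. Given distinct pairs $(a, b)$ and $(a^\prime, b^\prime)$, fix a basepoint $o \in S$ and pick sequences $x_n \to a$, $y_n \to b$, $x_n^\prime \to a^\prime$, $y_n^\prime \to b^\prime$ along geodesic rays, parametrized so that $\d_S(x_n, y_n) = \d_S(x_n^\prime, y_n^\prime) \to \infty$. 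Isotropy supplies $g_n \in G$ with $\d_S(g_n x_n, x_n^\prime), \d_S(g_n y_n, y_n^\prime) \le D$. The key geometric observation is that the geodesic $[x_n, y_n]$ passes within a uniformly bounded distance of $o$ because the Gromov product $(x_n \mid y_n)_o$ stays bounded, and similarly for $[x_n^\prime, y_n^\prime]$; since geodesics with endpoints $D$-close stay uniformly close in a hyperbolic space, this forces $\d_S(o, g_n o)$ to be bounded, and a standard Gromov-product estimate then yields $g_n a \to a^\prime$ and $g_n b \to b^\prime$.

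For $(b) \Rightarrow (c)$, the Svarc-Milnor lemma applied to the cobounded action produces a symmetric generating set $X$ of $G$ together with a $G$-equivariant quasi-isometry $(G, \d_X) \to S$; since $S$ is hyperbolic and of general type, so is $Cay(G, X)$, whence $X \in Hyp_{gt}(G)$. To show $X$ is $\preccurlyeq$-minimal, suppose $Y \in Hyp_{gt}(G)$ with $Y \preccurlyeq X$; then the Lipschitz identity $(G, \d_X) \to (G, \d_Y)$ composed with a quasi-inverse of the Svarc-Milnor map gives a $G$-equivariant coarsely Lipschitz map $\phi \colon S \to Cay(G, Y)$. Because $Y$ is of general type, $Cay(G, Y)$ contains two loxodromic elements with disjoint fixed-point pairs, witnessing that the induced boundary map does not collapse the limit set. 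Combined with the minimality of the boundary-pair action from (b) and the rigidity Theorems \ref{Thm:LoxRigid} and \ref{Thm:LoxRigAct}, this forces $\phi$ to be a quasi-isometry, and consequently $Y \sim X$.

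For $(c) \Rightarrow (a)$, the equivariant quasi-isometry transports the question to showing that the canonical action on $Cay(G, X)$ is isotropic whenever $X$ is $\preccurlyeq$-minimal in $Hyp_{gt}(G)$. I would argue contrapositively: were no uniform isotropy constant to exist, one could extract sequences of equidistant pairs in $G$ at increasing distances that cannot be matched up to any fixed bound by a group element, and use these to construct a strictly coarser $G$-invariant pseudometric refining $\d_X$. Invoking Theorems \ref{Thm:LoxRigid} and \ref{Thm:LoxRigAct} to verify that the resulting structure remains hyperbolic and of general type would produce an element $Y \in Hyp_{gt}(G)$ strictly below $X$, contradicting minimality. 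The main obstacle lies precisely in this step: translating a purely combinatorial failure of uniform transitivity into a concrete competitor hyperbolic structure requires delicate control of both the hyperbolicity constant and the loxodromic spectrum under the proposed rescaling, and it is the earlier rigidity theorems that make this bookkeeping tractable.
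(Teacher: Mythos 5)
The paper proves Theorem~\ref{Thm:IsoEquiv} as an immediate corollary of the more general Theorem~\ref{Thm:Iso}, whose long cycle of implications passes through the Bestvina--Fujiwara equivalence of loxodromic elements, the translation length compression function, and the embedding construction of Proposition~\ref{Prop:EmbPi}. Your direct cycle (a)$\Rightarrow$(b)$\Rightarrow$(c)$\Rightarrow$(a) is a reasonable skeleton, and your (a)$\Rightarrow$(b) argument — using bounded Gromov products to force $\d_S(o,g_no)$ to stay bounded and then transferring to boundary convergence — is plausible and genuinely different from the paper's route through equivalence of loxodromic elements.

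However, both (b)$\Rightarrow$(c) and (c)$\Rightarrow$(a) rest on invoking Theorems~\ref{Thm:LoxRigid} and~\ref{Thm:LoxRigAct}, and this is a genuine gap. Those theorems carry the hypothesis that the \emph{group} $G$ is isotropic — a global condition requiring \emph{every} $X\in Hyp_{gt}(G)$ to yield an isotropic Cayley-graph action — whereas in Theorem~\ref{Thm:IsoEquiv} you only have a single action satisfying (b) or (c), and no a priori control over other elements of $Hyp_{gt}(G)$. Worse, in the paper the two rigidity theorems are themselves \emph{proved} using Theorem~\ref{Thm:Iso} (the proof of Theorem~\ref{Thm:LoxRigAct} explicitly cites ``the equivalence of (a) and (c) in Theorem~\ref{Thm:Iso}''), so relying on them here is circular. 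Even setting that aside, in (b)$\Rightarrow$(c) the conclusion you draw — that a $G$-equivariant coarsely Lipschitz map to a general-type $Cay(G,Y)$ is forced to be a quasi-isometry by ``not collapsing the limit set'' plus boundary-pair minimality — is simply false in general: for an acylindrically hyperbolic group one has many inequivalent general-type actions dominated by a single one, with non-degenerate boundary maps between them.

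The step you flag as ``the main obstacle'' in (c)$\Rightarrow$(a) is exactly where the real work lives, and the rigidity theorems do not rescue it. What the paper actually does is: use minimality of $X$ together with Proposition~\ref{Prop:EmbPi} (the $Q_{K_\sigma}$-embedding, which hinges on Proposition~\ref{Prop:BF} and the quasi-morphism-driven small-cancellation estimate) to show, contrapositively, that any two loxodromic elements of the action must be Bestvina--Fujiwara equivalent; then use Lemmas~\ref{Lem:Orb} and~\ref{Lem:HM} to pass from ``all loxodromics equivalent'' to minimality of the action on $\partial S\otimes\partial S$; and finally carry out the direct geometric ``sliding along a quasi-axis'' argument (the (e)$\Rightarrow$(a) implication, using Lemmas~\ref{Lem:4pts} and~\ref{Lem:xiyi}) to recover the uniform isotropy constant. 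Your proposal replaces this entire chain with a vague ``construct a strictly coarser pseudometric and check it is hyperbolic of general type,'' which is precisely the content of Sections~\ref{Sec:GAHS}--\ref{Sec:ModGA} and cannot be made to work by appealing to theorems proved downstream of the one at hand.
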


We are now ready to define isotropic weakly hyperbolic groups.

\begin{defn}
 A weakly hyperbolic group $G$ is called \emph{isotropic} if for every generating set $X\in Hyp_{gt}(G)$,
the natural action $G\act Cay(G,X)$ is isotropic. 
\end{defn}

A crucial observation in the proof of Theorem \ref{Thm:main} is that general type actions of isotropic groups on hyperbolic spaces can be classified by their translation length functions. This is close in spirit to the Alperin--Bass--Culler--Morgan theorem proved independently in \cite{AB} and \cite{CM}, which asserts that general type actions of a given group on $\mathbb R$-trees are determined by the corresponding translation length functions up to equivariant isometry. We discuss the particular case of actions on Cayley graphs here and refer to Theorem \ref{Thm:LoxRigAct} for the more general result. 

Given a group $G$, a generating set $X\in Gen(G)$, and an element $g\in G$, we define the \textit{translation length} of $g$ with respect to $X$ to be
$$
\tau_X(g)=\liminf_{n\to \infty} \frac{|g^n|_X}n.
$$
For a function $\tau \colon G\to \RR$, we let $[\tau]=\big\{c\tau \mid c\in \RR\setminus \{ 0\} \big\}.$ If $\tau \colon G\to \RR$ is non-zero, we can think of $[\tau]$ as an element of the projective space $\mathbf{P}(\RR^G)$. 

\begin{thm}\label{Thm:LoxRigid}
Let $G$ be an isotropic weakly hyperbolic group. For any $X,Y\in Hyp_{gt}(G)$, we have $X\sim Y$ if and only if $[\tau_X]= [\tau_Y]$. 
\end{thm}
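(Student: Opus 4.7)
Assume $X \sim Y$, so the identity map $(G,d_X) \to (G,d_Y)$ is $K$-bi-Lipschitz for some $K \ge 1$; fix an isotropy constant $D$ for the $X$-action. For any $w,w' \in G$ with $|w|_X = |w'|_X$, applying isotropy to the pairs $(1,w),(1,w')$ yields $k \in G$ with $|k|_X \le D$ and $d_X(kw,w') \le D$; bi-Lipschitzness then gives $\bigl||w|_Y-|w'|_Y\bigr| \le 2KD$ by the triangle inequality. Thus $\phi(r):=|w|_Y$ (for any $w$ with $|w|_X=r$) is well-defined up to an additive constant. Splitting a geodesic of $X$-length $r+s$ at its intermediate point of $X$-length $r$ yields $\phi(r+s) \le \phi(r)+\phi(s)+O(1)$, and Fekete's lemma (applied after a constant shift) produces $\lambda := \lim_{r\to\infty}\phi(r)/r \in [1/K,K]$. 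For any loxodromic $g$, the relation $\phi(|g^n|_X) = |g^n|_Y + O(1)$ combined with $|g^n|_X/n \to \tau_X(g)$ and $|g^n|_Y/n \to \tau_Y(g)$ then forces $\tau_Y(g) = \lambda\,\tau_X(g)$; since non-loxodromic elements have vanishing translation length on both sides, $\tau_Y = \lambda\,\tau_X$, whence $[\tau_X] = [\tau_Y]$.

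\textbf{Backward direction.} Assume $\tau_Y = c\,\tau_X$ with $c>0$; the goal is $X \sim Y$. I would first reduce to the case of minimal $X,Y$. By Theorem \ref{Thm:IsoEquiv}(c), each of the two isotropic actions admits a $G$-equivariant quasi-isometry from a Cayley graph of a minimal element $X_0,Y_0 \in Hyp_{gt}(G)$. Any $G$-equivariant map between Cayley graphs of $G$ sits at bounded distance from the identity map, and a quasi-isometry of Cayley graphs via the identity is automatically bi-Lipschitz (for $x\in X_0$, the QI bound caps $|x|_X$). Hence $X \sim X_0$ and $Y \sim Y_0$, and the forward direction converts the hypothesis into $\tau_{Y_0} = c'\tau_{X_0}$; it thus suffices to handle minimal $X,Y$ with proportional $\tau$. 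For these, fix a loxodromic $g$; isotropy of the $X$-action allows one to decompose any $w \in G$ with $|w|_X = r$ as $w = k_1 g^n k_2$ with $|k_1|_X,|k_2|_X \le D$ and $|g^n|_X$ within $O(1)$ of $r$. Combined with $|g^n|_Y \approx c|g^n|_X$ (from $\tau_Y = c\tau_X$), this bounds $|w|_Y$ by $c|w|_X$ up to the $Y$-lengths of the correction terms $k_1,k_2$, producing a quasi-isometry $(G,d_X) \to (G,d_Y)$ via the identity map — which again forces $X \sim Y$.

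\textbf{Main obstacle.} The backward direction is the delicate one. In the forward direction, bi-Lipschitzness furnishes the estimates that convert isotropy into a coarsely subadditive function $\phi$; in the backward direction, the metric structure must instead be reconstructed from the projective translation length function alone. The outstanding step is to bound the bounded-$X$-length correction terms $k_1,k_2$ uniformly in the $Y$-metric, which requires using isotropy of both actions simultaneously. This bootstrap is precisely the geometric content of the more general rigidity result Theorem \ref{Thm:LoxRigAct}, of which Theorem \ref{Thm:LoxRigid} is the specialization to Cayley graph actions.
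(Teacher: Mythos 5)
Your forward direction is correct and takes a genuinely different route from the paper's. You use isotropy and bi-Lipschitzness of the identity map to build a function $\phi$ on $\NN$ that is well-defined up to a bounded error, verify coarse subadditivity by cutting geodesics in $Cay(G,X)$, and extract the proportionality constant $\lambda$ from Fekete's lemma. The paper instead proves (Lemma \ref{Lem:tlcf}) that the translation-length compression ratio $\tau_Y(g)/\tau_X(g)$ is constant on equivalence classes of loxodromics, and then invokes the isotropy characterization (Theorem \ref{Thm:Iso}(c)) that all loxodromics are equivalent, so the ratio is a single constant. Your argument is more elementary and analytic; it exploits the fact that $(G,\d_X)$ is a transitive geodesic space, whereas the paper's argument (via Theorem \ref{Thm:LoxRigAct}) works with arbitrary hyperbolic $G$-spaces through quasi-axes.

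In the backward direction, however, you have the difficulty distribution inverted. What you call the delicate direction is, in the paper's framework, the easy one and uses no isotropy at all. Proposition \ref{Prop:tau} of the paper shows that for \emph{any} two general type actions on hyperbolic spaces, $G\curvearrowright S \lA G\curvearrowright T$ if and only if $\tau_{G\curvearrowright S}\preccurlyeq_{Lip}\tau_{G\curvearrowright T}$. Proportionality $\tau_Y = c\,\tau_X$ (with $c>0$, forced since both functions are nonnegative and not identically zero on a general type action) immediately gives two-sided Lipschitz domination, hence $X\sim Y$. Your attempted decomposition $w = k_1 g^n k_2$ is stuck at exactly the point you flag: the ball $\{k : |k|_X\le D\}$ can be infinite because $X$ may be infinite, so bounding $|k_i|_X$ gives no control on $|k_i|_Y$ unless you already know $X\preccurlyeq Y$ — the very thing you are trying to prove. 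Honestly acknowledging the gap does not close it; the correct move is not to attempt a metric reconstruction from $[\tau]$, but to pass through the translation-length rigidity of Proposition \ref{Prop:tau} (whose substantive ingredient is Lemma \ref{Lem:x0xn} on concatenating geodesics at bounded Gromov product).
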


If $G$ is countable, $\RR^G$ is a Polish space. Further, it is easy to show that the proportionality relation on $\RR^G$ is smooth, and the map $Gen(G)\to \RR^G$ given by $X\mapsto \tau_X$ is Borel. Thus, we obtain the following. 

\begin{cor}\label{Cor:Iso}
For any countable isotropic weakly hyperbolic group, the restriction of $\sim $ to $Hyp_{gt}(G)$ is smooth.
\end{cor}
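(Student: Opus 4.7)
The plan is to use Theorem~\ref{Thm:LoxRigid} to reduce the equivalence $\sim$ on $Hyp_{gt}(G)$ to the proportionality relation on $\RR^G$ via the translation length map $X \mapsto \tau_X$, and then to verify the two Borel ingredients claimed in the paragraph preceding the corollary.

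First I would fix a Polishing of $Hyp_{gt}(G)$ coming from Proposition~\ref{Prop:Hyp} and confirm that $\RR^G$ is a Polish space, which is immediate from countability of $G$ (it is a countable product of copies of $\RR$). Next I would check that the translation length map $\tau\colon Hyp_{gt}(G)\to \RR^G$, $X\mapsto \tau_X$, is Borel. It suffices to show that for each fixed $g\in G$, the map $X\mapsto \tau_X(g)$ is Borel, and since $\tau_X(g)=\liminf_{n\to\infty} |g^n|_X/n$, this reduces to Borelness of $X\mapsto |g^n|_X$ for each $n$. The latter is an $\NN$-valued function, and the set $\{X\in Gen(G) : |g^n|_X\le k\}$ is a countable union, over finite sequences $(x_1,\dots,x_k)\in (G\cup\{1\})^k$ with $x_1\cdots x_k=g^n$, of the sets $\{X : \{x_1,\dots,x_k\}\setminus\{1\}\subseteq X\cup X^{-1}\}$; each such set is clopen in $2^G$, so the preimages are Borel.

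Next I would argue that the proportionality relation on $\RR^G\setminus\{0\}$ is smooth. For this, fix an enumeration $g_1,g_2,\ldots$ of $G$ and define a Borel selector $\sigma\colon \RR^G\setminus\{0\}\to \RR^G$ by $\sigma(\tau)=\tau/\tau(g_k)$, where $k=k(\tau)$ is the least index with $\tau(g_k)\ne 0$. The map $\sigma$ is Borel since the set $\{\tau : k(\tau)=k\}$ is a Borel condition, and $\sigma(\tau)=\sigma(\tau')$ holds precisely when $[\tau]=[\tau']$. This exhibits the proportionality relation as Borel reducible to equality on the Polish space $\RR^G$, hence smooth.

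Finally, I would note that for $X\in Hyp_{gt}(G)$ the action is of general type, so it contains loxodromic elements and therefore $\tau_X\not\equiv 0$; thus $\tau$ actually takes values in $\RR^G\setminus\{0\}$. Composing $\tau$ with $\sigma$ yields a Borel map $Hyp_{gt}(G)\to \RR^G$ which, by Theorem~\ref{Thm:LoxRigid}, satisfies $X\sim Y\Longleftrightarrow \sigma(\tau_X)=\sigma(\tau_Y)$. This is a Borel reduction of $\sim$ to equality on a Polish space, proving smoothness. The argument is essentially bookkeeping once Theorem~\ref{Thm:LoxRigid} is granted; the only mildly delicate point is the Borelness of $X\mapsto|g^n|_X$, which is where I would expect to spend the most care.
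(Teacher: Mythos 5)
Your argument is correct and follows the same overall route the paper takes: use Theorem~\ref{Thm:LoxRigid} to reduce $\sim$ on $Hyp_{gt}(G)$ to the proportionality relation on $\RR^G\setminus\{\mathbf{0}\}$, then verify that $X\mapsto\tau_X$ is Borel and that proportionality is smooth. The only differences are cosmetic: you establish smoothness of the proportionality relation via an explicit Borel selector (normalize at the first nonzero coordinate of a fixed enumeration of $G$), whereas the paper's Lemma~\ref{Lem:smooth} shows the relation is closed in $(\RR^G\setminus\{\mathbf{0}\})^2$ and cites the fact that closed equivalence relations on Polish spaces are smooth, and you check Borelness of $X\mapsto\tau_X$ directly on $Gen(G)$ using openness of $\{X : |g^n|_X\le k\}$ while the paper factors through $PL(G)$ via Lemma~\ref{Lem:TauBorel}.
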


Following tradition, we use the term ``anisotropic" in lieu of ``not isotropic". Thus, a weakly hyperbolic group is \emph{anisotropic} if it admits at least one anisotropic general type hyperbolic Cayley graph. For groups in this class, we prove the following.

\begin{thm}\label{Thm:Aniso}
For any countable anisotropic weakly hyperbolic group $G$, there exists a Borel map $f\colon \Pi \to Hyp_{gt}(G)$ such that $$f(r)\preccurlyeq f(s) \;\;\; \Longleftrightarrow\;\;\; r \,Q_{K_\sigma}\, s \;\;\forall\,  r,s\in \Pi.$$ In particular, $E_{K_\sigma}$ is Borel reducible to the restriction of $\sim$ to $Hyp_{gt}(G)$.
\end{thm}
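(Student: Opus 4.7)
The plan is to leverage the anisotropy of $G$ to construct, for each $r \in \Pi$, a hyperbolic general type generating set $X_r$ of $G$ that encodes $r$ geometrically. Once a Borel map $f\colon r\mapsto X_r$ with the stated property is produced, the ``in particular'' clause follows formally, since a Borel reduction of the quasi-order $Q_{K_\sigma}$ to $\preccurlyeq$ automatically induces a Borel reduction of the associated equivalence relation $E_{K_\sigma}$ to $\sim$.

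By hypothesis and Theorem~\ref{Thm:IsoEquiv}, I may fix an anisotropic $X_0 \in Hyp_{gt}(G)$, which means that the induced action of $G$ on the space of ordered pairs of distinct boundary points of $Cay(G, X_0)$ fails to be minimal. Exploiting this non-minimality together with standard ping-pong and extraction arguments in hyperbolic dynamics, I produce a doubly-indexed family $\{h_{n,k} : n \in \NN,\ 1 \le k \le n\}$ of loxodromic elements whose pairs of limit points on $\partial Cay(G, X_0)$ lie in $G$-translates of distinct proper invariant closed subsets, and whose large powers satisfy a Gromov-type small cancellation condition relative to $X_0$. The role of anisotropy is precisely to allow these limit-point pairs to be chosen in ``inequivalent directions,'' something forbidden in the isotropic case.

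Given this family, for each $r \in \Pi$ I set
\[
f(r) = X_r := X_0 \cup \{w_{n,k} : n \in \NN,\ 1 \le k \le r(n)\},
\]
where $w_{n,k}$ is an explicit word built from $h_{n,k}$ together with a ``bookkeeping'' factor calibrated so that the added generators are pairwise independent relative to $X_0$. Since membership of each group element in $X_r$ is determined by finitely many coordinates of $r$, the map $f$ is Borel. That $X_r \in Hyp_{gt}(G)$ follows from $X_0 \subseteq X_r$ (which already supplies a general type subaction) together with a Dahmani--Guirardel--Osin / Bestvina--Fujiwara style argument: small cancellation on the added family guarantees that $Cay(G, X_r)$ remains hyperbolic.

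It remains to verify that $f(r) \preccurlyeq f(s) \Longleftrightarrow r\, Q_{K_\sigma}\, s$. The forward direction is essentially formal: if $r(n) \le s(n) + C$ for all $n$, then every generator $w_{n,k}$ with $k \le r(n)$ either lies in $X_s$ or can be expressed as a product of $w_{n, s(n)} \in X_s$ with at most $k - s(n) \le C$ additional factors drawn from $X_0$, giving a uniform bound on $\sup_{x \in X_r}|x|_{X_s}$. The main obstacle is the converse: given $f(r) \preccurlyeq f(s)$, to deduce $\sup_n (r(n) - s(n)) < \infty$. This will require a quantitative lower bound $|w_{n, r(n)}|_{X_s} \gtrsim r(n) - s(n)$ whenever the excess is positive. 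Here the small cancellation properties of $\{h_{n,k}\}$ must be translated, via translation length and Gromov product estimates inside $Cay(G, X_s)$, into the statement that any $X_s$-word representing $w_{n, r(n)}$ must traverse a quasi-geodesic of length comparable to $r(n) - s(n)$. Making this estimate uniform in $n$—and matching the precise growth rate demanded by $Q_{K_\sigma}$—is the technical heart of the argument.
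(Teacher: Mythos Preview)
Your construction has the monotonicity backwards. With $X_r = X_0 \cup \{w_{n,k} : k \le r(n)\}$, a larger $r$ yields a larger generating set and hence a \emph{smaller} word metric; by Definition~\ref{Def:ABO}, $X \subseteq Y$ implies $Y \preccurlyeq X$, not $X \preccurlyeq Y$. Concretely, your ``forward direction'' argument bounds $\sup_{x \in X_r} |x|_{X_s}$, and this establishes $f(s) \preccurlyeq f(r)$ rather than $f(r) \preccurlyeq f(s)$. Worse, whenever $r(n) \ge s(n)$ for all $n$ one has $X_s \subseteq X_r$, so $f(r) \preccurlyeq f(s)$ holds trivially and carries no information about $r(n) - s(n)$; the lower bound $|w_{n,r(n)}|_{X_s} \gtrsim r(n) - s(n)$ you aim for is therefore irrelevant to the converse as stated.

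The paper's construction runs in the opposite direction and is structurally different. It uses a \emph{single} sequence $(g_i)$ of loxodromics with $(K,L)$-quasi-axes through a common basepoint satisfying a uniform small-cancellation condition (Proposition~\ref{Prop:BF}, obtained via Bestvina--Fujiwara quasi-morphisms rather than a bare ping-pong extraction), and for each $r \in \Pi$ enlarges $X$ by the set $S(w_i, n_i)$ of all subwords of $w_i^{\pm n_i}$, where $n_i = 2^{\,i - r(i)} N_i$. Now a larger $r(i)$ gives a smaller $n_i$ and hence \emph{fewer} added generators, so the map is order-preserving as required. The exponential encoding, combined with the two-sided estimate $\lceil k/n_j \rceil \ge |g_j^k|_W \ge \alpha k / n_j - 2$ of Lemma~\ref{lem:prelim for embedding K_sigma}, then delivers both implications directly. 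Your doubly-indexed family with a linear count of added generators does not obviously support an analogue of this estimate even after the direction is repaired (say by precomposing with the order-reversing involution $r \mapsto (n - r(n) + 1)_n$ of $\Pi$).
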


On the other hand, it is not difficult to show that the equivalence relation $\sim$ on $Gen(G)$ is Borel reducible to $E_{K_\sigma}$ whenever $G$ is countable. Therefore, we obtain the following. 

\begin{cor} \label{Cor:Aniso}
For any countable anisotropic weakly hyperbolic group $G$, the restriction of $\sim$ to $Hyp_{gt}(G)$ is Borel bi-reducible with $E_{K_\sigma}$.
\end{cor}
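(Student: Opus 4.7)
The corollary follows by combining two Borel reductions. The lower bound $E_{K_\sigma}\preccurlyeq_B \sim|_{Hyp_{gt}(G)}$ is immediate from Theorem~\ref{Thm:Aniso}: taking symmetric closure of the Borel map $f\colon\Pi\to Hyp_{gt}(G)$ produced there, which reduces $Q_{K_\sigma}$ to $\preccurlyeq$, yields a Borel reduction of $E_{K_\sigma}$ to $\sim$.

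For the upper bound, since $Hyp_{gt}(G)\subseteq Gen(G)$ is Borel by Proposition~\ref{Prop:Hyp}, it suffices to show that $\sim$ on $Gen(G)$ is Borel reducible to $E_{K_\sigma}$ for any countable $G$. I would fix an enumeration $G\setminus\{e\}=\{g_n\}_{n\ge 1}$ and consider the Borel map $\phi\colon Gen(G)\to\NN^\NN$ defined by $\phi(X)(n)=|g_n|_X$; this is Borel because, for each $n$ and $k$, the set $\{X:|g_n|_X\le k\}$ is a countable union of clopen subsets of $2^G$ (indexed by length-$k$ words spelling $g_n$). The triangle inequality, combined with the observation that $|g_n|_Y=1$ whenever $g_n\in Y$, shows that $X\preccurlyeq Y$ if and only if there exists $C\in\NN$ with $\phi(X)(n)\le C\phi(Y)(n)$ for all $n$; equivalently, after taking $\lceil\log_2\rceil$, $X\sim Y$ if and only if $\sup_n|\lceil\log_2\phi(X)(n)\rceil-\lceil\log_2\phi(Y)(n)\rceil|<\infty$. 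Thus $\sim$ on $Gen(G)$ is Borel reducible to the additive bounded-difference equivalence on $\NN^\NN$.

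The remaining obstacle, which I expect to be the main technical step, is to Borel reduce this additive equivalence on $\NN^\NN$ further down to $E_{K_\sigma}$ on $\Pi$. My plan is to refine the $2^G$-topology on $Gen(G)$ to the Polish topology $\tau$ inherited from the embedding $X\mapsto(|g|_X)_{g\in G}$ into $\NN^G$ with $\NN$ discrete. In $\tau$, each level set $\{X:|g|_X=k\}$ is clopen, so each condition $\{(X,Y):|g|_X\le C|g|_Y\}$ is clopen, and $\preccurlyeq_C=\bigcap_{g\in G}\{(X,Y):|g|_X\le C|g|_Y\}$ is closed; thus $\preccurlyeq$ becomes an $F_\sigma$ quasi-order on $(Gen(G),\tau)$. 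A further Polishing refinement, partitioning $Gen(G)$ along Borel parameters that dominate the word length function $|\cdot|_X$ coordinate-wise, upgrades each $\preccurlyeq_C$ to a compact set, exhibiting $\preccurlyeq$ as a $K_\sigma$ quasi-order. Theorem~\ref{Thm:RKL}(a) then provides a Borel reduction from $\preccurlyeq$ to $Q_{K_\sigma}$, and hence from $\sim$ to $E_{K_\sigma}$; combined with the lower bound, this establishes $\sim|_{Hyp_{gt}(G)}\sim_B E_{K_\sigma}$.
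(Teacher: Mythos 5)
Your lower bound is correct and matches the paper's route exactly: Theorem~\ref{Thm:Aniso} gives the reduction $E_{K_\sigma}\preccurlyeq_B {\sim}|_{Hyp_{gt}(G)}$. Your upper bound also begins as the paper does: the map $X\mapsto(\lceil\log_2|g_n|_X\rceil)_n$ is, modulo rounding, the same Borel map used in Lemma~\ref{Lem:red} (there $\ell\mapsto(\log_2(\ell(g_i)+1))_i$), and both correctly convert the multiplicative quasi-order $\preccurlyeq$ into the additive bounded-difference relation on a sequence space.

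The gap is in your final step, and it is a genuine one. You want to invoke Theorem~\ref{Thm:RKL}(a), which requires the quasi-order to be a $K_\sigma$ subset of $Z\times Z$ for some Polish space $Z$, and your plan is to choose a Polishing of $Gen(G)$ that makes $\preccurlyeq$ a $K_\sigma$ quasi-order. This cannot succeed. In the topology $\tau$ you describe, each level $\preccurlyeq_C$ is indeed closed, but closed is not $K_\sigma$ in a non-$\sigma$-compact space. In fact $\preccurlyeq_C$ can never be compact in any Polish topology compatible with the Borel structure: its image under either coordinate projection is all of $Gen(G)$ (every $X$ satisfies $X\preccurlyeq_C X$), so compactness of $\preccurlyeq_C$ would force $Gen(G)$ to be compact, which fails whenever $G$ is infinite. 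For the same reason, no countable family of compact pieces of the form you suggest (sets where $|\cdot|_X$ is coordinate-wise dominated by a fixed sequence) can cover $Gen(G)$; one needs a $\Pi$-worth of dominating sequences, not countably many. So $\preccurlyeq$ simply is not $K_\sigma$ on any Polishing of $Gen(G)$.

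What the paper does instead is to cite Rosendal's observation from~\cite{Ros} that the orbit equivalence relation of $\ell^\infty\curvearrowright\RR^\omega$ (precisely your additive bounded-difference relation) is Borel bi-reducible with $E_{K_\sigma}$. This is not a formal consequence of Theorem~\ref{Thm:RKL}(a); it requires an explicit Borel map into $\Pi$. One such map: fix a surjection $m\mapsto n(m)$ from $\NN$ to $\NN$ hitting each value infinitely often, and send $r\in\NN^\NN$ to $f(r)\in\Pi$ with $f(r)(m)=\min(r(n(m))+1,m)$. Since $\min$ is $1$-Lipschitz in each argument, bounded difference of $r,s$ passes to bounded difference of $f(r),f(s)$; and since each $n$ recurs as $n(m)$ for arbitrarily large $m$, the truncation at $m$ is eventually inactive and unbounded difference is preserved. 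You should either supply such a construction or cite Rosendal's result directly, as the paper does.
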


\paragraph{2.4. An application to hyperbolic structures.}
The set of equivalence classes 
$$
\mathcal H_{gt}(G)=Hyp_{gt}(G)/{\sim} 
$$
endowed with the induced ordering is called the \emph{poset of general type hyperbolic structures}. In \cite{ABO} the authors showed that for every acylindrically hyperbolic group $G$, $\mathcal H_{gt}(G)$ is very rich and difficult to describe; on the other hand, there are groups for which $\mathcal H_{gt}(G)$ has a very simple structure. As a corollary of our result, we show that this dichotomy holds for any countable group $G$. 

\begin{cor}\label{Cor:HS}
For any countable weakly hyperbolic group $G$, exactly one of the following conditions holds.
\begin{enumerate}
\item[(a)]  $\mathcal H_{gt}(G)$ is an antichain of cardinality $1, 2,\ldots, \aleph_0$, or $2^{\aleph_0}$. 
\item[(b)] $\mathcal H_{gt}(G)$ contains chains and antichains of cardinality $2^{\aleph_0}$, and every poset of cardinality at most $\aleph_1$ can be embedded in $\mathcal H_{gt}(G)$. 
\end{enumerate}
\end{cor}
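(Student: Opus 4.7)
The plan is to leverage the dichotomy from Theorem~\ref{Thm:main}: every countable weakly hyperbolic group $G$ is either isotropic or anisotropic, and I treat the two cases separately. Realization of the individual cardinalities in (a) and the existence of anisotropic groups for (b) are supplied by Theorem~\ref{Thm:main}.

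In the isotropic case, Corollary~\ref{Cor:Iso} gives that $\sim$ restricted to $Hyp_{gt}(G)$ is smooth, hence by Theorem~\ref{Thm:main} it is Borel bi-reducible with one of $=_1, =_2, \ldots, =_\NN, =_\RR$, yielding $|\mathcal{H}_{gt}(G)| \in \{1, 2, \ldots, \aleph_0, 2^{\aleph_0}\}$. To prove the antichain conclusion, I would use Theorem~\ref{Thm:IsoEquiv}: for any $X \in Hyp_{gt}(G)$, the action $G \act Cay(G, X)$ is isotropic, so condition (c) of that theorem provides a minimal element $X_0 \in Hyp_{gt}(G)$ and a $G$-equivariant quasi-isometry $(G, \d_{X_0}) \to Cay(G, X)$. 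By $G$-equivariance such a map is right-translation by its value at $1$, which together with the left-invariance of $\d_X$ forces the identity $(G, \d_{X_0}) \to (G, \d_X)$ to be a quasi-isometry; for word metrics on the same group this is equivalent to $X \sim X_0$. Hence every $\sim$-class contains a $\preccurlyeq$-minimal representative. If $X \preccurlyeq Y$, choosing minimal $X_0 \sim X$ and $Y_0 \sim Y$ gives $X_0 \preccurlyeq Y_0$, and minimality of $Y_0$ forces $X_0 \sim Y_0$, whence $X \sim Y$.

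In the anisotropic case, Theorem~\ref{Thm:Aniso} supplies a Borel order-embedding $f \colon (\Pi, Q_{K_\sigma}) \to (Hyp_{gt}(G), \preccurlyeq)$, which descends to an embedding of the quotient poset $(\Pi/E_{K_\sigma}, \bar{Q}_{K_\sigma})$ into $\mathcal{H}_{gt}(G)$. It therefore suffices to produce chains, antichains, and embeddings of $\aleph_1$-posets inside $\Pi$ modulo $E_{K_\sigma}$. For a chain of size $2^{\aleph_0}$, I would take $r_t(n) := \max\{1, \lfloor tn\rfloor\}$ for $t \in (0, 1]$; since $r_{t'}(n) - r_t(n)$ is asymptotically $(t' - t)n$, one has $r_t \, Q_{K_\sigma} \, r_{t'}$ iff $t \le t'$. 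For an antichain of the same size, I would fix an almost disjoint family $\mathcal{A} \subseteq \mathcal{P}(\NN)$ of cardinality $2^{\aleph_0}$ and set $r_A(n) := n$ if $n \in A$ and $r_A(n) := 1$ otherwise; for distinct $A, A' \in \mathcal{A}$, both $A \setminus A'$ and $A' \setminus A$ are infinite, yielding $Q_{K_\sigma}$-incomparability in both directions.

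The main obstacle is the embedding of an arbitrary poset $P$ of cardinality $\le \aleph_1$; once this is in place, the universality claim under CH is immediate, since then $\aleph_1 = 2^{\aleph_0}$ and $|\mathcal{H}_{gt}(G)| \le |2^G| = 2^{\aleph_0}$. I would proceed by transfinite induction, enumerating $P = \{p_\alpha\}_{\alpha < \omega_1}$ and at stage $\alpha$ choosing $r_\alpha \in \Pi$ with the correct $Q_{K_\sigma}$-pattern against the previously constructed $\{r_\beta\}_{\beta < \alpha}$. The key ingredients would be an \emph{interpolation} property stating that for any countable $A, B \subset \Pi$ with $a \, Q_{K_\sigma} \, b$ for all $a \in A, b \in B$ there exist continuum-many $c \in \Pi$ strictly between $A$ and $B$, together with a \emph{side-incomparability} property asserting that such $c$ can additionally be chosen incomparable to any countable family of sequences lying outside the interval $(A, B)$. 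Both would be established by coordinate-wise diagonal constructions exploiting the unbounded freedom in $\Pi = \prod_n \{1, \ldots, n\}$; the delicate bookkeeping needed to meet simultaneously countably many $Q_{K_\sigma}$-inequalities and countably many $Q_{K_\sigma}$-non-inequalities at each of the $\omega_1$ stages is where the real work lies.
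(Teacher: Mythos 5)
Your treatment of the isotropic case is correct; it reaches the same conclusion as the paper (every class in $\mathcal H_{gt}(G)$ is minimal, hence the poset is an antichain, and the Silver/smoothness dichotomy bounds the cardinality), though you route through Theorem~\ref{Thm:IsoEquiv}(c) plus an equivariance argument rather than invoking the equivalence of (a), (f), and (g) in Theorem~\ref{Thm:Iso} together with Proposition~\ref{Prop:IsoEqDef}. Your chain construction $r_t$ and your almost-disjoint antichain construction $r_A$ are both fine; in fact $r_A$ is literally the map $f_A$ that the paper uses, and the paper's key observation is that $A \mapsto f_A$ is not merely an antichain-producing device but an order-embedding of the \emph{entire} Boolean algebra $\mathcal P(\NN)/\mathrm{fin}$ into $(R, \preceq)$.

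The genuine gap is the embedding of an arbitrary poset of cardinality at most $\aleph_1$, which you correctly identify as the crux but only sketch. Your proposed route is a transfinite recursion of length $\omega_1$ relying on an interpolation property together with a side-incomparability property for $(\Pi, Q_{K_\sigma})$. This is, in spirit, a direct proof of a $\sigma$-saturation property of $R$, which is true but requires careful bookkeeping of the countably many upper-, lower-, and incomparability-constraints at each stage; you have not established the two auxiliary properties, and as stated the ``side-incomparability'' lemma needs hypotheses ruling out forced comparabilities (if some excluded $d$ already dominates an element of $A$, incomparability to $c$ is impossible). These hypotheses are automatically satisfied by consistency with the ambient poset $P$, but you would need to make that explicit. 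The paper sidesteps all of this: it embeds the given poset $P$ into the Boolean algebra $B(X) \subseteq \mathcal P(P)$ generated by upper sets, which has cardinality at most $\aleph_1$; it then applies Parovichenko's theorem to embed $B(X)$ into $\mathcal P(\NN)/\mathrm{fin}$; and finally it embeds $\mathcal P(\NN)/\mathrm{fin}$ into $R$ via $A \mapsto f_A$. Your approach, if completed, would be more self-contained (no appeal to Parovichenko), but as written the hardest step is missing.
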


\paragraph{2.5. Examples.} 
Corollaries \ref{Cor:Iso} and \ref{Cor:Aniso} prove Theorem \ref{Thm:main}, except for the assertion that all possibilities are realized. We now discuss some results confirming the latter claim.

\begin{ex}[{\cite[Theorem 4.28]{ABO}}]\label{Ex:I}
There exists a finitely generated group $I$ such that $|\mathcal H_{gt}(I)|=1$. Further, for every $n\in \NN\cup\{\aleph_0\}$,  we have $|\mathcal H_{gt}(I^n)|=n$, where $I^n$ is the direct sum  of $n$ copies of $I$ (see Proposition \ref{Prop:DS}). In particular, the group $I^n$ is isotropic, and the restriction of $\sim $ to $Hyp_{gt}(I^n)$ is Borel bi-reducible with $=_n$ (if $n\in \NN$) or $=_\NN$ (if $n=\aleph_0$). 
\end{ex}

In \cite{ABO}, the group $I$ was constructed using the theory of almost prescribed local actions on trees developed in \cite{LB}. Examples of different nature were found in \cite{BCFS}. The following theorem is an immediate consequence of their main result. 

\begin{thm}[Bader--Caprace--Furman--Sisto, {\cite{BCFS}}]
For any $n\in \NN$, every irreducible lattice in a product of simple, centerless, Lie groups with exactly $n$ factors of rank $1$ has exactly $n$ general type hyperbolic structures. 
\end{thm}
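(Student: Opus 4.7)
The plan is to combine an inventory of the $n$ obvious general type structures on the lattice $\Gamma$ with a rigidity input from the main result of \cite{BCFS} to rule out any further ones.

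\textbf{Lower bound.} Let $G_1,\ldots, G_k$ be the simple centerless factors of the ambient product, indexed so that $G_1,\ldots, G_n$ are precisely the rank-one factors. Each such $G_i$ acts by isometries on its associated rank-one symmetric space $S_i$ (real, complex, or quaternionic hyperbolic space, or the octonionic hyperbolic plane), which is Gromov hyperbolic. The projection $\Gamma\to G_i$ has Zariski dense, hence non-elementary, image by Borel density, so $\Gamma\act S_i$ is a general type action. Via the pseudometric pulled back from an orbit map, this action yields an element of $Hyp_{gt}(\Gamma)$; call its $\sim$-class $[\eta_i]\in \mathcal H_{gt}(\Gamma)$. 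To see that $[\eta_1],\ldots, [\eta_n]$ are pairwise distinct, I would apply Theorem \ref{Thm:LoxRigid} once $\Gamma$ is known to be isotropic. Concretely, for each $i\ne j$ one can produce $\gamma\in \Gamma$ whose image in $G_i$ is loxodromic on $S_i$ while its image in $G_j$ is elliptic on $S_j$; then $\tau_{\eta_i}(\gamma)>0=\tau_{\eta_j}(\gamma)$, so the projective translation length classes $[\tau_{\eta_i}]$ are all different.

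\textbf{Upper bound.} This is the step that invokes \cite{BCFS}. Their main result is a rigidity theorem asserting, in substance, that every non-elementary action of $\Gamma$ on a Gromov hyperbolic space factors through (and is coarsely $\Gamma$-equivariantly quasi-isometric to an action on) one of the rank-one symmetric spaces $S_i$. Combined with the Svarc--Milnor-type correspondence discussed in Section \ref{Sec:CGA}, and with Theorem \ref{Thm:IsoEquiv}(c) (which ties minimal elements of $Hyp_{gt}(\Gamma)$ to quasi-isometry classes of cobounded general type actions), this shows that every element of $\mathcal H_{gt}(\Gamma)$ equals some $[\eta_i]$, giving $|\mathcal H_{gt}(\Gamma)|\le n$. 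Together with distinctness from the previous paragraph, one concludes $|\mathcal H_{gt}(\Gamma)|=n$. Note that the equality automatically forces $\Gamma$ to be isotropic, since Theorem \ref{Thm:main} rules out a finite but non-smooth poset $\mathcal H_{gt}$; so the use of Theorem \ref{Thm:LoxRigid} in the distinctness step is consistent.

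The main obstacle I anticipate is the translation between BCFS's rigidity (stated for abstract hyperbolic $\Gamma$-actions) and the equivalence relation $\sim$ on generating sets used here. Arbitrary elements of $Hyp_{gt}(\Gamma)$ need not be cobounded as actions on their Cayley graphs coincide with $G$-orbits, but after restricting to a $\Gamma$-orbit in the given hyperbolic space one does recover a cobounded action, and Theorem \ref{Thm:IsoEquiv}(c) should supply the passage from a coarsely $\Gamma$-equivariant quasi-isometry with $S_i$ to equality of $\sim$-classes in $Hyp_{gt}(\Gamma)$. Verifying that BCFS's conclusion is strong enough to yield this equivariant quasi-isometry (as opposed to merely producing a boundary map or a measured invariant) is the main point that requires careful reading of their proof.
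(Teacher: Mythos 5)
The paper does not prove this theorem: it is stated as an immediate consequence of the main result of \cite{BCFS}, with no argument supplied, and the translation you flag as requiring care --- converting BCFS's rigidity into $\sim$-equivalence of elements of $Hyp_{gt}(\Gamma)$ --- is precisely what the paper delegates to \cite{BCFS} together with the Svarc--Milnor correspondence of Section~\ref{Sec:CGA}. Your reconstruction of the deduction is a reasonable account of it. One simplification worth noting: the distinctness of $[\eta_1],\ldots,[\eta_n]$ does not require Theorem~\ref{Thm:LoxRigid} or isotropy, which removes the circularity you were managing. If $\gamma\in\Gamma$ projects to a loxodromic isometry of $S_i$ and an elliptic isometry of $S_j$ (such $\gamma$ exists because irreducibility makes $\Gamma$ dense in $G_i\times G_j$ and the loxodromics of $G_i$ and the non-trivial elliptics of $G_j$ each contain non-empty open sets --- compare the argument of Lemma~\ref{Lem:Emb}), then $\ell_{\Gamma\act S_i,\,s}$ is unbounded on $\langle\gamma\rangle$ while $\ell_{\Gamma\act S_j,\,t}$ is bounded there, so $\Gamma\act S_i$ and $\Gamma\act S_j$ are not $\lA$-comparable by Lemma~\ref{Lem:ODqoPres}, and hence the associated hyperbolic structures differ without any appeal to translation-length rigidity.
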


At the other end of the spectrum lie acylindrically hyperbolic groups, which are easily seen to be anisotropic. Recall that the class of acylindrically hyperbolic groups includes all non-elementary hyperbolic and relatively hyperbolic groups, mapping class groups of closed surfaces of genus at least $1$, $Aut(F_n)$ and $Out(F_n)$ (where $F_n$ is the free group of rank $n\ge 2$), and many other examples (see \cite{Osi16,Osi18}). 

In Section \hyperref[Sec:Ex]{5.4}, we provide further examples of isotropic and anisotropic weakly hyperbolic groups. Our findings are summarized in the table below. 

\vspace{1mm}
{\begin{center}
\begin{tabular}{?c?c|l|}
\Xhline{2\arrayrulewidth}
&&\\
\textbf{Type} & \textbf{Complexity}  &  \hspace{30mm}\textbf{Examples}  \\  
&& \\
\Xhline{2\arrayrulewidth}
\multirow{3}{*}
               &&\\
               & &  $\bullet$ $I^n$, where $I$ is the group from Example \ref{Ex:I} \\
               &$=_n$, $n\in \NN$ &\\
               & & $\bullet$  Irreducible lattices in products of simple, centerless\\ 
               && \hspace{2mm} Lie groups with exactly $n$ factors of rank $1$ \\&&\\
               \cline{2-3}
               {\textbf{Isotropic}} &&\\
               &$=_\mathbb N$&  $G^{\aleph_0}$ for any group $G$ as in the cell above\\
               &&\\ \cline{2-3}
               &&\\
               & $=_\mathbb R$ & $SL_2$ over any countable transcendental extension of $\QQ$  \\
               &&\\ \hline
&&\\
&& $\bullet$ Acylindrically hyperbolic (e.g., non-elementary \\ 
&& \hspace{2mm} hyperbolic and relatively hyperbolic) groups\\  &&\\
\textbf{Anisotropic} & $E_{K_\sigma}$ & $\bullet$ HNN-extensions $A\ast_{B^t=C}$, where $B\ne A\ne C$; e.g.,\\
&& \hspace{2mm}  $BS(m,n)=\langle a,b\mid b^{-1}a^mb=a^n\rangle$, $|m|, |n|\ne 1$\\  &&\\
& & $\bullet$ Amalgams $A\ast_C B$, where $A\ne C$ and $|C\backslash B/C|\ge 3$\\
&& \\ \hline
\end{tabular}
\end{center}}
{\captionof{table}{Complexity of the classification problem $(Hyp_{gt}(G), \sim)$ for countable groups.}\label{table}}
\bigskip

We mention a couple of results that are instrumental in determining whether a given weakly hyperbolic group is isotropic. Recall that a group $G$ is \emph{uniformly perfect} if there exists $k\in \NN$ such that every element of $G$ is a product of at most $k$ commutators. Further, a group $G$ is \textit{boundedly generated}  if there exist $k\in \NN$ and elements $a_1, \ldots a_k\in G$ such that every $g\in G$ can be written as 
$g=a_1^{\alpha _1}\cdots a_k^{\alpha_k}$ for some $\alpha_1, \ldots, \alpha_k \in \ZZ$.

\begin{prop}\label{Prop:UP}
If a weakly hyperbolic group is uniformly perfect or boundedly generated, then it is isotropic. 
\end{prop}

Note that the group $SL_2(R)$ is weakly hyperbolic for any subring $R\subseteq \CC$, as evidenced by the standard action on $\mathbb H^3$. Using Proposition \ref{Prop:UP} and a generalization of the bounded generation result of Morgan, Rapinchuk, and Sury \cite{MRS} for rings of arithmetic integers (see Appendix), we will prove the following.

\begin{cor}\label{Cor:SL2}
\begin{enumerate}
    \item[(a)] The group $SL_2(F)$ is isotropic for any subfield $F\subseteq \CC$.
    \item[(b)] Let $R$ be a finitely generated subring of the algebraic closure of $\QQ$. The group $SL_2(R)$ is anisotropic if and only if $R$ is contained in the ring of integers of $\QQ(\sqrt{d})$ for some negative $d\in \ZZ$. 
\end{enumerate}
\end{cor}

Recall also that a \textit{quasi-morphism} on a group $G$ is a map $q\colon G\to \RR$ such that 
\begin{equation}\label{Eq:qm}
\sup_{g,h\in G}|q(gh)-q(g)-q(h)|<\infty.
\end{equation}

\begin{defn}\label{Def:subord}
A quasi-morphism $q\colon G\to \RR$ is \textit{bounded} if $\sup_{g\in G} |q(g)|<\infty$. Further, we say that $q$ is \emph{subordinate} to a $G$-action on a metric space $S$ if there exist $s\in S$ and $M\ge 0$ such that
\begin{equation}\label{Eq:subord}
|q(g)| \le M\d_S(s, gs)+M\;\;\; \forall\, g\in G.
\end{equation}
\end{defn}

For example, if $G$ is an HNN-extension of the form $A\ast_{B^t=C}$, then the natural homomorphism $G\to \langle t\rangle$ is subordinate to the action of $G$ on the associated Bass-Serre tree. The last two examples in the anisotropic section of Table \ref{table} are obtained using the following.

\begin{prop}\label{Prop:qm-intr}
Suppose that a group $G$ admits a general type action on a hyperbolic space $S$ and there exists an unbounded quasi-morphism $G\to \RR$ subordinate to $G\curvearrowright S$. Then $G$ is anisotropic.
\end{prop}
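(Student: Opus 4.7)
The plan is to first show that the given action $G \curvearrowright S$ itself cannot be isotropic — isotropy together with the existence of an unbounded subordinate quasi-morphism will force $q$ to be uniformly bounded — and then to transfer this failure of isotropy to a Cayley graph in $Hyp_{gt}(G)$ via a Svarc--Milnor construction.

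For the main computation, fix $s \in S$ and $M \ge 0$ witnessing the subordination bound $|q(g)| \le M\d_S(s,gs) + M$, and let $\Delta = \sup_{g,h\in G}|q(gh) - q(g) - q(h)|$ be the defect of $q$ (so in particular $|q(e)| \le \Delta$). Suppose toward a contradiction that $G \curvearrowright S$ is isotropic with constant $D$. Given any $g_1, g_2 \in G$ with $\d_S(s, g_1 s) = \d_S(s, g_2 s)$, applying isotropy to the quadruple $(s, g_1 s, s, g_2 s)$ produces $u \in G$ with $\d_S(us, s) \le D$ and $\d_S(u g_1 s, g_2 s) \le D$. Setting $v = g_2^{-1} u g_1$, the latter inequality gives $\d_S(vs, s) \le D$ and $g_2 = u g_1 v^{-1}$. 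Subordination bounds $|q(u)|, |q(v^{-1})| \le MD + M$, and two uses of the quasi-morphism inequality yield
\[
|q(g_1) - q(g_2)| \;\le\; C \;:=\; 2\Delta + 2(MD + M).
\]
Taking $g_1 = g$ and $g_2 = g^{-1}$ — which have the same orbit distance to $s$ — gives $|q(g) - q(g^{-1})| \le C$, while the defect inequality applied to $g \cdot g^{-1} = e$ gives $|q(g) + q(g^{-1})| \le 2\Delta$. Adding, $|q(g)| \le (C + 2\Delta)/2$ uniformly in $g$, contradicting unboundedness of $q$. Hence $G \curvearrowright S$ is not isotropic.

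To conclude that $G$ itself is anisotropic, I would produce $X \in Hyp_{gt}(G)$ on which the natural action also fails to be isotropic. After replacing $S$ by the orbit $Gs$, a hyperbolic subspace on which $G$ acts coboundedly and still of general type, one may assume $G \curvearrowright S$ is cobounded. A Svarc--Milnor-style argument then gives, for $R$ large enough, that $X = \{g \in G : \d_S(gs, s) \le R\}$ is a generating set of $G$ and that $Cay(G, X)$ is $G$-equivariantly quasi-isometric to $S$; in particular $X \in Hyp_{gt}(G)$. Since $R \cdot |g|_X \ge \d_S(s, gs)$ for every $g$, the quasi-morphism $q$ remains subordinate to $G \curvearrowright Cay(G, X)$ with new constants, so the key computation applied to this Cayley graph action shows it is anisotropic. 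Therefore $G$ is anisotropic.

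The main obstacle is the transfer step rather than the bound itself: one has to verify that passing to the orbit $Gs$ preserves the relevant structure (hyperbolicity, general type action of $G$, subordination of $q$), and that the Svarc--Milnor-type ball generating set really lies in $Hyp_{gt}(G)$. The core inequality, by contrast, is a short calculation once the isotropy hypothesis is unpacked into the purely algebraic form $g_2 \in U \cdot g_1 \cdot U$ for the bounded-displacement set $U = \{h \in G : \d_S(hs, s) \le D\}$.
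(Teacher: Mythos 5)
Your core computation is correct and, in my view, cleaner than the paper's route to the same intermediate conclusion. The paper reaches ``$G \curvearrowright S$ is not weakly isotropic'' by first passing to the homogenization $\widetilde q$ of $q$, finding $g$ with $|\widetilde q(g)|>0$, invoking Lemma~\ref{Lem:qgg-1} to get $g\in\mathcal L(G\act S)$ with $g\not\sim_{G\act S}g^{-1}$, and then applying the equivalence (a)$\Leftrightarrow$(b) of Theorem~\ref{Thm:Iso}. Your algebraic argument --- unpacking isotropy as $g_2 \in Ug_1U$ for the bounded set $U=\{h: \d_S(hs,s)\le D\}$ and deriving a uniform bound on $q$ via the defect --- bypasses all of this and is self-contained. (A small point: what you actually establish, since you only use orbit points $gs$, is that the restriction of the action to $Gs$ is not isotropic for any $D$, i.e.\ that $G\act S$ is not weakly isotropic in the paper's terminology; the action on all of $S$ need not be cobounded, so full isotropy is not the right target anyway.)

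The transfer step, however, has a genuine gap, and the fix you sketch does not work. The orbit $Gs$, with the restricted metric, is a discrete (typically non-geodesic) metric space, so it is not a ``hyperbolic subspace'' in the paper's sense (a hyperbolic space is geodesic by convention, see Section~4.1), and Lemma~\ref{Lem:MS} (the Svarc--Milnor step) requires a geodesic space and coboundedness. Concretely, if $G\act S$ is not cobounded, the ball $X=\{g\in G:\d_S(gs,s)\le R\}$ need not generate $G$, and even when it does there is no reason for $(G,\d_X)$ to be $G$-equivariantly quasi-isometric to $S$. The paper avoids reproving this transfer each time by packaging it into Proposition~\ref{Prop:IsoEqDef}: a weakly hyperbolic group $G$ is isotropic if and only if every general type $G$-action on a hyperbolic space is weakly isotropic. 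The nontrivial direction of that proposition uses Proposition~\ref{Prop:Cob} --- which adds edges to a hyperbolic graph far from the orbit $Gs$ to produce a genuinely cobounded hyperbolic $G$-action on a geodesic space while preserving the inequivalence of a chosen pair of loxodromic elements --- followed by Lemma~\ref{Lem:MS} and Theorem~\ref{Thm:Iso}. Note also that the naive modification ``pass to a cobounded action $R$ with $G\act R\lA G\act S$'' would not obviously help your argument directly, because $q$ being subordinate to $G\act S$ (an upper bound $|q(g)|\le M\d_S(s,gs)+M$) does not transfer to being subordinate to $G\act R$ when $R$ has \emph{smaller} orbits; Proposition~\ref{Prop:Cob} transfers the geometric consequence (inequivalent loxodromics) rather than the quasi-morphism. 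So the cleanest repair is to stop after your computation (which shows $G\act S$ is not weakly isotropic) and invoke Proposition~\ref{Prop:IsoEqDef}; as written, the last paragraph of your argument does not close.
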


\paragraph{2.6. Structure of the paper.} In the next section, we discuss the equivalence of group actions introduced in \cite{ABO} and provide a formalization of the associated classification problem using the space of pseudo-length functions on a group.  We also prove parts (a) and (b) of   Proposition \ref{Prop:Hyp} (see Proposition \ref{Prop:PLGen}).

In Sections \ref{Sec:GAHS} and \ref{Sec:ModGA}, we obtain some technical results about loxodromic isometries and use them to construct new general type group actions on hyperbolic spaces starting from a single anisotropic action. This construction plays a major role in the proof of Theorem~\ref{Thm:Aniso}.

Section \ref{Sec:IWHG} primarily focuses on the in-depth study of isotropic weakly hyperbolic groups. In particular, we prove generalizations of Theorems \ref{Thm:IsoEquiv} and \ref{Thm:LoxRigid} there (see Theorems \ref{Thm:Iso} and \ref{Thm:LoxRigAct}, respectively). We also prove Propositions \ref{Prop:UP} and \ref{Prop:qm-intr} there and obtain further results that enable us to provide examples of isotropic and anisotropic weakly hyperbolic groups, including those listed in the table above.

The proofs of our main results are contained in Section \ref{Sec:DV}. Specifically, part (c) of Proposition \ref{Prop:Hyp} is proved in Subsection~\hyperref[Sec: BSPL]{7.1} and the proofs of generalized versions of Theorem~\ref{Thm:main}, Theorem \ref{Thm:Aniso}, and Corollary~\ref{Cor:HS} are given in Subsections~\hyperref[Sec:PMT]{7.2} and \hyperref[Sec:AtoHS]{7.3}.

\paragraph{Acknowledgments.} The authors are grateful to Andrei Rapinchuk for useful discussions on generalizations of the results in \cite{MRS} to finitely generated rings of algebraic numbers, and to Talia Fernos for helpful comments. During the preparation of this paper, the first author was supported by the NSF grant DMS-2405032 and the Simons Fellowship in Mathematics MP-SFM-00005907.

\section{Classifying group actions on metric spaces}\label{Sec:CGA}


\paragraph{3.1. Comparing group actions.}\label{Sec:CompGA} 
Recall that a \emph{quasi-order} on a set is any reflexive and transitive relation. Every quasi-order $\preceq _{\scriptscriptstyle P}$ on a set $P$ induces an equivalence relation $\simeq _{\scriptscriptstyle P}$ by the rule 
$$
x \simeq_{\scriptscriptstyle P} y \;\;\; \Longleftrightarrow \;\;\; x\preceq_{\scriptscriptstyle P} y \; {\rm and} \; y\preceq_{\scriptscriptstyle P} x 
$$
for all $x, y\in P$.  We use the standard notation $[x]$ for the equivalence class of an element $x\in P$ and denote by $P/{\simeq_{\scriptscriptstyle P}}$ the set of all equivalence classes. The quasi-order $\preceq_{\scriptscriptstyle P}$ induces a genuine order on $P/{\simeq_{\scriptscriptstyle P}}$ in the obvious way; we keep the same notation $\preceq_{\scriptscriptstyle P}$ for this order. Thus, we have
$$
[x]\preceq_{\scriptscriptstyle P} [y] \;\;\; \Longleftrightarrow\;\;\; x\preceq_{\scriptscriptstyle P}y
$$
for all $x,y\in P$. When talking about the poset $P/{\simeq_{\scriptscriptstyle P}}$, we always assume it to be equipped with the induced ordering. 

As usual, we say that an element $x\in P$ is \textit{minimal} in $P$ if so is $[x]$ in $P/{\simeq_{\scriptscriptstyle P}}$ (equivalently, for any $y\in P$, the inequality $y\preceq_{\scriptscriptstyle P} x$ implies $y\simeq_{\scriptscriptstyle P} x$ ).

Throughout this paper, all group actions on metric spaces are assumed to be isometric by default. More formally, an \emph{action} of a group $G$ on a metric space $S$ is a homomorphism $$\alpha \colon G\to Isom(S).$$ We denote this action by $G\curvearrowright S$ omitting $\alpha$ from the notation as it will always be clear from the context or unnecessary to specify. The distance function on a metric space $S$ is denoted by $\d_S$.

Given a group $G$, let $A(G)$ be the set of all $G$-actions on metric spaces; to avoid dealing with proper classes, we assume all metric spaces under consideration to have cardinality at most $2^{\aleph_0}$. (The particular bound on the size of metric spaces is not important for our purposes). As observed in \cite{ABO}, elements of $A(G)$ can be naturally quasi-ordered according to the amount of information they provide about $G$.

\begin{defn}\label{def-poset}
Let $G\curvearrowright R$ and $G\curvearrowright S$ be two actions of a group $G$ on metric spaces. We say that $G\curvearrowright S$ \emph{dominates} $G\curvearrowright R$ and write $G\curvearrowright R \lA   G\curvearrowright S$ if for some $r\in R$ and $s\in S$, there is a constant $C$ such that
\begin{equation}\label{dSdR}
\d_R(r,gr)\le C\d_S(s, gs)+C\;\; \forall\, g\in G.
\end{equation}
Further, we write $G\curvearrowright S \eA G\curvearrowright R$ and say that the actions $G\curvearrowright R$ and $G\curvearrowright S$ are \emph{equivalent} if $G\curvearrowright S \lA G\curvearrowright R$ and $G\curvearrowright R \lA G\curvearrowright S$.
\end{defn}

The intuition behind this definition is that actions with larger orbits usually provide more information about the group. For example, it is not difficult to show the following (compare to Example \ref{Ex:Gen}).

\begin{ex}
If $G$ is generated by a finite set $X$, the action of $G$ on $(G, \d_X)$ by left multiplication dominates any other action of $G$ (see Lemma \ref{Lem:OLip}). At the other end of the spectrum, we have $G$-actions with bounded orbits, which are all equivalent and dominated by any other action of $G$.
\end{ex}

It is easy to see that the existential quantification over $r$ and $s$ in Definition \ref{def-poset} can be replaced by the universal one. 

\begin{lem}[{\cite[Lemma 3.3]{ABO}}]\label{Lem:E->A}
Let $G\curvearrowright R$ and $G\curvearrowright S$ be two actions of a group $G$ on metric spaces. Suppose that $G\curvearrowright R \lA G\curvearrowright S$; then for any  $r \in R$ and any $s\in S$, there exists a constant $C$ such that (\ref{dSdR}) holds. 
\end{lem}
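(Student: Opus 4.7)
The plan is to exploit isometry invariance: for any two basepoints $x,x' \in R$, the two displacement functions $g \mapsto \d_R(x,gx)$ and $g \mapsto \d_R(x',gx')$ differ by an additive constant bounded by $2\d_R(x,x')$. This follows immediately from the triangle inequality combined with $\d_R(gx,gx') = \d_R(x,x')$, which holds because the $G$-action is isometric. This observation is the only geometric content of the lemma; everything else is bookkeeping.

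First I would fix witnesses $r_0 \in R$ and $s_0 \in S$ to the hypothesis, so that there is a constant $C \ge 0$ with
$$\d_R(r_0, gr_0) \le C\, \d_S(s_0, gs_0) + C \qquad \forall\, g \in G.$$
Given arbitrary $r \in R$ and $s \in S$, the observation above yields
$$\d_R(r, gr) \le \d_R(r_0, gr_0) + 2\d_R(r_0, r) \quad \textrm{and} \quad \d_S(s_0, gs_0) \le \d_S(s, gs) + 2\d_S(s_0, s).$$
Chaining these through the hypothesis and collecting terms produces a bound of the form
$$\d_R(r, gr) \le C\, \d_S(s, gs) + \bigl(2\d_R(r_0,r) + 2C\,\d_S(s_0,s) + C\bigr).$$
Setting $C' = \max\{C,\; 2\d_R(r_0,r) + 2C\,\d_S(s_0,s) + C\}$ gives the required single-constant inequality $\d_R(r,gr) \le C'\, \d_S(s,gs) + C'$.

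The argument presents no real obstacle: it is essentially two applications of the triangle inequality together with the isometry relation $\d(gx,gy) = \d(x,y)$. The only conceptual point worth flagging is that the single-parameter shape $C\, \d_S(s,gs) + C$ in Definition~\ref{def-poset} (where the same constant appears as both the multiplicative factor and the additive term) is precisely what makes the relation $\lA$ robust under change of basepoints; if one kept track of the multiplicative and additive constants separately, only the additive one would change, but packaging them as a single $C'$ keeps the definition clean.
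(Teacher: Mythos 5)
Your proof is correct. The paper cites this as Lemma 3.3 of [ABO] without reproducing a proof, but the argument you give — change of basepoint via the triangle inequality and isometry invariance, on both the $R$-side and the $S$-side, then packaging the multiplicative and additive constants into a single $C'$ — is the standard and essentially unique way to prove it.
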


As an immediate corollary, we obtain that $\lA$ is a transitive relation and, therefore, is a quasi-order on $A(G)$ \cite[Corollary 3.4]{ABO};  therefore, $\eA$ is an equivalence relation. 

\paragraph{3.2. The space of pseudo-length functions.}
To formalize the classification problem associated with the equivalence relation $\eA$ 
on $A(G)$ in the framework of Borel complexity theory, we have to organize group actions into a topological (or Borel) space. There are two natural ways to achieve this goal.

One possibility would be to restrict to $G$-actions on separable metric spaces. For countable groups, such actions can be encoded by countable structures (whose universes are countable dense subsets of the metric spaces) in a countable signature, and the standard topology on the space of countable structures can be used in this case (see \cite{Mar} for details). 

Although this approach appears conventional, it has a significant drawback: the structures encoding group actions bear excessive information at the ``small-scale", which is irrelevant in the context of geometric group theory. Instead, we observe that there is a natural equivalence-preserving map from $A(G)$ to the Polish space of pseudo-length functions $PL(G)$, defined below, which keeps all the information about the $G$-actions we care about. This allows us to replace $A(G)$ with $PL(G)$, which is much easier to deal with. 

\begin{defn}\label{Def:PLF}
A \emph{pseudo-length function} on a group $G$ is a map $\ell\colon G\to [0, \infty)$ satisfying the following conditions for all $g,h\in G$.
\begin{enumerate}
\item[(a)] $\ell(1)=0$;
\item[(b)] $\ell(g)=\ell(g^{-1})$;
\item[(c)] $\ell (gh)\le \ell(g)+\ell(h)$.
\end{enumerate}
We denote by $PL(G)$ the space of all pseudo-length functions on $G$ endowed with the topology of pointwise convergence. 
\end{defn}

\begin{ex}\label{Ex:WL}
For every generating set $X$ of a group $G$, the \emph{word length} function $\ell_X$ defined by $\ell_X(g)=|g|_X$  for all $g\in G$ is a pseudo-length on $G$. We denote by $WL(G)$ the subset of $PL(G)$ consisting of all word length functions on $G$; that is,
$$
WL(G)=\{ \ell_X\mid X\in Gen(G)\}.
$$
\end{ex}

The next example describes another class of pseudo-length functions that play an important role in our paper. For any action $G\curvearrowright S$ of a group $G$ on a metric space $S$ and any point $s\in S$, we define 
\begin{equation}\label{Eq:ls}
\ell_{G\curvearrowright S, s} (g) =\d_S(s, gs).
\end{equation}
It is straightforward to see that $\ell_{G\curvearrowright S, s}\in PL(G)$ for any choice of $s\in S$. 

\begin{defn}
Given $\ell_1, \ell_2\in PL(G)$, we write $\ell_1\lPL \ell_2$ if there exists $C>0$ such that $$\ell_1(g)\le C\ell_2(g)+C\;\;\; \forall\, g\in G.$$ Clearly, $\lPL $ is a quasi-order on $PL(G)$. By $\ePL$ we denote the induced equivalence relation. 
\end{defn}

\begin{lem}\label{Lem:ODqoPres}
Let $G\curvearrowright R$ and $G\curvearrowright S$ be two actions of a group $G$ on metric spaces. For any $r\in R$ and $s\in S$, we have $G\curvearrowright R\lA G\curvearrowright S$ if and only if $\ell_{G\curvearrowright R, r}\lPL\ell_{G\curvearrowright S, s}$. In particular, for any $s_1, s_2\in S$, we have $\ell_{G\curvearrowright S, s_1}\ePL\ell_{G\curvearrowright S, s_2}$.
\end{lem}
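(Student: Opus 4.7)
The plan is to observe that this lemma is essentially a translation between two notations for the same inequality, with the only non-trivial input being Lemma \ref{Lem:E->A}, which lets us upgrade the existential quantification in the definition of $\lA$ to a universal one.

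First, I would unwind the definitions on both sides. By definition of $\ell_{G\curvearrowright R, r}$ and $\ell_{G\curvearrowright S, s}$ in (\ref{Eq:ls}), the inequality $\ell_{G\curvearrowright R, r}\lPL\ell_{G\curvearrowright S, s}$ asserts the existence of $C>0$ such that
\[
\d_R(r,gr)\le C\,\d_S(s,gs)+C\quad \forall\, g\in G,
\]
which is exactly inequality (\ref{dSdR}) for this particular choice of basepoints. So the equivalence boils down to showing that one may freely prescribe $r\in R$ and $s\in S$ in the definition of $\lA$. This is precisely the content of Lemma \ref{Lem:E->A}: the ``forward'' direction of the equivalence follows because if $G\curvearrowright R\lA G\curvearrowright S$, then Lemma \ref{Lem:E->A} guarantees that (\ref{dSdR}) holds at the chosen $r$ and $s$, and the ``backward'' direction is immediate from the existential clause in Definition \ref{def-poset}.

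For the ``in particular'' clause, I would apply the main equivalence with $R=S$ and the same action $G\curvearrowright S$ on both sides. Trivially $G\curvearrowright S\eA G\curvearrowright S$, so we have both $G\curvearrowright S\lA G\curvearrowright S$ and its reverse, and for \emph{any} choice $s_1,s_2\in S$ the main equivalence then gives $\ell_{G\curvearrowright S, s_1}\lPL \ell_{G\curvearrowright S, s_2}$ and the reverse, i.e., $\ell_{G\curvearrowright S, s_1}\ePL \ell_{G\curvearrowright S, s_2}$.

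There is no genuine obstacle here; the only point worth verifying carefully is that Lemma \ref{Lem:E->A} is applied in the correct direction, since the definition of $\lA$ is existential in $(r,s)$ while the statement of the present lemma is universal. Once that quantifier swap is in place, the rest is a literal substitution of notation.
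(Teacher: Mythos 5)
Your proof is correct and follows essentially the same route as the paper's: the forward direction upgrades the existential quantification to the universal one via Lemma~\ref{Lem:E->A}, and the backward direction is just a restatement of Definition~\ref{def-poset}. Your explicit justification of the ``in particular'' clause (taking $R=S$ with identical actions) is slightly more detailed than the paper's proof, which leaves that final step implicit, but the content is the same.
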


\begin{proof}
Assume first that $G \curvearrowright R\lA G \curvearrowright S$. By Lemma \ref{Lem:E->A}, there exists a constant $C$ such that (\ref{dSdR}) is satisfied. For any $g\in G$, we have
$$
\ell_{G\curvearrowright R , r} (g) = \d_R(r, gr) \le C\d_S(s, gs) + C=C\ell_{G\curvearrowright S, s} (g)+C.
$$ 
Thus, $\ell_{G\curvearrowright R, r} \lPL \ell_{G\curvearrowright S, s}$. This proves the forward implication in the first claim. The backward implication is a reformulation of Definition \ref{def-poset}.
\end{proof}

Conceptually, Lemma \ref{Lem:ODqoPres} justifies replacing of the classification problem $(A(G), \eA)$ with $(PL(G), \ePL)$. We record a few more elementary observations concerning the quasi-ordered sets $(Gen(G), \preccurlyeq)$ and $(PL(G),\lPL)$. Recall that $Gen(G)$ is endowed with the subspace topology induced by the inclusion $Gen(G)\subseteq 2^G$.

\begin{prop}\label{Prop:PLGen}
Let $G$ be a countable group.
\begin{enumerate}
\item[(a)] $PL (G)$ is a Polish space and $\lPL$ (respectively, $\ePL$) is a Borel quasi-order (respectively, equivalence relation) on $PL(G)$.
\item[(b)] $Gen (G)$ is a Polish space and $\preccurlyeq$ (respectively, $\sim$) is a Borel quasi-order (respectively, equivalence relation) on $Gen(G)$.
\item[(c)] The map $(Gen(G), \preccurlyeq)\to (PL(G),\lPL)$ sending every $X\in Gen(G)$ to $\ell_X\in PL(G)$ is Borel and quasi-order preserving; in particular, it is a Borel reduction of $\preccurlyeq$ to $\lPL$ and $\sim$ to $\ePL$.
\end{enumerate}
\end{prop}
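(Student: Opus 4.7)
The plan is to reduce everything to a single elementary observation, which I will call $(\ast)$: for each $g \in G$ and each $n \in \NN$, the set $\{X \in 2^G \colon |g|_X \le n\}$ is open in $2^G$. Indeed, $|g|_X \le n$ holds iff $X$ contains the letters of some length-$\le n$ expression of $g$ as a product from $G \cup G^{-1}$; for each fixed such expression, the condition ``$X$ contains these specific finitely many elements'' is clopen, and the (countable) union over all length-$\le n$ expressions of $g$ is therefore open.

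For part (a), since $G$ is countable, $[0,\infty)^G$ is Polish in the product topology. Each of the three axioms in Definition \ref{Def:PLF} is a countable conjunction of closed conditions on finitely many coordinates, so $PL(G)$ is closed in $[0,\infty)^G$ and hence Polish. The quasi-order $\lPL$ unpacks as
\[
\ell_1 \lPL \ell_2 \;\;\Longleftrightarrow\;\; \exists\, C \in \NN \;\forall\, g \in G\; \bigl(\ell_1(g) \le C\ell_2(g)+C\bigr),
\]
which is an $F_\sigma$ subset of $PL(G) \times PL(G)$, and therefore Borel; $\ePL$ is Borel as the intersection of $\lPL$ with its transpose.

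For part (b), applying $(\ast)$ with $n$ ranging over $\NN$ shows that $\{X : g \in \la X \ra\}$ is open in $2^G$, so $Gen(G) = \bigcap_{g \in G}\{X : g \in \la X \ra\}$ is $G_\delta$ and hence Polish. Using the equivalent characterization $X \preccurlyeq Y \Longleftrightarrow \sup_{y \in Y}|y|_X < \infty$ from Definition \ref{Def:ABO}, I rewrite
\[
X \preccurlyeq Y \;\;\Longleftrightarrow\;\; \exists\, N \in \NN\;\forall\, y \in G\;\bigl(y \notin Y \;\vee\; |y|_X \le N\bigr);
\]
for fixed $y$ and $N$ the first disjunct is clopen in $Y$ and the second is open in $X$ by $(\ast)$, so $\preccurlyeq$ is $F_\sigma$ and $\sim$ is Borel.

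For part (c), the coordinate functions of the map $X \mapsto \ell_X$ are $X \mapsto |g|_X$, each of which takes values in $\NN$ with Borel level sets by $(\ast)$; hence the map is Borel into the product $[0,\infty)^G$ and thus into $PL(G)$. Quasi-order preservation reads $X \preccurlyeq Y \Longleftrightarrow \ell_X \lPL \ell_Y$, which is immediate from the two equivalent formulations in Definition \ref{Def:ABO} together with the subadditivity $|g|_X \le \sum_i |y_i|_X$ applied to a geodesic representation $g = y_1\cdots y_k$ over $Y \cup Y^{-1}$. As a Borel map that preserves the quasi-order in both directions, it serves as a Borel reduction of $\preccurlyeq$ to $\lPL$ and of $\sim$ to $\ePL$. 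I do not foresee any serious obstacle beyond bookkeeping; the only step that requires a moment of care is the openness observation $(\ast)$, and everything else is a routine descriptive-set-theoretic translation of ``$\exists N \forall g$'' statements into $F_\sigma$ formulas.
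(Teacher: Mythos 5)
Your argument is correct and uses the same key fact as the paper's proof, namely the openness of the sets $\{X \in 2^G \mid |g|_X \le n\}$; the only organizational difference is that you verify Borelness of $\preccurlyeq$ on $Gen(G)$ directly, whereas the paper first proves (c) and then obtains (b) by pulling $\lPL$ and $\ePL$ back along the Borel map $X \mapsto \ell_X$. One small imprecision worth noting: you assert that $\preccurlyeq$ on $Gen(G)$ is $F_\sigma$, but the quantifier pattern you give does not support this. For each fixed $N$, the disjunct $y \notin Y \vee |y|_X \le N$ cuts out an open (not closed) subset of $Gen(G)\times Gen(G)$, so the intersection over $y$ is only $G_\delta$, and the union over $N$ is a priori $\Sigma^0_3$ rather than $F_\sigma$. (Contrast this with your $F_\sigma$ claim for $\lPL$ on $PL(G)$, which is correct because the constraints $\ell_1(g) \le N\ell_2(g)+N$ are closed conditions.) Since the proposition only asserts Borelness, the overall argument is unaffected.
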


\begin{proof}
Recall that every $G_\delta $ (in particular, every closed) subset of a Polish space is Polish with respect to the subspace topology \cite[Theorem 3.11]{Kec}. To prove (a), we first note that $PL (G)$ is a closed subspace of the Polish space $[0,\infty)^G$ (endowed with the product topology). Therefore, $PL (G)$ is Polish. Further, for any $n \in \NN$, the set $A_n$ defined by 
$$A_n = \{(\ell_1,\ell_2) \in PL(G)\times PL(G) \mid \ell_1(g)\le n\ell_2(g)+n\;\; \forall\,g \in G\}$$
is closed in $PL(G)\times PL(G)$. Since $\lPL$, considered as a subset of $PL(G)\times PL(G)$, equals $\bigcup_{n \in \NN}A_n$, the quasi-order $\lPL$ is Borel in $PL(G)\times PL(G)$; consequently, so is $\ePL$.

We now proceed with (b) and (c). It is straightforward to verify that, for any $g \in G$, the set $B_g = \big\{X \in 2^G \mid g \in \la X \ra \big\}$ is open in $2^G$. Note also that $Gen(G) = \bigcap_{g \in G} B_g$. Hence, $Gen(G)$ is a $G_\delta$ subset of $2^G$; in particular, it is Polish. Further, it is not difficult to show that, for any $g \in G$ and any $n \in \ZZ$, the set $$L_{g,n}=\{X \in Gen(G) \mid |g|_X \le n\}$$ is open in $Gen(G)$; therefore, the set $$E_{g,n}=\{ X \in Gen(G) \mid |g|_X = n\}= L_{g,n} \setminus L_{g, n-1}$$ is Borel. For any finite set of elements $g_1, \ldots, g_k$ and any integers $n_1, \ldots, n_k$, the preimage of the basic neighborhood $$\{ \ell \in PL(G)\mid \ell (g_i)=n_i, \; i=1, \ldots, k\}\subseteq PL(G)$$ under the map $X \mapsto \ell_X$ equals $\bigcap_{i=1}^k E_{g_i, n_i}$. This implies that the map $Gen(G) \to  PL(G)$ is Borel. Finally, for any $X,Y \in Gen(G)$, we obviously have $X \preccurlyeq Y \iff \ell_X \lPL \ell_Y$  and $X \sim Y \iff \ell_X \ePL \ell_Y$. Hence, $\preccurlyeq$ and $\sim$ are Borel in $Gen(G)\times Gen(G)$ being full preimages of the Borel subsets $\lPL$ and $\ePL$ of $PL(G)\times PL(G)$ under a Borel map.
\end{proof}

\paragraph{3.3. Quasi-isometry and the Svarc--Milnor lemma.}
Recall that the space $S$ is \emph{geodesic} if, for any $x, y\in S$, there is a geodesic path connecting $x$ to $y$ in $S$. 
A map $f\colon R\to S$ between two metric spaces $R$ and $S$ is a \emph{quasi-isometric embedding} if there is a constant $C$ such that for all $x,y\in R$ we have
\begin{equation}\label{def-qi}
\frac1C\d_R(x,y)-C\le \d_S(f(x),f(y))\le C\d_R(x,y)+C;
\end{equation}
if, in addition, $S$ is contained in the $C$--neighborhood of $f(R)$, $f$ is called a \emph{quasi-isometry}.  Two metric spaces $R$ and $S$ are \emph{quasi-isometric} if there is a quasi-isometry $R\to S$. It is well-known and easy to prove that quasi-isometry of metric spaces is an equivalence relation.

Let $G$ be a group acting on a metric space $S$. Given a point $s\in S$ or a subset $A\subseteq S$, we denote by $gs$ and $gA$ the images of $s$ and $A$ under the action of an element $g\in G$. Recall that the action of $G$ on  $S$ is said to be \emph{cobounded} if there exists a bounded subset $B\subseteq S$ such that $S=\bigcup\limits_{g\in G} gB$.

The following lemma is well-known (see, for example, \cite[Lemma 3.10]{ABO}). 

\begin{lem}\label{Lem:OLip}
Let $G=\langle X\rangle$ be a group acting on a metric space $S$. Suppose that for some $s\in S$, we have $\sup _{x\in X} \d_S(s, xs)<\infty $. Then the orbit map $g\mapsto gs$ is a Lipschitz map from $(G, \d_X)$ to $S$. In particular, if $G$ is finitely generated, the orbit map is always Lipschitz.
\end{lem}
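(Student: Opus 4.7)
The plan is to set $K = \sup_{x \in X} \d_S(s, xs)$, which is finite by hypothesis, and to show directly that the orbit map $g \mapsto gs$ is $K$-Lipschitz as a map from $(G, \d_X)$ to $S$. A preliminary observation is that, since $G$ acts by isometries, for any $x \in X$ we have $\d_S(s, x^{-1}s) = \d_S(xs, x \cdot x^{-1}s) = \d_S(xs, s)$, so the same constant $K$ controls displacements by inverse generators. Thus I may replace $X$ by the symmetric set $X \cup X^{-1}$ without changing either the word metric up to a factor or the bound $K$.

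Next I would consider an arbitrary $g \in G$ with $|g|_X = n$, written as a product $g = x_1 x_2 \cdots x_n$ with each $x_i \in X \cup X^{-1}$. Telescoping along the partial products and using the triangle inequality together with $G$-equivariance of $\d_S$ gives
\[
\d_S(s, gs) \;\le\; \sum_{i=1}^{n} \d_S\bigl(x_1\cdots x_{i-1} s,\; x_1\cdots x_i s\bigr) \;=\; \sum_{i=1}^{n} \d_S(s, x_i s) \;\le\; nK \;=\; K\,|g|_X,
\]
where the middle equality uses that applying the isometry $(x_1 \cdots x_{i-1})^{-1}$ preserves distances. To upgrade this orbit-based estimate to a bound on $\d_S(gs, hs)$ for arbitrary $g, h \in G$, I would apply equivariance one more time: $\d_S(gs, hs) = \d_S(s, g^{-1}hs) \le K\,|g^{-1}h|_X = K\,\d_X(g, h)$. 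Hence the orbit map is $K$-Lipschitz.

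The final clause of the lemma then follows trivially: if $G$ is finitely generated and $X$ is a finite generating set, $\sup_{x \in X}\d_S(s, xs)$ is a maximum over finitely many nonnegative reals, hence automatically finite, and the preceding argument applies. There is no genuine obstacle in this proof; it is a classical telescoping argument. The only point demanding any care is confirming that inverses of generators are controlled by the same constant, which is a direct consequence of the action being isometric.
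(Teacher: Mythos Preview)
Your proof is correct and is precisely the standard telescoping argument; the paper itself does not give a proof but simply cites the result as well-known (referring to \cite[Lemma 3.10]{ABO}). One minor quibble: when you say replacing $X$ by $X\cup X^{-1}$ changes the word metric ``up to a factor,'' in fact it does not change it at all, since the word length $|\cdot|_X$ is already computed using $X^{\pm 1}$ (otherwise $\d_X$ would not be symmetric).
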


For our purpose, we will also need a generalized version of the Svarc--Milnor lemma. This result is usually stated for proper and cobounded actions (see, for example, \cite[Chapter I, Proposition 8.19]{BH}), in which case it produces finite generating sets. The same argument works without the properness assumption, but the resulting generating sets are not necessarily finite (see {\cite[Lemma 3.11]{ABO}}).

\begin{lem}\label{Lem:MS}
Suppose that a group $G$ admits a cobounded action on a geodesic metric space $S$. Then there exists a generating set $X$ of $G$ and a $G$-equivariant quasi-isometry $(G, \d_X) \to S$. 
\end{lem}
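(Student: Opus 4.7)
The plan is to mimic the classical proof of the Svarc--Milnor lemma, dropping the properness assumption at the cost of producing a (possibly infinite) generating set. First I would fix a basepoint $s\in S$ and use coboundedness to choose a constant $R>0$ such that every point of $S$ lies within distance $R$ of the orbit $Gs$. The natural candidate for the generating set is
$$
X=\{g\in G\setminus\{1\} \mid \d_S(s,gs)\le 2R+1\},
$$
which is automatically symmetric since $G$ acts by isometries.

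To see that $X$ generates $G$, I would take an arbitrary $g\in G$, a geodesic $\gamma$ from $s$ to $gs$ in $S$, and subdivide $\gamma$ by points $s=p_0,p_1,\ldots,p_n=gs$ with $\d_S(p_i,p_{i+1})\le 1$ and $n\le \lceil \d_S(s,gs)\rceil$. Using coboundedness, pick $g_i\in G$ with $\d_S(g_is,p_i)\le R$, arranging $g_0=1$ and $g_n=g$. Setting $x_i=g_i^{-1}g_{i+1}$ and using the triangle inequality plus the fact that $G$ acts isometrically, one gets $\d_S(s,x_is)=\d_S(g_is,g_{i+1}s)\le 2R+1$, so each $x_i$ belongs to $X\cup\{1\}$. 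Then the telescoping product $g=x_0x_1\cdots x_{n-1}$ shows both that $\langle X\rangle=G$ and that $|g|_X\le \d_S(s,gs)+1$.

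For the reverse estimate, note that $\sup_{x\in X}\d_S(s,xs)\le 2R+1<\infty$ by definition, so Lemma \ref{Lem:OLip} implies that the orbit map $\phi\colon (G,\d_X)\to S$, $g\mapsto gs$, is $(2R+1)$-Lipschitz. Combining this with the bound from the previous paragraph yields the two-sided inequality
$$
\tfrac{1}{2R+1}\d_S(s,gs)\le |g|_X\le \d_S(s,gs)+1,
$$
which says exactly that $\phi$ is a $G$-equivariant quasi-isometric embedding. The $R$-density of the orbit $Gs$ in $S$, guaranteed by the choice of $R$, upgrades $\phi$ from an embedding to a genuine quasi-isometry.

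I do not anticipate a serious obstacle. The only point at which this argument departs from the textbook version is that without properness of the action, the set $X$ need not be finite (and in general cannot be made so), which is precisely why the lemma only asserts the existence of \emph{some} generating set rather than a finite one. Everything else is a direct adaptation of the proof in \cite[Chapter I, Proposition 8.19]{BH}.
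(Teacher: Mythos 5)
Your argument is correct and is exactly the standard Švarc--Milnor argument that the paper invokes by reference (\cite[Lemma 3.11]{ABO}, adapted from \cite[Chapter I, Proposition 8.19]{BH}): choose $X$ from the $R$-dense orbit, subdivide a geodesic to show $X$ generates with $|g|_X\lesssim \d_S(s,gs)$, and use Lemma~\ref{Lem:OLip} for the reverse Lipschitz bound. The only microscopic point worth tidying is that your subdivision count $n\le\lceil\d_S(s,gs)\rceil$ should be taken with $n\ge 1$ when $g\ne 1$ (else $g_0=g_n$ forces $g=1$), but this is cosmetic.
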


\begin{rem}\label{Rem:Equiv}
For any action $G\act S$ and any $X\in Gen(G)$, the existence of a $G$-equivariant quasi-isometry $(G, \d_X) \to S$ implies the equivalence $G\act Cay(G,X) \eA G\act S$. 
\end{rem}

\begin{figure}
 \centering \hspace*{7mm} \small\def\svgscale{1}{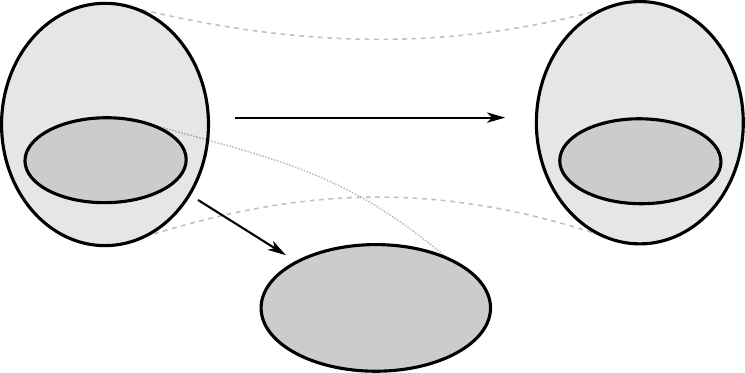}\\
  \caption{Order preserving maps between $A(G)/{\eA}$, $Gen(G)/{\sim}$, and $PL(G)/{\ePL}$.}\label{FigCD}
\end{figure}

The content of  Lemma \ref{Lem:ODqoPres}, Proposition \ref{Prop:PLGen}, and Lemma \ref{Lem:MS} can be visualized as follows. 
Let $A_{cb}(G)$ denote the set of all cobounded $G$-actions on geodesic metric spaces (of cardinality at most $2^{\aleph_0}$). It is straightforward to verify that, for any $G\act S_1, G\act S_2\in A_{cb}(G)$ and any $X_1, X_2\in Gen(G)$ satisfying the conclusion of Lemma \ref{Lem:MS} respectively, we have
$$
G\act S_1 \lA G\act S_2 \;\; \Longleftrightarrow\;\; X_1 \preccurlyeq X_2.
$$
Thus, Lemma \ref{Lem:MS} provides a well-defined order preserving map $A_{cb}/{\eA} \to Gen(G)/{\sim} $ called the \emph{Svarc--Milnor map} in \cite{ABO}. Similarly, Lemma \ref{Lem:ODqoPres} and Proposition \ref{Prop:PLGen} ensure that the \textit{orbital distance map} $A(G)/{\eA}\to PL(G)/{\ePL}$ given by  $[G\curvearrowright S]\mapsto [\ell_{G\act S,s}]$ (where $s$ is an arbitrary point of $S$) and the map $Gen(G)/{\sim} \to PL(G)/{\ePL}$ given by $[X]\mapsto [\ell_X]$ are well-defined and order-preserving. The reader can easily verify that these maps form a commutative diagram depicted on Fig. \ref{FigCD}.


\section{Loxodromic isometries of hyperbolic spaces} \label{Sec:GAHS}


In this section, we obtain several technical results on loxodromic isometries of hyperbolic spaces, which will be used to prove Theorems \ref{Thm:IsoEquiv} and \ref{Thm:Aniso}. 

\paragraph{4.1. Preliminaries on hyperbolic geometry.} \label{Sec:Prelim}
We begin by reviewing the necessary background on hyperbolic spaces and their isometry groups; our main references are the paper \cite{Gro} and books \cite{BH,DSU}.

Let $(S,d_S)$ be a metric space. Recall that the \emph{Gromov product} of two points $x,y\in S$ with respect to a point $z\in S$ is defined by
$$
(x,y)_z=\frac12\big( \d_S(x,z) +\d_S (y,z) -\d_S (x,y)\big).
$$
Further, let $\Delta $ be a geodesic triangle in $S$ with vertices $x$, $y$, $z$ and let  $[x,y]$, $[y,z]$, $[x,z]$ denote the sides of $\Delta$ connecting the corresponding vertices. Further, let $u$, $v$ be a pair of points on distinct sides of $\Delta$; for definiteness, suppose that $u\in [x,z]$ and $v\in [y,z]$. Points $u$ and $v$ are called \textit{conjugate} if $\d_S(u,z)=\d_S(v,z)\le (x,y)_z$.  

For $\delta \ge 0$, the notion of a $\delta$-hyperbolic space is typically defined using one of the following conditions (see, for example, \cite{Gro} or \cite[Chapter 2]{GdlH}).

\begin{enumerate}
    \item[({\bf H$_1$})] For any geodesic triangle $\Delta $ in $S$, the union of closed $\delta$-neighborhoods of any two sides of $\Delta$ contains the third side.
    \item[({\bf H$_2$})] For any geodesic triangle $\Delta $ in $S$ and any conjugate points $u$, $v$ on $\Delta$, we have $\d_S(u,v)\le \delta$.
    \item[({\bf H$_3$})] For any $x,y,z,t\in S$, we have $(x,z)_t\ge \min \{ (x,y)_t,\, (y,z)_t\} -\delta.$
\end{enumerate}

For convenience, we adopt the following.

\begin{defn}
A metric space $S$ is \textit{$\delta$-hyperbolic} for some $\delta\ge 0$ if $S$ is geodesic and all conditions ({\bf H$_1$})--({\bf H$_3$}) hold. A metric space is said to be \textit{hyperbolic} if it is $\delta$-hyperbolic for some $\delta\ge 0$.
\end{defn}

The following corollary of ({\bf H}$_1$) obtained by drawing diagonals will often be used without reference.

\begin{cor}\label{Cor:ngon}
    Any side of a geodesic $n$-gon in a $\delta$-hyperbolic space belongs to the closed $(n-2)\delta$-neighborhood of the union of the other sides. 
\end{cor}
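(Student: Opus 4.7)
The plan is to prove this by induction on $n$, with the base case $n=3$ being precisely condition ({\bf H}$_1$). For the inductive step, I would label the vertices of the geodesic $n$-gon in cyclic order as $v_1, v_2, \ldots, v_n$ and, without loss of generality, focus on the side $[v_n, v_1]$. The key move is to fix a geodesic diagonal $[v_2, v_n]$ (which exists since $S$ is geodesic), splitting the $n$-gon into a geodesic triangle with vertices $v_1, v_2, v_n$ and a geodesic $(n-1)$-gon with vertices $v_2, v_3, \ldots, v_n$.

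Applying ({\bf H}$_1$) to the triangle, every point of $[v_n, v_1]$ lies in the closed $\delta$-neighborhood of $[v_1, v_2] \cup [v_2, v_n]$. By the inductive hypothesis applied to the $(n-1)$-gon, every point of the diagonal $[v_2, v_n]$ lies in the closed $(n-3)\delta$-neighborhood of $[v_2, v_3] \cup \cdots \cup [v_{n-1}, v_n]$. Combining the two estimates via the triangle inequality, every point of $[v_n, v_1]$ lies within $\delta + (n-3)\delta = (n-2)\delta$ of $[v_1, v_2] \cup [v_2, v_3] \cup \cdots \cup [v_{n-1}, v_n]$, completing the induction.

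There is essentially no obstacle here; the argument is a routine induction, and the bound $(n-2)\delta$ is tight because each additional vertex of the polygon adds exactly one thin-triangle step. The only point worth noting is that the estimate does not depend on the particular choice of the diagonal $[v_2, v_n]$, so the resulting neighborhood constant is intrinsic to $n$ and $\delta$.
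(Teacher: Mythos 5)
Your induction on $n$ via drawing the diagonal $[v_2, v_n]$ is correct, and it is exactly the argument the paper has in mind — the paper explicitly describes this corollary as "obtained by drawing diagonals" from ({\bf H}$_1$). No further comment is needed.
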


By a \textit{path} (respectively, \textit{bi-infinite path}) in a metric space $S$ we mean a continuous map $p\colon I\to S$, where $I= [a,b]\subset \RR$ (respectively, $I=\RR$). By abuse of terminology, we identify paths and bi-infinite paths with their images in $S$. For a path $p$, we denote by $\ell(p)$ the length of $p$ and by $p_-=p(a)$ (respectively, $p_+=p(b)$) its origin (respectively, terminus). 

A (possibly bi-infinite) path $p$ in a metric space $S$ is said to be $(K,L)$\emph{--quasi-geodesic} for some $K\ge 1$, $L\ge 0$ if every subpath $q$ of $p$ satisfies the inequality 
$$
\ell(q)\le K\d_S(q_-, q_+)+L.
$$
Further, we say that $p$ is \emph{quasi--geodesic} if it is $(K,L)$--quasi-geodesic for some $K$ and $L$. 

Recall also that the \textit{Hausdorff distance} between two subsets $X$ and $Y$ of a metric space is at most $D\in [0, \infty)$, written $\d_{Hau}(X,Y)\le D$, if each of the subsets belongs to the closed $D$-neighborhood of the other. The following result is well-known (see, for example, {\cite[Theorem 1.7, Chapter III.H]{BH}}).

\begin{lem}\label{lem:Morse lemma}
For any $\delta\ge 0$, $K\ge 1$, and $L \ge 0$, there exists $M(\delta,K,L)>0$ satisfying the following condition. Suppose that $p$ and $q$ are $(K,L)$-quasi-geodesic paths in a $\delta$-hyperbolic space such that $p_-=q_-$ and $p_+=q_+$. Then $\d_{Hau}(p,q)\le M(\delta,K,L)$.
\end{lem}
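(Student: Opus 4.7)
The plan is to derive this stability lemma, which is a standard form of the Morse lemma for quasi-geodesics in Gromov-hyperbolic spaces, from the $\delta$-thin triangle condition ($\mathbf{H}_1$). The statement is classical, and I would follow the approach used in Bridson--Haefliger, Chapter III.H.

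First, I would reduce the two-quasi-geodesic comparison to a one-quasi-geodesic-versus-geodesic comparison. Namely, it suffices to produce $M_0 = M_0(\delta,K,L)$ such that every $(K,L)$-quasi-geodesic $p$ with endpoints $x=p_-$ and $y=p_+$ satisfies $\d_{Hau}(p, \gamma) \le M_0$ for every geodesic $\gamma$ joining $x$ to $y$ in $S$. Given this, fix any geodesic $\gamma$ between the common endpoints of $p$ and $q$ in the hypothesis; then $\d_{Hau}(p,q) \le \d_{Hau}(p,\gamma) + \d_{Hau}(q,\gamma) \le 2M_0$, so one may take $M(\delta,K,L) = 2M_0(\delta,K,L)$.

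Second, since an arbitrary quasi-geodesic need not be rectifiable, I would tame $p$ by replacing it with the piecewise geodesic path $p'$ obtained by concatenating geodesic segments between the sample points $p(n)$ for integer $n$ in the parameter interval (and the endpoints). A short calculation shows that $p'$ is a $(K',L')$-quasi-geodesic with constants depending only on $K,L$, that $p'$ is rectifiable with length locally controlled by $\d_S$, and that $p$ lies in a uniform neighborhood of $p'$. So without loss of generality one may assume $p$ itself is piecewise geodesic with these improved properties.

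Third, I would run the classical ``farthest point'' divergence argument. Let $D = \sup_{z \in p}\, \d_S(z,\gamma)$ and choose $z^\ast \in p$ nearly realizing $D$. Consider a subpath $p^{\ast} \subseteq p$ of length roughly $2D$ centered at $z^\ast$; both endpoints of $p^\ast$ are within $2D$ of their nearest-point projections onto $\gamma$, producing a geodesic quadrilateral. Applying Corollary \ref{Cor:ngon} to this quadrilateral (or iterating $\mathbf{H}_1$) and using the fact that $z^\ast$ is at distance $\ge D - O(\delta)$ from three of its four sides forces one of the ``parallel'' projection segments to be exponentially short in terms of $D$; combining this with the quasi-geodesic lower bound $\ell(p^\ast) \ge \frac{1}{K} \d_S(p^\ast_-, p^\ast_+) - L$ against the rectifiable upper bound from the piecewise geodesic structure yields an inequality that bounds $D$ in terms of $\delta$, $K$, and $L$ alone. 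Setting $M_0$ to be this bound (plus the constants absorbed in the reduction and taming steps) completes the proof.

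The main obstacle is the quantitative balancing in the last step: one must choose the length of $p^\ast$ precisely so that the exponential divergence of geodesics forced by hyperbolicity overpowers the linear-in-length lower bound from the quasi-geodesic condition, and carefully track how the piecewise-geodesic approximation constants $(K',L')$ depend on $(K,L)$. Since these estimates are exactly what is carried out in \cite[Chapter III.H, Theorem 1.7]{BH}, I would ultimately just cite that result for the explicit value of $M(\delta,K,L)$.
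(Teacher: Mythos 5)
The paper does not prove this lemma itself; it simply cites \cite[Theorem 1.7, Chapter III.H]{BH}, which is exactly the reference you invoke. Your sketch correctly reproduces the standard Bridson--Haefliger argument (reduction to comparison with a geodesic, taming to a piecewise-geodesic quasi-geodesic, then the exponential-divergence estimate) and ends with the same citation, so it matches the paper's approach.
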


Following \cite{Gro}, we say that a sequence $(x_n)$ of elements of a hyperbolic space $S$ \emph{converges to infinity} if $\lim_{i, j\to \infty}(x_i, x_j)_o= \infty $ for some (equivalently, any) point $o\in S$. We denote the set of all sequences of elements of $S$ converging to infinity by $\widetilde S$. Two sequences $(x_i), (y_i)\in \widetilde S$ are \emph{equivalent} if $\lim_{i, j\to \infty} (x_i,y_j)_o= \infty$. For a sequence $(x_i)\in \widetilde S$, we denote by $[(x_i)]$ its equivalence class.

As a set, the \textit{Gromov boundary} of a hyperbolic space $S$, denoted by $\partial S$,  is the set of equivalence classes in $\widetilde S$. Extending the Gromov product to $\partial S$ in a natural way defines a topology on $\widehat S = S \cup \partial S$ satisfying the following conditions (we refer the reader to Section~3.4.1 of \cite{DSU} for details).

\begin{prop}\label{Prop:Bord}
Let $S$ be a hyperbolic metric space and let $o\in S$.
\begin{enumerate}
\item[(a)] $S$ is dense in $\widehat S$ and the topology on $\widehat S$ extends the metric topology on $S$.
\item[(b)] A sequence of points $x_i\in S$ converges to a point $a=[(a_j)]\in \partial S$ if and only if $\lim\limits_{i,j\to \infty} (x_i, a_j)_o=\infty$. In particular, every $(x_i)\in \widetilde S$ converges to $[(x_i)]\in \partial S$.
\item[(c)] The rule $g[(x_i)]=[(gx_i)]$ for all $g\in Isom (S)$ and all $(x_i)\in \widetilde S$ yields a well-defined extension of the action of $Isom(S)$ on $S$ to an action on $\widehat S$ by homeomorphisms. 
\end{enumerate}
\end{prop}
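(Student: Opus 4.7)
The plan is to extend the Gromov product to pairs involving boundary points, use this to define a topology on $\widehat{S}$, and then verify the three assertions in turn. For $a=[(a_i)]$, $b=[(b_j)]\in \partial S$ and $x\in S$, I would set
$$(x,a)_o=\sup_{(a_i)\in a}\liminf_{i\to\infty}(x,a_i)_o\quad\text{and}\quad (a,b)_o=\sup\liminf_{i,j\to\infty}(a_i,b_j)_o,$$
where the latter supremum ranges over representatives of $a$ and $b$. Repeated applications of the four-point condition (H$_3$) show that alternative choices of representatives change these quantities by at most $2\delta$, that the ``convergence to infinity'' relation is genuinely an equivalence relation on $\widetilde{S}$, and that the same definitions taken with a different basepoint $o'$ differ from the original by a quantity controlled by $\d_S(o,o')$.

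For the topology on $\widehat{S}$, I would declare a basis of neighborhoods of $a\in\partial S$ to be the sets $U_R(a)=\{y\in\widehat{S}\mid (y,a)_o>R\}$ for $R>0$, and take the metric balls as neighborhoods of points in $S$. Condition (H$_3$), extended to triples with boundary entries, yields inclusions of the form $U_{R+\delta}(a)\subseteq U_{R-2\delta}(b)$ whenever $(a,b)_o>R$, which is exactly what is needed to check that the $U_R(a)$ form a genuine neighborhood basis and that the resulting topology is independent of $o$. Part (a) is then immediate: any representative $(a_i)$ of $a\in\partial S$ converges to $a$ because $(a_i,a)_o\to\infty$ by the definition of convergence to infinity, and the identity $(x,y)_o=\tfrac12(\d_S(x,o)+\d_S(y,o)-\d_S(x,y))$ shows that the topology on $S$ induced from $\widehat{S}$ coincides with the original metric topology.

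Part (b) is then an unwinding of the definition: $x_i\to a$ iff every $U_R(a)$ eventually contains $x_i$, which, by the definition of $(x_i,a)_o$, is equivalent up to a uniform additive $\delta$-error to $\lim_{i,j\to\infty}(x_i,a_j)_o=\infty$; the ``in particular'' clause is the diagonal case applied to any $(x_i)\in\widetilde{S}$. For part (c), every $g\in Isom(S)$ preserves $\d_S$ and hence the Gromov product, so it maps $\widetilde{S}$ into itself, respects the equivalence relation, and satisfies $\widehat{g}(U_R(a))=U_R(ga)$ when Gromov products are computed from the basepoint $go$; basepoint independence up to bounded error then yields continuity with respect to the original basepoint $o$. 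Functoriality $\widehat{gh}=\widehat{g}\,\widehat{h}$ is automatic from the definition on sequences, giving an action on $\widehat{S}$ by homeomorphisms. The main nuisance throughout is the careful bookkeeping of the additive $\delta$-errors and checking that none of the constructions depend essentially on the choice of basepoint $o$; but all such verifications reduce to routine applications of (H$_3$) rather than genuine obstacles.
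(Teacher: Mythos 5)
The paper does not prove Proposition \ref{Prop:Bord}; it refers the reader to Section 3.4.1 of Das--Simmons--Urba\'nski \cite{DSU}. Your proposal is the standard textbook construction of the bordification used in that reference (and in Bridson--Haefliger, Ghys--de la Harpe, etc.): extend the Gromov product to $\widehat{S}$ via the $\sup\liminf$ formula, topologize $\partial S$ with the sets $U_R(a)$, and propagate everything through (H$_3$). Modulo the usual bookkeeping, this is the right argument.

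One place where ``unwinding the definition'' is mildly understating the work: in part (b), the forward implication requires converting the statement $(x_i,a)_o \to \infty$, which after unravelling the $\sup\liminf$ only gives that $\liminf_{j}(x_i,a_j)_o$ is large for each fixed large $i$, into the genuine joint limit $\lim_{i,j\to\infty}(x_i,a_j)_o=\infty$. The threshold in $j$ beyond which $(x_i,a_j)_o$ becomes large could a priori depend on $i$. The fix is another application of (H$_3$): since $(a_j)$ converges to infinity, pick $J_0$ so that $(a_j,a_{j'})_o>R$ for all $j,j'\ge J_0$; then for $i$ large choose some $j'\ge J_0$ with $(x_i,a_{j'})_o>R$ and conclude $(x_i,a_j)_o\ge \min\{(x_i,a_{j'})_o,(a_{j'},a_j)_o\}-\delta>R-\delta$ for every $j\ge J_0$, giving the uniform threshold. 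This falls under your blanket ``routine applications of (H$_3$)'' remark, but it is worth isolating since it is where the $\liminf$ in the definition of the extended product gets upgraded to a joint limit.
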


Given a group $G$ acting on a hyperbolic space $S$, we denote by $\Lambda_S (G)$ the set of limit points of $G$ on $\partial S$. That is, $$\Lambda_S (G)=\overline{Gs}\cap \partial S,$$ where $s\in S$ and $\overline{Gs}$ is the closure of the $G$-orbit of $s$ in $\widehat S$. It is easy to show that this definition is independent of the choice of a particular orbit.

\paragraph{4.2. Boundary dynamics and equivalence of loxodromic elements.}\label{Sec:BDL} Recall that an element $g\in G$ is called \emph{elliptic} if it has bounded orbits in $S$. An element $g\in G$ is  \emph{loxodromic} if the map $\mathbb Z\to S$ defined by $n\mapsto g^ns$ is a quasi-isometric embedding for every $s\in S$ (here, we assume that $\mathbb Z$ is equipped with the standard metric). By $\L(G\curvearrowright S)$ we denote the set of all elements of $G$ that are loxodromic for the action $G\curvearrowright S$.

Recall also that, for every element $g\in \L(G\curvearrowright S)$, there exist $K\ge 0$, $L\ge 0$, and a $(K,L)$-quasi-geodesic bi-infinite path $L_g\colon \RR\to S$ such that $g$ stabilizes $L_g$ setwise and acts on it by translation. We call $L_g$ a \emph{$(K,L)$-quasi-axis} of $g$ (or simply a \emph{quasi-axis} of $g$ if specifying $K$ and $L$ is unnecessary). For instance, $L_g$ can be constructed as follows. 

\begin{defn}\label{Def:qa}
    Let $\gamma $ be any path connecting $s$ to $gs$ in $S$. By $L_g(\gamma)$, we denote the bi-infinite, $\la g\ra $-invariant path obtained by concatenating the consecutive geodesics 
    $$
   \ldots,\;  g^{-2}\gamma,\; g^{-1}\gamma,\; \gamma, \;g\gamma,\; g^2\gamma,\; \ldots .
    $$
    If $\gamma $ is geodesic, we call $L_g(\gamma)$ a \textit{standard quasi-axis of $g$ passing through $s$.}
\end{defn} 

Equivalently, loxodromic elements can be characterized by the following (see \cite[Lemma 8.1.G]{Gro} or \cite[Theorem 6.1.10]{DSU}).

\begin{lem} \label{Lem:NS}
Any $g\in \L(G\act S) $ fixes exactly two points $g^-, g^+\in \partial S$, and we have 
$$
\lim _{n \rightarrow \infty} g^n s=g^{+} \;\;\;  \forall s \in \widehat S \setminus \left\{g^{-}\right\}\;\;\;\;\; {\rm and }\;\;\;\;\; \lim _{n \rightarrow \infty} g^{-n} s=g^{-} \;\;\;  \forall s \in \widehat S \setminus \left\{g^{+}\right\}.
$$ 
In particular, $g^+=[(g^is)]$ and $g^-=[(g^{-i}s)]$ for any $s\in S$. 
\end{lem}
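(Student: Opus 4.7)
The plan is to construct the candidate fixed points $g^\pm$ explicitly from a quasi-axis of $g$, establish the North--South dynamics, and then derive uniqueness of boundary fixed points as a corollary. Fix $s\in S$, let $\gamma$ be a geodesic from $s$ to $gs$, and consider the standard $(K,L)$-quasi-axis $L_g = L_g(\gamma)$ from Definition~\ref{Def:qa}. The positive tail $(g^n s)_{n\ge 0}$ traces out a $(K,L)$-quasi-geodesic ray inside $L_g$. Applying the Morse lemma (Lemma~\ref{lem:Morse lemma}) to finite sub-segments of this ray and to the geodesics between their endpoints, one obtains $(g^i s, g^j s)_s \to \infty$ as $i,j\to\infty$, so $(g^n s)_{n\in\NN}\in \widetilde S$; set $g^+:=[(g^n s)]$ and, symmetrically, $g^-:=[(g^{-n} s)]$. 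These points are independent of $s$ since $\d_S(g^n s, g^n s') = \d_S(s,s')$ for any other $s'\in S$, so the two orbits are equivalent sequences. Because $(g^{n+1} s)$ is a tail of $(g^n s)$, Proposition~\ref{Prop:Bord}(c) forces $g\cdot g^+ = g^+$ and symmetrically for $g^-$. Distinctness $g^+\ne g^-$ follows by applying the Morse lemma once more to $L_g$ as a bi-infinite quasi-geodesic: for every $n$, a geodesic $[g^n s, g^{-n} s]$ stays within a constant $M(\delta, K, L)$ of $s$, so $(g^n s, g^{-n} s)_s$ is bounded uniformly in $n$, precluding equivalence of the defining sequences.

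The heart of the argument is the North--South dynamics. For $s'\in S$, isometry gives $\d_S(g^n s', g^n s) = \d_S(s', s)$; combined with the triangle inequality this yields the uniform lower bound $(g^n s', g^j s)_s \geq (g^n s, g^j s)_s - \d_S(s',s)$, so the asymptotics obtained in Step~1 propagate to $(g^n s', g^j s)_s \to \infty$, and Proposition~\ref{Prop:Bord}(b) delivers $g^n s' \to g^+$. For $s'\in \partial S\setminus\{g^-\}$, I would represent $s'$ by a sequence $x_k\to s'$ in $S$; the condition $s'\ne g^-$ translates, via the boundary-extended Gromov product, into a uniform upper bound (in $j$, for $k$ large) on the products $(x_k, g^{-j} s)_s$. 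The four-point inequality $(\mathbf{H}_3)$ then converts this upper bound into a lower bound on $(g^n x_k, g^j s)_s$ that grows with $\min(n,j)$, and a diagonal argument in $k$ together with continuity of the $g$-action (Proposition~\ref{Prop:Bord}(c)) yields $g^n s' \to g^+$ in $\widehat S$. The analogous statement for $g^{-n} s'$ follows by applying what has been proved to $g^{-1}$ in place of $g$. Uniqueness of the fixed set is then immediate: if $p\in \partial S$ is $g$-fixed and $p\ne g^-$, then $p = \lim g^n p = g^+$, so $\mathrm{Fix}_{\partial S}(g) = \{g^+, g^-\}$.

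The main obstacle will be making Step~3 rigorous for boundary starting points, i.e., translating the qualitative hypothesis $s'\ne g^-$ into a quantitative control on Gromov products that involve $s'$ and the orbit of $s$. The interior case $s'\in S$ is essentially a one-line triangle-inequality estimate, but the boundary case requires the extended Gromov-product formalism on $\partial S$, a careful coupling of the Morse lemma with the four-point inequality, and an approximation argument from the interior. Once North--South dynamics are in hand, existence, $g$-invariance, distinctness, and uniqueness follow with only bookkeeping, and the final sentence of the lemma --- that $g^+=[(g^i s)]$ and $g^-=[(g^{-i}s)]$ for any $s\in S$ --- is exactly the basepoint-independence observation recorded at the end of the construction.
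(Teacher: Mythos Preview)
The paper does not supply its own proof of this lemma: it is stated with the parenthetical reference ``(see \cite[Lemma 8.1.G]{Gro} or \cite[Theorem 6.1.10]{DSU})'' and then used as a black box. So there is no in-paper argument to compare against; your proposal is essentially a sketch of the standard proof one finds in those references, and the overall architecture---build $g^\pm$ from the quasi-axis, verify invariance and distinctness via the Morse lemma, establish North--South dynamics first for interior points by a triangle-inequality estimate and then for boundary points, and read off uniqueness---is correct.

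One point deserves tightening. In your boundary step you write that $s'\ne g^-$ ``translates \ldots\ into a uniform upper bound (in $j$, for $k$ large) on the products $(x_k,g^{-j}s)_s$.'' As stated this is slightly too strong: finiteness of the extended Gromov product $(s'\,|\,g^-)_s$ gives control on a $\liminf$, not an honest uniform bound over all $k,j$. The usual fix is either to invoke the fact (see e.g.\ \cite[Section 3.4]{DSU}) that the extended product is well defined up to an additive $2\delta$, so one can choose representatives and a constant $C$ with $(x_k,g^{-j}s)_s\le C$ for all sufficiently large $k,j$; or to bypass sequences entirely and argue with the extended Gromov product on $\widehat S$ directly, using the boundary version of $(\mathbf{H}_3)$. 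Either route then feeds into the four-point inequality exactly as you indicate. You have correctly flagged this as the place where the real work lies; just be aware that the passage from ``$s'\ne g^-$'' to a usable quantitative estimate needs the $2\delta$-ambiguity lemma for boundary Gromov products rather than a naive reading of the definition.
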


An action of a group $G$ on a hyperbolic space $S$ is said to be \textit{non-elementary} if $\Lambda_S(G)$ has at least $3$ (equivalently, infinitely many) points. If, in addition, $G$ does not fix any point of $\partial S$, the action is said to be of general type. 

Recall that elements $g,h\in \L(G\act S)$ are \textit{independent} if $\{ g^-, g^+\} \cap \{ h^-, h^+\}=\emptyset$. Actions of general type can be characterized by the existence of at least two independent loxodromic elements (see Sections 8.1, 8.2 of \cite{Gro} or Chapter 6 of \cite{DSU}). In fact, general type isometry groups of hyperbolic spaces contain an abundance of independent loxodromic elements, as shown by the following.

\begin{lem}\label{dense}
Suppose that a group $G$ acts non-elementarily on a hyperbolic space $S$. Then $\Lambda_S (G)$ has no isolated points and the set $\{ g^+ \mid g\in \L(G\act S)\}$ is dense in $\Lambda_S (G)$. Furthermore, if the action is of general type, then for any non-empty open subsets $A,B\subseteq \Lambda_S (G)$, there exists a loxodromic element $g\in G$ such that $g^+\in A$ and $g^-\in B$.
\end{lem}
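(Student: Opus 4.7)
The plan is to derive all three assertions from a single loxodromic element, using its north--south dynamics, the $G$-invariance of $\Lambda_S(G)$, and a classical ping-pong construction in the general type case.

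First I would invoke the standard fact (Gromov's classification of isometric actions on hyperbolic spaces) that any non-elementary action admits a loxodromic element $g \in \L(G\act S)$. Its fixed points lie in $\Lambda_S(G)$ because $g^{\pm n}s \to g^\pm$ for any $s \in S$. Since $|\Lambda_S(G)|\ge 3$, I can pick $q\in \Lambda_S(G)\setminus\{g^-,g^+\}$, and by Lemma \ref{Lem:NS} the sequence $(g^n q)$ lies in $\Lambda_S(G)\setminus\{g^+\}$ and converges to $g^+$. So $g^+$ is not isolated in $\Lambda_S(G)$.

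To handle an arbitrary $p\in \Lambda_S(G)$ and simultaneously establish the density of $\{h^+:h\in \L(G\act S)\}$, I would fix a base point $o\in S$, pick $(f_n)\subseteq G$ with $f_n o\to p$, and form the conjugates $h_n=f_n g f_n^{-1}\in \L(G\act S)$, which satisfy $h_n^+=f_n g^+\in \Lambda_S(G)$. The key is to show $f_n g^+\to p$ in $\widehat S$, possibly after passing to a subsequence. Using the isometry invariance $(f_n g^+, f_n o)_o=(g^+, o)_{f_n^{-1}o}$, the thin-triangle inequality (H$_3$), and the fact that $(f_n o, p)_o\to\infty$, I would derive $(f_n g^+, p)_o\to\infty$. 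Discarding the finitely many $n$ with $f_n g^+=p$, this furnishes a sequence of pairwise distinct loxodromic attracting fixed points in $\Lambda_S(G)\setminus\{p\}$ converging to $p$, proving both that $p$ is a non-isolated point of $\Lambda_S(G)$ and that $p$ lies in the closure of the set of loxodromic attracting fixed points.

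For the general type statement, given non-empty open sets $A,B\subseteq \Lambda_S(G)$, I would apply the density just proved to pick loxodromic elements $g_1,g_2$ with $g_1^+\in A$ and $g_2^-\in B$. If the pairs $\{g_1^\pm\}$ and $\{g_2^\pm\}$ overlap, replace $g_2$ by a conjugate $ug_2u^{-1}$ with $u\in G$ chosen so that $ug_2^-$ still lies in $B$ while the two loxodromics become independent---such $u$ exists because, by the general type hypothesis, conjugation by elements of $G$ can move $g_2^\pm$ throughout a dense subset of $\Lambda_S(G)$. Then a classical ping-pong argument on $\widehat S$ applies: selecting disjoint open neighborhoods $U^\pm\subseteq A$ and $V^\pm\subseteq B$ of $g_1^\pm$ and (the new) $g_2^\pm$ respectively, and taking $N$ large enough that $g_1^N$ contracts $\widehat S\setminus U^-$ into $U^+$ and similarly for $g_2^N$, the product $g_1^N g_2^N$ is loxodromic with attracting fixed point in $U^+\subseteq A$ and repelling fixed point in $V^-\subseteq B$.

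The principal technical obstacle I anticipate is the rigorous verification of $f_n g^+\to p$. This requires carefully extending the Gromov product to boundary points and ensuring that $(g^+,o)_{f_n^{-1}o}\to\infty$, which in turn requires $f_n^{-1}o$ to avoid accumulating at $g^+$---a condition one can arrange by passing to a subsequence, since otherwise the behavior of $f_n$ could be absorbed into that of a power of $g$ and handled separately. Once this hyperbolic-boundary bookkeeping is settled, the remaining ingredients (the north--south argument, the conjugation trick to achieve independence, and the ping-pong construction) are standard tools in the geometry of hyperbolic spaces.
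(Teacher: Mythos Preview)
The paper does not supply its own proof of this lemma; it is stated as a standard fact with implicit reference to Gromov \cite[Sections 8.1, 8.2]{Gro} and Das--Simmons--Urba\'nski \cite[Chapter 6]{DSU}. So the comparison is between your outline and the classical arguments found there.

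Your treatment of perfectness and density is the standard one and is essentially correct. The technical point you flag---that $f_n g^+\to p$ requires $f_n^{-1}o$ not to accumulate at $g^+$---is genuine, and your proposed fix (pass to a subsequence; if $f_n^{-1}o\to g^+$ then replace $g$ by $g^{-1}$) is exactly how it is handled. The case $f_ng^+=p$ for infinitely many $n$ is harmless for the reason you indicate: then $p$ is itself a loxodromic fixed point and the first step applies.

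There is, however, a real gap in your argument for the general type claim. You write that if $g_1,g_2$ are not independent, one can conjugate $g_2$ by some $u$ so that $ug_2^-$ remains in $B$ while $ug_2u^{-1}$ becomes independent of $g_1$, ``because \dots conjugation can move $g_2^\pm$ throughout a dense subset of $\Lambda_S(G)$.'' Density of each individual orbit $Gg_2^+$ and $Gg_2^-$ (which does follow from the first part) is not enough: conjugation moves the \emph{pair} $(g_2^-,g_2^+)$ diagonally, and asserting that this diagonal orbit meets $B\times(\Lambda\setminus\{g_1^\pm\})$ is essentially the double-density statement you are trying to prove. Concretely, after arranging $\overline A\cap\overline B=\emptyset$ and choosing $g_2^-\in B\setminus\{g_1^\pm\}$, the only problematic case is $g_1^-=g_2^+$ (the cases $g_1^-=g_2^-$ and $g_1^+=g_2^+$ resolve immediately, since then $g_1$ or $g_2$ itself already has fixed points in $A$ and $B$). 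In the remaining case $z:=g_1^-=g_2^+$, the subgroup $\langle g_1,g_2\rangle$ fixes $z$, so your ping-pong with $g_1^Ng_2^N$ degenerates. The correct fix uses the general type hypothesis directly: since $G$ has no global fixed point on $\partial S$, there exists $\gamma\in G$ with $\gamma z\ne z$; then for large $N$ the conjugate $g_2'':=g_2^{-N}\gamma\, g_2\,\gamma^{-1}g_2^{N}$ has both fixed points close to $g_2^-\in B$ (by north--south dynamics of $g_2^{-1}$, using $\gamma z\ne z$ and $\gamma g_2^-\ne z$ after a further small adjustment), hence $g_2''$ is independent of $g_1$ with $g_2''{}^-\in B$, and your ping-pong applies to $g_1,g_2''$. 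You should replace the circular sentence with this (or an equivalent) argument.
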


Given an action of a group $G$ on a hyperbolic space $S$, we consider the diagonal $G$-action on $\partial S \times \partial S$  defined by the rule $$g(x,y)=(gx,gy)$$ for all $g\in G$ and $(x,y)\in \partial S \times \partial S$. In what follows, we will often work with the co-diagonal $G$-invariant subset 
$$
\partial S\otimes \partial S = \{ (x, y)\in \partial S \times \partial S \mid x\ne y\},
$$ which we endow with the subspace topology induced by the inclusion in $\partial S \times \partial S$. 

\begin{defn}
For every pair of distinct points $x,y\in \partial S$, we denote by $\overline{Orb}_G(x,y)$ the closure of the $G$-orbit of the pair $(x,y)\in \partial S \otimes \partial S$ in $\partial S \otimes \partial S$. 
\end{defn}

Recall that the action of a group on a topological space is \emph{minimal} if all orbits are dense.

\begin{lem}\label{Lem:Orb}
Let $G$ be a group acting on a hyperbolic space $S$. Suppose that the action $G\curvearrowright S$ is of general type. Then for any $g\in \L(G\curvearrowright S)$, the diagonal action of $G$ on the set $\overline{Orb}_G(g^-, g^+)$ is minimal.
\end{lem}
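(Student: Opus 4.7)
The plan is to show that $(g^-, g^+) \in \overline{G(x,y)}$ for every $(x,y) \in \overline{Orb}_G(g^-,g^+)$. This suffices: since $\overline{G(x,y)}$ is closed and $G$-invariant, it would then contain $\overline{G(g^-,g^+)} = \overline{Orb}_G(g^-,g^+)$, while the reverse inclusion is automatic, giving minimality of $G \curvearrowright \overline{Orb}_G(g^-,g^+)$.

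To produce a sequence $s_n \in G$ with $s_n(x,y) \to (g^-,g^+)$, the approach is a two-step ping-pong on $\partial S$, carried out in a visual metric $\rho$ in which loxodromic isometries exhibit exponentially expanding/contracting North--South dynamics (a standard quantitative refinement of Lemma \ref{Lem:NS}). Note first that $(x,y) \in \Lambda_S(G) \times \Lambda_S(G)$, since $\overline{Orb}_G(g^-, g^+) \subseteq \Lambda_S(G) \times \Lambda_S(G)$.

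In the first step, by Lemma \ref{dense} I pick a loxodromic $f \in G$ with $f^+$ close to $g^-$ and $f^-$ much closer to $y$ than $x$ is to $y$: concretely, $\rho(f^-, y) < \varepsilon$ while $\rho(x, f^-) \geq \rho(x,y) - \varepsilon$. Since positive powers $f^k$ expand distances to $f^-$ by a factor of roughly $\lambda^k$ for some $\lambda = \lambda(f) > 1$ (until saturation near $f^+$), the escape time of $x$ from a fixed small neighborhood of $f^-$ is bounded, while that of $y$ is of order $\log(1/\varepsilon)/\log\lambda$, tending to infinity as $\varepsilon \to 0$. For $k$ chosen between these two escape times, $f^k(x)$ lies close to $f^+$ (hence to $g^-$) while $f^k(y)$ remains close to $f^- \approx y$; hence $f^k(x,y)$ can be made arbitrarily close to $(g^-, y)$. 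The second step is symmetric: via Lemma \ref{dense} pick a loxodromic $f' \in G$ with $(f')^+$ close to $g^+$ and $(f')^-$ very close to $g^-$, and apply a suitable power $(f')^{k'}$, now moving $y$ to a neighborhood of $(f')^+ \approx g^+$ while keeping the first coordinate close to $(f')^- \approx g^-$. The composition $(f')^{k'} f^{k} \in G$ sends $(x,y)$ arbitrarily close to $(g^-,g^+)$, and varying the parameters produces the required sequence $s_n$.

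The main obstacle will be the quantitative coordination of each ping-pong step: one must arrange the two coordinates to have sufficiently disparate escape times from the repelling fixed point of the auxiliary loxodromic, which couples the density of loxodromic fixed-point pairs (Lemma \ref{dense}) to the uniform hyperbolic contraction/expansion in a visual metric. A secondary difficulty is the handful of degenerate cases where $\{x,y\}$ meets $\{g^-, g^+\}$ --- for instance, $y = g^-$ prevents choosing $(f')^-$ close to $g^-$ and bounded away from $y$ simultaneously --- and these are resolved by reordering or relabelling the two steps, using the abundance of independent loxodromics afforded by the general type hypothesis.
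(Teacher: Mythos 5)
Your plan (show $(g^-,g^+)\in \overline{Orb}_G(x,y)$) is the same reduction the paper uses, but your route to it is genuinely different: you propose a two-step ping-pong on $\partial S$ in a visual metric, coordinating escape times from repelling fixed points of auxiliary loxodromics, whereas the paper works directly with Gromov products and constructs a \emph{single} group element $a$ by approximating $(x,y)$ with endpoints of a translate $bL_g$ of the quasi-axis and then ``sliding'' so that $s$ aligns with a point $bg^k s$ on that translate ($a=g^{-k}b^{-1}$). The paper's one-shot geometric construction avoids both of the obstacles you flag for yourself, and I think those obstacles are in fact genuine gaps as written.

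First, your ``expand by $\lambda^k$ until saturation near $f^+$'' is a quantitative North--South dynamics statement in a visual metric; Lemma~\ref{Lem:NS} gives only pointwise convergence $f^n\xi\to f^+$, not the uniform exponential rate you need to produce a nonempty window $[K_1,K_2)$ of valid exponents $k$. Moreover, the auxiliary loxodromic $f$ must change as $\varepsilon\to 0$ (so that $f^-$ approaches $y$), and you then need the rates $\lambda(f)$ and the ``bounded escape time'' for $x$ to be uniform across these varying $f$; that uniformity is not automatic (translation lengths of loxodromics in a general hyperbolic space need not be bounded away from $0$), and the Gromov boundary need not be compact here, so even locally uniform convergence requires argument. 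None of this is cited or established. Second, the degenerate cases $\{x,y\}\cap\{g^-,g^+\}\neq\emptyset$ are real and cannot be dismissed by ``relabelling'': if $y=g^-$, Step~1 forces $f^-\approx y=g^-$ and $f^+\approx g^-$, which is impossible since $f^\pm$ are distinct, and Step~2 then requires $f^k y\approx y=g^-$ to be far from $(f')^-\approx g^-$, a contradiction. A workable fix is to first premultiply $(x,y)$ by some $c\in G$ placing $cx,cy$ in general position (this uses minimality of $G\curvearrowright\Lambda_S(G)$ and needs a short separate argument), but as proposed the resolution is a gap. Your strategy could likely be pushed through with substantial extra work, but it is considerably more delicate than the paper's Gromov-product computation, and the two quantitative claims it relies on are the heart of the matter rather than routine checks.
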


\begin{proof}
Suppose that $(x,y)\in \overline{Orb}_G(g^-, g^+)$. We fix some $s\in S$ and sequences $(x_i), (y_i)\in \widetilde S$ such that $x=[(x_i)]$ and $y=[(y_i)]$. To show that the $G$-orbit of $(x,y)$ is dense in $\overline{Orb}_G(g^-, g^+)$, it suffices to prove that $(g^-, g^+) \in \overline{Orb}_G(x,y)$. By Proposition \ref{Prop:Bord}, this amounts to showing that, for every $r>0$, there exists $a\in G$ and $M\in \NN$ such that
\begin{equation}\label{Eq:axigj}
\min\{ (ax_i, g^{-j}s)_s, \, (ay_i, g^{j}s)_s\} \ge r \;\;\;\;\; \forall \, i,j\ge M.
\end{equation}

Given $r>0$, we choose the constant $M$ and the element $a$ as follows. Since the element $g$ is loxodromic, it admits a standard $(K,L)$-quasi-axis $L_g$ passing through $s$ for some $K,L\ge 0$. Note also that $\sup_{i\in \NN} (x_i, y_i)_s<\infty $ since $x\ne y$. Let \begin{equation}\label{Eq:D}
D=\sup_{i\in \NN} (x_i, y_i)_s+3\delta +M(\delta, K,L) +\d_S(s,gs),
\end{equation}
where $M(\delta, K,L)$ is provided by Lemma \ref{lem:Morse lemma}. Proposition \ref{Prop:Bord} and our assumption $(x,y)\in \overline{Orb}_G(g^-, g^+)$ allow us to find  $b\in G$ and $N\in \NN$ such that 
\begin{equation}\label{Eq:xibgj}
\min\{ (x_i, bg^{-j}s)_s, \, (y_i, bg^{j}s)_s\} > r+D \;\;\;\;\; \forall \, i,j\ge N.
\end{equation}
Using the assumption that $g$ is loxodromic again, we can find an integer $M\ge N$ such that 
\begin{equation}\label{Eq:M}
\min\{ \d_S (s, bg^{-M}s), \d_S (s, bg^{M}s)\} \ge N\d_S(s, gs) + D.
\end{equation}

\begin{figure}
\centering
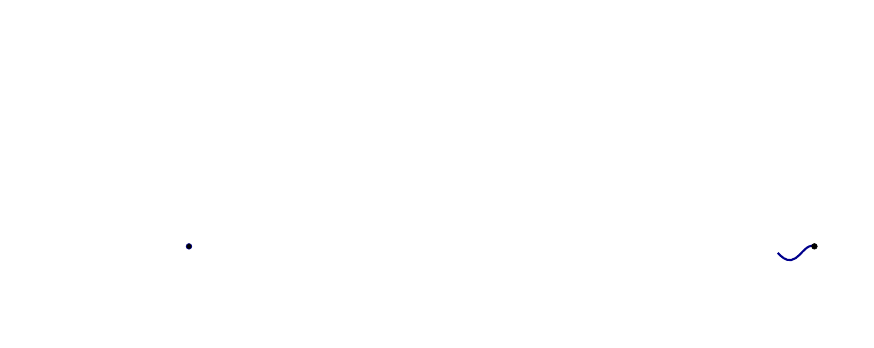
\caption{Proof of Lemma \ref{Lem:Orb}}
\label{Fig:Orb}
\end{figure}

By condition ({\bf H}$_3$) from the definition of a hyperbolic space, we have 
\begin{equation}\label{Eq:xNyN}
\begin{split}
(x_N, y_N)_s &\ge\min \{ (x_N, bg^{-N}s)_s,\, (bg^{-N}s, bg^Ns)_s,\, (y_N, bg^Ns)_s)\} -2\delta \\ &\ge  \min \{ r+D,\, (bg^{-N}s, bg^Ns)_s\} -2\delta.
\end{split}
\end{equation}
If the last minimum equals $r+D$,  we obtain $D\le (x_N, y_N)_s - r + 2\delta< (x_N, y_N)_s + 2\delta $, which contradicts  (\ref{Eq:D}).  Hence, the minimum equals $(bg^{-N}s, bg^Ns)_s$, and we can rewrite (\ref{Eq:xNyN}) as
$$
(bg^{-N}s, bg^Ns)_s\le (x_N, y_N)_s +2\delta.
$$
Let $p$ be any geodesic in $S$ connecting $bg^{-N}s $ to $bg^Ns$ (see Fig. \ref{Fig:Orb}). Condition ({\bf H}$_2$) easily implies that exists $t\in p$ such that $$\d_S(s,t)\le (bg^{-N}s, bg^Ns)_s+\delta \le  (x_N, y_N)_s+3\delta.$$ Applying Lemma \ref{lem:Morse lemma} to $p$ and the segment of $bL_g$ going from $bg^{-N}s $ to $bg^Ns$, we obtain that there exists an integer $k\in [-N, N]\subseteq [-M, M]$ such that $\d_S(t, bg^ks)\le M(\delta, K,L) + \d(s, gs)$; consequently, we have  
\begin{equation}\label{Eq:gk}
\d_S(bg^{k}s, s)\le (x_N, y_N)_s+3\delta +M(\delta, K,L) +\d(s, gs) \le D.
\end{equation}
We let $a=g^{-k}b^{-1}.$

We now show that the constant $M$ and the element $a$ chosen above satisfy (\ref{Eq:axigj}).
Let $i,j\ge M$. Using (\ref{Eq:M}) and (\ref{Eq:gk}), we obtain $$\d_S(bg^{\pm M}s, bg^ks)\ge \d_S(bg^{\pm M}s, s)- \d_S(s, bg^ks)\ge N\d(s,gs).$$ On the other hand, we have  
$\d_S(g^{\pm M}s, g^ks)\le |k\mp M|\d_S(s, gs)$ by the iterated triangle inequality. Therefore, $|k\mp M| \ge  N$. In particular, for any integer $j\ge M$, we have $k+j \ge k+M \ge N$. Combining the obvious inequality $(y_i, bg^{k+j}s)_{bg^ks} \ge (y_i, bg^{k+j}s)_{s} - \d_S(s,bg^{k}s)$  with (\ref{Eq:xibgj}) and (\ref{Eq:gk}), we obtain $(y_i, bg^{k+j}s)_{bg^ks} \ge  r$
for all $i,j\ge M$. Hence,
$$
(ay_i, g^{j}s)_s = (y_i, a^{-1}g^{j}s)_{a^{-1}s}= (y_i, bg^{k+j}s)_{bg^ks} \ge r.
$$
Using a symmetric argument, we also obtain $(ax_i, g^{-j}s)_s\ge r$ for all $i,j\ge M$, thus completing the proof of  (\ref{Eq:axigj}).
\end{proof}

Given $\e\ge 0$, we say that two paths $p$, $q$ in a metric space $S$ are \textit{oriented $\e$-close} if 
$$
\max\{ \d_S(p_-, q_-), \, \d_S (p_+, q_+)\}\le \e.
$$

\begin{defn}[Bestvina--Fujiwara, \cite{BF}]\label{Def:loxeq}
Let $G$ be a group acting on a hyperbolic space $S$. Elements $g,h\in \L(G\curvearrowright S)$ are \emph{equivalent}, denoted $g\sGS h$, if there exist quasi-axes $L_g$, $L_h$ of $g$ and $h$, respectively, and a constant $\e\ge 0$ such that for any $r>0$, there is $a\in G$ such that $aL_g$ and $L_h$ contain oriented $\e$-close subsegments of length at least $r$. 
\end{defn}

\begin{rem}\label{Rem:loxeq}
    In Definition  \ref{Def:loxeq}, we can replace ``\textit{there exist quasi-axes $L_g$ and $L_h$ of $g$ and $h$, respectively, and a constant $\e\ge 0$}" with ``\textit{for any quasi-axes $L_g$ and $L_h$ of $g$ and $h$, respectively, there exists a constant $\e\ge 0$}". This follows from the obvious observation that the Hausdorff distance between any two quasi-axes of the same loxodromic element is finite.
\end{rem}

As noted in \cite{BF}, it is not difficult to check using Lemma \ref{lem:Morse lemma} that the equivalence of loxodromic elements is indeed an equivalence relation.  This fact becomes apparent if we restate the definition in terms of the boundary dynamics as follows. 

\begin{lem}[{\cite[Lemma 2.8]{HM21}}]\label{Lem:HM}
Let $G$ be a group acting on a hyperbolic space $S$. For any elements $g,h\in \L(G\curvearrowright S)$ the following conditions are equivalent:
\begin{enumerate}
\item[(a)] $g\sGS h$;
    \item[(b)] $\overline{Orb}_G(g^+, g^-)= \overline{Orb}_G(h^+, h^-)$;
    \item[(c)] $\overline{Orb}_G(g^+, g^-)\cap  \overline{Orb}_G(h^+, h^-)\ne \emptyset$.
\end{enumerate}
\end{lem}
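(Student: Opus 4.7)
The plan is to show $(b) \Leftrightarrow (c)$ using the minimality Lemma \ref{Lem:Orb}, then close the loop via $(a) \Rightarrow (c)$ and $(b) \Rightarrow (a)$, both through Morse-type comparisons of quasi-axes with their boundary behavior. The implication $(b) \Rightarrow (c)$ is immediate. Conversely, if $(x,y) \in \overline{Orb}_G(g^+, g^-) \cap \overline{Orb}_G(h^+, h^-)$, Lemma \ref{Lem:Orb} shows that the $G$-orbit of $(x,y)$ is dense in each closure. Since $G \cdot (x,y)$ is contained in both closed $G$-invariant sets, both closures equal $\overline{G \cdot (x,y)}$, proving (b).

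For $(a) \Rightarrow (c)$, I apply Definition \ref{Def:loxeq} with a sequence $r_n \to \infty$ to obtain quasi-axes $L_g, L_h$, a constant $\e \ge 0$, and elements $a_n \in G$ such that $a_n L_g$ and $L_h$ contain oriented $\e$-close subsegments of length $\ge r_n$. Fix a base point $s \in L_h$. After replacing each $a_n$ with $h^{k_n} a_n$ for a suitable integer $k_n$, I may arrange that the midpoint of the close subsegment on $L_h$ lies within bounded distance of $s$. The two endpoints of this subsegment on $L_h$ then converge to $h^+$ and $h^-$ as $n \to \infty$, and so do their $\e$-close counterparts on $a_n L_g$. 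Since these counterparts lie on the quasi-axis $a_n L_g$, whose endpoints at infinity are $a_n g^\pm$, and are increasingly far apart along it, a standard Gromov-product computation forces $a_n g^\pm \to h^\pm$ as well. Hence $(h^+, h^-) \in \overline{Orb}_G(g^+, g^-)$, which is (c).

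For $(b) \Rightarrow (a)$, the equality of closures yields a sequence $a_n \in G$ with $a_n g^\pm \to h^\pm$. For any $R > 0$, I argue that $a_n L_g$ must, for large $n$, contain a sub-arc lying within some uniform Hausdorff distance (depending only on $\delta$, $K$, $L$) of $L_h([-R, R])$: choose geodesic segments joining points on $a_n L_g$ near its boundary endpoints $a_n g^\pm$ to the corresponding segments on $L_h$ near $h^\pm$, then apply Lemma \ref{lem:Morse lemma} together with the convergence $a_n g^\pm \to h^\pm$ to force the two quasi-axes to fellow-travel across $L_h([-R, R])$. Passing to the endpoints of these sub-arcs yields oriented $\e$-close subsegments of length at least $R$; as $R$ is arbitrary, this establishes $g \sGS h$. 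The main technical hurdle is precisely these two Morse-type transitions between boundary convergence and fellow-traveling in the bulk; both rest on Lemma \ref{lem:Morse lemma} and careful Gromov-product bookkeeping, but are standard in the theory of $\delta$-hyperbolic spaces.
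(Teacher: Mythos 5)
The paper does not prove this statement: it is imported verbatim from Handel--Mosher \cite[Lemma~2.8]{HM21}, so there is no internal proof for you to replicate, and I can only assess your argument on its own merits. Your loop structure (trivial (b)$\Rightarrow$(c); (c)$\Rightarrow$(b) via minimality; (a)$\Rightarrow$(c) and (b)$\Rightarrow$(a) by Morse-type fellow-travelling) is a sensible plan, and the two Morse/Gromov-product sketches are plausible. In (a)$\Rightarrow$(c), the recentring trick $a_n\mapsto h^{k_n}a_n$ and the observation that a long subsegment of $a_nL_g$ fellow-travelling $L_h$ across a fixed basepoint forces $a_ng^{\pm}\to h^{\pm}$ is correct, though the ``standard Gromov-product computation'' conceals several steps (transferring from the finite endpoint $p_n^+$ of the subsegment to the ideal point $a_ng^+$ uses the Morse lemma to bound $(m_n,\cdot)_{p_n^+}$ uniformly, then letting the parameter run to $+\infty$). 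In (b)$\Rightarrow$(a), the fellow-travelling over $L_h([-R,R])$ should be derived from the boundedness of $(a_ng^+,a_ng^-)_{L_h(\pm R)}$ for $n$ large, together with thin-quadrilateral estimates; the uniformity of the constant $\e$ (depending only on $\delta,K,L$) is the point that needs care and is correctly flagged.

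One genuine issue to flag: in (c)$\Rightarrow$(b) you invoke Lemma~\ref{Lem:Orb}, which is stated in the paper under a general-type hypothesis, whereas Lemma~\ref{Lem:HM} makes no such assumption (and is applied in the paper, e.g.\ in Corollary~\ref{Cor:LoxInd}, without it). As written you are applying a lemma outside its stated hypotheses. You should either (i) observe that the proof of Lemma~\ref{Lem:Orb} does not in fact use the general-type assumption --- it only uses that $g$ is loxodromic and $x\neq y$, so the hypothesis there is redundant --- or (ii) dispense with general type by a direct argument. Without one of these, the step is a formal gap even though it can be closed.
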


We mention a corollary of this result relating equivalence and independence of loxodromic elements.

\begin{cor}\label{Cor:LoxInd}
    Let $G$ be a group acting on a hyperbolic space $S$, $g,h\in \L(G\curvearrowright S)$. Suppose that $g\not\sGS h$ and $g\not\sGS h^{-1}$, then $g$ and $h$ are independent. 
\end{cor}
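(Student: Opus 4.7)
The plan is to prove the contrapositive: if $g$ and $h$ are not independent, then $g\sGS h$ or $g\sGS h^{-1}$, contradicting the hypothesis. Suppose $\{g^+,g^-\}\cap\{h^+,h^-\}\ne\emptyset$. By Lemma \ref{Lem:HM}, in each of the four subcases it is enough to exhibit the pair $(g^+,g^-)$ inside one of the closures $\overline{Orb}_G(h^+,h^-)$ or $\overline{Orb}_G(h^-,h^+)$, since the latter equals $\overline{Orb}_G((h^{-1})^+,(h^{-1})^-)$ and $(g^+,g^-)$ trivially lies in $\overline{Orb}_G(g^+,g^-)$.

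In the representative subcase $g^+=h^+$, set $a_n=g^{-n}$. Then $a_nh^+=g^{-n}g^+=g^+$ for every $n$, and since $h^-\ne h^+=g^+$, Lemma \ref{Lem:NS} yields $a_nh^-=g^{-n}h^-\to g^-$. Therefore $(a_nh^+,a_nh^-)\to(g^+,g^-)$ in $\partial S\otimes\partial S$ (the limit lies here because $g^+\ne g^-$), which places $(g^+,g^-)\in\overline{Orb}_G(h^+,h^-)$. Lemma \ref{Lem:HM} then forces $g\sGS h$, contradicting the hypothesis.

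The remaining three subcases are handled by the same north–south dynamics trick, with different choices of power $a_n=g^{\pm n}$ and some bookkeeping of coordinates. When $g^-=h^-$, taking $a_n=g^n$ again delivers $(g^+,g^-)\in\overline{Orb}_G(h^+,h^-)$, so $g\sGS h$. When $g^+=h^-$ (respectively $g^-=h^+$), one takes $a_n=g^{-n}$ (respectively $a_n=g^n$); the sequence now fixes the coordinate of $h^-$ (respectively $h^+$) at $g^+$ (respectively $g^-$) while driving the other coordinate to $g^-$ (respectively $g^+$) via Lemma \ref{Lem:NS}, so the pair $(a_nh^-,a_nh^+)$ converges to $(g^+,g^-)$. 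This puts $(g^+,g^-)\in\overline{Orb}_G(h^-,h^+)=\overline{Orb}_G((h^{-1})^+,(h^{-1})^-)$, and Lemma \ref{Lem:HM} yields $g\sGS h^{-1}$. I expect no genuine obstacle: the entire argument is a formal consequence of the convergence statement in Lemma \ref{Lem:NS}, and the only mild subtlety is keeping straight which fixed point plays the role of attractor versus repeller across the four subcases.
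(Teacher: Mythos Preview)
Your proof is correct and follows essentially the same approach as the paper's: argue by contradiction, use the north--south dynamics of Lemma~\ref{Lem:NS} to push $(h^+,h^-)$ (or its flip) onto $(g^+,g^-)$ via powers of $g$, and conclude with Lemma~\ref{Lem:HM}. The only cosmetic difference is that the paper dispatches all four subcases at once with a ``replacing one or both elements with their inverses if necessary'' reduction to the single case $g^+=h^+$, whereas you spell out each subcase explicitly.
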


\begin{proof}
    Arguing by contradiction, suppose that $g$ and $h$ are not independent. Replacing one or both elements with their inverses if necessary, we can assume that $g^+=h^+$ while keeping the condition $g\not\sGS h$, thanks to our assumption. By Lemma \ref{Lem:NS}, we have $\lim\limits_{n\to \infty} g^{-n} h^-=g^-$ and, obviously, $\lim\limits_{n\to \infty} g^{-n} h^+=g^+$. Hence, $(g^-,g^+)\in \overline{Orb}(h^-,h^+)$, which contradicts $g\not\sGS h$ by Lemma \ref{Lem:HM}. 
\end{proof}

We also provide a couple of equivalent reformulations of Definition \ref{Def:loxeq}, which will be used later in our paper.

\begin{lem}\label{Lem:loxeq}
Let $G$ be a group acting on a $\delta$-hyperbolic space $S$. For any $g,h\in \L(G\curvearrowright S)$, the following conditions are equivalent.
\begin{enumerate}
\item[(a)] Elements $g$ and $h$ are equivalent.
\item[(b)] For every $s\in S$, there exists $\e>0$ such that for any $N\in \NN$, there are $a\in G$ and integers $m,n>N$ satisfying the inequality  $\max\{ \d_S(as, s), \, \d_S(ag^{m}s, h^{n}s)\} \le \e. $
\end{enumerate}
Furthermore, if $g$ and $h$ admit $(K,L)$-quasi-axes $L_g$ and $L_h$ for some $K\ge 1$, $L\ge 0$, then conditions (a) and (b) are equivalent to the following.
\begin{enumerate}
\item[(c)] For any $\ell>0$, there exists $a \in G$ such that $aL_g$ and $L_h$ contain oriented $(2\delta+2M(\delta,K,L))$-close subsegments of length at least $\ell$, where $\delta$ is the hyperbolicity constant of $S$ and the constant  $M(\delta, K,L)$ is provided by Lemma \ref{lem:Morse lemma}.
\end{enumerate}
\end{lem}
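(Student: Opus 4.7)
The plan is to establish the equivalences via the cycle $(c) \Rightarrow (a) \Rightarrow (b) \Rightarrow (a)$ together with $(a) \Rightarrow (c)$, so that all three conditions coincide. The implication $(c) \Rightarrow (a)$ is immediate from Definition \ref{Def:loxeq} upon taking $\e = 2\delta + 2M(\delta, K, L)$ and the specified quasi-axes. The real content lies in $(a) \Rightarrow (c)$, which shrinks the closeness constant to the specific value $2\delta + 2M(\delta, K, L)$, and -- the hardest step -- in $(a) \Rightarrow (b)$, which must upgrade qualitative quasi-axis proximity into proximity of particular orbit points based at the fixed point $s$.

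For $(a) \Rightarrow (c)$, I would use Remark \ref{Rem:loxeq} to apply (a) with the specific $(K,L)$-quasi-axes $L_g, L_h$: for some fixed $\e_0 \ge 0$ and any $r > 0$, there is $a \in G$ giving oriented $\e_0$-close subsegments $[x_-, x_+] \subset aL_g$ and $[y_-, y_+] \subset L_h$ of length $\ge r$. Form the geodesic quadrilateral on vertices $x_-, y_-, y_+, x_+$; by Corollary \ref{Cor:ngon} it is $2\delta$-slim, and since the two ``short'' sides $[x_\pm, y_\pm]$ have length $\le \e_0$, any point of the geodesic $[x_-, x_+]$ lying at distance greater than $\e_0 + 2\delta$ from both endpoints must lie within $2\delta$ of the opposite long side $[y_-, y_+]$. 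Passing between these geodesics and the quasi-geodesic subsegments of $aL_g$ and $L_h$ via Lemma \ref{lem:Morse lemma} costs an additional $M(\delta, K, L)$ on each side; after truncating the ends one obtains oriented $(2\delta + 2M(\delta, K, L))$-close sub-subsegments of any prescribed length $\ell$, provided $r$ was chosen sufficiently large.

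For $(b) \Rightarrow (a)$, take the standard quasi-axes $L_g, L_h$ through $s$ from Definition \ref{Def:qa}; they respectively pass through $g^k s$ and $h^k s$ for every $k \in \ZZ$. Given $\ell > 0$, pick $N$ so large that the subsegments $[s, g^m s] \subset L_g$ and $[s, h^n s] \subset L_h$ have length $\ge \ell$ for all $m, n > N$ (these lengths grow linearly in $m$ and $n$ by construction). The $a \in G$ provided by (b) then satisfies $\d_S(as, s), \d_S(a g^m s, h^n s) \le \e$, so $[as, ag^m s] \subset aL_g$ and $[s, h^n s] \subset L_h$ are oriented $\e$-close subsegments of length $\ge \ell$, confirming Definition \ref{Def:loxeq}.

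The hardest implication, $(a) \Rightarrow (b)$, is handled by a shifting argument using the now-established (c). Working again with the standard quasi-axes through $s$, for any large $\ell$ obtain $a' \in G$ producing oriented $(2\delta + 2M(\delta, K, L))$-close subsegments on $a' L_g$ and $L_h$ of length $\ge \ell$, and round their endpoints to the nearest orbit points $a' g^{t_\pm} s$ on $a' L_g$ and $h^{u_\pm} s$ on $L_h$, at a cost of at most $\d_S(s, gs)/2 + \d_S(s, hs)/2$ in the closeness. The decisive step is to replace $a'$ by $a = h^{-u_-} a' g^{t_-}$; because $h^{-u_-}$ is an isometry preserving $L_h$, one computes
\[
\d_S(as, s) = \d_S(a' g^{t_-} s, h^{u_-} s), \qquad \d_S(a g^{m} s, h^{n} s) = \d_S(a' g^{t_+} s, h^{u_+} s),
\]
where $m = t_+ - t_-$ and $n = u_+ - u_-$; both are bounded by a constant $\e$ depending only on $\delta, K, L, \d_S(s, gs), \d_S(s, hs)$, not on $\ell$. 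The quasi-axis path lengths force $m \ge \ell/\d_S(s, gs) - O(1)$ and $n \ge \ell/\d_S(s, hs) - O(1)$, so $m, n > N$ whenever $\ell$ is chosen large enough, verifying (b).
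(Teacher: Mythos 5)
Your proof is correct and takes essentially the same approach as the paper: the $(a)\Rightarrow(c)$ argument via $2\delta$-slimness of a geodesic quadrilateral together with Morse-lemma truncation at both ends is precisely the paper's argument (arriving at the same constant $2\delta + 2M(\delta,K,L)$), and the shifting computation with $a=h^{-u_-}a'g^{t_-}$ makes explicit what the paper dismisses as easily following from the observation that any quasi-axis of a loxodromic $f$ lies in a bounded neighborhood of any orbit $\langle f\rangle s$. The minor organizational difference that you route $(a)\Rightarrow(b)$ through $(c)$, while the paper handles $(a)\Leftrightarrow(b)$ directly, does not change the substance of the argument.
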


\begin{proof}
First, note that for any $f\in \L(G\act S)$, any quasi-axis $L_f$ of $f$, and any $s\in S$, there exists $B\ge 0$ such that $L_f$ belongs to the $B$-neighborhood of the orbit $\langle f\rangle s$. Applying this observation to the elements $g$ and $h$, we easily obtain the equivalence of (a) and (b).

Next, we prove the implication (a) $\Rightarrow$ (c). By Remark \ref{Rem:loxeq}, there exists $\e>0$ such that, for any $r>0$, there is $a\in G$ such that $aL_g$ and $L_h$ contain oriented $\e$-close segments $p$ and $q$ of length at least $r$.  Henceforth, we assume that $r$ is sufficiently large (the exact value will be specified later), and fix an element $a$ as above. Let $x_1$, $x_2$ be points on $p$ so that 
\begin{equation}\label{Eq:xy}
\min \{ \d_S(x_1, p_-),\, \d_S(x_2, p_+)\} = D +\e,
\end{equation}
where $D=2(\delta +M(\delta, K,L))$ (see Fig. \ref{Fig:Lgh}).
Let $[p_-, q_-]$ and $[p_+, q_+]$ denote geodesic paths connecting $p_-$ to $q_-$ and $p_+$ to $q_+$, respectively. Condition ({\bf H}$_1$) from the definition of a $\delta$-hyperbolic space easily implies that any side of a geodesic quadrilateral in $S$ belongs to the closed $2\delta$-neighborhood of the union of the other three sides. Combining this with Lemma~\ref{lem:Morse lemma}, we obtain points $y_1, y_2\in [p_-, q_-]\cup q \cup [p_+, q_+]$ such that $\d_S(x_i, y_i)\le D$ for $i=1,2$. Clearly,  (\ref{Eq:xy}) guarantees that $y_1, y_2\in q$. Since $L_g$ and $L_h$ are $(K,L)$-quasi-geodesic, we have $$\d_S(x_1, x_2) \ge \d_S(p_-, p_+) -2(D+\e) \ge (r-L)/K - 2(D+\e);$$ consequently  
$$
\d_S(y_1, y_2) \ge \d_S(x_1, x_2) - 2D \ge (r-L)/K - 4D -2\e.
$$
Let $p^\prime$ and $q^\prime$ denote the subpaths of $p$ and $q$ connecting $x_1$ to $x_2$ and $y_1$ to $y_2$, respectively. Clearly, $p^\prime$ and $q^\prime$ are oriented $D$-close. Taking $r$ to be sufficiently large, we can ensure that these segments have length at least $\ell$ and (c) follows. To complete the proof of the lemma, it remains to note that the implication (c) $\Rightarrow$ (a) is trivial. 
\end{proof}

\begin{figure}
  \centering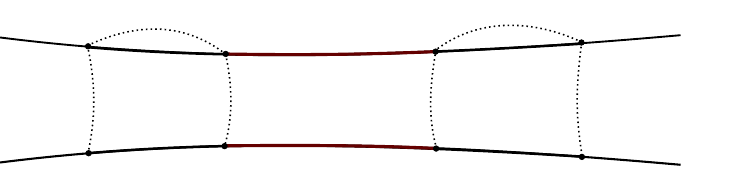
  \caption{The proof of (a) $\Rightarrow$ (b) in Lemma \ref{Lem:loxeq}}\label{Fig:Lgh}
\end{figure}

\paragraph{4.3. Constructing inequivalent loxodromic elements.}\label{Sec:IneqL} In this section, we obtain some technical results necessary for the proof of Theorem \ref{Thm:Aniso}. 

\begin{defn}
 We say that the action of a group $G$ on a hyperbolic space $S$ satisfies the \textit{Bestvina-Fujiwara condition} if there exist independent elements $g,h\in \L(G\act S)$ such that $g\not\sGS h$.   
\end{defn} 

Our terminology is motivated by the paper \cite{BF} by Bestvina and Fujiwara, where this condition was used to derive several interesting properties of the group $G$ and the action $G\act S$, including the following. 

\begin{prop}[Bestvina-Fujiwara, {\cite[Proposition 2]{BF}}]\label{Prop:BF-original}
  Suppose that the action of a group $G$ on a hyperbolic space $S$ satisfies the Bestvina-Fujiwara condition. Then there exist elements $f_i\in \L(G\act S)$ ($i\in \NN$) such that $f_i\not\sGS f_i^{-1}$ and $f_i\not\sGS f_j^{\pm 1}$ for all $i$ and all $j\ne i$.
\end{prop}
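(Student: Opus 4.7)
The plan is to follow the classical Bestvina--Fujiwara construction: the $f_i$ are produced as long alternating products of high powers of $g$ and $h$, distinguished by their combinatorial pattern, and the hypothesis $g\not\sGS h$ together with Lemma~\ref{Lem:loxeq} is used to certify pairwise inequivalence.

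As a preliminary, I would arrange that also $g \not\sGS h^{-1}$. If this fails, Lemma~\ref{Lem:HM} gives $\overline{Orb}_G(g^-,g^+) = \overline{Orb}_G(h^+,h^-)$, which is disjoint from $\overline{Orb}_G(h^-,h^+)$ since $g\not\sGS h$. Using Lemma~\ref{dense} I can find a loxodromic $h' \in G$ whose fixed-point pair $((h')^-,(h')^+)$ lies in a small enough open neighborhood of a point of $\partial S \otimes \partial S$ chosen outside both $\overline{Orb}_G(g^-, g^+)$ and $\overline{Orb}_G(g^+, g^-)$; by minimality (Lemma~\ref{Lem:Orb}) this forces $g \not\sGS (h')^{\pm 1}$, and a further adjustment of $h'$ can also secure independence of $g$ and $h'$. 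Replacing $h$ by $h'$, I henceforth assume $g \not\sGS h^{\pm 1}$.

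Next I fix standard quasi-axes $L_g, L_h$ through a common basepoint $s$, and via a standard ping-pong argument in small neighborhoods of $g^\pm, h^\pm \in \widehat S$ I choose $N_0 \in \NN$ so that, for every finite sequence $\sigma = (a_1, b_1, \ldots, a_k, b_k)$ with $a_i, b_i \geq N_0$, the product
\[
f_\sigma := g^{a_1} h^{b_1} g^{a_2} h^{b_2} \cdots g^{a_k} h^{b_k}
\]
is loxodromic and its standard quasi-axis $L_{f_\sigma}$ through $s$ (Definition~\ref{Def:qa}) is $(K',L')$-quasi-geodesic for universal $K',L'$. Up to a bounded Hausdorff error, $L_{f_\sigma}$ decomposes into a periodic concatenation of $G$-translates of subarcs of $L_g$ (of length growing linearly in $a_i$) and of $L_h$ (linearly in $b_i$) in the cyclic order prescribed by $\sigma$. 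The central rigidity claim is then: if $f_\sigma \sGS f_{\sigma'}^{\epsilon}$ for some $\epsilon\in\{\pm 1\}$, then the cyclic words $\sigma$ and $\sigma'$ (or its reversal when $\epsilon=-1$) must be equal. Indeed, by Lemma~\ref{Lem:loxeq}(c) some $G$-translates of the two quasi-axes are oriented $D$-close on arbitrarily long subsegments for a uniform $D$; taking $N_0$ much larger than $D$ and the Morse constant forces at least one entire $g$- or $h$-piece on the $f_\sigma$-side to shadow a segment lying inside a single piece on the $f_{\sigma'}^{\epsilon}$-side, while a cross-matching (a $g$-piece shadowing an $h^{\pm 1}$-piece) is excluded by $g \not\sGS h^{\pm 1}$ after pulling back by the intermediate group element. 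Propagating this type-preserving alignment around the cycle identifies the two patterns, and the sequences $\sigma_n := (N_0, N_0, 2N_0, 2N_0, \ldots, nN_0, nN_0)$ for $n \geq 2$ are pairwise inequivalent under both rotation and reversal, so $f_n := f_{\sigma_n}$ satisfies all the required inequivalences.

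The main obstacle is the rigidity step: the Morse Lemma only controls quasi-axes up to bounded Hausdorff neighborhoods, so the claim that an entire piece on one side is shadowed by a segment inside a single piece on the other requires quantitative control of the ratio between piece-length ($\geq$ const $\cdot N_0$) and the accumulated error coming from junctions and the Morse constant. Once this is in place, the use of $g\not\sGS h^{\pm 1}$ to forbid cross-matching and the bookkeeping to propagate alignment along the cyclic pattern are routine.
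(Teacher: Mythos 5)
The paper itself does not prove Proposition~\ref{Prop:BF-original}; it cites it verbatim from Bestvina--Fujiwara \cite[Proposition~2]{BF}, so there is no in-house proof to compare against. Your sketch does follow the general strategy of the Bestvina--Fujiwara construction (long alternating products $g^{a_1}h^{b_1}\cdots$, ping-pong to certify loxodromicity, and a pattern-rigidity argument to certify pairwise inequivalence), so modulo the quantitative work it is the ``right'' proof.

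Two concrete gaps, one of which you acknowledge and one of which you do not. The acknowledged one is the rigidity step itself: this is not a mere technicality but is in fact essentially the whole content of \cite[Proposition~2]{BF}, so the proof as written is only an outline there. The unacknowledged one is in the preliminary reduction to $g\not\sGS h^{\pm 1}$: you need a point of $\Lambda_S(G)\otimes\Lambda_S(G)$ lying outside both $\overline{Orb}_G(g^+,g^-)$ and $\overline{Orb}_G(g^-,g^+)$, but its existence is not automatic --- it requires that each orbit closure $\overline{Orb}_G(f^+,f^-)$ has empty interior, which follows from topological transitivity of $G\act\Lambda_S(G)\otimes\Lambda_S(G)$ (this is exactly what the paper's Lemma~\ref{Lem:BF} proves for the analogous step). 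You simply assert that such a point can be chosen; also note that when $g\sGS h^{-1}$, the obvious candidate $(h^+,h^-)$ actually lies in $\overline{Orb}_G(g^-,g^+)$, so the nowhere-density argument cannot be avoided. Finally, a minor slip: $\sigma_2=(N_0,N_0,2N_0,2N_0)$ \emph{is} equal to its own cyclic reversal (shift by two), so the pattern argument does not directly give $f_2\not\sGS f_2^{-1}$; starting from $n\geq 3$ (or choosing a manifestly reversal-asymmetric pattern) fixes this.
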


In our paper, we will need a strengthening of this result. By a \emph{combinatorial path} in a connected graph $S$ we mean a graph homomorphism $p \colon I \to S$, where $I \subset \RR$ is an interval whose endpoints are in $\ZZ\cup\{\pm\infty\}$ and integer points are vertices.

\begin{prop}\label{Prop:BF}
    Let $G$ be a group acting on a hyperbolic space $S$ and let $o\in S$. Suppose that the action $G \act S$ is of general type and there exist $g,h \in \L(G \act S)$ such that $g \not\sGS h$. Then there exist $K\ge 1$, $L \ge 0$, and a sequence $(g_n)_{n \in \NN} \subset \L(G\act S)$ satisfying the following conditions.
    \begin{itemize}
        \item[(a)]
        For any $n\in\NN$, there exists a $(K,L)$-quasi-axis $L_{g_n}$ of $g_n$ passing through $o$. If in addition, $S$ is a hyperbolic graph, then we can take $L_{g_n}$ to be a combinatorial path.
        \item[(b)]
        For any $r \ge 0$ and any $n\in\NN$, we have (see \eqref{eq:A^+sigma})
        \begin{equation}\label{Eq:SC}
        \sup\big\{\diam (L_{g_n}\cap aL_{g_m}^{+r})\mid a\in G, m\ne n\big\} <\infty.
        \end{equation}
        In particular, $g_m$ and $g_n$ are independent and non-equivalent for any $m\ne n$.
    
        \item[(c)] For every $n\in \NN$, we have $g_n\not\sim_{G\act S} g_n^{-1}$.
    \end{itemize}
\end{prop}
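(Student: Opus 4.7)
The plan is to build the sequence $(g_n)$ from two well-chosen loxodromic elements using a Bestvina--Fujiwara style ping-pong construction. First, I would strengthen the hypothesis. If $g, h$ are not already independent, then one can replace one of them by any loxodromic element $k$ independent from both (such $k$ exists since the action is of general type) and, by transitivity of $\sGS$, at least one of $g \not\sGS k$ or $h \not\sGS k$ must hold. Combined with Corollary \ref{Cor:LoxInd}, this lets us assume the Bestvina--Fujiwara condition holds, so Proposition \ref{Prop:BF-original} applies and yields an infinite family $(f_i)$ with $f_i \not\sGS f_i^{-1}$ and $f_i \not\sGS f_j^{\pm 1}$ for $i \ne j$. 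Choose $a = f_0$, $b = f_1$; by Corollary \ref{Cor:LoxInd}, $a, b$ are independent. After replacing them with sufficiently high powers $a^N, b^N$ and invoking Lemma \ref{Lem:NS} on suitably shrunk disjoint neighborhoods of $\{a^\pm\}, \{b^\pm\}$ in $\partial S$, one verifies a ping-pong condition: every cyclically reduced non-trivial word $w$ in $a^{\pm N}, b^{\pm N}$ is loxodromic, with a quasi-axis approximated by a broken geodesic that ``spells out'' $w$; moreover these quasi-axes are $(K,L)$-quasi-geodesic with uniform constants $K \ge 1$, $L \ge 0$ depending only on $a, b, \delta, N$.

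Next, I would define $g_n := c_n (a^N b^{nN})^M c_n^{-1}$ for a large fixed $M$ and a conjugator $c_n \in G$ chosen so that the axis of $(a^N b^{nN})^M$ passes within a uniformly bounded distance of $c_n^{-1} o$. Building the standard quasi-axis $L_{g_n}$ through $o$ as in Definition \ref{Def:qa}, starting from a geodesic segment joining $o$ to $g_n o$, then yields a $(K,L)$-quasi-geodesic with the same uniform $(K,L)$, provided $M$ is large enough to absorb the initial and terminal ``dips'' to $o$. In the hyperbolic graph case, we simply replace geodesics by combinatorial geodesics; the Morse-type estimates invoked below are unaffected. This establishes (a).

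For (b) and (c), I would combine Lemma \ref{lem:Morse lemma} with the ping-pong structure to reconstruct the word $a^N b^{nN}$ from any sufficiently long subsegment of $L_{g_n}$, up to uniformly bounded combinatorial error. A long common subsegment $L_{g_n} \cap a L_{g_m}^{+r}$ would then force the bi-infinite periodic words obtained by cyclically repeating $a^N b^{nN}$ and $a^N b^{mN}$ to share a correspondingly long common factor under some relative shift. A direct combinatorial check shows that for $m \ne n$, any such common factor has length at most $O(n+1)$, uniformly in $m$; translating back via the ping-pong estimate bounds $\diam(L_{g_n} \cap a L_{g_m}^{+r})$ uniformly in $a$ and $m$, giving (b). The same reconstruction argument applied to $g_n$ versus $g_n^{-1}$ (whose associated word $b^{-nN} a^{-N}$ is not cyclically conjugate to $a^N b^{nN}$ in the free subgroup generated by $a^N, b^N$) yields (c). I expect the main obstacle to be the uniform-in-$m$ bound in (\ref{Eq:SC}): pairwise non-equivalence gives a bound depending on both $n$ and $m$, whereas (b) demands a bound depending only on $n$ and $r$. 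The combinatorial rigidity of the family $\{a^N b^{nN}\}_{n \in \NN}$ is precisely what supplies this uniformity, and it is also why Proposition \ref{Prop:BF-original} alone does not suffice.
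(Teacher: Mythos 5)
Your overall architecture matches the paper's: reduce to the Bestvina--Fujiwara condition, invoke Proposition~\ref{Prop:BF-original} to obtain a pair of suitable loxodromics, and then build a sequence with combinatorial rigidity (your $\{a^N b^{nN}\}$ versus the paper's $\{f_1 f_2^{10^n}\}$). The first reduction is fine and is close in spirit to Lemma~\ref{Lem:BF} in the paper. But the mechanism you propose for proving (b) and (c) has a genuine gap.

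The problematic step is: ``A long common subsegment $L_{g_n} \cap a L_{g_m}^{+r}$ would then force the bi-infinite periodic words obtained by cyclically repeating $a^N b^{nN}$ and $a^N b^{mN}$ to share a correspondingly long common factor under some relative shift.'' There is no justification for this, and I do not believe it is true at this level of generality. The element $a$ ranges over \emph{all} of $G$, not just over the ping-pong free subgroup; ping-pong plus Morse's lemma tells you the broken geodesics are quasi-geodesics and the words embed quasi-isometrically, but it gives you \emph{no} control over how an arbitrary translate $aL_{g_m}$ can fellow-travel $L_{g_n}$. Two broken geodesics built from the same segments $[o, f_1 o]$, $[o, f_2 o]$ can run close to a common geodesic over a long stretch without their corners lining up and without their letters being ``readable'' from the ambient geometry; ruling this out is precisely what WPD or acylindricity would buy you, and the whole point of Proposition~\ref{Prop:BF} is that none of that is assumed. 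In short, you cannot pass from ``geometrically close over length $R$'' to ``share a common factor of length $\gtrsim R$'' by a reconstruction argument alone, and your crucial ``combinatorial rigidity $\Rightarrow$ uniformity in $m$'' step sits downstream of exactly this unjustified passage. The same issue infects your argument for (c).

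The paper bridges this geometry-to-combinatorics gap differently, and this difference is essential rather than cosmetic. Instead of trying to reconstruct words, it uses Lemma~\ref{Lem:BFq} to produce quasi-morphisms $q_1, q_2$ subordinate to $G\act S$ with $q_i$ unbounded on $\langle f_i\rangle$ and vanishing on $\langle f_j\rangle$, homogenizes and normalizes them, and then computes $q_1(h)$ and $q_2(h)$ for a single element $h = h_1^{-1}h_2$ lying between two corners of $aL_{g_m}$ in two ways: once exactly from the word $f_2^\alpha(f_1 f_2^{10^m})^\ell f_1 f_2^\beta$, and once approximately from the identity $h = (h_1^{-1}g_n^{k_1})\, g_n^k\, (h_2^{-1}g_n^{k_2})^{-1}$ using the fact that the conjugators only move $o$ a bounded distance. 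The point is that $q_i$ is insensitive to the bounded errors coming from subordination and from the overlap tolerance $r$, but sensitive to the ratio of $f_1$-content to $f_2$-content, which differ for $g_n$ and $g_m$ when $m\ne n$. This gives the uniform-in-$m$ bound without any claim about common factors of words. Likewise (c) falls out of Lemma~\ref{Lem:qgg-1} because $q_1(g_n)\ne 0$. If you want to push your approach through, you would have to replace the word-reconstruction step by something that survives conjugation by arbitrary group elements, and the natural such invariant is exactly a subordinate quasi-morphism.
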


\begin{rem}\label{Rem:BF} Note that the assumption of our proposition is a priori weaker and the conclusion is stronger than that of Proposition \ref{Prop:BF-original}. Indeed, Bestvina and Fujiwara assume the existence of two \textit{independent}, non-equivalent elements in $\L(G\act S)$. In our proposition, independence is omitted from the assumption. In particular, this allows us to apply Proposition \ref{Prop:BF} in the situation (occurring in the proof of Theorem \ref{Thm:Iso}),  when we have an element $g\in \L(G\act S)$ such that $g$ is not equivalent to $h=g^{-1}$. Furthermore, our result provides a small-cancelation-like property (\ref{Eq:SC}) that is significantly stronger than plain non-equivalence claimed in Proposition \ref{Prop:BF-original}; this property is crucial for the proof of Theorem \ref{Thm:Aniso}. 
\end{rem}

We first use a purely topological argument to prove the following.

\begin{lem}\label{Lem:BF}
    Under the assumptions of Proposition \ref{Prop:BF}, the action $G\act S$ satisfies the Bestvina-Fujiwara condition.
\end{lem}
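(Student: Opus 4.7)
The plan is to prove this by contradiction, using the orbit-closure reformulation of equivalence in Lemma~\ref{Lem:HM} together with the abundance of loxodromic fixed points provided by Lemma~\ref{dense}. Concretely, I will assume that the Bestvina--Fujiwara condition fails, force $\overline{Orb}_G(g^+, g^-)$ to fill up all of $\Lambda_S(G) \otimes \Lambda_S(G)$, and then derive a contradiction with $g \not\sGS h$.

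First, I would record that the failure of the Bestvina--Fujiwara condition means: every loxodromic element $f \in \L(G\act S)$ that is independent from $g$ satisfies $f \sGS g$. By Lemma~\ref{Lem:HM}, this translates to $(f^+, f^-) \in \overline{Orb}_G(g^+, g^-)$ for every loxodromic $f$ whose fixed-point pair avoids $\{g^+, g^-\}$.

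The main step is to upgrade this to $\overline{Orb}_G(g^+, g^-) = \Lambda_S(G) \otimes \Lambda_S(G)$. Fix $(x,y) \in \Lambda_S(G) \otimes \Lambda_S(G)$. I would take decreasing neighborhood bases $A_n \ni x$ and $B_n \ni y$ in $\Lambda_S(G)$ with $A_n \cap B_n = \emptyset$. Since $\Lambda_S(G)$ is perfect (Lemma~\ref{dense}), each $A_n$ and $B_n$ contains a non-empty open subset $A'_n$, $B'_n$ disjoint from the two-point set $\{g^+, g^-\}$. Applying the general-type part of Lemma~\ref{dense}, I obtain a loxodromic $f_n$ with $f_n^+ \in A'_n$ and $f_n^- \in B'_n$; by construction $f_n$ is independent from $g$, so the standing assumption yields $(f_n^+, f_n^-) \in \overline{Orb}_G(g^+, g^-)$. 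Passing to the limit, $(x,y) \in \overline{Orb}_G(g^+, g^-)$, as needed.

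Finally, since $h \in \L(G\act S)$, the pair $(h^+, h^-)$ lies in $\Lambda_S(G) \otimes \Lambda_S(G) = \overline{Orb}_G(g^+, g^-)$, so Lemma~\ref{Lem:HM} gives $g \sGS h$, contradicting the hypothesis. Hence some loxodromic $f$ is both independent from $g$ and inequivalent to it, which is exactly the Bestvina--Fujiwara condition. I expect the only delicate point to be the approximation step above: making sure that for any target $(x,y)$ --- including the cases where $x$ or $y$ is close to, or equals, $g^\pm$ --- we can still locate loxodromic fixed points inside $A_n$ and $B_n$ while keeping them away from $\{g^+, g^-\}$. This is handled cleanly by perfectness of $\Lambda_S(G)$, which is available thanks to the general-type hypothesis.
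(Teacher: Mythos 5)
Your proof is correct and takes a genuinely different route from the paper. The paper first shows that for any loxodromic $f$, the set $\overline{Orb}_G(f^-,f^+)$ has empty interior in $\Lambda_S(G)\otimes\Lambda_S(G)$ (via topological transitivity plus Lemma~\ref{Lem:HM}), then runs a Baire-category argument: $V=\overline{Orb}_G(g^-,g^+)\cup\overline{Orb}_G(h^-,h^+)$ is nowhere dense, so Lemma~\ref{dense} produces a third loxodromic $k$ with $(k^+,k^-)\notin V$; finally, a short case analysis on whether $g^{-1}\sGS h$ or $g^{-1}\sGS k$, together with Corollary~\ref{Cor:LoxInd}, yields the independent inequivalent pair. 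You instead argue by contradiction: assuming every independent pair is equivalent, you use perfectness of $\Lambda_S(G)$ and the general-type half of Lemma~\ref{dense} to approximate an arbitrary $(x,y)\in\Lambda_S(G)\otimes\Lambda_S(G)$ by pairs $(f_n^+,f_n^-)$ with $f_n$ independent from $g$, so that $(f_n^+,f_n^-)\in\overline{Orb}_G(g^+,g^-)$, forcing $\overline{Orb}_G(g^+,g^-)=\Lambda_S(G)\otimes\Lambda_S(G)$ and hence $g\sGS h$ via Lemma~\ref{Lem:HM}. Your version avoids both the topological transitivity input and Corollary~\ref{Cor:LoxInd}, and needs no case split; the paper's version is slightly longer but is explicitly constructive (it names a specific independent inequivalent pair, either $(g,k)$ or $(g,h)$) and yields the reusable observation that orbit closures of loxodromic fixed-point pairs are nowhere dense. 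The one point you flagged as delicate --- excising $\{g^+,g^-\}$ from the neighborhoods without killing them --- is indeed exactly where perfectness of $\Lambda_S(G)$ (from Lemma~\ref{dense}) is used, and the Hausdorffness of $\partial S$ makes $\{g^+,g^-\}$ closed so that the excised sets remain open; both are available here.
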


\begin{proof}
Let
    $$
        \Lambda_S(G)\otimes \Lambda_S(G) = \{(x,y) \in \Lambda_S(G) \times \Lambda_S(G) \mid x \neq y\}.
    $$
We claim that, for any $f\in \L(G\act S)$, the interior of the subset  $\overline{Orb}_G(f^-, f^+)$ in $ \Lambda_S(G)\otimes \Lambda_S(G)$ is empty. Indeed, suppose that a non-empty subset $U \subset \overline{Orb}_G(f^-, f^+)$ is open in $\Lambda_S(G) \otimes \Lambda_S(G)$. Since the action $G \act S$ is of general type, the action $G \act \Lambda_S(G) \otimes \Lambda_S(G)$ is topologically transitive (see \cite[8.2.H]{Gro}). Hence, $\bigcup_{a\in G} aU$ is dense in $\Lambda_S(G) \otimes \Lambda_S(G)$. This implies the equality $\overline{Orb}_G(f^-, f^+)=\Lambda_S(G) \otimes \Lambda_S(G)$, which contradicts the existence of non-equivalent loxodromic elements by Lemma \ref{Lem:HM}.

Let $g, h\in \L(G\act S)$ be non-equivalent elements. The set $$V=\overline{Orb}_G(g^-, g^+)\cup \overline{Orb}_G(h^-, h^+)$$ is nowhere dense in $\Lambda_S(G) \otimes \Lambda_S(G)$ being a union of two nowhere dense sets. Therefore, the complement of $V$ in $\Lambda_S(G) \otimes \Lambda_S(G)$ is non-empty; clearly, it is also open. Lemma \ref{dense} allows us to find $k\in \L(G\act S)$ such that $(k^+, k^-) \notin V$. By Lemma~\ref{Lem:HM}, we have $g\not\sGS k$ and $h\not \sGS k$. Note that if $g^{-1}\sGS h$ and $g^{-1}\sGS k$, then $h\sGS k$, and we obtain a contradiction. Thus, $g^{-1}$ is not equivalent to at least one of the elements $h$, $k$, and we complete the proof by applying Corollary \ref{Cor:LoxInd}.
\end{proof}

An interesting feature of our proof of Proposition \ref{Prop:BF} is that we will systematically use quasi-morphisms to derive geometric facts. We will need the following result, which can be easily extracted from the proof of Theorem 1 in \cite{BF} as explained below. For the notion of a quasi-morphism subordinate to a given action, see Definition \ref{Def:subord}.

\begin{lem}[Bestvina--Fujiwara, \cite{BF}]\label{Lem:BFq}
Let $G$ be a group acting on a hyperbolic space. Suppose that there exist $f_1, f_2\in \L(G\act S)$ such that $f_i\not\sGS f_i^{-1}$ for $i=1,2$,  $f_1\not\sGS f_2$, and $f_1\not\sGS f_2^{-1}$. Then there exist quasi-morphisms $q_1, q_2\colon G\to \RR$ subordinate to the action $G\act S$ such that $q_i$ is unbounded on $\la f_i\ra$ and is zero on $\la f_j\ra$ for $i=1,2$ and $j\ne i$.
\end{lem}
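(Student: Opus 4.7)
The plan is to invoke the Brooks-style counting quasi-morphism construction of Bestvina--Fujiwara with basepoint $o\in S$ and verify that the resulting maps are subordinate to $G\act S$.

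First I would fix $(K,L)$-quasi-axes $L_{f_1}, L_{f_2}$ of $f_1$ and $f_2$ and appeal to Lemma \ref{Lem:loxeq} (contrapositively) to produce a threshold $\e_0>0$ and a length $\ell>0$ such that, for a suitably chosen oriented length-$\ell$ segment $w_i$ of $L_{f_i}$, every $a\in G$ satisfies (i) $aw_i^{-1}$ is not oriented $\e_0$-close to $w_i$, and (ii) $aw_i^{\pm 1}$ is not oriented $\e_0$-close to any length-$\ell$ subsegment of $L_{f_j}$ for $j\ne i$. Uniformity in $a$ is exactly the content of Lemma \ref{Lem:loxeq}(c) applied to the four non-equivalences $f_1\not\sGS f_1^{-1}$, $f_2\not\sGS f_2^{-1}$, $f_1\not\sGS f_2$, and $f_1\not\sGS f_2^{-1}$; to get all four simultaneously, one takes the maximum of the corresponding length thresholds.

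Next I would define, for each $i$, the standard counting function $c_{w_i}\colon G \to \ZZ_{\ge 0}$, where $c_{w_i}(g)$ is the maximal number of pairwise disjoint translates of $w_i$ that oriented $\e_0$-fellow-travel pairwise disjoint subsegments of a fixed geodesic $[o, go]$, and set
$$
q_i(g)=c_{w_i}(g)-c_{w_i^{-1}}(g).
$$
The proof that $q_i$ is a quasi-morphism is the core of the Bestvina--Fujiwara argument and proceeds as in \cite[Theorem~1]{BF}, using $\delta$-thin quadrilaterals and the Morse lemma to bound the discrepancy $|q_i(gh)-q_i(g)-q_i(h)|$ in terms of $\delta$, $\e_0$, and $\ell$.

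Finally I would verify the three required properties. The subordination bound (\ref{Eq:subord}) is immediate because the number of pairwise disjoint $\e_0$-fellow-travelled translates of a length-$\ell$ segment along $[o,go]$ is at most $\d_S(o,go)/(\ell-2\e_0)+1$, so $|q_i(g)|\le 2\d_S(o,go)/(\ell-2\e_0)+2$. Unboundedness of $q_i$ on $\la f_i\ra$ follows from the fact that the quasi-axis $L_{f_i}$ carries $\Theta(n)$ consecutive translates of $w_i$ as we travel from $o$ to $f_i^n o$, while condition (i) combined with the Morse lemma forces $c_{w_i^{-1}}(f_i^n)$ to remain uniformly bounded. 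For $j\ne i$, condition (ii) and the Morse lemma applied to $[o, f_j^no]$ and $L_{f_j}$ bound both $c_{w_i}(f_j^n)$ and $c_{w_i^{-1}}(f_j^n)$ uniformly in $n$; replacing $q_i$ by its homogenization $\widetilde{q}_i(g)=\lim_{n\to\infty} q_i(g^n)/n$ then yields a quasi-morphism that vanishes identically on $\la f_j\ra$ while remaining unbounded on $\la f_i\ra$, and the subordination bound survives because $\widetilde q_i(g)$ is controlled by the translation length of $g$ on $S$, which is $\le \d_S(o,go)$. The principal obstacle is the uniform choice of $\e_0$ and $\ell$, which is precisely the step where all four non-equivalence hypotheses are consumed.
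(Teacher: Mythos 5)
Your proposal is correct and uses the same underlying construction the paper relies on, namely the Brooks-type counting quasi-morphisms of Bestvina--Fujiwara. The difference is one of exposition rather than substance: the paper's ``proof'' is a citation (existence and the properties on $\la f_1\ra, \la f_2\ra$ from \cite[Proposition 5]{BF}, and the subordination bound from \cite[Lemma 3.9]{F98}), whereas you reconstruct the argument from scratch. A few remarks on the reconstruction. Your use of Lemma~\ref{Lem:loxeq}(c) to produce the uniform tolerance $\e_0$ and length threshold $\ell$ is exactly the right way to translate the four non-equivalence hypotheses into the ``small cancellation'' input that the counting construction needs (note that $\e_0$ is not free -- it is forced to be $2\delta+2M(\delta,K,L)$ or larger by Lemma~\ref{Lem:loxeq}(c), and only $\ell$ is then a genuine threshold). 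You are also right to homogenize: the raw counting quasi-morphism is only \emph{bounded} on $\la f_j\ra$, and passing to $\widetilde q_i$ is what yields the ``zero on $\la f_j\ra$'' clause of the statement; your observation that homogenization preserves subordination (either because $|\widetilde q_i - q_i|\le D(q_i)$, or because $\widetilde q_i$ is controlled by translation length) is correct and is a detail the paper glosses over. The one place you should be slightly more careful is in the Morse-lemma bookkeeping: a copy of $w_i^{-1}$ that $\e_0$-fellow-travels $[o,f_i^n o]$ is only $(\e_0+M(\delta,K,L))$-close to $L_{f_i}$, so the constant you feed into Lemma~\ref{Lem:loxeq}(c) has to absorb this extra $M$; this is easily arranged by enlarging $\e_0$ before choosing $\ell$, but it deserves a sentence. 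With that caveat, the proof is sound.
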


\begin{proof}[On the proof]
The existence of quasi-morphisms $q_i$ (denoted by $h_{f_i^a}$ in \cite{BF}) with specified values on  $\langle f_i\rangle $ and $\langle f_j\rangle$ is proved in \cite[Proposition 5]{BF}. The fact that these quasi-morphisms are subordinate to $G\act S$ is clear from their definition. Indeed, the authors of \cite{BF} fix a point $x_0\in S$ and define each $q_i$ as the difference of two quantities, $c_w(x_0, gx_0)$ and $c_{w^{-1}}(x_0, gx_0)$ in the notation of \cite{BF}; each of these quantities is easily seen to be bounded by $M\d_S(s,gs)$ for some $M\ge 0$ independent of $g$ (the latter fact is stated explicitly in  \cite[Lemma 3.9]{F98}). 
\end{proof}

Let $q\colon G\to \RR$ be a quasi-morphism. We denote by $D(q)$ the \textit{defect} of $q$, defined by 
$$
D(q)=\sup_{g,h\in G} |q(gh)-q(g)-q(h)|.
$$
A quasi-morphism $G\to \RR$ is called a \textit{quasi-character} if its restriction to every cyclic subgroup of $G$ is a homomorphism. It is well-known and straightforward to check that the  \textit{homogenization} of $q$ defined by the formula
$$
\widetilde q(g)=\lim_{n\to \infty} \frac{q(g^n)}n \;\;\;\;\; \forall\, g\in G
$$
is a quasi-character and $\sup_{g\in G}|q(g)-\widetilde q(g)|\le D(q)$ (see, for example, \cite[Lemma 2.21]{scl}). 

\begin{lem}\label{Lem:qgg-1}
 Let $G$ be a group acting on a hyperbolic space and let $q\colon G\to S$ be a quasi-character subordinate to $G\act S$. If $g$ is an element of $G$ such that $|q(g)|>0$, then $g\in \L(G\act S)$ and $g\not\sGS g^{-1}$.
\end{lem}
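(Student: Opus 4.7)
The plan is to prove both conclusions via direct subordination estimates, with the second part requiring a careful application of Lemma \ref{Lem:loxeq}(b). Fix a point $o\in S$ and a constant $M\ge 0$ witnessing subordination, so that $|q(x)|\le M\d_S(o,xo)+M$ for every $x\in G$.

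For loxodromicity, I would use that $q$ is a quasi-character to get the exact identity $|q(g^n)|=|n|\cdot|q(g)|$, which, combined with subordination applied to $g^n$, yields a linear lower bound
$$\d_S(o,g^n o)\;\ge\; \frac{|n|\cdot|q(g)|-M}{M}.$$
The trivial upper bound $\d_S(o,g^n o)\le|n|\d_S(o,go)$ then shows that the orbit map $n\mapsto g^n o$ is a quasi-isometric embedding of $\ZZ$ into $S$, so $g\in\L(G\act S)$ directly by definition.

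For the non-equivalence $g\not\sGS g^{-1}$, I would argue by contradiction. Assuming $g\sGS g^{-1}$, Lemma \ref{Lem:loxeq}(b) applied with $h=g^{-1}$ at the point $s=o$ furnishes some $\e>0$ such that, for arbitrarily large $N\in\NN$, there exist $a\in G$ and integers $m,n>N$ with $\d_S(ao,o)\le\e$ and $\d_S(ag^m o, g^{-n}o)\le\e$. Applying the isometry $g^n$ rewrites the second inequality as $\d_S(g^n a g^m o,o)\le\e$. Subordination then bounds both $|q(a)|$ and $|q(g^n a g^m)|$ by $M\e+M$. On the other hand, two applications of the quasi-morphism inequality combined with the quasi-character identity $q(g^k)=kq(g)$ give
$$\bigl|q(g^n a g^m)-(n+m)q(g)-q(a)\bigr|\;\le\; 2D(q).$$
Rearranging and inserting the two subordination bounds yields
$$(n+m)|q(g)|\;\le\;|q(g^n a g^m)|+|q(a)|+2D(q)\;\le\; 2M\e+2M+2D(q),$$
a quantity independent of $N$. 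Since $n+m>2N$ diverges with $N$ and $|q(g)|>0$, this is absurd.

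The main obstacle is the second step: one has to recognize that the Bestvina--Fujiwara equivalence $g\sGS g^{-1}$, read through Lemma \ref{Lem:loxeq}(b), produces configurations of the form $g^n a g^m\approx\mathrm{id}$ with arbitrarily large exponents $m,n$ and bounded witness $a$. Any quasi-character subordinate to the action must be nearly zero on such near-identity words, while its explicit value $(n+m)q(g)+O(1)$ grows linearly. This tension is precisely what rules out $g\sGS g^{-1}$ whenever $q(g)\ne 0$.
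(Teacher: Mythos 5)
Your proof is correct and follows essentially the same approach as the paper: subordination plus the quasi-character identity gives the linear lower bound $\d_S(o,g^n o)\ge n|q(g)|/M-1$ establishing loxodromicity, and the non-equivalence is derived by applying Lemma \ref{Lem:loxeq}(b) to produce words $g^n a g^m$ that are simultaneously close to the identity (forcing $|q(g^n a g^m)|$ bounded) and have $q$-value near $(n+m)q(g)$ (forcing linear growth), a contradiction. The only cosmetic difference is that the paper packages the estimate as a single chain of inequalities while you state the two subordination bounds separately before rearranging.
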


\begin{proof}
Since $q$ is subordinate to $G\curvearrowright S$, there are $s\in S$ and $M\ge 0$ such that (\ref{Eq:subord}) holds; in particular, we have $\d_S(s, g^ns)\ge |q(g^n)|/M-1=  n|q(g)|/M-1$. We also have $\d_S(s, g^ns) \le \sum_{i=1}^n\d_S(g^{i-1}s, g^is)=n\d_S(s, gs)$, hence the map $n \mapsto g^ns$ is a quasi-isometric embedding.
Thus, $g\in \L(G\act S)$. Further, assume that $g\sim_{G\act S} g^{-1}$. According to Lemma \ref{Lem:loxeq}, there exists $r\ge 0$ such that for any $N\in \NN$, there are integers $m,n\ge N$ and $a\in G$ such that
$$
\max\{ \d_S(as, s), \, \d_S(g^nag^ms, s)\} = \max\{ \d_S(as, s), \, \d_S(ag^ms, g^{-n}s)\} \le r. 
$$
Using this inequality and (\ref{Eq:subord}), we obtain 
\begin{equation*}
     Mr+M \ge |q(g^nag^m)| \ge (m+n)|q(g)| - |q(a)| - 2D(q)\ge 2N|q(g)| - Mr-M - 2D(q),
\end{equation*}
which is a clear nonsense if $N$ is sufficiently large. 
\end{proof}

 We are now ready to prove the main result of this subsection. 

\begin{proof}[Proof of Proposition \ref{Prop:BF}]
By Lemma \ref{Lem:BF}, the action $G\act S$ satisfies the Bestvina-Fujiwara condition. By Proposition \ref{Prop:BF-original} and Lemma \ref{Lem:BFq} there exist $f_1,f_2\in \L(G\act S)$ and quasi-morphisms $q_1,q_2\colon G\to \RR$  subordinate to $G\act S$ such that, for any $i=1,2$ and $j\ne i$, $q_i$ is unbounded on $\la f_i\ra$ and identically $0$ on $\la f_j\ra$. Passing to homogenizations, we can assume that $q_1$ and $q_2$ are quasi-characters. We fix any $M\ge 0$ such that 
\begin{equation}\label{Eq:qisub}
    |q_i(g)|\le M\d_S(o, go)+M\;\;\;\;\; \forall\,g \in G,\, {\forall\,} i\in \{ 1,2\}.
\end{equation} 
Finally, rescaling the quasi-characters $q_i$ and replacing the elements $f_i$ with their powers if necessary, we can assume that 
\begin{equation}\label{Eq:qifi}
q_i(f_i)= 3 \;\;\;\;\; {\rm and} \;\;\;\;\; D(q_i)\le 1\;\;\;\;\; {\forall\,} i\in \{ 1,2\}.
\end{equation}

Let $\gamma _1$, $\gamma_2$ be any geodesics in $S$ going from  $o$ to $f_1o$ and $f_2o$, respectively.
For any word $w=f_{i_1}f_{i_2}\ldots f_{i_n}$, where $f_{i_j}\in \{ f_1, f_2\}$ for all $1\le j\le n$, we define a path $\gamma_w$ connecting $o$ to $wo$ in $S$ as the concatenation  $$\gamma_w=\gamma_{i_1} (f_{i_1}\gamma_{i_2}) \ldots  (f_{i_1}f_{i_2}\cdots f_{i_{n-1}} \gamma_{i_n}),$$ where $f\gamma$ denotes the image of a path $\gamma$ in $S$ under $f\in G$.   By (\ref{Eq:qisub}) and (\ref{Eq:qifi}), we have
$$ 
2M\d_S (o, wo) +2M \ge (q_1+q_2)(w) \ge \sum\limits_{j=1}^n \Big(q_1(f_{i_j}) + q_2(f_{i_j})\Big) - 2(n-1) >n\ge \frac{\ell(\gamma_w)}{A}, 
$$ 
where  $A=\max\{ \ell(\gamma_{1}),\, \ell(\gamma_2)\}$. This implies that the path $\gamma_w$ is 
$(K,L)$-quasi-geodesic for some $K$ and $L$ that depend on $A$ and $M$ only.  

For every $n\in \NN$, we let $$g_n= f_1f_2^{10^n}.$$
Applying the argument from the previous paragraph to powers of the word $f_1f_2^{10^n}$, we obtain that every $g_n$ is loxodromic and admits a $(K,L)$-quasi-axis $L_{g_n}=L_{g_n}\big(\gamma_{f_1f_2^{10^n}}\big)$ (in the notation of Definition \ref{Def:qa}) passing through $o$. By Lemma \ref{Lem:qgg-1}, elements $g_n$ also satisfy condition (c). It remains to prove (b). 

Arguing by contradiction, suppose that there exists $r\ge 0$ and $n\in \NN$ such that (\ref{Eq:SC}) is false; that is, for arbitrarily large $R\in \RR$, there are $m\in \NN\setminus\{n\}$, $a\in G$, and points $x_1, x_2\in L_{g_n}$, $y_1, y_2 \in aL_{g_m}$ such that $\d_S(x_i, y_i)\le r$ for $i=1,2$ and $\d_S(x_1, x_2)\ge R$ (see Fig. \ref{Fig:Lgmgn}).
Let $B=\ell(\gamma_{g_n})$ and let $g_n^{k_1}o$ and $g_n^{k_2}o$ be points on $L_{g_n}$ such that $\d_S(x_i, g_n^{k_i}o)\le B$ for $i=1,2$. Let $k=k_2-k_1$; without loss of generality, we can assume that $k\ge 0$. Similarly, we fix $h_1, h_2\in G$ such that $h_1o, h_2o\in aL_{g_m}$, $\d_S(y_i, h_io)\le A$ for $i=1,2$, and the element $h=h_1^{-1}h_2$ or its inverse (depending on the orientation of $L_{g_m}$) can be represented by a word $f_2^\alpha(f_1f_2^{10^m})^{\ell}f_1f_2^\beta$ for some integers $\ell \ge 0$ and $\alpha, \beta\in [0, 10^m]$.
Clearly, we have $$Bk= B(k_2-k_1)\ge \d_S(o, g_n^{k_2-k_1}o)=\d_S(g_n^{k_1}o, g_n^{k_2}o)\ge R-2B$$
Thus, taking $R$ to be sufficiently large, we can ensure that 
\begin{equation}\label{Eq:k1k2}
k\ge \max\{ 50, 5 + 2M (A+B+r) + 2M\}.
\end{equation}

In what follows, we write $r\stackrel{\e}\approx s$ for some numbers $r,s\in \RR$ and $\e\ge 0$ if $|r-s|\le \e$. Evaluating $q_1$ and $q_2$ at the product $f_2^\alpha(f_1f_2^{10^m})^{\ell}f_1f_2^\beta $ and using (\ref{Eq:qifi}), we obtain
\begin{equation}\label{Eq:qih1}
|q_i(h)| \stackrel{\ell+3}\approx \left\{\begin{aligned}
    & 3\ell +3,\; & {\rm for \;} i=1, \\
& 3\alpha +3\beta + 3\cdot 10^m\ell \stackrel{6\cdot 10^m}\approx 3\cdot 10^m\ell, & {\rm for \;} i=2.
\end{aligned}
\right.
\end{equation}
On the other hand, we have $h=h_1^{-1} h_2= (h_1^{-1}g_n^{k_1})\cdot g_n^{k} \cdot (h_2^{-1}g_n^{k_2})^{-1}$. Since 
$$   \left|q_i\left(h_j^{-1}g_n^{k_j}\right)\right|\le M\d_S\left(g_n^{k_j}0, h_jo\right) +M \le M(A+B+r)+M,
$$
for $i,j=1,2$, we obtain
$|q_i(h) - kq_i(g_n)|\le 2 + 2M(A+B+r)+2M\le k-3$ by (\ref{Eq:k1k2}).
Therefore, 
\begin{equation}\label{Eq:qih2}
 |q_i(h)| \stackrel{k-3}\approx k|q_i(g_n)| \stackrel{k}\approx \left\{\begin{aligned}
& 3k,\; & {\rm for \;} i=1, \\
& 3\cdot 10^nk, & {\rm for \;} i=2.
\end{aligned}\right.   
\end{equation}
Combining (\ref{Eq:qih1}) and (\ref{Eq:qih2}) for $i=1$ yields $3k\stackrel{2k+\ell}\approx  3\ell+3$, which is equivalent to
\begin{equation}\label{Eq:kl}
\frac25 \ell +\frac35 \le k\le 4\ell +3.
\end{equation}

We now consider two cases. Assume first that $m<n$. Using (\ref{Eq:qih1}) and (\ref{Eq:qih2}) for $i=2$, we obtain  $3\cdot 10^n k \le 3\cdot 10^m \ell +2k +\ell +6\cdot 10^m$. 
Dividing by $3\cdot 10^m\ge 30$ and taking into account that $10^{n-m}\ge 10$, we obtain $149k/15  \le 31\ell/30 +2$, which contradicts the left inequality in (\ref{Eq:kl}). Similarly, if $m>n$, we first note that 
$3\cdot 10^m \ell \le 3\cdot 10^n k +2k+\ell + 6\cdot 10^m$; then,
dividing by $3\cdot 10^m\ge 30$, we obtain $29\ell/30 \le k/6 +2$, which contradicts the right inequality in (\ref{Eq:kl}) as $k\ge 50$ by (\ref{Eq:k1k2}).

\begin{figure}
  \centering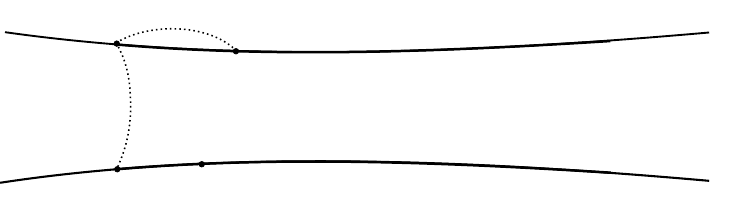
  \caption{Proof of Proposition \ref{Prop:BF}.}\label{Fig:Lgmgn}
\end{figure}
\end{proof}


\section{Modifying group actions on hyperbolic spaces} \label{Sec:ModGA} 


In this section, we consider certain modifications of group actions on hyperbolic spaces, which can be informally described as compressing hyperbolic spaces along axes of loxodromic elements. Our main result is Proposition \ref{Prop:EmbPi}, which presents the main step in the proof of Theorem \ref{Thm:Aniso}. 

\paragraph{5.1. Passing to cobounded actions.} 
We will need the following simplification of \cite[Proposition 3.1]{Bow} (in the notation of Bowdich's paper, the result below is obtained by letting $L(x,y)$ be a geodesic connecting $x$ and $y$ in $S$). Alternatively, it is a particular case of \cite[Proposition 2.5]{KR}, whose proof makes use of an earlier result of Bowditch. 

\begin{prop}[Bowditch, Kapovich--Rafi]\label{Prop:Bow}
For any $\delta, D\ge 0$, there are $\e=\e(\delta, D)\ge 0$ and $C=C(\delta, D)\ge 0$ with the following property. Let $S$ be a $\delta$-hyperbolic graph, and let $R$ be a graph obtained from $S$ by adding edges. Suppose that for any vertices $x$, $y$ connected by an edge in $R$, every geodesic in $S$ connecting $x$ and $y$ has diameter at most $D$ in $R$. Then: 
\begin{enumerate}
\item[(a)] $R$ is $\e $-hyperbolic;
\item[(b)] for any (combinatorial) geodesic path $p$ in $S$ and any geodesic $q$ in $R$ with the same endpoints as $p$, the Hausdorff distance between $p$ and $q$ in $R$ is at most $C$.     
\end{enumerate}  
\end{prop}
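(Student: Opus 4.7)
The plan is to treat the $S$-geodesics as ``guessing geodesics'' in $R$ and verify a Bowditch-style criterion that yields both hyperbolicity of $R$ and the Hausdorff estimate (b). For each pair of vertices $x,y$ in the common vertex set of $S$ and $R$, let $L(x,y)$ denote a choice of combinatorial $S$-geodesic from $x$ to $y$. Since every $S$-edge is an $R$-edge, the identity on vertices is $1$-Lipschitz from $(S,\d_S)$ to $(R,\d_R)$, so the $\delta$-slim-triangle condition satisfied by the family $\{L(x,y)\}$ in $S$ is inherited in $R$: for any triple $x,y,z$, each of $L(x,y)$, $L(y,z)$, $L(x,z)$ lies in the closed $\d_R$-neighborhood of radius $\delta$ of the union of the other two.

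First, I would prove (b). Given an $R$-geodesic $q$ from $x$ to $y$, lift each of its edges to an $S$-geodesic between its endpoints, producing an $S$-path $q'$ from $x$ to $y$; by the hypothesis, every vertex of $q'$ lies within $\d_R$-distance $D$ of $q$, so it suffices to bound the $\d_R$-Hausdorff distance between $p := L(x,y)$ and $q'$. The natural attack is to subdivide $q$ at an interior point $m$, apply induction on the $R$-length of $q$, and use the slim-triangle condition for $L(x,m) \cup L(m,y) \cup L(x,y)$ to force $p$ into the $\d_R$-neighborhood of $L(x,m) \cup L(m,y)$, which is close to the two halves of $q'$ by induction. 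The subtlety is that a naive induction accumulates error linearly in $\d_R(x,y)$; the remedy is Bowditch's self-improving step (the coarse monotonicity half of the guessing-geodesics criterion), which converts an inductive bound depending on $\d_R(x,y)$ into a universal bound $C = C(\delta, D)$, thereby establishing (b).

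Claim (a) then follows from (b) by a routine transfer. For any $R$-geodesic triangle $\Delta$, replace each side by an $S$-geodesic with the same endpoints to obtain a triangle $\Delta'$; since $\Delta'$ is $\delta$-slim in $S$, it is $\delta$-slim in $R$, and each of its sides is at $\d_R$-Hausdorff distance $\le C(\delta, D)$ from the corresponding side of $\Delta$. Hence $\Delta$ is $(\delta + 2C)$-slim in $R$, and we may take $\varepsilon(\delta, D) = \delta + 2C(\delta, D)$.

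The chief obstacle is the uniformity in the argument for (b): ensuring that the constant $C$ depends only on $\delta$ and $D$, and not on $\d_R(x,y)$. The difficulty is that $q'$ is only a concatenation of short $S$-geodesics and need not be $S$-quasi-geodesic, so the Morse lemma in $S$ does not apply directly to compare $p$ and $q'$; one is forced to work inside the $R$-metric, where hyperbolicity is what we are trying to prove. This circularity is precisely what Bowditch's ``guessing geodesics'' machinery is designed to resolve, which is why the authors invoke the result from \cite{Bow} and \cite{KR} rather than redeveloping it from the bare definition of hyperbolicity.
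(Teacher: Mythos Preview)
Your proposal is correct and matches the paper's treatment exactly: the authors do not prove this proposition but cite it as a simplification of \cite[Proposition~3.1]{Bow} (taking $L(x,y)$ to be an $S$-geodesic, precisely as you describe) and as a particular case of \cite[Proposition~2.5]{KR}. Your sketch of the guessing-geodesics argument, including the identification of the uniformity in (b) as the nontrivial step requiring Bowditch's machinery, is an accurate account of how the cited results establish the proposition.
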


Given a subset $A$ of a metric space $S$ and any $\sigma\in [0, \infty)$, we let 
\begin{align}\label{eq:A^+sigma}
A^{+\sigma}=\bigcup_{a\in A} \{ s\in S\mid \d_S(a,s)\le \sigma\}.    
\end{align}
The proof of the following result is inspired by an idea of Sisto, who established the second claim of the proposition (concerning the independence of loxodromic elements) in \cite[Proposition A.1 and Corollary A.2]{BFGS}. Our contribution is the proof of non-equivalence. We partially reproduce Sisto's argument to keep our exposition self-contained. 

\begin{prop}\label{Prop:Cob}
Let $G$ be a group acting on a hyperbolic space $S$. For any  elements $g, h\in \L(G\act S)$, there exists a cobounded $G$-action on a hyperbolic space $R$ such that $G\act R\lA G\act S$, and if $g$ and $h$ are non-equivalent (respectively, independent) with respect to the action on $S$, then they are non-equivalent (respectively, independent) with respect to the action on $R$. \end{prop}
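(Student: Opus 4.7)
The plan is to construct $R$ by adding $G$-equivariant shortcuts to a graph approximation of $S$, following Sisto's idea from the appendix of \cite{BFGS}, and then passing to a cobounded $G$-invariant subspace. Proposition~\ref{Prop:Bow} provides both the control on hyperbolicity and the Hausdorff tracking between $S$-geodesics and geodesics in the modified space that will allow us to transfer properties of $g$ and $h$ back and forth. Without loss of generality, replace $S$ by a $G$-equivariant graph approximation and assume $S$ is a $\delta$-hyperbolic graph; fix a basepoint $o \in S$.

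For a constant $D \ge \max\{\d_S(o,go),\,\d_S(o,ho),\,100\delta\}$, form $R^+$ by adjoining to $S$, for every pair of orbit points $u,v\in Go$ with $0<\d_S(u,v)\le D$, a new edge of length $1$ between $u$ and $v$; the added family is $G$-invariant. Proposition~\ref{Prop:Bow}(a) then gives $R^+$ hyperbolic with some constant $\e(\delta, D)$, and Proposition~\ref{Prop:Bow}(b) gives a Hausdorff tracking constant $C(\delta, D)$ between $S$-geodesics and $R^+$-geodesics sharing endpoints. To obtain the cobounded action, I would pass to a $G$-invariant quasi-convex subspace $R \subseteq R^+$ concentrated near $Go$; ensuring that $G\act R$ is cobounded requires $D$ to be chosen so that the shortcut edges make $Go$ metrically dense in the relevant subspace, which is the place where the construction must be handled with care. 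The resulting $R$ inherits hyperbolicity from $R^+$, and domination $G\act R\lA G\act S$ is immediate, since every edge of $R$ is either an edge of $S$ or corresponds to a pair of orbit points at $S$-distance $\le D$, so $\d_R(ao,bo)\le \d_S(ao,bo)$ for all $a,b\in G$.

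To transfer independence and non-equivalence from $G \act S$ to $G \act R$, I would first observe that by Proposition~\ref{Prop:Bow}(b), $g$ and $h$ remain loxodromic in $R$ with $R$-quasi-axes $L_g^R,\,L_h^R$ lying within Hausdorff distance $C$ of their $S$-quasi-axes $L_g, L_h$. Independence then follows via the induced boundary map $\partial R \to \partial S$, which identifies the $R$-fixed points of $g$ (respectively $h$) with $\{g^-,g^+\}$ (respectively $\{h^-,h^+\}$), so $\{g^\pm\}\cap\{h^\pm\}=\emptyset$ passes from $S$ to $R$. For non-equivalence -- the content that goes beyond Sisto's original argument -- I would proceed by contradiction: if $g \sGS h$ with respect to $G \act R$, then by Lemma~\ref{Lem:loxeq}(c), for arbitrarily large $\ell$ there exists $a \in G$ so that $aL_g^R$ and $L_h^R$ contain oriented-close sub-segments of $R$-length $\ge \ell$. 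Using Proposition~\ref{Prop:Bow}(b) to replace these $R$-sub-segments by nearby $S$-sub-segments of $aL_g$ and $L_h$, one obtains oriented-close $S$-sub-segments of $S$-length tending to infinity, contradicting $g \not\sGS h$ with respect to $G \act S$.

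The principal obstacles are twofold: (i) ensuring that the subspace $R$ is simultaneously cobounded and quasi-convex in $R^+$, which is delicate precisely when $Go$ is not already cobounded in $S$ and requires Sisto's particular geometric construction; and (ii) performing the quantitative transfer from $R$-oriented-closeness to $S$-oriented-closeness, which requires the Hausdorff tracking of Proposition~\ref{Prop:Bow}(b) to be applied uniformly across all the sub-segments produced by Lemma~\ref{Lem:loxeq}(c). It is this second point -- a metric statement, rather than the purely topological boundary identification that suffices for independence -- where the proof genuinely extends the argument of \cite{BFGS}.
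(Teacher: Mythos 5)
Your construction of $R^+$ goes in the wrong direction, and this undermines the entire argument for coboundedness. You propose adding length-$1$ edges between pairs of orbit points $u, v \in Go$ with $\d_S(u,v) \le D$. This only modifies the metric locally near the orbit; it does nothing to shrink the distances between a point far from $Go$ and the orbit itself. If $G \act S$ is not already cobounded, the resulting graph $R^+$ is still not cobounded, no matter how large $D$ is. You acknowledge this ("the shortcut edges make $Go$ metrically dense... which is the place where the construction must be handled with care"), and your fallback -- "pass to a $G$-invariant quasi-convex subspace $R \subseteq R^+$ concentrated near $Go$" -- is not a construction. There is no evident candidate for such a subspace: $(Go)^{+\sigma}$ need not be quasi-convex nor geodesic, and taking a Cayley-graph-style orbit space would sever the connection to $S$ that you need for Proposition~\ref{Prop:Bow}(b) to transfer geodesic tracking.

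The paper's construction is structurally opposite to yours: $R_A$ is obtained by adding an edge between \emph{any} two vertices $x, y \in S$ for which some $S$-geodesic from $x$ to $y$ avoids $Gs^{+A}$. This collapses distances \emph{far} from the orbit, which is precisely what makes the action cobounded, while leaving the geometry in a thick neighborhood of the orbit (in particular near the quasi-axes $L_g$, $L_h$) untouched -- that is the content of the claim and property $(\ast)$ in the paper's proof, established by choosing $A$ sufficiently large in terms of $\delta$, the Morse constant, and $\d_S(s,gs), \d_S(s,hs)$. Your transfer argument for non-equivalence via Lemma~\ref{Lem:loxeq}(c) and Hausdorff tracking of quasi-axes is in the right spirit and close to what the paper actually does, but with the paper's $R_A$ the transfer is cleaner: property $(\ast)$ guarantees that near $(L_g \cup L_h)^{+r}$ the metrics of $S$ and $R_A$ literally coincide, so no quantitative tracking argument is needed there.
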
 

\begin{proof}
Suppose that $S$ is $\delta$-hyperbolic. Note that passing to an equivalent $G$-action on a hyperbolic space preserves the set of loxodromic elements, as well as the independence and equivalence relations.  Since every action of a group on a geodesic metric space is equivalent under the relation $\sim_A$ to an action on a graph (see, for example, \cite[Proposition 3.14]{AHO20}), we can assume that $S$ is a graph without loss of generality. We fix any vertex $s\in S$ and let $L_g$, $L_h$ be standard quasi-axes  of $g$ and $h$ passing through $s\in S$ (see Definition \ref{Def:qa}).

For any $A \in \NN$, we define $R_A$ to be the graph obtained from $S$ by adding an edge between any two vertices $x,y \in S$ such that there exists a geodesic in $S$ connecting $x$ to $y$ that does not intersect  $Gs^{+A}$. We call an edge of $R_A$ \textit{new} if it is not an edge of $S$. Since our definition of new edges is $G$-equivariant, the action $G\act S$ extends to a $G$-action on $R_A$ by graph automorphisms. Further, given a vertex $u$ of $S$, consider a geodesic $p$ in $S$ going from $u$ to a nearest vertex in $Gs$. If $\ell(p)\ge A+1$, let $v$ be the vertex on $p$ such that  $\d(u,v)=\ell(p)-A-1$ (see Fig. \ref{Fig:Cobounded}). By the choice of $p$, the initial segment $[u,v]$ of $p$ does not intersect $Gs^{+A}$. Hence, $\d_{R_A}(u,v)\le 1$ and, consequently, $u$ belongs to the $(A+2)$-neighborhood of $Gs$ in $R_A$. Thus, the action $G \act R_A$ is cobounded. Obviously, we also have $G\act R_A\lA G\act S$. 

\begin{figure}
  \centering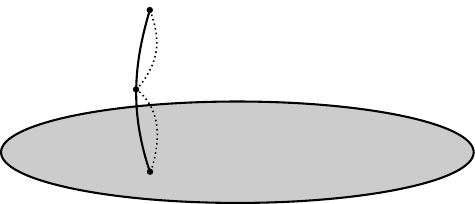
  \caption{Proving that the action  $G\act R_A$ is cobounded.}\label{Fig:Cobounded}
\end{figure}

It is easy to see that the assumptions of Proposition \ref{Prop:Bow} are satisfied for $D=2\delta +1$. Hence, there exist $\e, C\ge 0$ (which depend only on $\delta$) such that $R_A$ satisfies conditions (a) and (b). Further, we fix any $K\ge 1$, $L\ge 0$ such that $L_g$ and $L_h$ are $(K,L)$-quasi-geodesic in $S$. Let $M=\max\{ M(\e, K,L),\, M(\delta, K,L)\}$, where $M(\e, K,L)$ and $M(\delta, K,L)$ are provided by Lemma \ref{lem:Morse lemma}, and let $r = 2(\e + M)
$  and
\begin{equation}\label{Eq:A}
A = r  + 3\delta + M + C +\max\{ \d_S(s,gs), \, \d_S(s, hs)\}.
\end{equation}
We first prove the following.

\medskip

\noindent {\bf Claim.} \textit{For any new edge $e$ of $R_A$ and any endpoint $x$ of $e$, we have $\d_{R_A}(x, Gs)>A$}.

\medskip

Let $f$ be an element of $G$ such that $\d_{R_A}(x, fs) = \d_{R_A}(x, Gs)$ and let $p$ be a geodesic in $R_A$ going from $fs$ to $x$. Without loss of generality, we can assume that $p$ contains no new edges (otherwise, we replace $x$ with the origin of the first new edge in $p$). Thus, $\d_S(x, Gs)\le \ell(p)=\d_{R_A}(x, Gs)$. On the other hand, we have $\d_S(x, Gs)>A$ as $e$ is a new edge, and the claim follows. As a corollary, we obtain:
\begin{itemize}
    \item[$(\ast)$]
    \textit{For any vertices $x,y \in (L_g\cup L_h)^{+r}$ in $R_A$, no geodesic in $R_A$ connecting $x$ to $y$ contains new edges; in particular, $\d_{R_A}(x,y) = \d_S(x,y)$.}
\end{itemize}

Indeed, by the claim and inequality $A \ge r + \max\{ \d_S(s,gs), \, \d_S(s, hs)\}$, vertices $x$ and $y$ belong to $(L_g\cup L_h)^{+r}$ in $S$. Combining with Corollary \ref{Cor:ngon} applied to the obvious geodesic pentagon (represented by red lines in Fig. \ref{Fig:penta}) and Lemma \ref{lem:Morse lemma}, we obtain that any geodesic $p$ in $S$ connecting $x$ to $y$, belongs to $(L_g\cup L_h)^{+(r +3\delta + M)}$ in $S$. By part (b) of Proposition \ref{Prop:Bow}, any geodesic $q$ in $R_A$ connecting $x$ to $y$ belongs to the closed $C$-neighborhood of $p$ in $R_A$, and hence to $Gs^{+A}$ in $R_A$. Using the claim again, we conclude that $q$ contains no new edges.

Property ($\ast$) immediately implies that $g, h\in \L(G\act R_A)$,  $L_g$ and $L_h$ are $(K,L)$-quasi-axes of $g$ and $h$ in $R_A$, and if $g$ and $h$ are independent with respect to the action on $S$, then they are independent with respect to the action on $R_A$. Further, suppose that $g\sim_{G\act R}h$. By Lemma \ref{Lem:loxeq}, for every $\ell \ge 0$, there exists $a \in  G$ such that $aL_g$  and $L_h$ contain subsegments $p$ and $q$ of length at least $\ell$ that are oriented
$r$-close in $R_A$. By ($\ast$), the same configuration appears in $S$, which implies that $g\sGS h$. 
\end{proof}

We record a useful corollary. Since it only uses the portion of Proposition \ref{Prop:Cob} proved in the appendix to \cite{BFGS}, we attribute it to Sisto.

\begin{cor}[Sisto]\label{Cor:Sisto}
A group $G$ is weakly hyperbolic if and only if there exists a generating set $X$ of $G$ such that the Cayley graph $Cay(G,X)$ is hyperbolic of general type. 
\end{cor}

\begin{proof}
    Suppose $G$ is weakly hyperbolic. By Proposition \ref{Prop:Cob} (equivalently, Corollary A.2 in \cite{BFGS}), there exists a cobounded, general type action of $G$ on a hyperbolic space. To complete the proof, it remains to apply Lemma \ref{Lem:MS} and note that the property of being of general type is preserved under the equivalence of actions.
\end{proof}

\begin{figure}
  \centering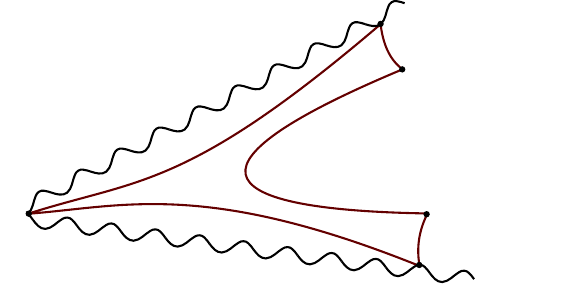
  \caption{Red lines represent geodesics in $S$, all distances are measured in $S$.}\label{Fig:penta}
\end{figure}

\paragraph{5.2. Compressing hyperbolic spaces along axes of loxodromic isometries.}
We now present a general construction that will allow us to compress certain group actions on hyperbolic spaces in infinitely many ``independent" directions. For simplicity, we restrict to the particular case of group actions on Cayley graphs, which is sufficient for our goals. 

Throughout this subsection, we fix a group $G$, a generating set $X\in Hyp(G)$, and $\delta\ge 0$  such that $Cay(G,X)$ is $\delta$-hyperbolic. 
Further, we fix an infinite sequence of loxodromic elements $g_i\in \L(G\act Cay(G,X))$, $i\in \NN$. Depending on the situation, we will assume that $(g_i)$ satisfies one or both of the following conditions. 
\begin{enumerate}
    \item[({\bf L}$_1$)] There exist $K\ge 1$, $L\ge 0$ such that every $g_i$ admits a $(K,L)$-quasi-axis $L_{g_i}$ in $Cay(G,X)$ passing through $1$.

    \item[({\bf L}$_2$)] For any $i\in \NN$ and any $r\ge 0$, we have $
   \sup\{\diam(L_{g_i}\cap (aL_{g_j})^{+r})\mid a\in G,\; j\ne i\}<\infty $.
\end{enumerate}

Given a word $w \in G$ in the alphabet $X^{\pm 1}$ and $n \in \NN$, we define $S(w,n) $ to be the set of all subwords of the words $w^{\pm n}$ and let $S(w,\infty)=\bigcup_{n\in \NN}S(w,n)$. For every $i\in \NN$, let $w_i$ denote the label of the segment of $L_{g_i}$ going from $1$ to $g_i$. Thus, $w_i$ is a word in the alphabet $X^{\pm 1}$ representing $g_i$ in $G$ and every subpath of $L_{g_i}$ is labeled by a word from $S(w_i,\infty)$. With every sequence $(n_i)\in (\NN\cup \{\infty\})^\NN$, we associate a new generating set  of $G$ defined by   
\begin{equation}\label{Eq:Wni}
      W_{(n_i)}= X\cup\left(\bigcup_{i \in \NN}S(w_i,n_i)\right)
    \end{equation}
In this notation, we have the following.

\begin{lem}\label{lem:Cay(G,W) hyperbolic}
    Suppose that $(g_i)$ satisfies {\rm({\bf L}$_1$)}; then for any sequence $(n_i)\in (\NN\cup \{\infty\})^\NN$, the set
    $W=W_{(n_i)}$ belongs to $Hyp(G)$ and we have $G \act Cay(G,W) \lA G \act Cay(G,X)$.
\end{lem}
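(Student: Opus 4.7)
My plan is to apply Proposition~\ref{Prop:Bow} with $S=Cay(G,X)$ and $R=Cay(G,W)$, viewing the latter as obtained from the former by adjoining edges indexed by $W\setminus X$. The domination statement comes for free: since $X\subseteq W$, we have $|g|_W\le |g|_X$ for every $g\in G$, which via Lemma~\ref{Lem:ODqoPres} gives $G\act Cay(G,W) \lA G\act Cay(G,X)$. The same inclusion makes the identity map $Cay(G,X)\to Cay(G,W)$ a $1$-Lipschitz map, a fact I will use without further comment to transfer $X$-estimates to $W$-estimates.

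For hyperbolicity I need to produce, uniformly in $i$ and in the choice of a new edge, a bound $D$ on the $Cay(G,W)$-diameter of any $Cay(G,X)$-geodesic $\gamma$ joining the endpoints of that edge. Fix such an edge $e=\{x,xv\}$ with $v\in S(w_i,n_i)\setminus X$. The key geometric observation is that the edge-path in $Cay(G,X)$ from $x$ reading $v$ coincides with a subpath $\sigma$ of some translate $aL_{g_i}$: indeed, $L_{g_i}$ is labelled by $w_i^{\pm\infty}$ and $v$ is a subword of $w_i^{\pm n_i}$, so positioning the occurrence of $v$ inside $w_i^{\pm n_i}$ determines $a$. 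Every vertex of $\sigma$ has the form $xu$ where $u$ is a subword of $v$, and hence of $w_i^{\pm n_i}$, so $u\in S(w_i,n_i)\subseteq W$, which places all such vertices within $W$-distance $1$ of $x$.

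To close the argument I combine this with Lemma~\ref{lem:Morse lemma}: because $aL_{g_i}$ is $(K,L)$-quasi-geodesic in the $\delta$-hyperbolic graph $Cay(G,X)$ and $\gamma$ is a genuine geodesic between the same endpoints, the Hausdorff distance between $\gamma$ and $\sigma$ in $Cay(G,X)$ is at most $M(\delta,K,L)$. Pushing forward through the $1$-Lipschitz identity and combining with the previous paragraph, every point of $\gamma$ lies at $W$-distance at most $M(\delta,K,L)+2$ from $x$, giving a uniform bound on $\diam_{Cay(G,W)}(\gamma)$ depending only on $\delta$, $K$, and $L$. Proposition~\ref{Prop:Bow}(a) then yields hyperbolicity of $Cay(G,W)$ with a constant depending only on $\delta$, $K$, $L$. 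I do not foresee a serious obstacle; the only routine checks are that $W$ is symmetric (so $Cay(G,W)$ is well-defined as an unoriented graph) and that subwords of $v$ remain subwords of $w_i^{\pm n_i}$, both of which follow directly from the definitions.
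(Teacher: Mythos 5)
Your proof is correct and follows essentially the same route as the paper: it reduces hyperbolicity to Proposition~\ref{Prop:Bow} by observing that the $Cay(G,X)$-path reading a new generator $v\in S(w_i,n_i)$ is a translate of a subsegment of the $(K,L)$-quasi-axis $L_{g_i}$, invoking Lemma~\ref{lem:Morse lemma} to keep the $X$-geodesic near that path, and using closure of $W$ under subwords to bound the $W$-diameter; the domination $G\act Cay(G,W)\lA G\act Cay(G,X)$ comes from $X\subseteq W$ exactly as in the paper. The only differences are cosmetic (your constant $M(\delta,K,L)+2$ versus the paper's $M(\delta,K,L)+1$, and your more explicit remark about positioning $v$ inside a translate of $L_{g_i}$, which the paper leaves implicit in the appeal to ({\bf L}$_1$)).
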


\begin{proof}
  The Cayley graph $Cay(G,W)$ is obtained from $Cay(G,X)$ by adding edges corresponding to generators from $\bigcup_{i \in \NN}S(w_i,n_i)$. Suppose that $x,y\in G$ are connected by an edge in $Cay(G,W)$ labeled by some $w\in \bigcup_{i \in \NN}S(w_i,n_i)$. Let $p$ be the path in $Cay(G,X)$ starting at $x$ and labeled by $w$, and let $q$ be a geodesic connecting $x$ to $y$ in $Cay(G,X)$. By ({\bf L}$_1$), $p$ is $(K,L)$-quasi-geodesic. Hence, $q\subset p^{+M(\delta,K,L)}$ in $Cay(G,X)$ by Lemma~\ref{lem:Morse lemma}. Note that every vertex of $p$ is connected to $x$ by an edge in $Cay(G,W)$ as the set $W$ is closed under taking subwords. It follows that $q$ belongs to the closed $(M(\delta,K,L)+1)$-neighborhood of $x$ in $Cay(G,W)$. Thus, the assumptions of Proposition~\ref{Prop:Bow} are satisfied for $D=2(M(\delta, K,L)+1)$, and hence $W\in Hyp(G)$. Clearly, the inclusion $X\subseteq W$ implies the inequality $G \act Cay(G,W) \lA G \act Cay(G,X)$.
\end{proof}

To prove our next result, we will need the following simplification of \cite[Lemma 10]{Ol}. For a path $p$ in a metric space, we denote by $p_{\pm}$ the set $\{ p_-, p_+\}$.

\begin{lem}[Olshanskii]\label{Lem:Ols}
Let $\mathcal P$ be a geodesic $n$-gon in a $\delta$-hyperbolic space for some $n\ge 2$ and let $Q$ be a subset of sides of $\mathcal P$. Assume that the total length of all sides from $Q$ is at least $10^3cn$ for some $c\ge 30 \delta$. Then there exist $q\in Q$, another side $p$ of $\mathcal P$, and subsegments $u$ and $v$ of $p$ and $q$, respectively, such that $\d_{Hau}(u_\pm, v_\pm)\le 13\delta$ and $\min \{ \ell(u), \, \ell(v)\} >c$.
\end{lem}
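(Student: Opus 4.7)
The plan is to deduce this lemma as a direct specialization of Olshanskii's Lemma~10 from \cite{Ol}, which is a more technical combinatorial statement about geodesic polygons in hyperbolic spaces. I would first apply the pigeonhole principle: since $\sum_{q\in Q}\ell(q)\ge 10^3 c n$ and $|Q|\le n$, some distinguished side $q_0\in Q$ must satisfy $\ell(q_0)\ge 10^3 c$. By Corollary~\ref{Cor:ngon}, every point of $q_0$ lies within distance $(n-2)\delta$ of the union of the remaining $n-1$ sides of $\mathcal P$, so it suffices to extract a long contiguous subsegment of $q_0$ that runs alongside a single one of these sides.

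To do this, I would fix a nearest-point projection $\pi\colon q_0 \to \partial\mathcal P \setminus q_0$ and record, for each $x\in q_0$, which side of $\mathcal P$ contains $\pi(x)$. The hyperbolicity of the ambient space ensures that ``switching'' between different target sides at nearby points $x,x'\in q_0$ is possible only when two distinct sides of $\mathcal P$ come within $O(\delta)$ of each other close to the switching point; since the polygon has only $n$ corners, this can happen in only a uniformly bounded number of places. Consequently, some contiguous subarc of $q_0$ of length $>c$ projects entirely into a single side $p$ of $\mathcal P$, and after trimming a boundary layer of length $O(\delta)$ (absorbed by $c\ge 30\delta$), one obtains the desired subsegments $u\subset q_0$ and $v\subset p$ with $\d_{Hau}(u_\pm,v_\pm)\le 13\delta$ and $\min\{\ell(u),\ell(v)\}>c$.

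The main obstacle is bookkeeping: the constants $10^3$ and $13\delta$ are quantitative, not symbolic, and arise from worst-case estimates in the projection-and-pigeonhole argument. One must carefully track the additive and multiplicative error terms introduced by trimming and by absorbing the number-of-corners factor $n$ into the $10^3 n$ threshold, and verify compatibility with the $30\delta$ lower bound on $c$. Since the essential combinatorics is already present in \cite{Ol}, most of the work reduces to extracting the right parameters from there and discarding the features of Olshanskii's statement that are not needed for the simplified conclusion.
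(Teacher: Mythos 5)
The paper itself gives no proof of this lemma; it is simply stated as a ``simplification of [Ol, Lemma~10]'' and attributed to Olshanskii. Your stated plan --- to deduce the statement as a specialization of Olshanskii's Lemma~10 --- therefore matches the paper's stance. The issue is that the sketch you then write out as a reconstruction of the underlying argument has two genuine gaps, and the quantitative structure would not close as written.

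First, Corollary~\ref{Cor:ngon} places a point of $q_0$ within $(n-2)\delta$ of the \emph{union} of the other sides, not within $O(\delta)$ of any single side. The lemma asserts a bound of $13\delta$, independent of $n$. Nothing in your sketch explains how $(n-2)\delta$ gets improved to $13\delta$: ``trimming a boundary layer of length $O(\delta)$'' addresses the \emph{length} of the subsegments, not the \emph{distance} between them. Passing from $(n-2)\delta$ to a constant typically requires drawing diagonals so that the point of $q_0$ and its nearest point on the other side both sit in a sub-polygon with a bounded number of sides; this step is exactly what you would need to extract from Olshanskii's argument, and it is missing.

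Second, the pigeonhole is spent too early. From $\sum_{q\in Q}\ell(q)\ge 10^3cn$ you produce a single side $q_0$ with $\ell(q_0)\ge 10^3c$. The other $n-1$ sides form a concatenated geodesic path joining the endpoints of $q_0$, so the nearest-point projection of $q_0$ can switch sides on the order of $n-1$ times (once per intermediate vertex of that path); it is not true that the number of switches is ``uniformly bounded'' independently of $n$. A second pigeonhole inside $q_0$ then only yields a single-target interval of length $\gtrsim 10^3c/(n-1)$, which exceeds $c$ only when $n$ is at most about a thousand. To handle general $n$, the factor of $n$ in the hypothesis must survive to the stage where you count candidate intervals, rather than being absorbed into the choice of $q_0$. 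As written, the argument's dependence on $n$ does not balance, and the conclusion $\min\{\ell(u),\ell(v)\}>c$ is not reached.
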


To simplify the statement of the next lemma, we adopt the convention $x/\infty=0$ for all numbers $x\in \RR$.

\begin{lem}\label{lem:prelim for embedding K_sigma}
   Suppose that the sequence $(g_i)$ satisfies {\rm ({\bf L}$_1$)} and {\rm ({\bf L}$_2$)}. Then, there exist $\alpha >0$ and $(N_i)\in \NN^\NN$ such that, for any $(n_i)\in (\NN\cup \{\infty\})^\NN$ satisfying $n_i\ge N_i$ and any $j, k\in \NN$, we have 
   \begin{equation}\label{Eq:|wj|}
   \lceil k/n_j\rceil \ge |g_j^{k}|_{W} \ge \alpha k/n_j -2,
   \end{equation}
   where $W=W_{(n_i)}$. Furthermore, if at least two elements of $(n_i)$ are not equal to $\infty$,  then $W\in Hyp_{gt}(G)$.
\end{lem}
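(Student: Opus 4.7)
The plan is to observe that this is essentially tautological from the definition of $W$: for any index $i$ and any $s$ with $0 \le s \le n_i$, the word $w_i^s$ is a subword of $w_i^{\pm n_i}$, hence lies in $S(w_i, n_i) \subseteq W$, so $g_i^s$ is represented by a single letter of $W$. Writing $k = q n_j + r$ with $0 \le r < n_j$ and concatenating $q$ copies of the $W$-letter $g_j^{n_j}$ with (if $r>0$) the $W$-letter $g_j^r$ produces a $W$-word of length at most $\lceil k/n_j\rceil$ representing $g_j^k$. The case $n_j = \infty$ is handled by interpreting $g_j^k$ itself as a single letter of $W$.

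\textbf{Lower bound.} Suppose $|g_j^k|_W = m$ and let $u_1\cdots u_m$ be a $W$-geodesic word representing $g_j^k$. I plan to expand each letter $u_l$ to its $X$-spelling to obtain an $X$-path $p = \tau_1\cdots \tau_m$ from $1$ to $g_j^k$ in $Cay(G,X)$, where each $\tau_l$ is of one of three types: (A) a single $X$-edge (if $u_l \in X$), (B) a subsegment of the translate $x_{l-1}L_{g_j}$ (if $u_l \in S(w_j,n_j)$), or (C) a subsegment of $x_{l-1}L_{g_{i_l}}$ with $i_l \ne j$; here $x_{l-1}$ denotes the vertex of $p$ preceding $\tau_l$. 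By (L1) every $\tau_l$ is $(K,L)$-quasi-geodesic, and the segment $\sigma\subset L_{g_j}$ from $1$ to $g_j^k$ is $(K,L)$-quasi-geodesic of $X$-length $k|w_j|_X$. I would then Morse-replace each $\tau_l$ and $\sigma$ by geodesics $\gamma_l$ and $\sigma'$ with the same endpoints (each within Hausdorff distance $M := M(\delta,K,L)$ of the original by Lemma \ref{lem:Morse lemma}) to form a geodesic $(m+1)$-gon $P = \gamma_1\cdots\gamma_m(\sigma')^{-1}$, and apply Olshanskii's Lemma \ref{Lem:Ols} with $Q=\{\sigma'\}$. Any long $13\delta$-fellow-traveling pair of subsegments of $\sigma'$ and a Type C side $\gamma_l$ pushes back via Morse to a long intersection $L_{g_j}\cap (x_{l-1}L_{g_{i_l}})^{+r}$ with $r := 13\delta + 2M$, which by (L2) is uniformly bounded by a constant $B_{j,r} := \sup\{\diam(L_{g_j}\cap(aL_{g_i})^{+r}) \mid i\ne j,\; a\in G\} < \infty$; hence long fellow-traveling with $\sigma'$ can occur only along Type A or Type B sides. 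Iterating (or, equivalently, projecting $p$ onto $L_{g_j}$ by nearest-point projection and bounding each projected piece in turn) produces an inequality of the shape $m_A + m_B\cdot n_j|w_j|_X + m_C\cdot B_{j,r} \ge |g_j^k|_X - O(\delta)$, where $m_A, m_B, m_C$ count edges of the corresponding type and $m_A + m_B + m_C = m$.

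\textbf{Choice of $(N_i)$ and the furthermore clause.} I will choose $N_j$ so large that $n_j|w_j|_X$ dominates $B_{j,r}$ together with all the additive $O(\delta,K,L)$ errors; this absorbs the Type A and Type C contributions into a fixed fraction of the Type B bound and yields $m\ge \alpha k/n_j - 2$ for a universal $\alpha>0$ depending only on $\delta,K,L$, and the lengths $|w_i|_X$. For the final assertion, if $n_i, n_j \in \NN$ for some $i\ne j$, the lower bound gives positive translation lengths for both $g_i$ and $g_j$ in $Cay(G,W)$, so both act loxodromically; to upgrade to the general-type conclusion, I would verify that $g_i$ and $g_j$ retain disjoint fixed-point pairs on $\partial Cay(G,W)$ by applying the lower-bound argument to elements of the form $g_i^a g_j^b$ and using (L2) to exclude long cancellation, obtaining a linear lower bound on $|g_i^a g_j^b|_W$ that rules out any shared boundary fixed point. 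The main obstacle I anticipate is in the lower bound: keeping $\alpha$ genuinely universal across all admissible $(n_i)\ge(N_i)$, and in particular handling Type B pieces, whose underlying segments lie on conjugate translates of $L_{g_j}$ and can in principle align with $L_{g_j}$ for long stretches, which seems to require a global length-accounting in $P$ rather than a per-edge bound.
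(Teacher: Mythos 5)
Your upper bound and your basic setup for the lower bound (replace the $W$-geodesic by the corresponding broken $X$-geodesic path, form a geodesic $(m+1)$-gon against a single $X$-geodesic $q=\sigma'$ from $1$ to $g_j^k$, and apply Olshanskii's Lemma~\ref{Lem:Ols} with $Q=\{\sigma'\}$) match the paper exactly. The gap is that you never pin down the scale parameter $c$ in Olshanskii's lemma, and this is precisely what creates the difficulty you flag at the end. The paper chooses $c = n_j\|w_j\| + 2M$, which is deliberately larger than the $X$-length of \emph{any} Type~A or Type~B side (those have length $\le 1$ and $\le n_j\|w_j\|$, respectively). Then, since the contradiction hypothesis $|g_j^k|_W < \alpha k/n_j - 2$ (with $\alpha = 1/(2\cdot 10^3 K)$) forces $\ell(\sigma') = |g_j^k|_X \ge 10^3 c\,(|g_j^k|_W+1)$, Olshanskii's lemma produces in a single shot a $13\delta$-fellow-traveling pair of length $> c$, and the partner side has length $> c > n_j\|w_j\|$, so it \emph{cannot} be Type A or Type B. It must be Type~C, and then a single Morse push-off contradicts~(\textbf{L}$_2$). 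There is no iteration and no global projection accounting; the worry you raise about Type~B pieces aligning with $L_{g_j}$ for long stretches is dissolved by the fact that no Type~B side is even long enough to carry a fellow-traveling segment of length $>c$. Your proposed ``global length-accounting via nearest-point projection'' might be made to work, but you correctly identify it as unresolved, so as written the lower bound is not closed.

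For the final assertion, the paper also proceeds differently and more cheaply. Rather than proving a linear lower bound on $|g_i^a g_j^b|_W$ from scratch, it argues by contradiction: if (say) $g_1^+ = g_2^+$ in $\partial Cay(G,W)$, there is a uniform $B$ and integers $m(k)$ with $\d_W(g_1^k, g_2^{m(k)}) \le B$. Now pass to the auxiliary generating set $U = W_{(n_1,\infty,\infty,\dots)}$, which contains $W$ and all powers of $g_2$ as single letters; then $|g_1^k|_U \le B+1$ for all $k$. But $U$ still satisfies the hypotheses $n_i\ge N_i$, so the already-established right-hand inequality of \eqref{Eq:|wj|} applied to $U$ gives $|g_1^k|_U \ge \alpha k/n_1 - 2 \to \infty$, a contradiction. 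Your plan to bound $|g_i^a g_j^b|_W$ via small-cancellation from~(\textbf{L}$_2$) is plausible but would require a genuinely new argument; the paper's reduction to the lower bound with a cleverly modified sequence $(n_i)$ is the step you are missing.
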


\begin{proof}
The left inequality in (\ref{Eq:|wj|}) follows immediately from the inclusion $S(w_j,n_j)\subseteq W$. Thus, we only need to prove the right inequality. Let $r=13\delta +2M$, where $M=M(\delta, K,L)$ is provided by Lemma \ref{lem:Morse lemma},  and let
$$
N_i=\lceil\sup\{\diam(L_{g_i}\cap (aL_{g_j})^{+r})\mid a\in G,\; j\ne i\} +2M +L +30\delta\rceil.
$$
Note that $N_i<\infty $ for all $i$ by {\rm ({\bf L}$_2$)}. Further, let $\alpha = 1/(2\cdot 10^3K)$. 

Suppose, for contradiction, that $|g_j^{k}|_{W} < \alpha k/n_j -2$ for some $j,k\in \NN$; equivalently, we have $k> n_j(|g_j^k|_W+2)/\alpha$. For a word $w$ in $X^{\pm 1}$, we denote by $\| w\|$ the number of letters in $w$. Using  ({\bf L}$_1$) and the inequalities $n_j\| w_j\|\ge n_j\ge N_j\ge \max\{ L,\, 2M\}$, we obtain
\begin{equation}\label{Eq:cn}
\begin{split}
   |g_j^k|_X  & \ge (k\| w_j\| -L)/K\ge   
\Big(2\cdot 10^3 K n_j \| w_j\| \big(|g_j^k|_W +2\big) -L\Big)/K \\ & >  2\cdot 10^3 n_j \| w_j\| \big(|g_j^k|_W +1\big) \ge  10^3 \big(n_j \| w_j\| +2M\big)\big(|g_j^k|_W +1\big).
\end{split}
\end{equation}

Let $q$ (respectively, $s$) be a geodesic in $Cay(G,X)$ (respectively, $Cay(G,W)$) going from $1$ to $g_j^k$ (see Fig. \ref{Fig:Ols}). Replacing every edge $e$ of $s$ with a geodesic $t_e$ in $Cay(G,X)$ going from $e_-$ to $e_+$, we obtain a path $t$ in $Cay(G,X)$ connecting $1$ to $g_j^k$ and consisting of at most $|g_j^k|_W$ geodesic segments. Thus, we can think of $\mathcal P=qt^{-1}$ as a geodesic $(|g_j^k|_W+1)$-gon. Since $n_j\ge N_j\ge 30\delta$, and $\ell(q)=|g_j^k|_X$, the inequality (\ref{Eq:cn}) allows us to apply Lemma \ref{Lem:Ols} to $\mathcal P$ and $Q=\{ q\} $ with $c=n_j\| w_j\|+2M$. We conclude that there are subsegments $u$ of $q$ and $v$ of some $t_e$ (where $e$ is an edge of $s$) such that  $\d_{Hau}(u_\pm, v_\pm)\le 13\delta$ and 
\begin{equation}\label{Eq:duv}
    \min\{\ell(u), \ell(v)\} >c = n_j\| w_j\| +2M.
\end{equation}

\begin{figure}
 \centering{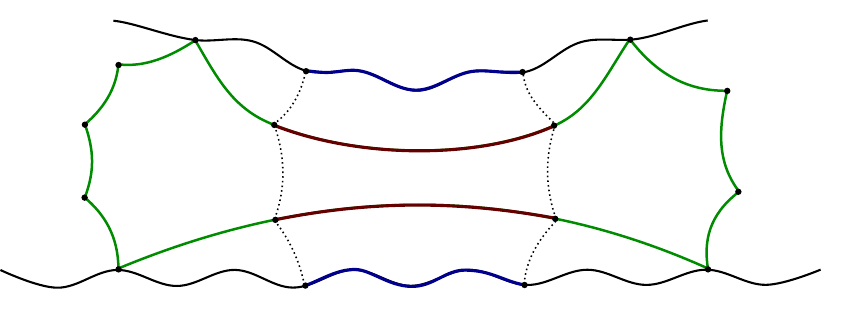}\\
  \caption{Proof of Lemma \ref{lem:prelim for embedding K_sigma}.}\label{Fig:Ols}
\end{figure}

In particular, (\ref{Eq:duv}) implies that $\ell (t_e)>n_j\| w_j\|$. Hence, the label of $e$ cannot belong to $X\cup S(w_j, n_j)$. Thus, $e$ is labeled by a word $w'\in S(w_{j'}, n_{j'})$ for some $j'\ne j$. Using Lemma \ref{lem:Morse lemma}, we conclude that, for some $a\in G$, $L_{g_j}$ and $aL_{g_{j'}}$ contain subsegments $u_0$ and $v_0$, respectively, such that $\d_{Hau}((u_0)_\pm, (v_0)_{\pm})\le 13\delta +2M =r$ and 
\begin{equation*}
    \begin{split}
        \min\{\d_X(u_{0-}, u_{0+}), \d_X(v_{0-}, v_{0+})\} &  > n_j\|w_j\|\ge N_j\\ &\ge \sup\{\diam(L_{g_j}\cap (aL_{g_{j'}})^{+r})\mid a\in G,\; j'\ne j\},
    \end{split}
\end{equation*}  
which contradicts ({\bf L}$_2$).

We now prove the second claim of the lemma. We already know that $W\in Hyp(G)$ by Lemma \ref{lem:Cay(G,W) hyperbolic}, so we only have to show that $Cay(G,W)$ is of general type. Without loss of generality, we can assume that $n_1, n_2<\infty$. By (\ref{Eq:|wj|}), we have $g_1,g_2\in \L(G\act Cay(G,W))$. We will show that $g_1$ and $g_2$ are independent with respect to the action on $Cay(G,W)$.

Arguing by contradiction, suppose that $\{ g_1^-, g_1^+\} \cap \{ g_2^-, g_2^+\} \ne \emptyset$ in $\partial Cay(G,W)$. Replacing elements $g_i$ with their inverses if necessary, we can assume that $g_1^+= g_2^+$. It is well-known (and easy to see using Lemma \ref{lem:Morse lemma}) that this equality is equivalent to the existence of a constant $B\ge 0$ satisfying the following: for any $k\in \NN$, there is $m(k)\in \NN$ such that $\d_W(g_1^k, g_2^{m(k)})\le B$.  To derive a contradiction, we consider another generating set 
$$
U=W_{(n_1, \infty, \infty, \ldots)}=X\cup S(g_1,n_1)\cup\left(\bigcup_{i=2}^\infty S(g_i, \infty)\right) 
$$
Note that $W\cup \la g_2\ra \subseteq U$. Hence, for any $k\in \NN$, we have
$$
|g_1^k|_{U}\le \d_U(g_1^k, g_2^{m(k)})+|g_2^{m(k)}|_U \le  \d_W(g_1^k, g_2^{m(k)})+1 \le B+1.
$$
For sufficiently large $k$, this contradicts the right inequality in (\ref{Eq:|wj|}) applied to $U$.
\end{proof}

\paragraph{5.3. Embedding $Q_{K_\sigma}$ in $Hyp_{gt}(G)$.} We refer the reader to Definition \ref{Def:EKs} for the definition of $\Pi$ and $Q_{K_\sigma}$. 

\begin{prop}\label{Prop:EmbPi}
Suppose that a group $G$ admits a general type action on a hyperbolic space $S$ and contains two non-equivalent loxodromic elements. Then there is a Borel map $f\colon \Pi\to Hyp_{gt}(G)$ satisfying the following conditions.
\begin{enumerate}
    \item[(a)] For any $X\in f(\Pi)$, we have $G\act Cay(G,X)\lA G\act S$.
    \item[(b)] For any $r,s\in \Pi$, we have $f(r)\preccurlyeq f(s)$ if and only if $r\,Q_{K_\sigma}s$.
\end{enumerate}  
\end{prop}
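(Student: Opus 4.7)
The plan is to reduce to a general type action on a Cayley graph, then use the loxodromic sequence produced by Proposition~\ref{Prop:BF} to compress $Cay(G,X)$ along infinitely many independent directions, with the amount of compression in the $i$th direction controlled by the coordinate $r(i)$ of $r\in\Pi$.

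First I would normalize the action. Applying Lemma~\ref{Lem:BF} to $G\act S$ extracts independent, non-equivalent elements $g,h\in\L(G\act S)$. Feeding them into Proposition~\ref{Prop:Cob} yields a cobounded action $G\act R$ on a hyperbolic graph with $G\act R\lA G\act S$ in which $g$ and $h$ remain loxodromic, independent, and non-equivalent; in particular, $G\act R$ is of general type. By Lemma~\ref{Lem:MS} and Remark~\ref{Rem:Equiv}, there exists $X\in Hyp_{gt}(G)$ with $G\act Cay(G,X)\eA G\act R$, and this Cayley action still admits two non-equivalent loxodromic elements. Applying Proposition~\ref{Prop:BF} to $G\act Cay(G,X)$ at the base vertex $o=1$ provides a sequence $(g_i)_{i\in\NN}\subset \L(G\act Cay(G,X))$ with combinatorial $(K,L)$-quasi-axes $L_{g_i}$ through $1$ satisfying precisely conditions~({\bf L}$_1$) and~({\bf L}$_2$) of Section~\ref{Sec:ModGA}. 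Lemma~\ref{lem:prelim for embedding K_sigma} then supplies constants $\alpha>0$ and $(N_i)\in\NN^\NN$ together with the key word-length estimate
\[
\lceil k/n_j\rceil\;\ge\;|g_j^k|_{W}\;\ge\;\alpha k/n_j-2
\]
valid for every $W=W_{(n_i)}$ with $n_i\ge N_i$.

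Next I would define the map $f$. For $r\in\Pi$, set $n_i(r)=N_i\cdot 2^{i-r(i)}$, which satisfies $n_i(r)\ge N_i$ since $r(i)\le i$, and let $f(r)=W_{(n_i(r))}$ as in~(\ref{Eq:Wni}). Lemma~\ref{lem:prelim for embedding K_sigma} guarantees $f(r)\in Hyp_{gt}(G)$, and Lemma~\ref{lem:Cay(G,W) hyperbolic} combined with the first paragraph gives $G\act Cay(G,f(r))\lA G\act Cay(G,X)\lA G\act S$, proving~(a). The map $f$ is continuous: $n_i(r)$ depends only on the coordinate $r(i)$, and for each fixed $g\in G$ and $i\in\NN$ the set $\{n\in\NN\mid g\in S(w_i,n)\}$ is either empty or an upward-closed interval, so each basic condition ``$g\in f(r)$'' pulls back to a clopen subset of $\Pi$.

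Finally I would verify~(b) via the identity $n_j(s)/n_j(r)=2^{r(j)-s(j)}$. For the forward direction, if $f(r)\preccurlyeq f(s)$ with Lipschitz constant $C$, then plugging $y=g_j^k$ into $|y|_{f(r)}\le C|y|_{f(s)}$ and combining both bounds of the estimate above gives $\alpha k/n_j(r)-2\le Ck/n_j(s)$; letting $k\to\infty$ yields $n_j(s)/n_j(r)\le C/\alpha$ uniformly in $j$, hence $\sup_j(r(j)-s(j))<\infty$, i.e.\ $rQ_{K_\sigma}s$. For the converse, if $r(j)-s(j)\le M$ for all $j$, then $n_j(s)\le 2^M n_j(r)$. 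Every $y\in f(s)$ lies either in $X\subseteq f(r)$ (so $|y|_{f(r)}\le 1$) or is represented by a subword of $w_i^{\pm m}$ for some $i$ with $|m|\le n_i(s)$; in the latter case I would decompose this subword canonically as $u_0\cdot w_i^{\pm k}\cdot u_t$, where $u_0,u_t\in S(w_i,1)$ are the partial initial and terminal copies of $w_i$ and $|k|\le n_i(s)$, then write $|k|=qn_i(r)+r'$ with $0\le r'<n_i(r)$ to express $y$ as a product of at most $q+3\le 2^M+3$ generators drawn from $S(w_i,n_i(r))\subseteq f(r)$. Thus $\sup_{y\in f(s)}|y|_{f(r)}<\infty$, giving $f(r)\preccurlyeq f(s)$.

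The main obstacle I anticipate is the preparatory alignment: Proposition~\ref{Prop:BF} requires the action $G\act Cay(G,X)$ to be of general type, which forces one to invoke Lemma~\ref{Lem:BF} at the outset to guarantee that the non-equivalent pair extracted from $G\act S$ can be chosen independent, so that general type survives the Cob/Svarc--Milnor reduction. Once the sequence $(g_i)$ is in hand, the exponential scaling $n_i(r)=N_i\cdot 2^{i-r(i)}$ converts the additive comparison on $\Pi$ into the multiplicative comparison of the $n_i$'s required by Lemma~\ref{lem:prelim for embedding K_sigma}, and the combinatorial decomposition in the converse of~(b) is routine.
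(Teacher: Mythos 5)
Your proposal is correct and follows essentially the same route as the paper: Lemma~\ref{Lem:BF} $\to$ Proposition~\ref{Prop:Cob} $\to$ Lemma~\ref{Lem:MS} to land on a general type Cayley graph, then Proposition~\ref{Prop:BF} to produce the sequence $(g_i)$ satisfying ({\bf L}$_1$), ({\bf L}$_2$), then Lemma~\ref{lem:prelim for embedding K_sigma} with the exact same choice $f(r)=W_{(2^{i-r(i)}N_i)}$, and the same two-sided use of the word-length estimate for part~(b). The only cosmetic differences are that you spell out the subword decomposition in the easy direction (the paper just asserts a bound), you pass to the limit $k\to\infty$ instead of picking a single large power as the paper does, and your remark that $f$ is ``continuous'' is slightly too strong (a countable union of clopen sets is only open, so the preimage of a basic neighborhood is a Boolean combination of opens, which is what makes $f$ Borel rather than continuous) — but none of this affects the argument.
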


\begin{proof} 
By Lemma \ref{Lem:BF}, we can find independent elements $g, h\in \L(G\act S)$ such that $g\not\sGS h$. By Proposition \ref{Prop:Cob}, we can assume that $G\act S$ is cobounded without loss of generality. Lemma \ref{Lem:MS} yields a generating set $X \in Hyp_{gt}(G)$ such that the actions $G\act Cay(G,X)$ and $G\act S$ are equivalent. In particular, $G\act Cay(G,X)$ is of general type, and $g$, $h$ are loxodromic and non-equivalent with respect to this action. Let $(g_i)$ be a sequence of elements satisfying properties (a) and (b) in Proposition \ref{Prop:BF} applied to $G\act Cay(G,X)$ and $o=1$.  Thus, the sequence $(g_i)$ satisfies conditions ({\bf L$_1$}) and ({\bf L$_2$})

Let $\alpha $ and $(N_i)$ be as in Lemma \ref{lem:prelim for embedding K_sigma}. We define a map $f \colon \Pi \to Gen(G)$ by the rule 
    \begin{equation}\label{Eq:Psi}
    f(r)= W_{(2^{i-r(i)}N_i)} \;\;\; \forall \, r\in \Pi, 
    \end{equation}
where $W_{(2^{i-r(i)}N_i)}$ is given by (\ref{Eq:Wni}) for the sequence $n_i=2^{i-r(i)}N_i$. It is straightforward to see that this map is Borel. By Lemma \ref{lem:prelim for embedding K_sigma}, we have $f(r)\in Hyp_{gt}(G)$ for all $r\in \Pi$,  and Lemma \ref{lem:Cay(G,W) hyperbolic} implies (a). 
   
To prove (b), suppose first that $r, s\in \Pi$ and $r\, Q_{K_\sigma}s$. Let $k = \sup_{i\in\NN}(r(i)-s(i))$. Then $2^{i-s(i)}\le 2^{i-r(i)+k}$. Combining this inequality with (\ref{Eq:Wni}) and (\ref{Eq:Psi}), it is easy to see that $|w|_{f(r)}\le 2^k$ for all $w\in f(s)$. This implies that $f(r)\preccurlyeq f(s)$. 

Next, suppose that $f(r)\preccurlyeq f (s)$; i.e., there is $C\ge 0$ such that 
\begin{equation}\label{Eq:gleC}
|g|_{f(r)} \le C\;\;\; \forall\, g\in f(s). 
\end{equation}
We want to show that $r\, Q_{K_\sigma}s$. Suppose, for contradiction, that $\sup_{i\in\NN}(r(i)-s(i))=\infty$. Then there exists $i\in\NN$ such that $\alpha 2^{r(i)-s(i)} -2>C$. Using \eqref{Eq:|wj|}, we obtain
$$
\left|g_i^{2^{i-s(i)}N_i}\right|_{f(r)} \ge \alpha\cdot  \frac{ 2^{i-s(i)}N_i}{2^{i-r(i)}N_i} -2 = \alpha 2^{r(i)-s(i)} -2>C,
$$
which contradicts (\ref{Eq:gleC}) as $g_i^{2^{i-s(i)}N_i}\in f(s)$ by definition. 
\end{proof}


\section{Isotropic weakly hyperbolic groups.}\label{Sec:IWHG}


This section is devoted to the study of general-type actions of isotropic groups on hyperbolic spaces. In particular, we show that all such actions satisfy a weak form of the isotropy condition and exhibit a coarse version of the marked spectrum rigidity phenomenon (for more details on the latter, see \cite{WXY} and references therein).

\paragraph{6.1. Translation length functions and group actions on hyperbolic spaces.}\label{Sec:TLR} In this subsection, we establish several auxiliary results that describe the relation between the translation length functions on a group $G$ computed with respect to general type actions on hyperbolic spaces that are comparable with respect to $\lA$. We begin by recalling the following.

\begin{defn}
Let $G\curvearrowright S$ be an action of a group $G$ on a metric space $S$ and let $s\in S$. The \emph{translation length} of an element $g\in G$ is defined by the formula
$$
\tau_{G\curvearrowright S}(g)=\liminf_{n\to \infty} \frac{\d_S (s,g^ns)}{n}.
$$ 
\end{defn}

By the triangle inequality, the sequence $\d_S(s, g^ns)$ is subadditive. The well-known and easy-to-prove result about subadditive sequences (sometimes referred to as Polya--Szeg\"o theorem) implies that the actual limit always exists and, moreover, 
\begin{equation}\label{Eq:Liminf}
\tau_{G\curvearrowright S}(g)=\lim_{n\to \infty}\frac{\d_S (s,g^ns)}{n}=\inf_{n\in \NN} \frac{\d_S (s,g^ns)}{n} .
\end{equation}

Further, it is easy to see that $\tau_{G\curvearrowright S}(g)$ is independent of the choice of a particular point $s$ and $\tau_{G\curvearrowright S}(g^k)=|k|\tau _{G\curvearrowright S}(g)$ for all $k\in \ZZ$. If $S$ is hyperbolic, we obviously have $\tau_{G\curvearrowright S}  (g)>0$ if and only if $g\in \L (G\curvearrowright S)$. We will frequently use these properties below without further reference.

\begin{defn}
    Given two functions $\alpha, \beta\colon G\to [0, \infty)$, we write $\alpha \preccurlyeq_{Lip} \beta $ if there exists $K\in [0, \infty)$ such that $\alpha(g)\le K\beta(g)$ for all $g\in G$. Further, we say that $\alpha$ and $\beta$ are \emph{Lipschitz equivalent} and write $\alpha \sim_{Lip} \beta $ if $\alpha \preccurlyeq_{Lip}  \beta $ and $\beta \preccurlyeq_{Lip} \alpha$.

\end{defn}
Our first goal is to prove the following.  

\begin{prop}\label{Prop:tau}
Let $G\curvearrowright S$ and $G\curvearrowright T$ be general type actions of a group $G$ on hyperbolic spaces $S$ and $T$. We have $G\curvearrowright S\lA G\curvearrowright T$ if and only if $\tau_{G\curvearrowright S} \preccurlyeq_{Lip} \tau_{G\curvearrowright T}$. In particular,  $G\curvearrowright S\sim  G\curvearrowright T$ if and only if $\tau_{G\curvearrowright S}$ and  $\tau_{G\curvearrowright T}$ are Lipschitz equivalent.
\end{prop}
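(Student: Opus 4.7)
The forward direction is immediate from \eqref{Eq:Liminf}: if $\d_S(s, gs) \le C\d_T(t, gt) + C$ for every $g\in G$, applying this with $g$ replaced by $g^n$, dividing by $n$, and taking $n\to\infty$ absorbs the additive constant and yields $\tau_{G\act S}(g) \le C\tau_{G\act T}(g)$ for every $g$; hence $\tau_{G\act S} \preccurlyeq_{Lip} \tau_{G\act T}$. The ``in particular'' claim about $\eA$ and $\sim_{Lip}$ follows by applying the main equivalence in both directions.

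The real work is the converse: from $\tau_{G\act S}(g)\le K\tau_{G\act T}(g)$ for every $g \in G$ I need to recover an inequality $\d_S(s, gs)\le D\,\d_T(t, gt)+D$. My plan is a \emph{loxodromic replacement} construction. For every $g\in G$ I will produce an auxiliary element $\phi(g)\in G$ such that
\begin{itemize}
\item[(i)] $\phi(g)$ is loxodromic with respect to $G\act S$ with translation length $\tau_{G\act S}(\phi(g)) \ge C^{-1}\,\d_S(s,gs) - C$, and
\item[(ii)] $\phi(g)=g\cdot h$ for some $h$ in a finite set $F\subseteq G$, so that $\d_T(t, \phi(g)\,t)\le \d_T(t, gt)+\max_{h\in F}\d_T(t, ht)$.
\end{itemize}
Once such a $\phi$ is built, the conclusion is a short chain of inequalities, using the obvious bound $\tau_{G\act T}(\cdot)\le\d_T(t,\cdot\, t)$ in the middle step:
\begin{equation*}
\d_S(s,gs) \le C\,\tau_{G\act S}(\phi(g))+C^2 \le CK\,\tau_{G\act T}(\phi(g))+C^2 \le CK\,\d_T(t,\phi(g)\,t)+C^2,
\end{equation*}
so $\d_S(s,gs)\le CK\,\d_T(t,gt)+D$ for a constant $D$ depending on $C$, $K$, and $F$ only, and $G\act S \lA G\act T$ then follows from Lemma~\ref{Lem:ODqoPres}.

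The construction of $\phi(g)$ satisfying (i) is where the general type hypothesis on $G\act S$ is used essentially. By Lemma~\ref{dense}, the pairs of fixed points $(h^-, h^+)$ of loxodromic elements are dense in $\Lambda_S(G)\otimes\Lambda_S(G)$; I pick a finite family $h_1,\dots,h_k\in\L(G\act S)$ whose quasi-axes pass within bounded distance of $s$ and whose endpoints are spread out enough on $\partial S$ that, for every $g\in G$, at least one index $i$ makes the would-be attracting endpoint of $gh_i$ (which lies near $h_i^+$) transversal at $s$ to the would-be repelling endpoint (which lies near $g\cdot h_i^-$). After replacing each $h_i$ by a sufficiently large power, a ping-pong argument based on Lemma~\ref{lem:Morse lemma} shows that for that index $gh_i$ is loxodromic with quasi-axis passing uniformly close to $s$; consequently
\begin{equation*}
\tau_{G\act S}(gh_i)\ge \d_S(s, (gh_i)\,s)-O(1) \ge \d_S(s, gs)-\max_j\d_S(s, h_j s)-O(1),
\end{equation*}
giving~(i) with $\phi(g):=gh_i$.

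The main technical obstacle is precisely this uniform construction of the family $F=\{h_i\}$: its members' endpoints must be ``sufficiently well-distributed'' on the Gromov boundary for the ping-pong step to succeed for every group element $g$ simultaneously. This is a finite-cover argument on $\partial S$ that rests on the boundary dynamics developed in Section~\ref{Sec:BDL} and on the Morse lemma for quasi-geodesics.
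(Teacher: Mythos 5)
Your forward direction is correct, and the core idea of the converse — multiply $g$ by a loxodromic element so that the product is loxodromic with translation length comparable to $\d_S(s,gs)$, then chain through the obvious inequality $\tau_{G\act T}(\cdot)\le\d_T(t,\cdot\,t)$ — is exactly the mechanism the paper uses. But the way you propose to carry it out has a gap, and it is precisely the step you flag as the "main technical obstacle."

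Your plan requires a \emph{single finite family} $F=\{h_1,\dots,h_k\}$ that works for \emph{every} $g\in G$ simultaneously, and you propose to find it by a finite-cover argument on $\partial S$. This cannot work in the stated generality: the proposition concerns arbitrary (in particular, non-proper) hyperbolic spaces, and for such a space $\partial S$ (or $\Lambda_S(G)$) need not be compact. Already for a cobounded general type action on a tree with infinite valence (e.g.\ $F_\infty$ acting on its standard Cayley tree), the boundary is a Baire-type space in which no finite union of shadows covers everything. Concretely, the ping-pong conditions you need are, roughly, that some $h_i$ satisfies $(g^{-1}s, h_i^N s)_s\le C$ and $(h_i^{-N}s, gs)_s\le C$; this amounts to the pair of coarse directions $(g^{-1}s, gs)$ being ``transversal'' to $(h_i^+,h_i^-)$, and with finitely many $h_i$ there are unavoidable pairs of directions for which this fails once the limit set is non-compact. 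So (i) cannot be achieved with a fixed finite $F$, and without it the whole chain of inequalities has nothing to stand on.

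The paper avoids this by arguing by contradiction rather than directly. Assuming $G\act S\not\lA G\act T$, one obtains a \emph{single} sequence $(f_i)$ violating the inequality. After passing to a subsequence, the closure of $\{f_is,f_i^{-1}s\}$ in $\widehat S$ meets $\partial S$ in at most two points, and then a \emph{single} auxiliary loxodromic $z$ (with $z^\pm$ avoiding those two points) suffices to apply the broken-geodesic estimate (Lemma~\ref{Lem:x0xn}) to the elements $h_i=f_i^{-1}z^k$. In other words, the finite cover is replaced by a diagonal/subsequence argument which tames the set of relevant boundary directions down to a finite set, after which no compactness of $\partial S$ is needed. Your write-up would become correct if you either restrict to proper spaces with compact limit set (not the setting of the proposition) or switch to the contradiction/subsequence format, in which case only one $z$ is needed and the ping-pong becomes a uniform estimate along the chosen subsequence rather than over all of $G$.
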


\begin{rem}
    For cobounded actions, a similar result was obtained in \cite[Theorem~2.14]{ABO}. However, as noted in \cite[Example 6.14]{ABO}, the direct counterpart of that theorem does not hold for non-cobounded actions. This is precisely the reason why we work with Lipschitz equivalence here, which is stronger than the equivalence relation considered in \cite[Theorem 2.14]{ABO}.
\end{rem}

We will need the following lemma proved in {\cite[Lemma 6.12]{ABO}} (an analogous result can be found in \cite[Chapter~5]{GdlH}).

\begin{lem}\label{Lem:x0xn}
Let $x_0, x_1, \ldots, x_n$ be a sequence of points in a $\delta$--hyperbolic space $S$. Suppose that there exists $C\ge 0$ such that $(x_{j-1}, x_{j+1})_{x_j} \le C$ for all $1\le j \le n-1$ and $\d_S (x_{j-1}, x_{j})> 2C+16\delta$ for all $1\le j\le n$. Then
$$
\d_S (x_0, x_n) \ge \sum\limits_{j=1}^n \d_S (x_{j-1}, x_{j}) - 2(n-1)(C+8\delta).
$$
\end{lem}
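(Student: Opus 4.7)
The plan is to prove the lemma by induction on $n$, strengthening the inductive hypothesis to simultaneously control two quantities: the desired length defect
\begin{equation*}
(\ast)\quad \d_S(x_0, x_n) \ge \sum_{j=1}^n \d_S(x_{j-1}, x_j) - 2(n-1)(C+8\delta),
\end{equation*}
and an auxiliary lower bound on a Gromov product at the endpoint,
\begin{equation*}
(\ast\ast)\quad (x_{n-1}, x_0)_{x_n} \ge \d_S(x_{n-1}, x_n) - (C+8\delta).
\end{equation*}
Geometrically, $(\ast\ast)$ encodes the fact that any geodesic from $x_n$ to $x_0$ starts by essentially retracing $[x_n, x_{n-1}]$, which is what propagates small Gromov products from one step of the broken path to the next. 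The base case $n=1$ is trivial for $(\ast)$ and reduces for $(\ast\ast)$ to the tautology $(x_0,x_0)_{x_1} = \d_S(x_0, x_1)$.

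For the inductive step, I will first extract from $(\ast\ast)$ at index $n-1$ that
$$(x_{n-2}, x_0)_{x_{n-1}} \ge \d_S(x_{n-2}, x_{n-1}) - (C + 8\delta) > C + 8\delta,$$
where the last inequality uses the hypothesis $\d_S(x_{j-1}, x_j) > 2C + 16\delta$. Then I will apply condition $({\bf H}_3)$ at the point $x_{n-1}$ to the triple $(x_0, x_{n-2}, x_n)$:
$$(x_{n-2}, x_n)_{x_{n-1}} \ge \min\bigl\{(x_0, x_{n-2})_{x_{n-1}},\, (x_0, x_n)_{x_{n-1}}\bigr\} - \delta.$$
Since the left-hand side is $\le C$ by hypothesis while the first term in the minimum strictly exceeds $C+\delta$, the minimum must be attained by the second term, yielding the key estimate $(x_0, x_n)_{x_{n-1}} \le C + \delta \le C + 8\delta$.

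With this bound in hand, both induction claims follow immediately. For $(\ast\ast)$ at index $n$, use the identity
$(x_{n-1}, x_0)_{x_n} = \d_S(x_{n-1}, x_n) - (x_0, x_n)_{x_{n-1}}$,
which is a direct rearrangement of the two defining formulas for the Gromov products (both compute $\d_S(x_{n-1}, x_n) + \d_S(x_n, x_0) - \d_S(x_{n-1}, x_0)$ up to signs once combined). For $(\ast)$ at index $n$, the definition of Gromov product gives
$$\d_S(x_0, x_n) = \d_S(x_0, x_{n-1}) + \d_S(x_{n-1}, x_n) - 2(x_0, x_n)_{x_{n-1}} \ge \d_S(x_0, x_{n-1}) + \d_S(x_{n-1}, x_n) - 2(C+8\delta),$$
and substituting the inductive hypothesis for $\d_S(x_0, x_{n-1})$ completes the argument.

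The main obstacle is isolating the correct strengthening of the hypothesis; a naive induction on $(\ast)$ alone breaks, because one has no control on $(x_0, x_n)_{x_{n-1}}$ from $(\ast)$ itself. The chosen auxiliary statement $(\ast\ast)$ is exactly what the hyperbolicity condition $({\bf H}_3)$ needs in order to force the min on the correct branch, and the numerical threshold $2C + 16\delta$ in the distance hypothesis is precisely calibrated so that this forcing works at every step.
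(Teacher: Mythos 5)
Your proof is correct. The induction with the two coupled invariants $(\ast)$ and $(\ast\ast)$ is sound: the identity $(a,b)_c + (a,c)_b = \d_S(b,c)$ that you use to propagate $(\ast\ast)$ is an exact algebraic consequence of the definition of the Gromov product, and the ``forcing the $\min$ onto the right branch'' step via $({\bf H}_3)$ is calibrated correctly by the threshold $2C+16\delta$ in the hypothesis (one needs $(x_{n-2},x_0)_{x_{n-1}} > C+\delta$, and you have $>C+8\delta$). The paper in question does not give its own proof of this lemma but cites \cite[Lemma 6.12]{ABO} (with a parallel pointer to \cite[Chapter~5]{GdlH}); your argument is essentially the standard ``broken quasi-geodesic'' induction one finds in those sources, strengthening the inductive claim so that the endpoint Gromov product stays under control. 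One cosmetic remark: you prove the sharper bound $(x_0,x_n)_{x_{n-1}} \le C+\delta$ but then only invoke the weaker $\le C+8\delta$; this is fine, as the stated constant in the lemma already absorbs the slack.
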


\begin{proof}[Proof of Proposition \ref{Prop:tau}]
Suppose first that  $G\curvearrowright S\lA G\curvearrowright T$. By Definition \ref{def-poset}, this means that there exist a constant $K$ and points $s\in S$, $t\in T$ such that $$\d_S (s,fs) \le K\d_T(t, ft)+K$$ for all $f\in G$. Let $g$ be an arbitrary element of $G$. Applying the previous inequality to powers of $g$, we obtain
$$
\tau_{G\curvearrowright S}(g)=\lim\limits_{n\to \infty} \frac{\d_S(s, g^ns)}{n} \le \lim\limits_{n\to \infty} \frac{Kd_T(t, g^nt)+K}{n}= K\lim\limits_{n\to \infty} \frac{\d_T(t, g^nt)}{n}=K\tau_{G\curvearrowright T}(g).
$$
Thus, $\tau_{G\curvearrowright S} \preccurlyeq_{Lip} \tau_{G\curvearrowright T}$.

To prove the converse implication, we argue by contradiction. Suppose that there exists $K\ge 0$ such that
\begin{equation}\label{Eq:tau}
\tau_{G\curvearrowright S} (g)\le K\tau_{G\curvearrowright T}(g)\;\;\;\;\; \forall\, g\in G,
\end{equation}
but $G\curvearrowright S\not \lA G\curvearrowright T$. Let us fix arbitrary points $s\in S$, $t\in T$. By Definition \ref{def-poset}, $G\curvearrowright S\not \lA  G\curvearrowright T$ means that for every $i\in \NN$ there exists an element  $f_i\in G$ such that
\begin{equation}\label{Eq:dfi}
\d_S (s,f_is) \ge i\d_T(t, f_it)+i.
\end{equation}
Note that inequality (\ref{Eq:dfi}) remains true if we pass to a subsequence $f^\prime_i=f_{k(i)}$ of $(f_i)$ for some $k(1)<k(2)<\ldots $; that is, 
$$
\d_S(s, f^\prime_i s)\ge k(i)\d_T(t, f^\prime _it)+k(i)\ge  i\d_T(t, f^\prime _it)+i.
$$
Thus, passing to a subsequence if necessary, we can additionally ensure that each of the sequences $(f_is)_{i \in \NN}$, $(f_i^{-1}s)_{i \in \NN}$ either converges to infinity or does not contain any subsequence convergent to infinite. In particular, the closure of the set $\{ f_is, f_i^{-1}s\mid i\in \NN\}$ in $S\cup \partial S$ contains at most $2$ points of $\partial S$.

Since the action of the group $G$ on $S$ is of general type, Lemma \ref{dense} guarantees the existence of an element $z\in \L(G\curvearrowright S)$ such that the closure of the set $\{ f_is, f_i^{-1}s\mid i\in \NN\}$ in $S\cup \partial S$ does not intersect $\{ z^+, z^-\}$. By the definition of the topology on $\partial S$, this implies that there is a constant $C$ such that
\begin{equation}\label{Eq:supz}
(z^ns, f_is)_s\le C\;\;\; {\rm and}\;\;\; (z^{-n}s, f_i^{-1}s)_s\le C
\end{equation}
for all $n,i\in \mathbb N$.

Suppose that $S$ is $\delta$-hyperbolic. Again passing to a subsequence of $(f_is)$ if necessary, we can assume that
\begin{equation}\label{Eq:fi}
\d_S(f_is,s) > 2C+16\delta
\end{equation}
for all $i\in \mathbb N$. Further since $z$ is loxodromic, there exists $k\in \mathbb N$ such that
\begin{equation}\label{Eq:zk}
\d_S(z^ks,s) > 2C+16\delta.
\end{equation}

For every $i\in \mathbb N$, we define $h_i=f_i^{-1} z^k$ and consider the sequence of points
\begin{equation}\label{Eq:xj}
x_0=s, \;\;\; x_1=f_i^{-1}s,\;\;\; x_2=h_is,\;\;\; x_3=h_if_i^{-1}s, \;\;\; x_4 = h_i^2s, \;\;\; \ldots
\end{equation}
For any odd $j\in \NN$, the obvious equality $(ga,gb)_{gc}=(a,b)_c$ for all $a,b,c\in S$ and all $g\in G$ implies
$$
(x_{j-1}, x_{j+1})_{x_j}= (x_0, x_2)_{x_1} = (s, h_is)_{f_i^{-1}s} = (f_is, z^ks)_s.
$$
Similarly, if $j$ is even, we obtain
$$
(x_{j-1}, x_{j+1})_{x_j}= (x_1, x_3)_{x_2} = (f_i^{-1}s, h_if_i^{-1}s)_{h_is} = (z^{-k}s, f_i^{-1}s)_s.
$$
In either case, we have $(x_{j-1}, x_{j+1})_{x_j}\le C$ by (\ref{Eq:supz}). This inequality, (\ref{Eq:fi}), and (\ref{Eq:zk}) allow us to apply Lemma \ref{Lem:x0xn} to any initial subsequence of the sequence (\ref{Eq:xj}).
For every $m\in \mathbb N$, we obtain
\begin{equation*}
    \begin{split}
\d_S(h_i^ms,s) & = \d_S(x_0, x_{2m})\ge  \sum\limits_{j=1}^{2m}  \d_S (x_{i-1}, x_{i}) - 2(2m-1)(C+8\delta) \\ 
&\ge  m\big(\d_S(f_i^{-1}s,s)+\d_S(z^ks,s)\big) - 2(2m-1)(C+8\delta).
\end{split}
\end{equation*}

Consequently,
$$
\tau_{G\curvearrowright S}(h_i)=\lim_{m\to \infty} \frac{\d_S(h_i^ms,s)}{m} \ge \d_S(f_i^{-1}s,s)-C_1,
$$
where $C_1$ is a constant independent of $i$. On the other hand, we have
$$
\tau _{G\curvearrowright T}(h_i) \le \d_T(h_it, t) \le \d_T(f_i^{-1}z^kt, f_i^{-1} t)+ \d_T(f_i^{-1}t,t)=\d_T(f_i^{-1}t,t)+ C_2,
$$
where $C_2=\d_T(z^kt, t)$ is also independent of $i$. Combining with (\ref{Eq:tau}) and (\ref{Eq:dfi}), we obtain 
\begin{equation*}
\begin{split}
i\d_T(f_it,t)+i & \le \d_S(f_is,s) \le \tau_{G\curvearrowright S}(h_i)+C_1 \le  K\tau_{G\curvearrowright T}(h_i)+C_1\\
& \le  K\d_T(f_it,t) + KC_2+C_1
\end{split}
\end{equation*}
for all $i\in \NN$, which is a clear nonsense. This contradiction completes the proof.
\end{proof}

\begin{lem}\label{Lem:supgh}
Let $G$ be a group acting on a hyperbolic space $T$ and let $g,h\in \L(G\act T)$. Suppose that for some point $t\in T$, there exist strictly increasing sequences $(m_i)$, $(n_i)$ of natural numbers such that $$\sup_{i\in \NN} \big|\d_T(t, g^{m_i}t) - \d_T(t, h^{n_i}t)\big|<\infty.$$
Then $\lim\limits_{i\to \infty} m_i/n_i$ exists and equals  $\tau_{G\curvearrowright T} (h)/\tau_{G\curvearrowright T} (g).$
\end{lem}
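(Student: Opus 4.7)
The plan is to exploit equation (\ref{Eq:Liminf}), which shows that for any $f\in G$, the translation length $\tau_{G\curvearrowright T}(f)$ is a genuine limit of $\d_T(t,f^nt)/n$ and equals the infimum of this quantity over $n\in\NN$. Since $g$ and $h$ are loxodromic, both $\tau_g:=\tau_{G\curvearrowright T}(g)$ and $\tau_h:=\tau_{G\curvearrowright T}(h)$ are strictly positive. Abbreviate $a_i=\d_T(t,g^{m_i}t)$ and $b_i=\d_T(t,h^{n_i}t)$, so that $|a_i-b_i|\le C$ for some $C\ge 0$, and observe that $m_i,n_i\to\infty$ because $(m_i)$ and $(n_i)$ are strictly increasing sequences of natural numbers.

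The first step is to show that the ratios $m_i/n_i$ and $n_i/m_i$ are bounded. For the upper bound on $m_i/n_i$, I would combine the inequality $a_i\ge m_i\tau_g$, coming directly from (\ref{Eq:Liminf}), with the triangle-inequality bound $b_i\le n_i\d_T(t,ht)$, obtaining
$$
m_i\tau_g\le a_i\le b_i+C\le n_i\d_T(t,ht)+C.
$$
Dividing by $n_i$ and using $\tau_g>0$ gives a uniform upper bound on $m_i/n_i$. A symmetric argument (using $\tau_h>0$ in place of $\tau_g$ and $\d_T(t,gt)$ in place of $\d_T(t,ht)$) produces a uniform upper bound on $n_i/m_i$.

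For the second step, set $\e_i = a_i/m_i - \tau_g$ and $\eta_i = b_i/n_i - \tau_h$; by (\ref{Eq:Liminf}), $\e_i,\eta_i\to 0$. Rewriting $|a_i-b_i|\le C$ and dividing by $n_i$ yields
$$
\left|\frac{m_i}{n_i}(\tau_g+\e_i)-(\tau_h+\eta_i)\right|\le \frac{C}{n_i}.
$$
Since the sequence $m_i/n_i$ is bounded, every subsequence has a convergent sub-subsequence, and passing to the limit in the above inequality forces any such limit $L$ to satisfy $L\tau_g=\tau_h$, i.e.\ $L=\tau_h/\tau_g$. As all subsequential limits coincide, $m_i/n_i$ itself converges to $\tau_h/\tau_g$. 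There is no substantive obstacle in this argument; the only point requiring a little care is establishing the two-sided boundedness of the ratio before extracting the limit.
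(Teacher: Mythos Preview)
Your proof is correct and rests on the same ingredients as the paper's: loxodromicity gives $\tau_g,\tau_h>0$, and $\d_T(t,f^nt)/n\to\tau_f$ along any subsequence $n\to\infty$. The paper's argument is somewhat more direct: rather than first bounding $m_i/n_i$ and then extracting via subsequences, it observes that $\frac{m_i}{n_i}=\frac{\d_T(t,h^{n_i}t)/n_i}{\d_T(t,g^{m_i}t)/m_i}\cdot\frac{\d_T(t,g^{m_i}t)}{\d_T(t,h^{n_i}t)}$, where the first factor tends to $\tau_h/\tau_g$ and the second tends to $1$ (since the numerator and denominator differ by a bounded amount and both tend to infinity), so the limit exists outright. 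Your route is a little longer but has the virtue of making the boundedness step explicit, which the paper's identity handles implicitly.
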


\begin{proof} The assumptions of the lemma imply that $\lim\limits_{i\to \infty}\frac{\d_T(t, h^{n_i}t)}{\d_T(t, g^{m_i}t)}=1$. Therefore, 
$$
\lim_{i\to \infty}\frac{m_i}{n_i}=
\lim_{i\to \infty}\frac{\d_T(t, h^{n_i}t)}{\d_T(t, g^{m_i}t)} \cdot \lim_{i\to \infty}\frac{m_i}{n_i}= 
\frac{\lim\limits_{i\to \infty} \d_T(t, h^{n_i}t)/n_i}{\lim\limits_{i\to \infty} \d_T(t, g^{m_i}t)/m_i} =  \frac{\tau_{G\curvearrowright T} (h)}{\tau_{G\curvearrowright T} (g)}. 
$$\end{proof}

\begin{defn}
Let $G\curvearrowright R\lA G\curvearrowright S$ be two actions of a group of $G$ on metric spaces $R$ and $S$. The associated \textit{translation length compression function} $Comp^{G\act S}_{G\act R}$ is a map from the set $ \{ g\in G\mid \tau_{G\act S}(g)>0\}$ to $[0, \infty)$ defined by the formula
$$
Comp^{G\act S}_{G\act R}(g) =\frac{\tau_{G\act R}(g)}{\tau_{G\act S}(g)} \;\;\;\;\; \forall\, g\in G.
$$
\end{defn}

Note that if $S$ is hyperbolic, the domain of $Comp^{G\act S}_{G\act R}$ coincides with $\L(G\act S)$.

\begin{lem}\label{Lem:tlcf}
  Let $G\curvearrowright R\lA G\curvearrowright S$ be two actions of a group of $G$ on hyperbolic spaces. For any $g,h\in \L(G\act S)$ such that $g\sim_{G\act S} h$, we have $Comp^{G\act S}_{G\act R}(g)=Comp^{G\act S}_{G\act R}(h)$. 
\end{lem}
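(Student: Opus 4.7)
The plan is to extract, along the equivalence $g\sim_{G\act S}h$, a pair of sequences of integers $(m_i),(n_i)\to\infty$ along which the $S$-orbit distances $\d_S(s,g^{m_i}s)$ and $\d_S(s,h^{n_i}s)$ differ by at most a uniform constant, transfer this closeness to the $R$-action via the domination hypothesis, and then divide by $m_i$ to relate the four translation lengths involved.

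Fix a point $s\in S$. Applying Lemma~\ref{Lem:loxeq}(b) to the equivalent pair $g\sim_{G\act S}h$ supplies a constant $\e>0$ and, for each $N\in\NN$, an element $a_N\in G$ and integers $m_N,n_N>N$ with $\d_S(a_Ns,s)\le\e$ and $\d_S(a_Ng^{m_N}s,h^{n_N}s)\le\e$. After passing to a subsequence, I may assume $(m_i)$ and $(n_i)$ are strictly increasing. Now fix $r\in R$; by the hypothesis $G\act R\lA G\act S$ together with Lemma~\ref{Lem:E->A}, there exists $C\ge 0$ with $\d_R(r,fr)\le C\d_S(s,fs)+C$ for every $f\in G$. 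Applying this to $f=a_i$ and to $f=h^{-n_i}a_ig^{m_i}$ (whose displacement of $s$ equals $\d_S(a_ig^{m_i}s,h^{n_i}s)\le\e$ by isometry), I obtain the uniform bounds $\d_R(r,a_ir)\le C\e+C$ and $\d_R(a_ig^{m_i}r,h^{n_i}r)\le C\e+C$. Using the identity $\d_R(r,g^{m_i}r)=\d_R(a_ir,a_ig^{m_i}r)$ and two applications of the triangle inequality, I conclude
$$
\bigl|\d_R(r,g^{m_i}r)-\d_R(r,h^{n_i}r)\bigr|\le D,\qquad D:=2(C\e+C).
$$
The identical argument (with $C=1$) gives $\bigl|\d_S(s,g^{m_i}s)-\d_S(s,h^{n_i}s)\bigr|\le 2\e$ in $S$.

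Since $g,h\in\L(G\act S)$, Lemma~\ref{Lem:supgh} applied to $G\act S$ then yields
$$
\lim_{i\to\infty}\frac{m_i}{n_i}=\frac{\tau_{G\act S}(h)}{\tau_{G\act S}(g)}=:c,
$$
a positive finite number. Because $m_i,n_i\to\infty$, formula \eqref{Eq:Liminf} gives $\d_R(r,g^{m_i}r)/m_i\to\tau_{G\act R}(g)$ and $\d_R(r,h^{n_i}r)/n_i\to\tau_{G\act R}(h)$. Dividing the bound $\bigl|\d_R(r,g^{m_i}r)-\d_R(r,h^{n_i}r)\bigr|\le D$ by $m_i$ and passing to the limit produces
$$
\tau_{G\act R}(g)=\frac{1}{c}\,\tau_{G\act R}(h)=\frac{\tau_{G\act S}(g)}{\tau_{G\act S}(h)}\,\tau_{G\act R}(h),
$$
which rearranges to $\tau_{G\act R}(g)/\tau_{G\act S}(g)=\tau_{G\act R}(h)/\tau_{G\act S}(h)$, i.e.\ $Comp^{G\act S}_{G\act R}(g)=Comp^{G\act S}_{G\act R}(h)$. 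This argument needs no case split: if either $g$ or $h$ happens to have translation length zero on $R$, the limiting identity still forces the other to have translation length zero, and both sides equal $0$.

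The only delicate point is the transfer step: applying the domination constant $C$ to the compound element $h^{-n_i}a_ig^{m_i}$ rather than to the individual terms is essential, because domination controls orbit maps only at a single basepoint, and there is no a priori control over $\d_R(g^{m_i}r,a_ig^{m_i}r)$ from $\d_S(s,a_is)$. Using the isometric action of $h^{n_i}$ to rewrite the bounded pair as a single element of $G$ with small displacement at $s$ is what allows Lemma~\ref{Lem:E->A} to push the required inequality into $R$.
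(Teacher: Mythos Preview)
Your proof is correct and follows essentially the same approach as the paper: extract via Lemma~\ref{Lem:loxeq} the sequences $(m_i),(n_i)$ with bounded $S$-discrepancy, transfer to $R$ by applying the domination inequality to the compound element $h^{-n_i}a_ig^{m_i}$ (the paper does the same, phrased as bounding $\d_R(ag^{m_i}r,h^{n_i}r)$), and invoke Lemma~\ref{Lem:supgh} on $S$ to identify $\lim m_i/n_i$. Your endgame is marginally cleaner: the paper re-applies Lemma~\ref{Lem:supgh} to the $R$-action, which requires its hypothesis $g,h\in\L(G\act R)$ and hence a case split on whether $g,h$ are loxodromic on $R$; you instead pass to the limit directly using \eqref{Eq:Liminf}, so the identity $\tau_{G\act R}(g)=c^{-1}\tau_{G\act R}(h)$ comes out uniformly and the case split is absorbed.
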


\begin{proof}
We fix some points $r\in R$, $s\in S$ and consider arbitrary $g,h\in \L(G\curvearrowright S)$ such that $g\sim_{G\act S} h$. By Lemma \ref{Lem:loxeq}, there exist $\e>0$ and strictly increasing sequences $(m_i)$, $(n_i)$ of natural numbers such that, for any $i\in \mathbb N$, we can find $a\in G$ satisfying the inequality 
\begin{equation}\label{Eq:dass}
\max\{ \d_S(as, s), \, \d_S(ag^{m_i}s, h^{n_i}s)\} \le \e.
\end{equation}
In particular, we have 
$$
|\d_S(g^{m_i}s, s) - \d_S(h^{n_i}s,s)|= |\d_S(ag^{m_i}s, as) - \d_S(h^{n_i}s,s)| \le 2\e
$$ 
for all $i\in \NN$.
By Lemma \ref{Lem:supgh}, this implies
\begin{equation}\label{Eq:h/gR}
\lim\limits_{i\to \infty} \frac{m_i}{n_i}= \frac{\tau_{G\curvearrowright S}(h)}{\tau_{G\curvearrowright S}(g)}\ne 0
\end{equation}

Combining (\ref{Eq:dass}) with the inequality $G\curvearrowright R\lA G\curvearrowright S$, we obtain that there exists a constant $\e^\prime$ such that
$\max\{ \d_R(ar, r), \, \d_R(ag^{m_i}r, h^{n_i}r)\} \le \e^\prime$ for all $i$. As above, we obtain
$|\d_R(g^{m_i}r, r) - \d_R(h^{n_i}r,r)|\le 2\e^\prime.$
In particular, $g$ and $h$ are either simultaneously loxodromic or simultaneously non-loxodromic with respect to the action on $R$. In the latter case, we have $Comp^{G\act S}_{G\act R}(g)=Comp^{G\act S}_{G\act R}(h)=0$. In the former case, we apply Lemma \ref{Lem:supgh} again and obtain
$$
\frac{\tau_{G\curvearrowright S}(h)}{\tau_{G\curvearrowright S}(g)}=\lim\limits_{i\to \infty} \frac{m_i}{n_i}= \frac{\tau_{G\curvearrowright R}(h)}{\tau_{G\curvearrowright R}(g)}, 
$$
which is equivalent to $Comp^{G\act S}_{G\act R}(g)=Comp^{G\act S}_{G\act R}(h)$.
\end{proof}

\paragraph{6.2. Weakly isotropic group actions on hyperbolic spaces.}\label{Sec:EqDefI} 
By Definition \ref{Def:Iso}, isotropic group actions are necessarily cobounded. It is sometimes convenient to consider a weaker yet closely related condition that may hold for non-cobounded actions.

\begin{defn}
    An action of a group $G$ on a metric space is \textit{weakly isotropic} if its restriction to some orbit is isotropic.
\end{defn}

A word of warning: neither isotropy nor weak isotropy of group actions is invariant under equivalence. Furthermore, even for cobounded actions, weak isotropy can be strictly weaker than isotropy, which underscores the non-triviality of the equivalence between conditions (a)–-(g) and (h) in Theorem \ref{Thm:Iso} below. We outline an example illustrating the latter phenomenon; since this example is not used in the proof of any of our results, we leave the verification of details to the reader.

\begin{ex} Let $G=\bigoplus_{i=1}^\infty \la a_i\mid a_i^2=1\ra$ and let $\Gamma $ denote the Cayley graph of $G$ with respect to the generating set $\{ a_1, a_2, \ldots\}$. We pick an infinite sequence of linearly independent (over $\QQ$) numbers $\ell_i\in [1,2]$ and endow $\Gamma $ with a metric $\d_\Gamma$ by identifying every edge labeled by $a_i$ with $[0, \ell_i]$. The action of $G$ on itself by left multiplication obviously extends to a cobounded (isometric) action of $G$ on $\Gamma$. Using linear independence of $\{\ell_i\mid i\in \NN\}$, it is easy to show that for any $a,b,c,d\in G$, the equality $\d_\Gamma (a,b)=\d_\Gamma(c,d)$ implies $a^{-1}b=c^{-1}d$, which in turn implies that the restriction of the action $G\act \Gamma$ to the set of vertices is isotropic. On the other hand, it is not difficult to show that the action $G\act \Gamma$ is not isotropic itself.
\end{ex}



Below, we provide several equivalent characterizations of weakly isotropic, general-type group actions on hyperbolic spaces playing a fundamental role in the proof of the main results of our paper. Recall that we endow  $\partial S\times \partial S$ with the product topology and denote by $\Lambda_S(G)\otimes\Lambda_S(G)$ its subspace $\{ (x,y)\mid x,y\in \Lambda_S(G),\, x\ne y\}$. For the definition of a minimal element in a quasi-ordered set, we refer to the beginning of Subsection \hyperref[Sec:CompGA]{3.1}.

\begin{thm}\label{Thm:Iso}
For any general type action $G\curvearrowright S$ of a group $G$ on a hyperbolic space $S$, the following conditions are equivalent.
\begin{enumerate}
\item[(a)] The action $G\curvearrowright S$ is weakly isotropic.
\item[(b)] Every element of $\L(G\curvearrowright S)$ is equivalent to its inverse.
\item[(c)] Any two elements of $\L(G\curvearrowright S)$ are equivalent.
\item[(d)] For any $g\in \L(G\act S)$, we have $\overline{Orb}(g^-,g^+)=\Lambda_S(G)\otimes \Lambda_S(G)$.
\item[(e)] The induced action of $G$ on $\Lambda_S(G)\otimes \Lambda_S(G)$ is minimal.
\item[(f)] $G\curvearrowright S$ is minimal among all general type $G$-actions on hyperbolic spaces.
\item[(g)]  There exist a minimal element $X\in Hyp_{gt}(G)$ and a $G$-equivariant quasi-isometric embedding $(G,\d_X)\to (S,\d_S)$ (equivalently, $G\act Cay(G, X)\eA G\act S$).
\end{enumerate}
Furthermore, if $G\act S$ is cobounded, conditions (a)--(g) are equivalent to the following.
\begin{enumerate}
    \item[(h)] The action $G\act S$ is isotropic. 
\end{enumerate}
\end{thm}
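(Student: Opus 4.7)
The plan is to establish Theorem \ref{Thm:Iso} by chaining the implications among conditions (a)--(g) into a single cycle $(a) \Rightarrow (b) \Leftrightarrow (c) \Leftrightarrow (d) \Leftrightarrow (e)$ and $(c) \Rightarrow (f) \Rightarrow (g) \Rightarrow (a)$, with the auxiliary implication $(g) \Rightarrow (c)$ treated as a side observation; the supplementary equivalence $(a) \Leftrightarrow (h)$ under coboundedness is handled at the end. The boundary-dynamics block, the rigidity/minimality block, and the two isotropy conditions each rely on a different package of preliminary results.

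For the boundary-dynamics block I would first prove $(d) \Leftrightarrow (e)$ directly from Lemma \ref{Lem:Orb}, which guarantees a minimal $G$-action on each $\overline{Orb}_G(g^-, g^+)$. For $(c) \Rightarrow (d)$, Lemma \ref{Lem:HM} identifies $\overline{Orb}_G(g^-, g^+)$ with $\overline{Orb}_G(h^-, h^+)$ whenever $g \sGS h$, and Lemma \ref{dense} yields density of $\{(h^-, h^+) : h \in \L(G \act S)\}$ in $\Lambda_S(G) \otimes \Lambda_S(G)$; the converse $(d) \Rightarrow (c)$ is immediate from Lemma \ref{Lem:HM}. For $(b) \Leftrightarrow (c)$, the forward direction is trivial, and the reverse argues by contradiction: non-equivalent loxodromic elements would trigger Proposition \ref{Prop:BF}, producing $g_n \in \L(G \act S)$ with $g_n \not\sGS g_n^{-1}$, contradicting (b). Finally, $(a) \Rightarrow (b)$ follows by applying weak isotropy to the pairs $(s, g^N s)$ and $(s, g^{-N} s)$, which have equal pairwise distances by symmetry of $\d_S$: the resulting element $a \in G$ satisfies $\d_S(as, s) \le D$ and $\d_S(a g^N s, g^{-N} s) \le D$, which is exactly condition (b) of Lemma \ref{Lem:loxeq} for the pair $(g, g^{-1})$.

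For the rigidity/minimality block I would prove $(c) \Rightarrow (f)$ via Proposition \ref{Prop:tau}, which recasts minimality of $G \act S$ in the $\lA$-order as the Lipschitz comparison $\tau_{G \act S} \preccurlyeq_{Lip} \tau_{G \act T}$ for any general-type $G \act T \lA G \act S$: condition (c) collapses $\L(G \act S)$ into a single $\sGS$-class, and Lemma \ref{Lem:tlcf} then yields a uniform translation-length compression ratio on this class; the alternative that no element of $\L(G \act S)$ is loxodromic for $T$ is ruled out by the general-type hypothesis on $T$ via Proposition \ref{Prop:tau}. For $(f) \Rightarrow (g)$, I pass to a cobounded sub-action $G \act R$ via Proposition \ref{Prop:Cob}, apply Lemma \ref{Lem:MS} to obtain a generating set $X \in Hyp_{gt}(G)$ with $G \act Cay(G, X) \eA G \act R$, and upgrade this to $G \act Cay(G, X) \eA G \act S$ using (f); minimality of $X$ itself follows because any $Y \preccurlyeq X$ would give $G \act Cay(G, Y) \lA G \act S$, hence $G \act S \lA G \act Cay(G, Y)$ by (f), so $Y \sim X$. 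The reverse $(g) \Rightarrow (c)$ argues by contradiction: applying Proposition \ref{Prop:EmbPi} to $G \act S$ (assuming two non-equivalent loxodromic elements exist) produces a Borel map $\Pi \to Hyp_{gt}(G)$ whose image lies in $\{Y : Y \preccurlyeq X\}$ and is not contained in a single $\sim$-class, contradicting minimality of $X$.

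To close the cycle with $(g) \Rightarrow (a)$, I would use the equivariant quasi-isometric embedding $\phi : (G, \d_X) \to S$ from (g) and argue that the $G$-action is weakly isotropic on the orbit $G\phi(1)$. Via $\phi$ and left-invariance of $\d_X$, weak isotropy of this orbit reduces to the following property of $Cay(G, X)$: for all $u, v \in G$ with $|u|_X = |v|_X$, there exists $g \in G$ with both $|g|_X$ and $|v^{-1} g u|_X$ bounded by a constant depending only on $X$. This is the main obstacle of the proof; I would attack it by using the density from (d) and Lemma \ref{dense} to select loxodromic elements of $G \act S$ whose quasi-axes pass close to $\{s, us\}$ and $\{s, vs\}$ respectively, and then invoking Lemma \ref{Lem:loxeq}(c) together with the single-orbit condition from (c) to align these quasi-axes by a group element, with the hyperbolicity of $S$ absorbing the incurred errors. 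For the supplementary equivalence $(a) \Leftrightarrow (h)$ under coboundedness, the direction $(h) \Rightarrow (a)$ is trivial, and for $(a) \Rightarrow (h)$ I would approximate a general quadruple $(x, y, x', y') \subset S$ with $\d_S(x, y) = \d_S(x', y')$ by a quadruple in the orbit at additive error controlled by the coboundedness constant, use the geodesic and hyperbolic structure of $S$ to eliminate the resulting distance discrepancy within an enlarged additive error, and apply weak isotropy on the orbit, absorbing the cumulative error into an enlarged isotropy constant.
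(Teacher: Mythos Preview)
Your overall structure closely mirrors the paper's, and most implications are handled correctly with the same tools. The paper closes the main cycle via $(e) \Rightarrow (a)$ rather than your $(g) \Rightarrow (a)$, but since you already establish $(g) \Rightarrow (c) \Rightarrow (d) \Rightarrow (e)$ before reaching that step, the difference is cosmetic: your sketch for $(g) \Rightarrow (a)$ invokes (c) and (d), so it is really the $(e) \Rightarrow (a)$ argument in disguise. The paper's version is, however, more precise than your sketch: rather than ``selecting loxodromic elements whose quasi-axes pass close to $\{s, us\}$'' (which is not immediate from density of fixed points), it fixes a \emph{single} loxodromic $h$, uses an auxiliary lemma (Lemma~\ref{Lem:xiyi}) to produce sequences in $Gs$ converging to boundary points ``behind'' $x$ and behind $y$ with controlled Gromov products, invokes minimality on the boundary to align $h^{\pm\infty}$ with those points, and then applies a geometric lemma (Lemma~\ref{Lem:4pts}) to conclude that a translate of the $h$-axis passes close to $x$ and $y$. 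Your sketch is in the right direction but omits this machinery.

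The genuine gap is in your direct argument for $(a) \Rightarrow (h)$ under coboundedness. Weak isotropy gives the conclusion only for pairs of orbit points at \emph{exactly} equal distance. When you approximate a general equidistant quadruple $(x, y, x', y')$ by orbit points $(a,b,a',b')$, the resulting distances differ by up to $4\sigma$, and weak isotropy no longer applies. Your proposal to ``use the geodesic and hyperbolic structure to eliminate the discrepancy'' does not close the loop: adjusting one of the points along a geodesic to equalize distances takes you off the orbit, and re-approximating by an orbit point reintroduces a discrepancy of the same order. The paper sidesteps this by proving, within its $(e) \Rightarrow (a)$ argument, the stronger claim that for every $\sigma \ge 0$ the $G$-action on the neighborhood $(Gs)^{+\sigma}$ is isotropic; the proof never uses exact equality of distances, only the bound $|\d_S(x,y) - \d_S(x',y')| \le 4\sigma$, so it goes through uniformly in $\sigma$. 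Taking $\sigma = 0$ yields (a), and taking $\sigma$ equal to the coboundedness constant yields (h). Since you already have $(a) \Rightarrow (e)$, you can repair your argument by instead proving $(e) \Rightarrow (h)$ along these lines.
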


To prove the theorem, we will need a couple of lemmas. The first one is an elementary fact in hyperbolic geometry. For any two points $u$, $v$ in a geodesic metric space, we denote by $[u,v]$  a geodesic connecting $u$ to $v$ (which may not be unique).

\begin{lem}\label{Lem:4pts}
Let $S$ be a $\delta$-hyperbolic space.  Let also $x,y,u,v\in S$ and let $t$ be a point on a geodesic $[x,y]$. Suppose that there exists a constant $K$ such that
\begin{equation}\label{Eq:K}
(x,u)_t\ge \d_S(t,x) -K> 3\delta {\rm \;\;\;\;\; and \;\;\;\;\;} (y,v)_t\ge \d_S(t,y) -K> 3\delta.
\end{equation}
Then, for any geodesic $[u, v]$, there exist $c,d\in [u,v]$ such that $d$ belongs to the segment $[c,v]$ of $[u,v]$ and we have 
$$
\max\{ \d_S(x, c), \, \d_S(y, d)\} \le K+2\delta.
$$
\end{lem}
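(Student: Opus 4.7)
The plan is to locate $c$ and $d$ by first identifying, on the sides $[t,u]$ and $[t,v]$ of the triangle $tuv$, approximate copies $u'$ and $v'$ of $x$ and $y$ (each within $K+\delta$), and then showing that $t$ lies essentially on the geodesic $[u,v]$ — quantitatively, that $(u,v)_t \le 2\delta$. This last estimate will let me transfer $u'$ and $v'$ to nearby points $c, d \in [u,v]$ via conjugate pairs in the triangle $tuv$, and the required ordering $d \in [c,v]$ will drop out of a short length computation.

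Setting $A := (x,u)_t$ and $A' := (y,v)_t$, the hypotheses give $A \ge \d_S(t,x) - K > 3\delta$ and similarly for $A'$. Since $A \le \min\{\d_S(t,x), \d_S(t,u)\}$, the points $x' \in [t,x]$ and $u' \in [t,u]$ at distance $A$ from $t$ both exist; by ({\bf H}$_2$) applied to the triangle $tux$ at the vertex $t$, they are conjugate, so $\d_S(x', u') \le \delta$. Since $\d_S(x', x) = \d_S(t,x) - A \le K$, this yields $\d_S(x, u') \le K + \delta$. A symmetric argument gives $v' \in [t,v]$ at distance $A'$ from $t$ with $\d_S(y, v') \le K + \delta$.

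The crux is to establish $(u,v)_t \le 2\delta$ using two applications of ({\bf H}$_3$). First, to the triple $x,u,y$ based at $t$: since $t \in [x,y]$ implies $(x,y)_t = 0$, we get $0 \ge \min\{A,\, (u,y)_t\} - \delta$, and $A > \delta$ forces $(u,y)_t \le \delta$. Second, to the triple $u,v,y$ based at $t$: $(u,y)_t \ge \min\{(u,v)_t,\, A'\} - \delta$; since $A' > 3\delta$ while $(u,y)_t + \delta \le 2\delta$, the minimum is forced to be $(u,v)_t$, whence $(u,v)_t \le (u,y)_t + \delta \le 2\delta$.

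Finally, since $A > 3\delta \ge (u,v)_t$, the point $u' \in [u,t]$ lies at distance $\d_S(u,u') = \d_S(t,u) - A \le \d_S(t,u) - (u,v)_t = (t,v)_u$ from $u$, so by ({\bf H}$_2$) on triangle $tuv$ at vertex $u$ it is conjugate to a point $c \in [u,v]$ at the same distance from $u$, with $\d_S(c, u') \le \delta$; hence $\d_S(x, c) \le K + 2\delta$. The mirror construction at vertex $v$ produces $d \in [u,v]$ at distance $\d_S(t,v) - A'$ from $v$ with $\d_S(y, d) \le K + 2\delta$. The condition $d \in [c,v]$ unpacks to $\d_S(u,c) \le \d_S(u,d)$, equivalently $2(u,v)_t \le A + A'$, which holds comfortably since $(u,v)_t \le 2\delta$ and $A + A' > 6\delta$. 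The main obstacle is the estimate $(u,v)_t \le 2\delta$: this is the one place where the hypothesis $t \in [x,y]$ and the strict inequality $> 3\delta$ are both essential, and everything else is bookkeeping with conjugate points in thin triangles.
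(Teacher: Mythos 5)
Your proof is correct and follows essentially the same route as the paper's: use ({\bf H}$_3$) and $(x,y)_t=0$ to force $(u,v)_t\le 2\delta$, then apply ({\bf H}$_2$) twice (once in the triangle $txu$ to relate $x$ to a point on $[t,u]$, once in $tuv$ to transfer that point to $[u,v]$), and finally verify the ordering of $c,d$ by a short Gromov-product computation. The only cosmetic difference is that you place your auxiliary point on $[t,u]$ at distance $(x,u)_t$ from $t$, while the paper places it at distance $\d_S(t,x)-K$, and you split the $(u,v)_t\le 2\delta$ estimate into two applications of ({\bf H}$_3$) rather than one combined inequality; neither change affects the structure of the argument.
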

\begin{proof}
We fix any geodesics $[u,v]$, $[t,u]$, and $[t,v]$ (see Fig. \ref{Fig:GroPro}). Let $a$ be the point on the segment $[x,t]$ of $[x,y]$ such that $\d_S(a,x)=K$. By (\ref{Eq:K}), we have $\d_S(t,a)=\d_S(t,x)-K\le (x,u)_t $. Therefore, by ({\bf H$_2$}), there exists $b\in [t,u]$ such that $\d(a,b)\le \delta$. Further, using ({\bf H$_3$}) twice and then (\ref{Eq:K}) we obtain
$$
0=(x,y)_t \ge \min\{ (x,u)_t, (u,v)_t, (y,v)_t\} -2 \delta \ge \min\{ (u,v)_t, 3\delta\} - 2\delta.
$$
Hence, $(u,v)_t\le 2\delta$, and consequently $(v,t)_u=\d_S(t,u)-(u,v)_t \ge \d_S(t,u)-2\delta$.
On the other hand, using (\ref{Eq:K}), we obtain
\begin{equation*} 
\begin{split}
\d_S(b,u) &=\d_S(t,u)-\d_S(t,b) \le \d_S(t,u)-(\d_S(t,x)-\d_S(x,a)-\d_S(a,b))\le\\ & \d_S(t,u)- (\d_S(t,x)-K-\delta) < \d_S(t,u)-2\delta \le (v,t)_u.
\end{split}
\end{equation*}
Therefore, ({\bf H$_2$}) yields  $c\in [u,v]$ such that $\d_S(b,c)\le \delta$ and 
\begin{equation}\label{Eq:duc}
\d_S(u,c) =\d_S(u,b)\le (v,t)_u
\end{equation}
In particular, we have $\d_S(x, c)\le \d_S(x,a)+ \d_S(a,b)+\d_S(b,c)\le K+2\delta.
$
\begin{figure}
  \centering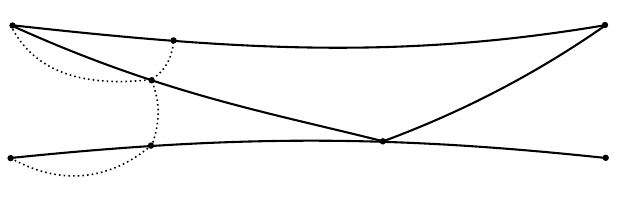
  \caption{The proof of Lemma \ref{Lem:4pts}}\label{Fig:GroPro}
\end{figure}
Similarly, we choose $d\in [u,v]$ such that $\d_S(y, d)\le K+2\delta$ and $\d_S(v,d) \le (u,t)_v$. Combining the latter inequality with (\ref{Eq:duc}), we obtain
$$
\d_S(u,c) \le (v,t)_u = \d_S(u,v) - (u,t)_v  \le \d_S(u,v)-\d_S(v,d) =\d_S(u,d).
$$
Therefore, $d$ belongs to the segment $[c,v]$ of $[u,v]$.
\end{proof}

\begin{lem}\label{Lem:xiyi}
Let $G$ be a group acting on a hyperbolic space $S$. Suppose that $\L(G\act S)\ne \emptyset$. Then for any $s\in S$, there exists a constant $L$ such that the following holds. For any $x\in Gs$ and any $t\in S$, there exists a sequence $(x_i)$ of elements of $Gs$ converging to infinity such that for every $i$,
$$
(x,x_i)_t\ge \d_S(t,x)-L. 
$$
\end{lem}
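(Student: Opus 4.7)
The plan is to use the $G$-equivariance of the Gromov product to reduce the statement to a claim about a single reference point in $S$ and then to produce the desired sequence from the $\la g\ra$-orbit of $s$ for some fixed $g\in\L(G\act S)$ (which exists by hypothesis). Fix such a $g$ and let $L_g$ be a standard quasi-axis of $g$ passing through $s$ in the sense of Definition~\ref{Def:qa}; this is a $(K,\Lambda)$-quasi-geodesic bi-infinite path for some $K\ge 1$ and $\Lambda\ge 0$ depending only on $g$ and $s$. Both sequences $(g^i s)_{i\ge 1}$ and $(g^{-i}s)_{i\ge 1}$ lie in $Gs$ and converge to infinity, with limit points $g^+$ and $g^-$ in $\partial S$ respectively, by Lemma~\ref{Lem:NS}.

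The central claim I will establish is that there exists a constant $L=L(s,g,\delta,K,\Lambda)$ such that for every $p\in S$ at least one of these two sequences satisfies $(s,g^{\pm i}s)_p\ge \d_S(p,s)-L$ for all $i\ge 1$. Granted this claim, the lemma follows: writing $x=hs$ for some $h\in G$, set $p=h^{-1}t$ and let $(k_i)\in \{(g^i),(g^{-i})\}$ be the choice from the claim applied to $p$. Setting $x_i=hk_is\in Gs$, the $G$-invariance of the Gromov product gives
$$(x,x_i)_t=(hs,hk_is)_t=(s,k_is)_{h^{-1}t}=(s,k_is)_p\ge \d_S(p,s)-L=\d_S(t,x)-L,$$
and $x_i\to\infty$ because $k_is\to\infty$.

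To prove the claim, let $\pi(p)\in L_g$ be a coarse nearest-point projection of $p$ onto $L_g$ (i.e., a point achieving the infimum of $\d_S(p,\cdot)$ on $L_g$ up to an additive error of $1$). Since $s\in L_g$, the point $\pi(p)$ lies on one of the two rays into which $s$ divides $L_g$; without loss of generality assume it lies on the ray going to $g^+$ (otherwise replace $g$ with $g^{-1}$). I then take $k_i=g^{-i}$, so that $g^{-i}s$ lies on the opposite ray and $s$ lies between $\pi(p)$ and $g^{-i}s$ along $L_g$. By standard projection estimates for quasi-geodesics in $\delta$-hyperbolic spaces, obtained by applying Lemma~\ref{lem:Morse lemma} to compare $L_g$ with a genuine geodesic and using the coarse projection property of $\pi(p)$, there exists a constant $C=C(\delta,K,\Lambda)$ such that
$$\d_S(p,g^{-i}s)\ge \d_S(p,\pi(p))+\d_S(\pi(p),s)+\d_S(s,g^{-i}s)-C\ge \d_S(p,s)+\d_S(s,g^{-i}s)-C,$$
where the second inequality uses the triangle inequality $\d_S(p,s)\le \d_S(p,\pi(p))+\d_S(\pi(p),s)$. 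Substituting into the definition of the Gromov product yields $(s,g^{-i}s)_p\ge \d_S(p,s)-C/2$, so $L:=C/2$ works.

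The main obstacle will be the projection estimate in the last paragraph. It is completely standard hyperbolic geometry but requires a little care to handle the quasi-geodesic (rather than geodesic) status of $L_g$ and to confirm that the resulting constant depends only on $s$ through the universal data $\delta,K,\Lambda$, and not on $p$, $x$, or $t$. The underlying geometric picture is simple: since $p$ projects onto the $g^+$-ray of $L_g$ from $s$, any geodesic from $p$ to a point on the opposite ray must pass coarsely near $s$, and hence the three distances appearing above become additive up to an $O(\delta)$ error.
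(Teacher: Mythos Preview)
Your proof is correct, but the route differs from the paper's. Both arguments share the same reduction: conjugate by the element $h$ with $x=hs$ to reduce to controlling $(s,g^{\pm i}s)_p$ for a single point $p\in S$. The difference is in how the central estimate is obtained. You project $p$ to the quasi-axis $L_g$, observe that $s$ separates $\pi(p)$ from one of the two half-rays, and invoke the standard projection/additivity estimate to get $\d_S(p,g^{-i}s)\ge \d_S(p,s)+\d_S(s,g^{-i}s)-C$. The paper avoids projection entirely: it uses the uniform bound $(g^{-n}s,g^ns)_s\le C$ (coming from $g^-\ne g^+$) together with one application of ({\bf H}$_3$),
\[
C\ge (g^{-n}s,g^ns)_s=(ag^{-n}s,ag^ns)_x\ge \min\{(ag^{-n}s,t)_x,\,(ag^ns,t)_x\}-\delta,
\]
to conclude that one of $(ag^{\pm n}s,t)_x$ is at most $C+\delta$ for infinitely many $n$; the identity $(x_i,t)_x+(x,x_i)_t=\d_S(t,x)$ then finishes immediately with $L=C+\delta$. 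Your approach is more geometric and yields the bound for \emph{all} $i$ on the chosen side (not just a subsequence), but it imports the nearest-point-projection lemma as a black box not proved in the paper; the paper's argument is shorter and entirely self-contained, using only ({\bf H}$_3$) and the Gromov-product identity.
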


\begin{proof}
    Let $x=as$ for some $a\in G$. Let $g\in \L(G\act S)$. Since $g^-\ne g^+$, there is $C\in \RR$ such that $(g^{-n}s, g^ns)_s\le C$ for all $n\in \NN$. Assume that $S$ is $\delta$-hyperbolic. By ({\bf H$_3$}), for every $n\in \NN$, we have 
    $$
    C\ge (g^{-n}s, g^ns)_s=(ag^{-n}s, ag^ns)_{x}\ge \min\{ (ag^{-n}s, t)_{x},\, (ag^ns, t)_x\}-\delta.
    $$
    Therefore, we have either $(ag^{-n}s, t)_{x}\le C+\delta$ or $(ag^{n}s, t)_{x}\le C+\delta$. One of the two inequalities must hold infinitely many times; without loss of generality, we can assume that there exists an infinite sequence of positive integers $(n_i)$ such that 
    \begin{equation}\label{Eq:agni}
        (ag^{n_i}s, t)_{x}\le C+\delta
    \end{equation}
for all $i\in \NN$. Let $x_i=ag^{n_i}s$. Clearly, $(x_i)$ converges to $ag^+$. Combining (\ref{Eq:agni}) with the obvious equality $(x_i,t)_x+(x,x_i)_t=\d_S(t,x)$, we obtain $(x, x_i)_t\ge \d_S(t,x) -C-\delta$. Letting $L=C+\delta$ finishes the proof.    
\end{proof}

We are now ready to prove the main result of this subsection.

\begin{proof}[Proof of Theorem \ref{Thm:Iso}]
We will prove the following implications.
$$
\begin{tikzcd}[column sep=small]
& (a) \arrow[rr, Leftarrow]\arrow[ld, Rightarrow] && (e) \arrow[rd, Leftarrow]&\\
 (b) \arrow[rr, Rightarrow] && (c) \arrow[rd, Rightarrow] \arrow[ld, Leftarrow]\arrow[rr, Rightarrow] && (d) \\
& (g) \arrow[rr, Leftarrow] && (f) &
\end{tikzcd}
$$
Moreover, in the course of proving (e) $\Longrightarrow$ (a), we will establish (e) $\Longrightarrow$ (h) whenever the action is cobounded. Since the implication (h) $\Longrightarrow$ (a) is obvious for any action, this will complete the proof of the theorem.
\smallskip

\emph{(a) $\Longrightarrow$ (b).} 
Let $g\in \L(G\act S)$. We fix $t\in S$ such that $G\act Gt$ is isotropic and let $D$ be the corresponding isotropy constant. For every $n\in \NN$, we have $\d_S(t, g^{-n}t)=\d_S(t, g^nt)$. Hence, there exists $a\in G$ (depending on $n$) such that $\max\{\d_S(at,t), \d_S(ag^nt, g^{-n}t)\}\le D$. Consequently, for any $s\in S$, we have 
$\max\{\d_S(as,s), \d_S(ag^ns, g^{-n}s)\}\le D+2\d_S(s,t)$. By Lemma \ref{Lem:loxeq}, this implies that $g\sim_{G\act S} g^{-1}$. 

\smallskip

\textit{(b) $\Longrightarrow$ (c)}. The contrapositive of this implication is established in Proposition \ref{Prop:BF}.

\smallskip

\textit{(c) $\Longrightarrow$ (d).} Fix any $g\in \L(G\act S)$. By Lemma \ref{Lem:HM} (b), for any $h\in \L(G\act S)$, we have $\overline{Orb}(g^-,g^+)=\overline{Orb}(h^-,h^+)$; in particular, $(h^-, h^+)\in \overline{Orb}(g^-,g^+)$. Using Lemma \ref{dense}, we obtain $$\Lambda_S(G)\otimes \Lambda_S(G) \subseteq \overline{ \{ (h^-, h^+)\mid h\in \L(G\act S)\}} \subseteq \overline{Orb}(g^-,g^+).$$

\textit{(d) $\Longrightarrow$ (e).} This implication follows immediately from Lemma \ref{Lem:Orb}.

\smallskip

\textit{(e) $\Longrightarrow$ (a)}. Fix some $s\in S$.  Assuming (e), we will show that  

\begin{enumerate}
    \item[(+)] \textit{for any $\sigma \ge 0$, the action of $G$ on $(Gs)^{+\sigma}$ is isotropic.}
\end{enumerate} 
Specifying $\sigma =0$, we obtain the desired implication (e) $\Longrightarrow$ (a), while for cobounded actions, (+) establishes (e) $\Longrightarrow$ (h). Informally, the idea is to show that every pair of points in $(Gs)^{+\sigma}$ can be moved by an element of $G$ so that the resulting pair is close to the orbit $\langle h\rangle s$ for a fixed $h\in \L(G\act S)$. Then, an additional ``sliding along"  $\langle h\rangle s$ allows us to make any two equidistant pairs of points close to each other.  

\begin{figure}
\centering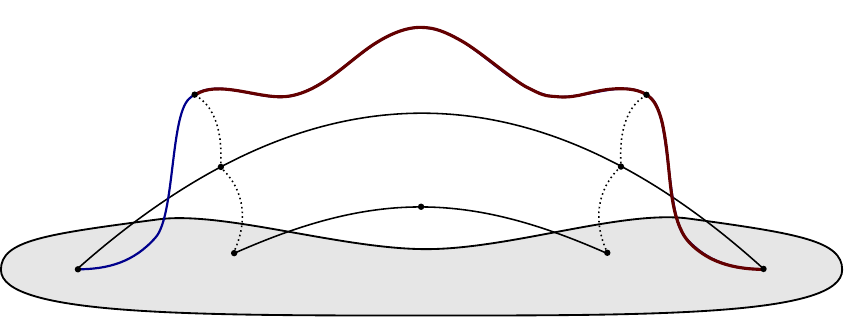
  \caption{Finding the points $h^ks$ and $h^\ell s$ in the proof of (e) $\Longrightarrow$ (a).}\label{Fig:ThmIso}
\end{figure}

More precisely, we fix any $h\in \L(G\act S)$ and let $L$ be the constant provided by Lemma~\ref{Lem:xiyi}. Consider any points $x_0,y_0, x^\prime_0, y^\prime_0\in (Gs)^{+\sigma}$ such that $\d_S(x_0,y_0)=\d_S(x^\prime_0, y^\prime_0)$. Clearly, there are $x,y,x^\prime, y^\prime \in Gs$ such that
\begin{equation}\label{Eq:xyx'y'}
   \max\{ \d_S(x, x_0),\, \d_S(y, y_0),\, \d_S(x^\prime, x_0^\prime),\, \d_S(y^\prime, y_0^\prime),\}\le \sigma,
\end{equation} 
which implies 
\begin{equation}\label{Eq:xyr}
    |\d_S(x,y)-\d_S(x^\prime, y^\prime)|\le 4\sigma.
\end{equation} 
We denote $\d_S(x,y)$ by $r$ for brevity. Obviously, the action of $g$ on $(Gs)^{+\sigma}$ is cobounded; hence, to prove (+), it suffices to verify Definition \ref{Def:Iso} for all points $x_0,y_0, x^\prime_0, y^\prime_0\in (Gs)^{+\sigma}$ such that $\d_S(x_0,y_0)=\d_S(x^\prime_0, y^\prime_0)$ is sufficiently large. Thus,  without loss of generality, we can assume that 
$$r> 2(L+4\delta).$$

We fix some geodesic $[x,y]$ and denote by $t\in S$ the midpoint of $[x,y]$ (see Fig. \ref{Fig:ThmIso}); thus,  we have 
$\d_S(x,t)=\d_S(y,t)=r/2> L+4\delta$.
By Lemma \ref{Lem:xiyi}, there exist sequences $(x_i), (y_i)\subset Gs$ converging to infinity such that for every $i$,
\begin{equation} \label{Eq:xxit}
\min\{(x, x_i)_t,\, (y, y_i)_t\} \ge r/2- L>4\delta.
\end{equation}
Further, by (e) and parts (b) and (c) of Proposition \ref{Prop:Bord}, there exist $a\in G$ and $i\in \NN$ such that $\min\{ (x_i, ah^{-i}s)_{t},\, (y_i, ah^is)_t\}> r/2$.
Combining this inequality with (\ref{Eq:xxit}) and applying ({\bf H$_3$}), we obtain
$$
\min\{ (x, ah^{-i}s)_{t},\, (y, ah^is)_t\}> r/2 -L-\delta >3\delta.
$$
Applying Lemma \ref{Lem:4pts} to the points $x$, $y$, $ah^{-i}s$, $ah^is$ and the constant $K=L+\delta$, we obtain  $c,d\in [ah^{-i}s, ah^is]$ such that $d$ is located between $c$ and $ah^is$ on $[ah^{-i}s, ah^is]$ and
\begin{equation}\label{Eq:dxahs}
\max\{ \d_S(x, c), \, \d_S(y, d)\} \le L+3\delta.
\end{equation}

Since $h$ is loxodromic, it has a standard quasi-axis $L_h$. By Lemma \ref{lem:Morse lemma}, there exists $M\ge 0$ such that every geodesic in $S$ with endpoints on $L_h$ lies in the closed $M$-neighborhood of the corresponding segment of $L_h$. Note also that the quasi-axis $L_h$ belongs to the $\d_S(s,hs)$-neighborhood of the orbit $\la h\ra s$. Therefore, there is an integer $k\in [-i,i]$ such that $ \d_S(c, ah^{k}s) \le M+\d_S(s,hs)$. Further, using ({\bf H$_1$}) for the geodesic triangle with vertices $c$, $ah^ks$, and $ah^is$, and then applying Lemma \ref{lem:Morse lemma} to the geodesic $[ah^ks, ah^is]$ and the segment $p$ of $aL_h$ between these points, we obtain $\ell \ge k$ such that $\d_S(d, ah^{\ell}s) \le \delta + M+\d_S(s,hs)$. Summarizing, we have
$$
\max\{ \d_S(x, ah^ks),\, \d_S(y, ah^\ell s)\} \le C,
$$
where the constant $C=L+M+\d_S(s,hs)+4\delta$ is independent of $x$ and $y$. 
Letting $b=h^{-k}a^{-1}$ and $m=\ell-k\ge 0$, we can rewrite the latter inequality as 
\begin{equation}\label{Eq:xyh}
  \max\{ \d_S(bx, s), \, \d_S(by, h^{m} s) \} \le C;
\end{equation}
informally, this step can be described as sliding the segment $[x, y]$ along $L_h$ so that $x$ is ``aligned" with $s$. Repeating the same argument for $x^\prime$ and $y^\prime$, we obtain  $b^\prime\in G$ and a non-negative integer $m^\prime $ such that
\begin{equation}\label{Eq:xyh'}
    \max\{ \d_S(b^\prime x^\prime, s), \, \d_S(b^\prime y^\prime, h^{m^\prime} s)\} \le C.
\end{equation}

\begin{figure}

\hspace{1.8cm}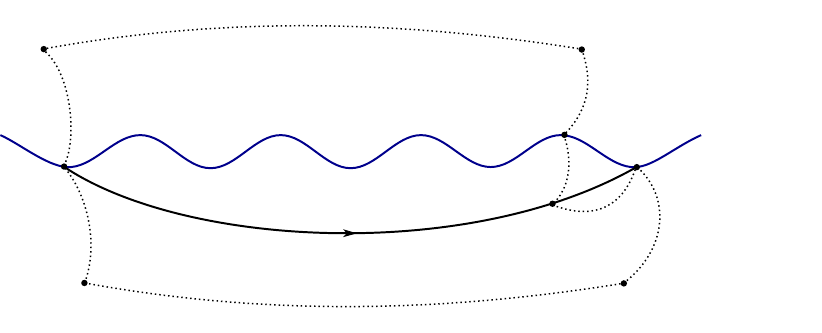
  \caption{After sliding along $L_h$.}\label{Fig:Sliding}
\end{figure}

Without loss of generality, we can assume that $m^\prime \ge m$. Let $p$ be a geodesic in $S$ connecting $s$ to $h^{m^\prime} s$ (see Fig. \ref{Fig:Sliding}). Let $r^\prime = \d_S(x^\prime, y^\prime)= \d_S(b^\prime x^\prime, b^\prime y^\prime)$; note that $|r-r^\prime|\le 4\sigma$ by (\ref{Eq:xyr}). Using Lemma \ref{lem:Morse lemma}, we can find $z\in p$ such that $\d_S(h^ms,z)\le M$. By  (\ref{Eq:xyh}), and (\ref{Eq:xyh'}), we have $|\d_S(s,z)-r|\le 2C+M$ and $|\d_S(s,h^{m^\prime} s)-r^\prime|\le 2C$. It follows that $\d_S(z,h^{m^\prime} s)=\d_S(s, h^{m^\prime} s) - \d_S(s,z) \le 4C+M+4\sigma$.
Consequently, 
$$
\d_S(by, b^\prime y^\prime) \le \d_S(by,h^ms)+\d_S(h^ms, z)+ \d_S(z, h^{m^\prime} s)+ \d_S (h^{m^\prime} s, b^\prime y^\prime) \le 6C+2M+4\sigma.
$$
Letting $g=(b^{\prime})^{-1}b$ we conclude that
\begin{equation*}
    \begin{split}
        \max\{ \d_S(gx,x^\prime),\, \d_S(gy, y^\prime)\} & \le  \max\{ \d_S(bx,b^\prime x^\prime),\, \d_S(by, b^\prime y^\prime)\} \\&\le \max\{ 2C, 6C+2M+4\sigma\}= 6C+2M+4\sigma.
    \end{split}
\end{equation*}
Finally, using (\ref{Eq:xyx'y'}), we obtain $$\max\{ \d_S(gx_0,x_0^\prime),\, \d_S(gy_0, y_0^\prime)\} \le 6C+2M+6\sigma.$$ 
Since the constant $6C+2M+6\sigma$ is independent of the choice of equidistant pairs $(x_0,y_0), (x^\prime_0, y^\prime_0)\in (Gs)^{+\sigma}\times (Gs)^{+\sigma}$, the action of $G$ on $(Gs)^{+\sigma}$ is isotropic.

\textit{(c) $\Longrightarrow$ (f).} 
Suppose that $G\act R$ is a general type $G$-action on a hyperbolic space $R$ such that $G\act R\lA G\act S$. Arguing by contradiction, assume that $G\act R\not\eA G\act S$. By Proposition \ref{Prop:tau}, there must exist a sequence of elements $(g_i)$ of $G$ such that \begin{equation}\label{Eq:limgi}
\lim\limits_{i\to \infty}\frac{\tau_{G\act R}(g_i)}{\tau_{G\act S} (g_i)}=0.
\end{equation}
Since all elements of $\L(G\act S)$ are equivalent, Lemma \ref{Lem:tlcf} and  (\ref{Eq:limgi}) imply that $Comp_{G\act R}^{G\act S}(g)=0$ for all $g\in\L(G\act S)$, i.e., no element of the group $G$ acts loxodromically on $R$. This  contradicts the assumption that $G\act R$ is of general type. 

\smallskip

\textit{(f) $\Longrightarrow$ (g).}  By Proposition \ref{Prop:Cob}, there exists a cobounded general type $G$-action on a hyperbolic space $R$ such that $G\act R\lA G\act S$. By the minimality of $G\act S$, we must have $G\act R\eA G\act S$. Furthermore, by Lemma \ref{Lem:MS}, there exist $X\in Gen()$ and a $G$-equivariant quasi-isometry $f\colon (G, \d_X)\to R$.  In particular, this implies that $X\in Hyp_{gt}(G)$ and $G\act Cay(G,X)\eA G\act R$. Therefore, $G\act Cay(G,X)\eA G\act S$, which is easily seen to be equivalent to the claim that, for any $s\in S$, the rule $g\mapsto gs$ defines a quasi-isometric embedding $(G, \d_X) \to S$. The minimality of $X$ follows from that of $G\act S$.

\smallskip

\textit{(g) $\Longrightarrow$ (c).}  This implication follows immediately from Proposition \ref{Prop:EmbPi}.
\end{proof}

\begin{proof}[Proof of Theorem \ref{Thm:IsoEquiv}]
    The action of $G$ on $Cay(G,X)$ is cobounded for any $X \in Gen(G)$. Therefore, we have $\Lambda_S(G)=\partial S$ for all $X\in Hyp(G)$ and the desired result follows from the equivalence of conditions (e), (g), and (h) in Theorem \ref{Thm:Iso}. 
\end{proof}

Recall that a weakly hyperbolic group $G$ is isotropic, if the action $G\act Cay(G,X)$ is isotropic for any $X\in Hyp_{gt}(G)$. This definition can be reformulated as follows.

\begin{prop}\label{Prop:IsoEqDef}
A weakly hyperbolic group $G$ is isotropic if and only if all general type actions of $G$ on hyperbolic spaces are weakly isotropic. 
\end{prop}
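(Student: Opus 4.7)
The plan is to derive both implications quickly from Theorem~\ref{Thm:Iso}, using Proposition~\ref{Prop:EmbPi} as the key leverage for the reverse direction.

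For the forward implication, I will start from any $X\in Hyp_{gt}(G)$ and consider the general type action $G\act Cay(G,X)$, which is automatically cobounded (the orbit of $1$ is $\tfrac12$-dense in the Cayley graph). The hypothesis supplies weak isotropy, and the equivalence of conditions (a) and (h) in Theorem~\ref{Thm:Iso} for cobounded actions upgrades this to outright isotropy. By definition, $G$ is then isotropic.

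For the converse, I will argue by contradiction. Assume $G$ is isotropic and that some general type action $G\act S$ on a hyperbolic space fails to be weakly isotropic. By the equivalence of (a) and (c) in Theorem~\ref{Thm:Iso}, $\L(G\act S)$ contains two non-equivalent loxodromic elements. Feeding this into Proposition~\ref{Prop:EmbPi} yields a Borel map $f\colon\Pi\to Hyp_{gt}(G)$ with $f(r)\preccurlyeq f(s)$ exactly when $r\,Q_{K_\sigma}\,s$. The crucial step is to exploit the fact that $Q_{K_\sigma}$ is a genuine quasi-order and not an equivalence relation: picking $r,s\in\Pi$ with $r\,Q_{K_\sigma}\,s$ but $s$ not $Q_{K_\sigma}$-related to $r$ (for instance, $r(n)=1$ and $s(n)=n$), I obtain $f(r)\preccurlyeq f(s)$ with $f(r)\not\sim f(s)$; in other words, the general type action $G\act Cay(G,f(r))$ on a hyperbolic space is strictly dominated by $G\act Cay(G,f(s))$. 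Isotropy of $G$ forces $G\act Cay(G,f(s))$ to be isotropic, hence minimal among all general type $G$-actions on hyperbolic spaces by the equivalence (a)$\Leftrightarrow$(f) of Theorem~\ref{Thm:Iso}. This contradicts the existence of the strict dominee $G\act Cay(G,f(r))$.

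I do not foresee a genuine technical obstacle: every ingredient is already in hand. The only conceptual subtlety worth flagging is why the argument routes through Proposition~\ref{Prop:EmbPi} rather than directly through Proposition~\ref{Prop:Cob} and Lemma~\ref{Lem:MS}. The latter would replace $G\act S$ by a cobounded dominee $G\act R\lA G\act S$ realized as a Cayley graph action $G\act Cay(G,X)$ with $X\in Hyp_{gt}(G)$, to which isotropy of $G$ applies; but equivalence of loxodromic elements is only known to propagate \emph{down} the domination order (via Lemma~\ref{Lem:loxeq}), so one cannot push weak isotropy back up from $G\act R$ to $G\act S$. Proposition~\ref{Prop:EmbPi} sidesteps this by producing the contradiction entirely inside the poset $Hyp_{gt}(G)$.
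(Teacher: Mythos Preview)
Your proof is correct. Your first paragraph (labelled ``forward'' but actually the $\Leftarrow$ direction) matches the paper's argument verbatim. For the $\Rightarrow$ direction, however, the paper takes precisely the Proposition~\ref{Prop:Cob}/Lemma~\ref{Lem:MS} route you dismiss, and your objection to it is mistaken: starting from non-equivalent loxodromics in $S$, Proposition~\ref{Prop:Cob} produces a cobounded $G\act R\lA G\act S$ in which the \emph{same} elements remain non-equivalent (this preservation is the explicit content of that proposition, not a gap); Lemma~\ref{Lem:MS} then realises $R$ as $Cay(G,X)$ up to $G$-equivariant quasi-isometry, and the presence of non-equivalent loxodromics in $Cay(G,X)$ directly witnesses, via Theorem~\ref{Thm:Iso}, that $G\act Cay(G,X)$ is not isotropic---contradicting isotropy of $G$. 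Nothing needs to be pushed back up to $S$; the contradiction lives in $Hyp_{gt}(G)$, just as in your argument. Your detour through Proposition~\ref{Prop:EmbPi} is valid but strictly heavier: that proposition already invokes Proposition~\ref{Prop:Cob} internally, and you use the full order-embedding of $(\Pi,Q_{K_\sigma})$ only to extract a single strict inequality in $Hyp_{gt}(G)$.
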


\begin{proof}
We first prove the contrapositive of the forward implication. Suppose that there exists a general type action  $G\act S$ of a group $G$ on a hyperbolic space $S$ that is not weakly isotropic. Combining Theorem  \ref{Thm:Iso} and Lemma \ref{Lem:BF}, we can find independent, non-equivalent elements $g,h \in \L(G\act S)$. Further, by Proposition \ref{Prop:Cob}, there exists a cobounded general type $G$-action on a hyperbolic space $R$ such that $g,h \in \L(G\act R)$ and $g\not\sim_{G\act R} h$.
By Lemma \ref{Lem:MS}, there is $X\in Gen(G)$ and a $G$-equivariant quasi-isometry $(G, \d_X)\to R$. Clearly, we have $X\in Hyp_{gt}(G)$, and the set $\L(G\act Cay(G,X))$ contains non-equivalent elements. The latter condition and Theorem~\ref{Thm:Iso} imply that $G\act Cay(G,X)$ is not isotropic. Therefore, $G$ is anisotropic.

To prove the backward implication, suppose that all general type actions of $G$ are weakly isotropic. Then $G\act Cay(G,X)$ is isotropic for any $X\in Hyp_{gt} (G)$ by the equivalence of conditions (a) and (h) in  Theorem \ref{Thm:Iso}. By definition, this means $G$ is isotropic. 

\end{proof}

Recall that, for a group $G$ and a function $\tau\colon G\to \RR$, we denote by $[\tau]$ the set of all non-zero multiples of $\tau$. That is, $[\tau]= \{ c\tau \mid c\in \RR\setminus\{0\}\}.$

\begin{thm}\label{Thm:LoxRigAct}
Let $G\curvearrowright S$ and $G\curvearrowright T$ be two general type actions of an isotropic group $G$ on hyperbolic spaces. We have $G\curvearrowright S \eA G\curvearrowright T$ if and only if  $[\tau_{G\curvearrowright S}]=[\tau_{G\curvearrowright T}]$.
\end{thm}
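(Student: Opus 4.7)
The plan is to use Proposition \ref{Prop:tau} to reduce everything to a statement about translation length functions, and then exploit isotropy (via Theorem \ref{Thm:Iso}, Proposition \ref{Prop:IsoEqDef}, and Lemma \ref{Lem:tlcf}) to upgrade Lipschitz equivalence of $\tau_{G\curvearrowright S}$ and $\tau_{G\curvearrowright T}$ to proportionality.

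First I would handle the easy backward direction. If $[\tau_{G\curvearrowright S}]=[\tau_{G\curvearrowright T}]$, then $\tau_{G\curvearrowright T}=c\,\tau_{G\curvearrowright S}$ for some $c\in \RR\setminus\{0\}$; since both functions take non-negative values, necessarily $c>0$, and hence $\tau_{G\curvearrowright S}\sim_{Lip}\tau_{G\curvearrowright T}$. Proposition \ref{Prop:tau} then gives $G\curvearrowright S\eA G\curvearrowright T$. Note this implication does not use isotropy.

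For the forward direction, assume $G\curvearrowright S\eA G\curvearrowright T$. By Proposition \ref{Prop:tau}, there exist constants $K_1,K_2>0$ such that $\tau_{G\curvearrowright S}\le K_1\tau_{G\curvearrowright T}$ and $\tau_{G\curvearrowright T}\le K_2\tau_{G\curvearrowright S}$ on $G$. In particular, an element $g\in G$ is loxodromic for $G\curvearrowright S$ if and only if it is loxodromic for $G\curvearrowright T$; let $\L$ denote this common set of loxodromic elements. Since $G$ is isotropic, Proposition \ref{Prop:IsoEqDef} implies that both actions are weakly isotropic; applying Theorem \ref{Thm:Iso} (equivalence of (a) and (c)) to each, we conclude that any two elements of $\L$ are equivalent under $\sim_{G\curvearrowright S}$, and likewise under $\sim_{G\curvearrowright T}$.

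Now apply Lemma \ref{Lem:tlcf} to the pair $G\curvearrowright T\lA G\curvearrowright S$ (which holds since the actions are equivalent): the compression function $Comp^{G\curvearrowright S}_{G\curvearrowright T}$ is constant on each equivalence class of $\sim_{G\curvearrowright S}$ in $\L$. Because $\L$ is a single equivalence class, there exists a constant $c>0$ such that $\tau_{G\curvearrowright T}(g)=c\,\tau_{G\curvearrowright S}(g)$ for every $g\in \L$. For $g\in G\setminus \L$, both translation lengths vanish, so the identity $\tau_{G\curvearrowright T}=c\,\tau_{G\curvearrowright S}$ holds on all of $G$, whence $[\tau_{G\curvearrowright S}]=[\tau_{G\curvearrowright T}]$. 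The main work is packaged into the earlier machinery (Proposition \ref{Prop:tau}, Theorem \ref{Thm:Iso}, and Lemma \ref{Lem:tlcf}); the step that genuinely requires isotropy is the promotion of ``single equivalence class of loxodromics'' to ``globally constant compression ratio,'' which is precisely what converts Lipschitz equivalence into honest proportionality.
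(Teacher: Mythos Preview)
Your proof is correct and follows essentially the same approach as the paper: the backward direction via Proposition \ref{Prop:tau}, and the forward direction by combining Proposition \ref{Prop:IsoEqDef}, the equivalence (a)$\Leftrightarrow$(c) of Theorem \ref{Thm:Iso}, and Lemma \ref{Lem:tlcf} to conclude that the compression function is globally constant on the common loxodromic set. Your write-up is simply a more detailed expansion of the paper's terse argument.
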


\begin{proof}
By Proposition \ref{Prop:IsoEqDef}, all general type actions of $G$ on hyperbolic spaces are weakly isotropic. Since $G\curvearrowright S \eA G\curvearrowright T$ we have $\L(G \act S) = \L(G \act T)$, and the forward implication follows from Lemma \ref{Lem:tlcf} and the equivalence of (a) and (c) in Theorem \ref{Thm:Iso}. The backward implication follows from Proposition \ref{Prop:tau}. 
\end{proof}

\paragraph{6.4. Examples.}\label{Sec:Ex}  We begin with examples of anisotropic groups. Recall that a loxodromic element $g$ of a group $G$ acting on a hyperbolic space $S$ satisfies the \textit{weak weak proper discontinuity condition} (abbreviated \textit{WWPD}) if the $G$-orbit of $(g^-,g^+)$ in $\partial S\times \partial S$ is discrete. The WWPD property was originally defined by Bestvina, Bromberg, and Fujiwara in \cite{BBF} as a weakening of the weak proper discontinuity property considered in \cite{BF}, which in turn can be thought of as a weakening of the acylindricity condition introduced by Bowditch \cite{Bow08}. The original definition of WWPD in \cite{BBF} was formulated in a different way; the characterization in terms of the boundary dynamics given above is due to Handel and Mosher \cite{HM21}. It is well-known and easy to see that every $g\in \L(G\act S)$ satisfies the WWPD condition whenever the action $G\act S$ is proper or acylindrical. For details, we refer the reader to \cite{BBF,HM21,Osi18}.

\begin{cor}\label{Cor:WWPD}
    Let $G\act S$ be a general type group action on a hyperbolic space. If $G$ contains a loxodromic element satisfying the WWPD  condition (e.g., if the action is acylindrical or proper), then $G\act S$ is not weakly isotropic; in particular, $G$ is anisotropic.
\end{cor}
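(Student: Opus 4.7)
The plan is to argue by contradiction using the equivalence $\mathrm{(a)} \Longleftrightarrow \mathrm{(d)}$ in Theorem~\ref{Thm:Iso}. Suppose $G \act S$ is weakly isotropic. Then, applying Theorem~\ref{Thm:Iso} to the WWPD element $g \in \L(G \act S)$, we would have
$$
\overline{Orb}_G(g^-, g^+) = \Lambda_S(G) \otimes \Lambda_S(G).
$$
In other words, the $G$-orbit of $(g^-, g^+)$ is dense in $\Lambda_S(G) \otimes \Lambda_S(G)$.

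The first key observation is that $\Lambda_S(G) \otimes \Lambda_S(G)$ has no isolated points. Indeed, by Lemma~\ref{dense} (applied to the non-elementary action $G \act S$), $\Lambda_S(G)$ itself has no isolated points; a straightforward argument using the product topology and the Hausdorff property of $\partial S$ then shows that the co-diagonal subspace $\Lambda_S(G) \otimes \Lambda_S(G)$ has no isolated points either. In particular, $(g^-, g^+)$ is not isolated in $\Lambda_S(G) \otimes \Lambda_S(G)$, so every neighborhood of $(g^-, g^+)$ in $\partial S \times \partial S$ meets $\Lambda_S(G) \otimes \Lambda_S(G) \setminus \{(g^-, g^+)\}$ in a non-empty relatively open subset $V$. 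By density of the orbit, $V$ must contain a point of the $G$-orbit of $(g^-, g^+)$ distinct from $(g^-, g^+)$ itself. This directly contradicts the WWPD hypothesis, which asserts that $(g^-, g^+)$ admits a neighborhood in $\partial S \times \partial S$ containing no other orbit point.

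The ``in particular'' clause then follows immediately from Proposition~\ref{Prop:IsoEqDef}: since $G \act S$ is a general type action on a hyperbolic space that fails to be weakly isotropic, $G$ cannot be isotropic. The only step requiring any real care is verifying that having no isolated points is inherited by $\Lambda_S(G) \otimes \Lambda_S(G)$ from $\Lambda_S(G)$, but this is a purely point-set exercise and presents no genuine obstacle. The proof is essentially a direct confrontation of the topological transitivity encoded in condition~(d) of Theorem~\ref{Thm:Iso} with the topological rigidity encoded in WWPD.
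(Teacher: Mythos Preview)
Your proof is correct and follows essentially the same approach as the paper's: both invoke the equivalence of (a) and (d) in Theorem~\ref{Thm:Iso}, use Lemma~\ref{dense} to conclude that $\Lambda_S(G)$ (and hence $\Lambda_S(G)\otimes\Lambda_S(G)$) has no isolated points, observe that this is incompatible with a discrete orbit being dense, and finish with Proposition~\ref{Prop:IsoEqDef}. Your write-up simply spells out the contradiction in a bit more detail than the paper does.
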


\begin{proof}
Let $g\in \L(G\act S)$ be a loxodromic element satisfying the WWPD condition. Since the action $G\act S$ is of general type, the limit set $\Lambda_S (G)$ has no isolated points (see Lemma~\ref{dense}). Hence, we cannot have the equality $\Lambda_S(G)\otimes \Lambda_S(G) = \overline{Orb}_G(g^-,g^+)$. By Theorem \ref{Thm:Iso}, this implies that the action $G\act S$ is not weakly isotropic. Therefore, $G$ is anisotropic by Proposition \ref{Prop:IsoEqDef}.
\end{proof}

We mention some particular cases covered by Corollary \ref{Cor:WWPD}. For the definition and examples of acylindrically hyperbolic groups, see \cite{Osi16,Osi18}.

\begin{ex}
    Every acylindrically hyperbolic group is anisotropic. In particular, every non-virtually-cyclic hyperbolic group is anisotropic. Examples of groups satisfying the assumption of Corollary \ref{Cor:WWPD} that are not acylindrically hyperbolic can be found among big mapping class groups (see \cite{Ras}).
\end{ex}

Further, we discuss the relation between the property of being isotropic and quasi-morphisms on weakly hyperbolic groups. Recall that for any group $G$, the set of all quasi-morphisms $G\to \RR$ has a natural structure of a linear vector space over $\RR$; its quotient space by the subspace spanned by bounded maps and homomorphisms $G\to \RR$ is denoted by $\widetilde{QH}(G)$. 
The following proposition can be thought of as a generalization of the main result of \cite{IPS} from trees to arbitrary hyperbolic spaces. 

\begin{prop}\label{Prop:qm}
If a  weakly hyperbolic group $G$ is anisotropic, then $\dim \widetilde{QH}(G)=\infty$.
\end{prop}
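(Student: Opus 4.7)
My plan is to reduce the statement to the Bestvina-Fujiwara theorem on the infinite-dimensionality of $\widetilde{QH}$, applied to a suitably generic sequence of loxodromic elements produced by the machinery already developed in the paper.

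Since $G$ is anisotropic, Proposition~\ref{Prop:IsoEqDef} furnishes a general type action $G \curvearrowright S$ on a hyperbolic space $S$ that is not weakly isotropic. By the equivalence of (a) and (c) in Theorem~\ref{Thm:Iso}, the set $\mathcal{L}(G \curvearrowright S)$ contains two non-equivalent elements $g, h$, so the hypotheses of Proposition~\ref{Prop:BF} are met. Applying that proposition, I obtain an infinite sequence $(g_n)_{n\in \mathbb{N}} \subset \mathcal{L}(G \curvearrowright S)$ with common $(K,L)$-quasi-axes $L_{g_n}$ through a basepoint $o$, the small-cancellation property~\eqref{Eq:SC}, and $g_n \not\sim_{G\curvearrowright S} g_n^{-1}$ for every $n$.

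The next step is to upgrade property~\eqref{Eq:SC} to the statement that $g_n \not\sim_{G\curvearrowright S} g_m^{\pm 1}$ for all $m \neq n$. Suppose some $g_n \sim_{G\curvearrowright S} g_m^{\pm 1}$ for $m \neq n$. By Lemma~\ref{Lem:loxeq}(c), for every $\ell > 0$, one can find $a \in G$ such that $aL_{g_m}$ (respectively its reverse, in the second case) contains a subsegment of length at least $\ell$ oriented $(2\delta + 2M(\delta, K,L))$-close to a subsegment of $L_{g_n}$. Comparing as sets, this forces $\operatorname{diam}(L_{g_n} \cap aL_{g_m}^{+\varepsilon}) \ge \ell - O(1)$ for a fixed $\varepsilon$, contradicting the uniform bound in~\eqref{Eq:SC}. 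Hence $(g_n)$ is a sequence of pairwise non-equivalent loxodromic elements, none equivalent to its own inverse, i.e.\ precisely the setup of Proposition~\ref{Prop:BF-original}.

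With $(g_n)$ in hand, I would invoke the Bestvina-Fujiwara counting quasi-morphism construction from \cite{BF} (of which Lemma~\ref{Lem:BFq} is the two-element version used already in the proof of Proposition~\ref{Prop:BF}). This construction produces, for every $N \in \mathbb{N}$, quasi-morphisms $q_1, \ldots, q_N \colon G \to \mathbb{R}$ subordinate to $G \curvearrowright S$ whose homogenizations satisfy $\widetilde{q_n}(g_m) = \delta_{n,m} c_n$ with $c_n \neq 0$. The small-cancellation condition~\eqref{Eq:SC} plays the role of WPD here, ensuring that the relevant counting functions define genuine quasi-morphisms with the required vanishing on independent axes. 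Since $N$ is arbitrary, one concludes $\dim \widetilde{QH}(G) = \infty$ once these quasi-morphisms are shown to be linearly independent in $\widetilde{QH}(G)$.

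The main obstacle is precisely this final linear independence, modulo the subspace spanned by bounded functions \emph{and} genuine homomorphisms $G \to \mathbb{R}$. The prescribed values $\widetilde{q_n}(g_m) = \delta_{n,m} c_n$ yield independence modulo bounded functions, but ruling out combinations that are homomorphisms requires an extra input: one uses that the Bestvina-Fujiwara counting quasi-morphisms take nontrivial values on appropriately chosen commutators of independent loxodromic elements (obtainable from the sequence $(g_n)$ by the flexibility of Proposition~\ref{Prop:BF}), while any homomorphism $G \to \mathbb{R}$ vanishes on $[G,G]$. This commutator evaluation trick is standard in BF-type arguments and completes the proof.
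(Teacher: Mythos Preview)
Your proposal is correct but takes a considerably more laborious route than the paper. The paper's proof is three lines: since $G$ is anisotropic, there is $X\in Hyp_{gt}(G)$ with $G\curvearrowright Cay(G,X)$ not isotropic; by Theorem~\ref{Thm:Iso} (using that this action is cobounded) there are non-equivalent loxodromic elements; by Lemma~\ref{Lem:BF} the action satisfies the Bestvina--Fujiwara condition; and then the main theorem of \cite{BF} is cited as a black box to conclude $\dim\widetilde{QH}(G)=\infty$. You instead invoke the heavier Proposition~\ref{Prop:BF} to manufacture an infinite small-cancellation family of loxodromics, and then essentially \emph{re-prove} the Bestvina--Fujiwara theorem by hand (building counting quasi-morphisms, checking values on the $g_n$, and handling linear independence modulo homomorphisms via the commutator trick). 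All of this is already packaged inside the result you could simply cite, and the intermediate use of the small-cancellation condition~\eqref{Eq:SC} ``in the role of WPD'' is not actually needed for the construction in \cite{BF}, which only requires the existence of two independent non-equivalent loxodromics. Your approach does have the minor advantage of being more self-contained and of illustrating concretely how the machinery of Section~\ref{Sec:GAHS} feeds into quasi-morphism constructions, but for the purposes of this proposition the paper's direct appeal to \cite{BF} via Lemma~\ref{Lem:BF} is both shorter and cleaner.
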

\begin{proof}
   Since $G$ is anisotropic, there exists $X\in Hyp_{gt}(G)$ such that $G\act Cay(G,X)$ is not isotropic. Since this action is cobounded, Theorem \ref{Thm:Iso} implies that $\L(G\act Cay(G,X))$ contains non-equivalent elements. Applying Lemma \ref{Lem:BF}, we conclude that $G\act Cay(G,X)$ satisfies the Bestvina--Fujiwara condition. By the main result of \cite{BF}, any group $G$ admitting an action on a hyperbolic space satisfying the Bestvina--Fujiwara condition has infinite dimensional $\widetilde{QH}(G)$.
\end{proof}

In what follows, we use the standard action of $SL_2(\CC)$ on $\mathbb H^3$ by M\"obius transformations. For our purpose, it suffices to know that a matrix $A$ from  $SL_2(\CC)$ acts loxodromically on  $\mathbb H^3$ if and only if $(tr(A))^2 \notin [0,4]$. Further, it is well-known and easy to see that the action of $SL_2(\ZZ)$ on $\mathbb H^3$ is of general type. It follows that \textit{$SL_2(R)$ is weakly hyperbolic for any subring $R$ of $\CC$}. For details, we refer the reader to \cite{Ber}.

As a corollary of Proposition \ref{Prop:qm}, we obtain the result about uniformly perfect groups stated in the introduction.

\begin{proof}[Proof of Proposition \ref{Prop:UP}]
Let $G$ be a weakly hyperbolic group. It is easy to derive from the definition that every quasi-morphism $G\to \RR$ is uniformly bounded on the set of all commutators. Therefore, a uniformly perfect group $G$ has no unbounded quasi-morphisms. Further, it is straightforward to see that $\dim\widetilde{QH(G)}<\infty$  for every boundedly generated group $G$ (see, for example, \cite[Proposition 5]{Kot}). Thus, a uniformly perfect or boundedly generated group cannot be anisotropic by Proposition \ref{Prop:qm}.
\end{proof}

In particular, Proposition \ref{Prop:UP} allows us to classify all isotropic groups of the form $SL_2(R)$, where $R$ is a ring of algebraic numbers or a subfield of $\CC$. As we explained above, $SL_2(R)$ is weakly hyperbolic for any subring $R$ of $\CC$.

\begin{proof}[Proof of Corollary \ref{Cor:SL2}]
(a) The claim about isotropy of $SL_2(F)$ for a subfield $F\subseteq \CC$ follows immediately from the well-known fact that $SL_2(F)$ is uniformly perfect. Indeed, it is not difficult to show that every element of $SL_2(F)$ is a product of at most $4$ elementary matrices of the form $\left(\begin{array}{cc} 1 & \ast \\ 0 &1\end{array}\right)$ or $\left(\begin{array}{cc} 1 & 0\\ \ast  &1\end{array}\right)$, and every such matrix is a commutator unless $|F|=2$ or $3$. Thus, Proposition \ref{Prop:UP} applies.

(b) We first note that, for any integer $d<0$ and any $R\subseteq \mathcal O_d$, the group $SL_2(R)$ is anisotropic by Corollary~\ref{Cor:WWPD} since the standard action of the Bianchi group $PSL_2(\mathcal O_d)$ on $\mathbb H^3$ is proper.  

Suppose now that there is no integer $d<0$ such that $R\subseteq \mathcal O_d$. Then $SL_2(R)$ is boundedly generated by Theorem \ref{Thm:MSR}. Hence, $SL_2(R)$ is isotropic by Proposition \ref{Prop:UP}.
\end{proof}

Considering $p$-adic valuations on $\QQ$ corresponding to a finite set of primes, we obtain the following. 

\begin{ex}
    For any primes $p_1, \ldots, p_n$, the group $SL_2\big(\ZZ[1/p_1, \ldots, 1/p_n]\big)$ is isotropic.
\end{ex}

Our next goal is to provide examples of anisotropic weakly hyperbolic groups with a continuum of general type hyperbolic structures. Given a field $F$, any embedding $\alpha\colon F\to \CC$ induces an embedding $\widehat\alpha \colon SL_2(F)\to SL_2(\CC)$. Composing $\widehat \alpha$ with the standard action $SL_2(\CC)\act \mathbb H^3$, we obtain an action of $SL_2(F)$ on $\mathbb H^3$. We call it the \textit{action of $SL_2(F)$ induced by the embedding $\alpha$.}

\begin{lem}\label{Lem:Emb}
Let $F$ be a field, $\alpha, \beta\colon F\to \CC$ two distinct embeddings. Suppose that there exists $f \in F$ such that $\alpha (f)$ and $\beta(f)$ are real and not equal. Then the actions of $SL_2(F)$ on $\mathbb H^3$ induced by $\alpha$ and $\beta$ are not equivalent. 
\end{lem}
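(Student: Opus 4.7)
The plan is to apply Proposition \ref{Prop:tau} and exhibit a sequence of matrices in $SL_2(F)$ on which the translation length function of one induced action tends to infinity while that of the other is zero. Write $\tau_\alpha$ and $\tau_\beta$ for the translation length functions of the actions of $SL_2(F)$ on $\mathbb H^2$ induced by $\alpha$ and $\beta$, respectively. Both actions are of general type, since $\alpha$ and $\beta$ restrict to the identity on the prime subring $\ZZ\subset F$, and hence both restrict on $SL_2(\ZZ)$ to its standard (general type) action on $\mathbb H^2$; consequently Proposition \ref{Prop:tau} applies.

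The key step is to construct an element $u\in F$ satisfying $|\alpha(u)|>1>|\beta(u)|$, after possibly interchanging the roles of $\alpha$ and $\beta$. Starting from any $v_0\in F$ with $\alpha(v_0)\ne\beta(v_0)$ (necessarily nonzero by injectivity of the embeddings), if $|\alpha(v_0)|=|\beta(v_0)|$, then $\alpha(v_0)=-\beta(v_0)$, and the identity $|\alpha(v_0+1)|^2-|\beta(v_0+1)|^2=4\alpha(v_0)\ne 0$ lets me replace $v_0$ by $v_0+1$ to arrange $|\alpha(v_0)|\ne|\beta(v_0)|$. Without loss of generality $|\alpha(v_0)|>|\beta(v_0)|$, so the open interval $(1/|\alpha(v_0)|,\,1/|\beta(v_0)|)$ is nonempty and contains a rational $r$; setting $u=rv_0\in F$ gives $|\alpha(u)|>1>|\beta(u)|$ as desired.

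With $u$ in hand, I would consider the matrices
\[
A_n=\begin{pmatrix}0 & -1 \\ 1 & u^n\end{pmatrix}\in SL_2(F),\qquad n\in\NN,
\]
each of determinant $1$ and trace $u^n$. Under $\widehat\alpha$ the trace has absolute value $|\alpha(u)|^n\to\infty$, so for all sufficiently large $n$ the matrix $\widehat\alpha(A_n)$ is hyperbolic in $SL_2(\RR)$ with hyperbolic translation length $\tau_\alpha(A_n)=2\cosh^{-1}(|\alpha(u)|^n/2)\to\infty$; under $\widehat\beta$ the trace has absolute value $|\beta(u)|^n\to 0$, so for all sufficiently large $n$ the matrix $\widehat\beta(A_n)$ is elliptic and $\tau_\beta(A_n)=0$. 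Thus $\tau_\alpha$ is not Lipschitz dominated by $\tau_\beta$, so by Proposition \ref{Prop:tau} the two actions cannot be equivalent.

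The main technical point is the two-step construction of $u$: first passing from \emph{images differ} to \emph{absolute values differ} via the trick $v_0\mapsto v_0+1$, and then rescaling by a rational to straddle $1$. Once $u$ is in place, the translation-length computation is immediate from the standard formula for hyperbolic isometries of $\mathbb H^2$.
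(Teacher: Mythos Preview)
Your proof is correct and follows essentially the same approach as the paper: both construct an element of $F$ whose images under the two embeddings have absolute values on opposite sides of $1$, and then exhibit a matrix in $SL_2(F)$ that is loxodromic for one induced action and elliptic for the other. The paper's version is slightly more streamlined---it obtains such an element in one step via a rational shift $x=f-r$ and uses a single matrix $A=\begin{pmatrix}x & x^2-1\\ 1 & x\end{pmatrix}$, concluding directly from the fact that equivalent actions share loxodromic elements---whereas you use a two-step adjustment, a sequence $A_n$, and invoke Proposition~\ref{Prop:tau}, none of which is strictly necessary but all of which is valid.
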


\begin{proof}
Without loss of generality, we can assume that  $\alpha (f)< \beta(f)$. Fix any rational $r$ such that $|\alpha(f)-r|< 1$ and $|\beta(f)-r|> 1$. We think of $\QQ$ as a subfield of $F$ and let $x=f-r$. Since the restrictions $\alpha$ and $\beta$ to $\QQ$ are the identity maps, we have $
|\alpha (x)| = |\alpha(f)-r|< 1$
and 
$|\beta (x)| = |\beta(f)-r|> 1$.
Consider the matrix
$$
A=\left(
\begin{array}{cc}
x & x^2-1 \\
1 & x \\
\end{array}
\right).
$$
Since $tr(A)=2x$, $A$ is elliptic (respectively, loxodromic) with respect to the action induced by $\alpha$  (respectively, $\beta$); in particular, these actions are not equivalent.
\end{proof}

\begin{cor}\label{Prop:F}
For any countable transcendental extension $F$ of $\QQ$, the restriction of $\sim$ to $Hyp_{gt} (SL_2(F))$ is Borel bi-reducible with $=_{\RR}$.
\end{cor}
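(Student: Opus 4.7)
The plan is to establish $\sim{\upharpoonright}Hyp_{gt}(SL_2(F))\sim_B{=}_{\RR}$ in two steps: prove smoothness, then exhibit $2^{\aleph_0}$ equivalence classes. The classification of smooth BERs on Polish spaces recalled after Definition \ref{def:smooth} then yields the conclusion immediately, since a smooth BER with continuum many classes is automatically Borel bi-reducible with $=_\RR$ (Silver's dichotomy supplies the direction $=_\RR \preccurlyeq_B \sim$ for free).

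Smoothness is immediate from Proposition \ref{Prop:UP}: since $F$ is an infinite field, $SL_2(F)$ is uniformly perfect by the standard fact that every element of $SL_2$ over an infinite field is a bounded product of commutators; hence it is isotropic, and Corollary \ref{Cor:Iso} concludes.

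For the cardinality lower bound, I would parametrize embeddings by transcendentals. Fix algebraically independent reals $\pi_2,\pi_3,\ldots\in\RR$ (one for each generator of $F$ beyond $x_1$); then the set of $t\in\RR$ transcendental over the countable field $\QQ(\pi_2,\pi_3,\ldots)$ is co-countable in $\RR$, so of cardinality $2^{\aleph_0}$. For each such $t$, the assignment $x_1\mapsto t$, $x_i\mapsto\pi_i$ ($i\geq 2$) defines a distinct embedding $\alpha_t\colon F\hookrightarrow\RR$. Composing with $SL_2(\RR)\to\mathrm{Isom}(\mathbb H^2)$ yields an isometric action $\rho_t\colon SL_2(F)\curvearrowright\mathbb H^2$, which is of general type (it restricts to the standard general type action of $SL_2(\ZZ)\subseteq SL_2(F)$) and cobounded (since $\alpha_t(F)$ is dense in $\RR$, the image $\rho_t(SL_2(F))$ is dense in $SL_2(\RR)$, which acts transitively on $\mathbb H^2$).

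Feeding each $\rho_t$ into the Svarc--Milnor lemma (Lemma \ref{Lem:MS}) produces $X_t\in Hyp_{gt}(SL_2(F))$ with $SL_2(F)\curvearrowright Cay(SL_2(F),X_t)\eA\rho_t$ (Remark \ref{Rem:Equiv}). By Lemma \ref{Lem:Emb}, distinct embeddings give non-equivalent actions $\rho_t\not\eA\rho_{t'}$, and the biconditional ``$G\curvearrowright S_1\lA G\curvearrowright S_2\Leftrightarrow X_1\preccurlyeq X_2$'' recorded in the discussion preceding Figure \ref{FigCD} forces $X_t\not\sim X_{t'}$. Thus $t\mapsto[X_t]$ injects a set of size $2^{\aleph_0}$ into $Hyp_{gt}(SL_2(F))/{\sim}$, as required. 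No serious obstacle arises; the point worth emphasizing is that one does \emph{not} need to build a Borel section of Svarc--Milnor, since Silver's dichotomy provides $=_\RR\preccurlyeq_B\sim$ for free once the cardinality is known.
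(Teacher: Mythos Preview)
Your proposal is correct and follows essentially the same route as the paper: smoothness via Proposition \ref{Prop:UP} and Corollary \ref{Cor:Iso}, then $2^{\aleph_0}$ classes via $2^{\aleph_0}$ distinct embeddings $F\hookrightarrow\RR$ and Lemma \ref{Lem:Emb}. You spell out a few steps the paper leaves implicit (coboundedness of the induced action, the Svarc--Milnor conversion to elements of $Hyp_{gt}$, and the role of Silver's dichotomy), but the argument is the same.
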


\begin{proof}
By Proposition \ref{Prop:UP}, $SL_2(F)$ is isotropic. By Corollary \ref{Cor:Iso}, the restriction of $\sim $ to $Hyp_{gt}(SL_2(F))$ is smooth. Thus, it suffices to show that 
\begin{equation}\label{Eq:Hgt}
|\H_{gt}(SL_2(F))|\ge 2^{\aleph_0}.
\end{equation}

Let $X$ (respectively, $Y$) be a transcendence basis of $F$ (respectively, $\RR$) over $\QQ$ and let $x\in X$. Every map sending $x$ to an element $y\in Y$ extends to an injection $X\to Y$, which in turn extends to an embedding $F\to \CC$ (by first extending to $\QQ(X)$ in a natural way, and then to $F$ using the facts that $F/\QQ(X)$ is algebraic and $\CC$ is algebraically closed). Lemma~\ref{Lem:Emb} guarantees that these embeddings give rise to distinct general type hyperbolic structures on $SL_2(F)$ for different elements $y\in Y$. Since $|Y|=2^{\aleph_0}$, inequality (\ref{Eq:Hgt}) follows.
\end{proof}

\begin{rem}
Lemma \ref{Lem:Emb} can be used to provide many other examples of isotropic groups $G$ such that the restriction of $\sim$ to $Hyp_{gt} (G)$ is Borel bi-reducible to $=_{\RR}$. For example, this property holds for $G=SL_2(F)$ whenever $F$ is a countable field such that some subfield of $F$ admits $2^{\aleph_0}$ real embeddings. For example, we can take $F$ to be the algebraic closure of $\QQ$ or $F=\QQ(\sqrt{p_1}, \sqrt{p_2}, \ldots))$, where $p_1, p_2, \ldots$ is any infinite sequence of pairwise distinct primes. We leave the (obvious) details to the reader.
\end{rem}

We return to the relation between group actions on hyperbolic spaces and quasi-morphisms.

\begin{proof}[Proof of Proposition \ref{Prop:qm-intr}]
Suppose that $q\colon G\to \RR$ is an unbounded quasi-morphism subordinate to a general type action of a group $G$ on a hyperbolic space $S$. Replacing $q$ with $\widetilde q$ (see the discussion before Lemma \ref{Lem:qgg-1}) we can assume that $q$ is a quasi-character. Since $q$ is unbounded, there is $g\in G$ such that $|q(g)|>D(q)$. By Lemma \ref{Lem:qgg-1}, $g\in \L(G\act S)$ and $g\not\sGS g^{-1}$. By Theorem \ref{Thm:Iso}, this implies that the action $G\act S$ is not weakly isotropic. Hence, $G$ is not isotropic by Proposition \ref{Prop:IsoEqDef}.
\end{proof}

\begin{cor}\label{Cor:GAT}
\begin{enumerate}
\item[(a)] Let $G=H_{A^t=B}$ be an $HNN$-extension of a group $H$ with associated subgroups $A$ and $B$. If $A\ne H\ne B$, then $G$ is an anisotropic weakly hyperbolic group.

\item[(b)] The amalgamated product $G=A\ast_CB$ is anisotropic whenever $C\ne A$ and $|C\backslash B/C|\ge 3$.
\end{enumerate}
\end{cor}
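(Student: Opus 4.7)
For part~(a), consider the Bass-Serre tree $T$ of the HNN-extension $G = H \ast_{A^t = B}$; this is a tree (hence hyperbolic) on which $G$ acts cocompactly by isometries. Since $A \neq H$ and $B \neq H$, every vertex of $T$ has degree $[H:A] + [H:B] \geq 4$, and standard Bass-Serre theory implies that $G \curvearrowright T$ is of general type---no global fixed vertex or end exists. Thus $G$ is weakly hyperbolic. The map $\phi\colon G \to \langle t\rangle \cong \ZZ$ defined by $\phi|_H \equiv 0$ and $\phi(t) = 1$ is a well-defined homomorphism, since the HNN-relations $t^{-1}at = b$ (with $a \in A$, $b \in B$) give $\phi(t^{-1}at) = 0 = \phi(b)$. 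It is unbounded (as $\phi(t^n) = n$) and, as noted in the paragraph preceding Definition~\ref{Def:subord}, it is subordinate to $G \curvearrowright T$: for any reduced form of $g$, the quantity $|\phi(g)|$ equals its net stable-letter count, which is bounded by $\d_T(v_0, g v_0)$ for any base vertex $v_0$. Proposition~\ref{Prop:qm-intr} then gives that $G$ is anisotropic.

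For part~(b), my plan is to characterize isotropy of $G = A \ast_C B$ on its Bass-Serre tree $T$ using Theorem~\ref{Thm:Iso}(a)$\Leftrightarrow$(c), which states that a general type action on a hyperbolic space is weakly isotropic iff all of its loxodromic elements are $\sGS$-equivalent. Every bi-infinite geodesic $\gamma$ in $T$ carries a canonical $G$-invariant label sequence, where at each $A$-type vertex of $\gamma$ the two incident edges (identified with cosets in $A/C$ via the stabilizer's action) determine a ``turn'' in $C\backslash A/C$, and at each $B$-type vertex a turn in $C\backslash B/C$. Since $T$ is a tree, equivalence of two loxodromic elements $g,h$ reduces to the requirement that $G$-translates of the axes $L_g, L_h$ share arbitrarily long common subsegments; by $G$-invariance of the turn double cosets, this is equivalent to the label sequences of $L_g$ and $L_h$ matching pointwise along an infinite subray. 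Spherical transitivity---that $\mathrm{Stab}(v)$ acts transitively on every sphere $S(v,n)$---is equivalent, by induction on $n$, to uniqueness of the turn at each vertex type, which in turn is equivalent to all axial label sequences agreeing. Combining these, the action is isotropic iff it is spherically transitive.

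For the specific anisotropy claim, assume $C \neq A$ and $|C\backslash B/C| \geq 3$. Then $[A:C] \geq 2$ and $[B:C] \geq 3$, so $G \curvearrowright T$ is of general type. Pick $a \in A \setminus C$ and $b_1, b_2 \in B$ representing two distinct double cosets in $C \backslash B / C$, both different from $C$. The elements $g_i := ab_i$ (for $i=1,2$) are cyclically reduced words of length $2$, hence loxodromic isometries of $T$ with translation length $2$; their axes both pass through the common edge from $A$ to $aB$. A direct identification of edges at $aB$ with $B/C$ (on which $\mathrm{Stab}(aB)\cong aBa^{-1}$ acts transitively by left multiplication) shows that the turn at $aB$ along $L_{g_i}$ is exactly the double coset $Cb_iC$. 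Since $Cb_1C \neq Cb_2C$, any common subsegment of any $G$-translate of $L_{g_1}$ with $L_{g_2}$ cannot contain a $B$-type vertex in its interior (else the interior turns would have to coincide), forcing such common subsegments to have length at most $2$. Therefore $g_1 \not\sGS g_2$, so $G \curvearrowright T$ is not weakly isotropic by Theorem~\ref{Thm:Iso}, and $G$ is anisotropic by Proposition~\ref{Prop:IsoEqDef}.

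The main technical obstacle is the label-sequence characterization of loxodromic equivalence on a tree action---especially the sufficiency direction, where one must construct, for each $n$, a $G$-translation aligning a length-$n$ subsegment of $L_g$ with one of $L_h$ purely from matching label data, which uses cocompactness together with the local tree structure. The induction linking spherical transitivity to uniqueness of turns at each vertex type is also somewhat delicate. However, for the ``in particular'' anisotropy corollary in part~(b), only the easy direction is needed: distinct turn double cosets along axes prevent equivalence, which follows immediately from the $G$-invariance of turn double cosets and the tree structure.
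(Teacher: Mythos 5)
Your argument for part~(a) matches the paper's proof essentially verbatim: both pass to the natural homomorphism $G\to \langle t\rangle \cong \ZZ$, note it is unbounded and subordinate to the Bass--Serre action, and invoke Proposition~\ref{Prop:qm-intr}.

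For the ``in particular'' claim of part~(b), you take a genuinely different route. The paper cites Fujiwara's theorem that amalgams $A\ast_C B$ with $C\ne A$ and $|C\backslash B/C|\ge 3$ carry unbounded quasi-morphisms subordinate to the Bass--Serre action, and again applies Proposition~\ref{Prop:qm-intr}. You instead construct explicit cyclically reduced loxodromics $g_1=ab_1$ and $g_2=ab_2$ whose axes exhibit distinct $G$-invariant turn double cosets $Cb_1C\ne Cb_2C$ at $B$-type vertices, and conclude $g_1\not\sGS g_2$ directly from the tree structure (any long $\e$-close overlap in a tree would force matching interior turns). This gives failure of condition~(c) in Theorem~\ref{Thm:Iso}, hence failure of weak isotropy, hence anisotropy via Proposition~\ref{Prop:IsoEqDef}. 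Your version is more self-contained and avoids treating Fujiwara's construction as a black box, at the cost of omitting the small details (e.g.\ that $\e$-close quasi-axis segments in a $0$-hyperbolic space share a genuine common subsegment of comparable length). Both reach the same conclusion; your approach also makes the mechanism behind the anisotropy more transparent, whereas the paper's argument is shorter because it outsources the hard work to \cite{Fuj}.

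On the ``if and only if'' assertion of part~(b): you openly flag that the sufficiency direction (spherical transitivity $\Rightarrow$ weak isotropy) is only a sketch, with the label-sequence alignment argument left as a plan. For what it is worth, the paper's proof of Corollary~\ref{Cor:GAT} does not address this equivalence either --- it establishes only the ``in particular'' clause --- so there is no discrepancy to resolve; but be aware that your outline does not yet constitute a complete proof of the equivalence. The most natural way to close the gap within this paper's framework is via Theorem~\ref{Thm:Iso}(e): relate spherical transitivity of the vertex stabilizers to density of $G$-orbits in $\partial T\otimes \partial T$, rather than working directly with the loxodromic equivalence $\sGS$.
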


\begin{proof}
    To prove (a) and (b), we first note that our assumptions $A\ne H\ne B$ (respectively, $A\ne C\ne B$) imply that the action of $G$ on the Bass-Serre tree $T$ associated to the graph of groups decomposition is of general type (see, for example, \cite[Proposition 4.13]{MO}). Further, in case (a), the natural homomorphism $G\to G/\ll A\cup B \rr =\langle t\rangle\cong \ZZ$ is clearly unbounded and subordinate to the action on $T$. Similarly, in case (b) the existence of an unbounded quasi-morphisms $q\colon G\to \RR$ was established by Fujiwara in \cite{Fuj}; the fact that these quasi-morphisms are subordinate to $G\act T$ is not mentioned explicitly in the paper, but is obvious from the construction.  To complete the proof, it remains to apply Proposition \ref{Prop:qm-intr}. 
\end{proof}

Finally, we record an elementary fact backing up Example \ref{Ex:I}. For a group $G$ and a cardinal number $n$, we denote by $G^n$ the direct sum of $n$ copies of $G$.

\begin{prop}\label{Prop:DS}
Let $G$ be an isotropic weakly hyperbolic group. Then, for every cardinal number $n$, the group $G^n$ is isotropic and satisfies 
$$
|\H_{gt}(G^n)|=n|\H_{gt}(G)|.
$$
\end{prop}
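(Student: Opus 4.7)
The plan is to reduce every general type action of $G^n = \bigoplus_i G_i$ on a hyperbolic space $S$ to one coming from a single factor. The central step is the following key lemma: for each such action there is a unique index $i_0$ such that $G_{i_0}$ acts of general type on $S$, while the complementary direct summand $K := \bigoplus_{j \ne i_0} G_j$ acts with bounded orbits. Granted the lemma, $G^n \act S \eA G_{i_0} \act S$ follows from the triangle inequality together with $|\d_S(s, gs) - \d_S(s, \pi_{i_0}(g)s)| \le \diam(Ks)$.

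For the uniqueness half of the key lemma, suppose $G_i$ and $G_j$ with $i \ne j$ contained loxodromic elements $a, b$. Since $G_i$ and $G_j$ commute elementwise in the direct sum, $b$ commutes with $a$ and hence preserves $\{a^-, a^+\}$; the north-south dynamics of Lemma \ref{Lem:NS} then force $\{b^-, b^+\} = \{a^-, a^+\}$. The subgroup $K_i = \bigoplus_{k \ne i} G_k$ commutes elementwise with $a$ and therefore contains an index-$\le 2$ subgroup fixing $\{a^\pm\}$ pointwise; similarly $G_i$ commutes with $b$ and contains such a subgroup. Combining, the direct product $G^n = G_i \times K_i$ has a finite-index subgroup $H$ fixing $\{a^\pm\}$ pointwise. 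The general type hypothesis provides a loxodromic $c \in G^n$ with $\{c^-, c^+\} \cap \{a^-, a^+\} = \emptyset$; a power $c^m$ lies in $H$ and is loxodromic with the same fixed points as $c$, but it also fixes $\{a^\pm\}$, contradicting that a loxodromic has only two boundary fixed points. For the existence half, the standard dichotomy for normal subgroups of a general type action on a hyperbolic space shows each $G_i$ is either elliptic or of general type. If all were elliptic, every $g \in G^n$ would factor as a product $g = h_{i_1}\cdots h_{i_r}$ of commuting elements from elliptic subgroups, giving $g^m = h_{i_1}^m\cdots h_{i_r}^m$ and hence $\d_S(s, g^m s) \le \sum_l \sup_n \d_S(s, h_{i_l}^n s) < \infty$ uniformly in $m$, which forbids loxodromic elements. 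This same reasoning shows every element of $K$ is elliptic, so applying the dichotomy to the normal subgroup $K$ itself forces $K$ to have bounded orbits.

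To verify isotropy of $G^n$, I would use criterion (c) of Theorem \ref{Thm:Iso}. Given loxodromic $g, g' \in G^n$ for an action $G^n \act S$, write $g = g_{i_0} k$ and $g' = g'_{i_0} k'$ with $g_{i_0}, g'_{i_0} \in G_{i_0}$ loxodromic and $k, k' \in K$. Since $g^n s$ and $g_{i_0}^n s$ stay within $\diam(Ks)$ of each other, both sequences converge to the same boundary point, giving $\{g^\pm\} = \{g_{i_0}^\pm\}$; Lemma \ref{Lem:HM} then yields $g \sim_{G^n \act S} g_{i_0}$, and likewise for $g'$. By Proposition \ref{Prop:IsoEqDef}, the isotropy of $G \cong G_{i_0}$ implies that $G_{i_0} \act S$ is weakly isotropic, so $g_{i_0} \sim_{G_{i_0} \act S} g'_{i_0}$; since the witnessing element lies in $G_{i_0} \subseteq G^n$, this upgrades to $g_{i_0} \sim_{G^n \act S} g'_{i_0}$, and transitivity completes the argument. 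Applying Proposition \ref{Prop:IsoEqDef} in the reverse direction gives isotropy of $G^n$.

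Finally, partitioning $\H_{gt}(G^n)$ by the invariant $i_0$ gives a decomposition $\H_{gt}(G^n) = \bigsqcup_i \H_{gt}(G^n)_i$. For each $i$, the pullback $[G_i \act S] \mapsto [\pi_i^\ast(G_i \act S)]$ defines a bijection $\H_{gt}(G) = \H_{gt}(G_i) \to \H_{gt}(G^n)_i$ with inverse induced by restriction to $G_i$ (which remains of general type by the key lemma). Summing cardinals yields $|\H_{gt}(G^n)| = n \cdot |\H_{gt}(G)|$. The principal obstacle is the key lemma, especially the uniqueness argument, which combines the rigid boundary-dynamics computation with the finite-index-stabilizer trick to extract a loxodromic power whose two fixed points would otherwise have to include an unrelated pair.
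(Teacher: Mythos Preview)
Your proof is correct and follows essentially the same strategy as the paper's: isolate a unique factor $G_{i_0}$ acting of general type via the normal-subgroup dichotomy, show the complementary summand has bounded orbits, and transfer the isotropy of $G_{i_0}$ to $G^n$ using Theorem~\ref{Thm:Iso}. The differences are in implementation. For isotropy, you verify criterion~(c) of Theorem~\ref{Thm:Iso} by pushing each loxodromic $g = g_{i_0}k$ to its $G_{i_0}$-component and invoking isotropy of the factor; the paper instead uses criterion~(e), observing that $\Lambda_S(G_{i_0}) = \Lambda_S(G^n)$ (a free byproduct of the dichotomy) so that minimality of $G_{i_0}\act \Lambda\otimes\Lambda$ immediately gives minimality for $G^n$. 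For the boundedness of $K$, you give a self-contained argument via the boundary fixed-point trick and the dichotomy applied to $K$; the paper simply cites \cite[Lemma~4.20]{ABO}. Your route is slightly longer but has the merit of avoiding the external citation, and your explicit partition $\H_{gt}(G^n) = \bigsqcup_i \H_{gt}(G^n)_i$ with the pullback/restriction bijection makes the cardinality count cleaner than the paper's separate $\le$ and $\ge$ arguments.
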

\begin{proof}
Let $G^n=\oplus_{i\in I} G_i$, where $|I|=n$ and $G_i\cong G$ for all $i\in I$, and let $X\in Hyp_{gt}(G^n)$. It is well-known (see, for example, \cite[Theorem 4.5]
{Osi17}) that the action of every normal subgroup $N\lhd G^n$ on $\Gamma=Cay(G^n,X)$ either has bounded orbits or is of general type and $\Lambda_{\Gamma}(N)=\Lambda_{\Gamma}(G^n)$. If all $G_i$ have bounded orbits in $\Gamma$, then $G^n$ has no loxodromic elements (the assumption that $G^n$ is a direct \textit{sum} of subgroups $G_i$ is used here), which contradicts our assumption that $X\in Hyp_{gt}(G)$. Therefore, there is $i_0\in I$ such that the induced action $G_{i_0}\act \Gamma$ is of general type and $\Lambda_{\Gamma}(G_{i_0})=\Lambda_{\Gamma}(G^n)$. Since $G_{i_0}$ is isotropic, the action of $G_{i_0}$ on $\Lambda_{\Gamma}(G_{i_0})\otimes \Lambda_{\Gamma}(G_{i_0})$ is minimal by Theorem \ref{Thm:Iso} and Proposition \ref{Prop:IsoEqDef}; therefore, so is the action of $G^n$ on  $\Lambda_{\Gamma}(G^n)\otimes \Lambda_{\Gamma}(G^n)= \Lambda_{\Gamma}(G_{i_0})\otimes \Lambda_{\Gamma}(G_{i_0})$. Using Theorem \ref{Thm:Iso} again, we obtain that the action $G^n\act \Gamma$ is isotropic. Since this is true for any $X\in Hyp_{gt}(G^n)$, the group $G^n$ is isotropic. 

Further, we have a decomposition $G^n=G_{i_0}\oplus E$. It is well-known that the action of $E$ on $\Gamma$ must be elliptic in this situation (see, for example, \cite[Lemma 4.20]{ABO}). It follows that $G^n \act \Gamma$ is equivalent to the action of $G^n$ obtained by composing the homomorphism $G^n\to G^n/E\cong G_{i_0}$ with the action $G_{i_0}\act \Gamma$. By Proposition \ref{Prop:IsoEqDef} and Theorem \ref{Thm:Iso}, the latter action is equivalent to  $G_{i_0}\act Cay(G_{i_0}, Y)$ for some $Y\in Hyp_{gt} (G_{i_0})$. Thus, we obtain $|\H_{gt}(G^n)|\le n|\H_{gt}(G)|$. The opposite inequality follows from the fact that inequivalent actions of distinct direct summands $G_i$ of $G^n$ give rise to inequivalent actions of $G^n$. 
\end{proof}


\section{Proofs of the main results}\label{Sec:DV}


\paragraph{7.1. Infinitary logic and definable subsets of $PL(G)$.}\label{Sec: BSPL}
Throughout this subsection, we fix an arbitrary countable group $G$. Every element $\ell\in PL(G)$ gives rise to a structure $G_\mathcal \ell$ in the signature
$$
\Sigma=\{ 1, ^{-1}, \cdot, \{ L_q\}_{q\in \QQ}\},
$$
where $1$ is a constant symbol, $^{-1}$ (respectively, $\cdot$) is a unary (respectively, binary) operation, and each $L_q$ is a unary predicate, as follows. The universe of the structure is $G$. The constant $1$ and operations $^{-1}$, $\cdot$ are interpreted in the usual way. Finally, $G_\ell\models L_q(g)$ for some $g\in G$ and $q\in \QQ$ if and only if $\ell(g)\le q$.

As usual, by $\mathcal L_{\omega_1, \omega}$ we denote the infinitary analog of the first order language in the signature $\Sigma$ that allows countable conjunctions and disjunctions but only finite sequences of quantifiers. More precisely, terms and atomic formulas in the signature $\Sigma$ are defined in the usual way. Further, let $\mathcal{L}_0$ denote the set of all atomic formulas. If $\alpha=\beta+1$ for some ordinal $\beta$, let $\mathcal{L}_\alpha$ be the union of $\mathcal{L}_\beta$  and the set of all well-formed formulas of the following two types.
\begin{enumerate}
\item[(a)] $\forall\, x\; \phi$, $\exists\, x\;\phi$, or $\neg\phi$, where $\phi \in \L_\beta$.
\item[(b)] $\bigwedge_{n\in \NN}\phi_n$ or $\bigvee_{n\in \NN}\phi_n$, where $\phi_n\in\mathcal{L}_\beta$ for all $n\in \NN$.
\end{enumerate}
If $\alpha>0$ is a limit ordinal, we define $\mathcal{L}_\alpha=\bigcup_{\beta<\alpha}\mathcal{L}_\beta$. By definition, $$\Lw=\bigcup_{\alpha<\omega_1}\mathcal{L}_\alpha$$ where $\omega_1$ is the first uncountable ordinal. Satisfaction of formulas in $\Sigma$-structures is defined in the usual way. For more details, we refer the reader to \cite{Mar}.

Given a pseudo-length function $\ell\in PL(G)$, we define the \emph{corresponding pseudo-distance function} $\d_\ell \colon G\times G\to [0, \infty)$ by the formula 
$$
\d_\ell (g,h)= \ell(g^{-1}h).
$$
Further, for any $f,g,h\in G$, we let 
$$
(g,h)_f^\ell = \frac12 \Big( \ell(f^{-1}g) +\ell(f^{-1}h) - \ell(g^{-1}h)\Big).
$$
Thus, $(g,h)_f^\ell$ is simply the Gromov product in the pseudo-metric space $(G, \d_\ell)$.

The expressive power of $\L_{\omega_1,\omega}$ far exceeds that of the first order logic. In fact, every ``explicitly definable" property of the pseudo-metric space $(G, \d_\ell)$ can be encoded by an $\L_{\omega_1, \omega}$-formula. To illustrate this, we consider two examples relevant to the discussion below. 

\begin{ex}\label{Ex:Lww} Let $A$ be a non-negative real number.
\begin{enumerate}
    \item[(a)] For any $g\in G$, the inequality $\ell(g)> A$  can be expressed by the formula 
$$
\sigma (g,A)=  \bigvee\limits_{q\in \QQ\cap (A,\infty)} \lnot L_q(g)
$$

\item[(b)] Let $Q(A)=\{ (B,C,D)\in \QQ^3\mid  B+C - D> 2A\}$. For any $f,g,h\in G$, the inequality $(g,h)_f^\ell> A$ can be expressed by 
$$
\bigvee_{(B,C,D)\in Q(A)} \Big( \sigma (f^{-1}g, B) \wedge \sigma(f^{-1}h, C)\wedge \lnot\sigma(g^{-1}h, D)\Big). 
$$
Indeed, $(g,h)_f^\ell> A$ can be rewritten as $\ell(f^{-1}g)+\ell(f^{-1}h)-\ell(g^{-1}h) > 2A$, which in turn is equivalent to the conjunction of $\ell(f^{-1}g)>B$, $\ell(f^{-1}h)>C$, and $\ell(g^{-1}h) \le D$ for some $(B,C,D)\in Q(A)$.
\end{enumerate} 
\end{ex}

\begin{defn}\label{Defn:PLHyp}
Let $G$ be a group. We say that a pseudo-length function $\ell \in PL(G)$ is \emph{hyperbolic} (respectively \emph{hyperbolic of general type}) if there is an action (respectively, general type action) of $G$ on a hyperbolic space $S$ and a point $s\in S$ such that $\ell(g)=\d_S(s,gs)$ for all $g\in G$. By $HypPL(G)$ (respectively, $HypPL_{gt}(G)$) we denote the subspace of $PL(G)$ consisting of all hyperbolic pseudo-length functions (respectively, hyperbolic pseudo-length functions of general type) on $G$.
\end{defn}

Lemma \ref{Lem:ODqoPres} confirms that studying the restriction of the equivalence relation $\ePL$ to $HypPL_{gt}(G)$ provides an appropriate formal framework for classifying general type $G$-actions on hyperbolic spaces. 
The following result (together with Example \ref{Ex:Lww}) will help us express the properties of pseudo-lengths introduced in Definition \ref{Defn:PLHyp} in $\L_{\omega_1, \omega}$ . Note that the fact that a metric space satisfying ({\bf H}$_3$) embeds equivariantly in a hyperbolic space was already proved in Corollary A.10 of the arXiv version of \cite{BHM11} following the Bonk--Schramm's method, while we prove it below using the Isbell's injective hull construction.

\begin{prop}\label{Prop:HypPL} 
Let $G$ be a group.
\begin{enumerate}
\item[(a)] A pseudo-length function $\ell$ on $G$ is hyperbolic if and only if there exists $\delta \ge 0$ such that 
\begin{equation}\label{Eq:4pt}
(x, z)_t^\ell \ge \min\big\{(x, y)_t^\ell,\, (y, z)_t^\ell\big\} - \delta\;\;\;\;\; \forall \, x, y, z, t \in G.
\end{equation}
\item[(b)] A hyperbolic pseudo-length function $\ell\in PL(G)$ is of general type if and only if there exist two elements $g_1,g_2\in G$ and constants $\lambda >0$, $C,N \ge 0$ such that $\ell(g_i^k)\ge \lambda k$ for all $k\in \NN$ and $i=1,2$, and 
$(g_1^m, g_2^n)_1^\ell\le C$  for all $m,n\in \ZZ$.
\end{enumerate}
\end{prop}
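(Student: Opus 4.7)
For part (a), the forward direction is essentially a translation: if $\ell = \ell_{G\act S, s}$ then $(g, h)^\ell_f = (gs, hs)_{fs}$ in $S$, so the inequality \eqref{Eq:4pt} is precisely condition $(\mathbf{H}_3)$ applied to the four orbit points $\{fs, gs, hs, ys\}$. For the backward direction, I would construct a $G$-equivariant geodesic hyperbolic realization of $\ell$ directly. Form the metric graph $\Gamma_\ell$ whose vertex set is $G/{\sim}$ (where $g \sim h$ iff $\d_\ell(g,h) = 0$) and in which each pair of distinct vertices $[g] \ne [h]$ is joined by one edge isometric to $[0, \d_\ell(g,h)]$. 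The iterated triangle inequality for $\ell$ gives $\d_{\Gamma_\ell}([g],[h]) = \d_\ell(g,h)$ on the vertex set; $\Gamma_\ell$ is geodesic by construction and carries an isometric left-translation action of $G$ with $\ell_{G \act \Gamma_\ell, [1]} = \ell$. The nontrivial step is verifying that $\Gamma_\ell$ is hyperbolic, which I expect to reduce to \eqref{Eq:4pt} on vertices via the standard trick of replacing an interior point of an edge by one of its endpoints at an additive cost bounded by the edge length; extending $(\mathbf{H}_3)$ from the vertex set to all of $\Gamma_\ell$ then follows with a slightly enlarged hyperbolicity constant.

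For part (b) forward, select independent loxodromic elements $g_1, g_2 \in \L(G\act S)$ in the realizing action (which exist by the general type hypothesis). Identity \eqref{Eq:Liminf} yields $\ell(g_i^k) \ge k\, \tau_{G\act S}(g_i)$, so take $\lambda = \min\{\tau_{G\act S}(g_1), \tau_{G\act S}(g_2)\} > 0$. Independence of the axes implies $g_1^+ \ne g_2^+$ in $\partial S$, and Proposition \ref{Prop:Bord}(b) then supplies a uniform bound $(g_1^m s, g_2^n s)_s \le C$, i.e., $(g_1^m, g_2^n)_1^\ell \le C$. For the backward direction of (b), realize $\ell$ by an action $G \act S$ using (a). The linear growth bound shows the orbit maps $k \mapsto g_i^k s$ are quasi-isometric embeddings, so $g_1, g_2 \in \L(G\act S)$. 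The Gromov product bound forces $g_1^+ \ne g_2^+$, and combined with the symmetry $\ell(g) = \ell(g^{-1})$---which gives $(g_1^{-m}, g_2^{-n})_1^\ell = (g_1^m, g_2^n)_1^\ell \le C$---also $g_1^- \ne g_2^-$. The only remaining configuration blocking independence is the lineal one where $\{g_1^+, g_1^-\} = \{g_2^-, g_2^+\}$, which I would exclude by considering the product $g_1 g_2$: its loxodromic dynamics (controlled via the linear growth hypothesis) are incompatible with both $g_1$ and $g_2$ stabilizing the same axis in opposite directions, supplying a third loxodromic element independent of one of the original two and yielding general type via Corollary \ref{Cor:LoxInd}.

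In summary, the main obstacle is the hyperbolicity check for $\Gamma_\ell$ in the reverse direction of (a), where the 4-point condition must be transferred from the discrete orbit to the full geodesic graph; the rest of the proposition reduces to routine bookkeeping of Gromov products, the translation between boundary dynamics and coarse invariants of $\ell$, and the known equivalence of general type actions with the existence of two independent loxodromic elements.
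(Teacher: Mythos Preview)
Your construction for the backward direction of (a) fails: the graph $\Gamma_\ell$ is \emph{not} hyperbolic in general. Take $G=\ZZ$ with $\ell(n)=|n|$, which satisfies \eqref{Eq:4pt} with $\delta=0$. In $\Gamma_\ell$ the vertices $0$, $n$, $2n$ span a geodesic triangle one of whose sides is the single edge of length $2n$ joining $0$ to $2n$; its midpoint $m$ satisfies $\d_{\Gamma_\ell}(m,k)=n+\min(|k|,|2n-k|)$ for every vertex $k$, so $\d_{\Gamma_\ell}(m,n)=2n$ and $m$ is at distance at least $n$ from each of the other two sides. Letting $n\to\infty$ shows that $\Gamma_\ell$ admits no finite hyperbolicity constant. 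The flaw in your heuristic is exactly that edge lengths in $\Gamma_\ell$ are unbounded, so ``replacing an interior point by an endpoint at cost bounded by the edge length'' yields no uniform constant and the $4$-point inequality on vertices does not propagate. The paper avoids this by a different construction: it forms the quotient metric space $X=(G/K,\d_\ell)$, where $K=\{g\in G:\ell(g)=0\}$, and embeds it isometrically into its Isbell injective hull $E(X)$, then invokes Lang's theorem \cite{Lan13} that $E(X)$ is geodesic and hyperbolic whenever $X$ satisfies the $4$-point condition. The left $G$-action on $X$ extends to $E(X)$ by functoriality of the hull, and the basepoint $\iota(K)\in E(X)$ recovers $\ell$ exactly.

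A smaller problem in (b): your claimed identity $(g_1^{-m},g_2^{-n})_1^\ell=(g_1^m,g_2^n)_1^\ell$ is false. Unwinding the definition gives that it is equivalent to $\ell(g_1^{m}g_2^{-n})=\ell(g_2^{-n}g_1^{m})$, i.e.\ $\ell(ab)=\ell(ba)$, which an orbit pseudo-length has no reason to satisfy. Hence the hypothesis bounding $(g_1^m,g_2^n)_1^\ell$ for $m,n\in\NN$ does not, via the argument you give, control the negative-power products, and your deduction of $g_1^-\ne g_2^-$ is unjustified. The paper is itself quite terse at this point, asserting that a straightforward verification shows $g_1$ and $g_2$ are independent in $E(X)$; whatever the details, they cannot proceed through the symmetry you invoke.
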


\begin{proof}
The forward implications in (a) and (b) easily follow from the definitions, so we focus on the ``if" claims. Suppose that $\ell\in PL(G)$ satisfies (\ref{Eq:4pt}). The set 
$$
K = \{g \in G \mid \ell(g) = 0\}
$$
is clearly a subgroup of $G$. We denote the set of cosets $G/K$ by $X$ and define a map $\d_X\colon X \times X \to [0,\infty)$ by the formula $$\d_X(gK,hK) = \ell(g^{-1}h)\;\;\;\;\; \forall\, g,h \in G.$$ Note that $\d_X$ is well-defined since we have $\ell(k_1gk_2)=\ell(g)$ for all $g \in G$  and $k_1,k_2 \in K$. Obviously, $\d_X$ is a metric on $X$ and $G$ acts on $(X,\d_X)$ isometrically by left multiplication. 

Recall that a metric space $Y$ is said to be \textit{injective} if for every metric space $B$ and every $1$-Lipschitz map $f\colon A\to Y$ defined on a subset $A \subset B$, there exists a $1$-Lipschitz
extension $B\to Y$ of $f$. In particular, every injective space is geodesic. We now apply Isbell's \textit{injective hull} construction to the space $(X, \d_X)$ to get a geodesic hyperbolic space $E(X)$ equipped with the desired action of $G$. 
More precisely, we define
$$ 
        \Delta(X) = \big\{f \in \RR^X \mid \forall\, x,y \in X,\, f(x)+f(y) \ge \d_X(x,y)\big\}
$$
and write $g\le f$ for $f,g\in \Delta(X)$ if $g(x)\le f(x)$ for all $x\in X$. Further, consider the set of minimal elements       
$$
E(X) = \big\{f \in \Delta (X) \mid \forall\, g \in \Delta (X)\;  \big(g\le f \Rightarrow g=f\big)\big\}.
$$
Further,  for all $f,g\in E(X)$, let
$$d_{E(X)}(f,g) = \sup_{x \in X}|f(x)-g(x)|.$$ 

As shown in \cite[Theorem 2.1]{Isb64},  $(E(X), \d_{E(X)})$ is an injective metric space and the map $\iota \colon X \to E(X)$ defined by $\iota(x)(t)=\d_X(x,t)$ for all $x,t \in X$ is isometric (see also \cite[Theorem 3.3]{Lan13}). Furthermore, since $X$ satisfies ({\bf H}$_3$), the space $E(X)$ is hyperbolic by \cite[Proposition 1.3]{Lan13}. 

For any $\phi \in Isom(X)$, the map $\widetilde{\phi} \colon \RR^X \to \RR^X$ defined by 
$$
\widetilde\phi(f)(t) = f(\phi^{-1}(t))\;\;\;\;\; \forall \, f \in \RR^X,\; \forall\, t \in X,
$$
satisfies $(\widetilde{\phi}\circ\iota)(x) = (\iota\circ\phi)(x)$ for any $x \in X$ and induces an isometry $\widetilde{\phi}|_{E(X)} \in Isom(E(X))$. The group homomorphism $Isom(X) \to Isom(E(X))$ given by $\phi \mapsto \widetilde{\phi}|_{E(X)}$ yields an isometric action $G \act E(X)$ whose restriction to $X$ is $G \act (X,\d_X)$. Hence, for any $g \in G$, we have 
\begin{equation}\label{Eq:l(g)}
\ell(g) = \d_X(K, gK) = \d_{E(X)}(s,gs)
\end{equation}
where $s = \iota(K)\in E(X)$. This completes the proof of the ``if" direction of part (a). 

Finally, suppose that $g_1$ and $g_2$ are elements satisfying the inequalities listed in (b). A straightforward verification using (\ref{Eq:l(g)}), shows that $g_1$ and $g_2$ are independent loxodromic isometries of $E(X)$. Thus, the action $G \act E(X)$ is of general type.
\end{proof}

Given a sentence $\sigma \in \Lw$, we define 
$$
\Mod(\sigma)=\{ \ell\in PL(G)\mid G_\ell \models \sigma\}.
$$
We will need the following version of the well-known Lopez-Escobar theorem (see \cite{LE} or \cite[Theorem~16.8]{Kec}). A similar result for integer-valued length functions on groups was obtained in \cite{JOO}. Our proof essentially repeats the same argument.

\begin{prop}\label{Prop:LE}
Let $G$ be a group and let $\ell\in PL(G)$. For any countable ordinal $\alpha$, any $\Lw$-formula $\sigma(x_1,\cdots,x_n)\in \mathcal{L}_\alpha$ with finitely many free variables $x_1,\cdots,x_n$, and any tuple $\overline{g} =(g_1,\cdots,g_n)\in G^n$,
the set $$\Mod(\sigma,\overline{g}) =
\{\ell\in PL(G) \mid G_\ell\models \sigma(g_1,\cdots,g_n)\} $$
is a Borel subset in $PL(G)$. In particular,  $\Mod(\sigma)$ is a Borel subset of $PL(G)$ for any sentence $\sigma \in \Lw$.
\end{prop}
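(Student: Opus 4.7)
The plan is to proceed by transfinite induction on the countable ordinal $\alpha$, proving the claim simultaneously for all formulas $\sigma(x_1, \ldots, x_n) \in \mathcal{L}_\alpha$ and all tuples $\overline{g} \in G^n$. The structural observation that powers this induction is that every structure $G_\ell$ shares the same universe $G$, and the interpretations of $1$, $^{-1}$, $\cdot$ are independent of $\ell$; only the predicates $\{L_q\}_{q\in\QQ}$ depend on $\ell$. Consequently, every term $t(\overline{x})$ in the signature $\Sigma$ evaluated at a fixed tuple $\overline{g}$ yields the same element $t^{G_\ell}(\overline{g}) \in G$ regardless of $\ell$, and quantification in $G_\ell$ amounts to quantification over the fixed countable set $G$.

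For the base case $\alpha = 0$, an atomic formula is either an equation $t_1(\overline{x}) = t_2(\overline{x})$ or of the form $L_q(t(\overline{x}))$. By the observation above, $\Mod(t_1 = t_2, \overline{g})$ is either $PL(G)$ or $\emptyset$ depending on whether $t_1(\overline{g})$ and $t_2(\overline{g})$ represent the same element of $G$. For $L_q(t)$, letting $h = t(\overline{g}) \in G$, we have
\[
\Mod(L_q(t), \overline{g}) = \{\ell \in PL(G) \mid \ell(h) \le q\},
\]
which is closed in $PL(G)$ (hence Borel), since the topology on $PL(G)$ is that of pointwise convergence and the evaluation map $\ell \mapsto \ell(h)$ is continuous for each $h \in G$.

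For the inductive step, assume the claim holds for all $\beta < \alpha$ and all formulas in $\mathcal{L}_\beta$. If $\sigma$ has one of the forms $\neg \phi$, $\bigwedge_{n\in\NN} \phi_n$, $\bigvee_{n\in\NN} \phi_n$ (with each $\phi, \phi_n \in \bigcup_{\beta<\alpha}\mathcal{L}_\beta$), then
\[
\Mod(\neg \phi, \overline{g}) = PL(G) \setminus \Mod(\phi, \overline{g}),
\]
\[
\Mod\Bigl(\bigwedge_n \phi_n, \overline{g}\Bigr) = \bigcap_{n\in\NN} \Mod(\phi_n, \overline{g}), \qquad \Mod\Bigl(\bigvee_n \phi_n, \overline{g}\Bigr) = \bigcup_{n\in\NN} \Mod(\phi_n, \overline{g}),
\]
and each is Borel by the inductive hypothesis and closure of the Borel $\sigma$-algebra under complementation and countable Boolean operations. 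If $\sigma$ has the form $\exists y\, \phi(y, \overline{x})$ or $\forall y\, \phi(y, \overline{x})$ with $\phi \in \bigcup_{\beta<\alpha}\mathcal{L}_\beta$, then, using that the universe of every $G_\ell$ is the countable set $G$,
\[
\Mod(\exists y\, \phi, \overline{g}) = \bigcup_{h \in G} \Mod(\phi, h, \overline{g}), \qquad \Mod(\forall y\, \phi, \overline{g}) = \bigcap_{h \in G} \Mod(\phi, h, \overline{g}),
\]
both countable, and each summand is Borel by induction. This closes the transfinite induction. The final claim for sentences $\sigma \in \Lw$ is the special case $n = 0$.

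There is no serious obstacle here: everything proceeds mechanically once one notes that the universe is fixed and countable, so quantifiers convert into countable unions or intersections. The only point that deserves mild care is the base case, where one must distinguish atomic formulas that depend on $\ell$ (namely those of the form $L_q(t)$, governed by the product topology on $PL(G)$) from those that do not (equations between terms, whose truth is determined by the group law alone).
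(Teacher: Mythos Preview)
Your proof is correct and follows essentially the same approach as the paper: transfinite induction on $\alpha$, with the base case splitting atomic formulas into term equations (independent of $\ell$) and predicates $L_q(t)$ (yielding closed sets), and the inductive step handling negation, countable Boolean combinations, and quantifiers by reducing the latter to countable unions and intersections over the fixed countable universe $G$.
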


\begin{proof}
We proceed by transfinite induction on $\alpha$. If $\alpha=0$, $\sigma$ is an atomic formula. If it does not contain any predicate symbols, then $\sigma(g_1,\cdots,g_n)$ is a formula in the language of groups and $\Mod(\sigma,\overline{g})$ is either equal to $PL(G)$ or empty depending on whether $G$ satisfies $\sigma(g_1, \ldots, g_n)$ or not. In particular, $\Mod(\sigma,\overline{g})$ is Borel. If $\sigma=L_q(t(x_1,\cdots,x_n))$ for some $q \in \QQ$, where $t(x_1,\cdots,x_n)$ is a term, then $\Mod(\sigma,\overline{g})$ coincides with the set 
$$\{\ell\in PL(G) \mid\ell(t(g_1,\cdots,g_n)) \le q \},$$ 
which is closed in $PL(G)$.

Further, assume that $\alpha>0$ and that the lemma holds for any $\beta<\alpha$. We fix some $\overline{g}\in G^n$. If $\alpha$ is a limit ordinal, the claim follows immediately from the inductive assumption. If $\alpha=\beta+1$, there are three cases to consider.

\medskip 

\noindent{\it Case 1.} First assume that $\sigma=\forall\, x\;\tau(x,x_1,\cdots,x_n)$ or $\sigma= \exists\, x\; \tau(x,x_1,\cdots,x_n)$, where $\tau \in \mathcal{L}_\beta$. Then $\Mod(\sigma,\overline{g})$ equals $\bigcap_{g\in G} \Mod(\tau,(g,\overline{g}))$ or $\bigcup_{g\in G} \Mod(\tau,(g,\overline{g}))$, respectively. Since $G$ is countable and $\Mod(\tau,(g,\overline{g}))$ is Borel for any $g\in G$ by the inductive assumption, $\Mod(\sigma,\overline{g})$ is also Borel.

\medskip

\noindent{\it Case 2.} Next, suppose that $\sigma=\neg \tau(x_1,\cdots,x_n)$, where $\tau \in \mathcal{L}_\beta$.  Then $\Mod(\sigma,\overline{g})$ coincides with $PL(G)\setminus \Mod(\tau,\overline{g})$ and the latter set is Borel by the inductive assumption.

\medskip

\noindent{\it Case 3.} Finally, let $\sigma=\bigwedge_{i\in \NN} \tau_i \left(x_1,\cdots,x_n\right)$ or
$\sigma=\bigvee_{i\in \NN} \tau_i \left(x_1,\cdots,x_n\right)$, where $\tau_i \in \mathcal{L}_\beta$  for all $i$ (some of the variables $x_1,\cdots x_n$ can be absent in some $\tau_i$). In this case, $\Mod(\sigma,\overline{g})$ equals $\bigcap_{i\in \NN}  \Mod\left(\tau_i,\left(g_1,\cdots,g_{n}\right)\right)$ or $\bigcup_{i\in \NN} \Mod\left(\tau_i,\left(g_1,\cdots,g_n\right)\right)$) and the claim follows.
\end{proof}

\begin{prop}\label{Prop:HypPLBorel}
For every countable group $G$, $HypPL(G)$ and $HypPL_{gt}(G)$ are Borel subsets of $PL(G)$.
\end{prop}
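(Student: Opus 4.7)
The plan is to exhibit two $\Lw$-sentences $\sigma_{\mathrm{hyp}}$ and $\sigma_{\mathrm{gt}}$ in the signature $\Sigma$ such that $HypPL(G) = \Mod(\sigma_{\mathrm{hyp}})$ and $HypPL_{gt}(G) = \Mod(\sigma_{\mathrm{hyp}} \wedge \sigma_{\mathrm{gt}})$, and then to conclude by Proposition \ref{Prop:LE}. The characterizations in Proposition \ref{Prop:HypPL} reduce the task to translating the four-point hyperbolicity condition and the existence of a pair of quasi-geodesic elements with bounded Gromov product into infinitary logic.

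The key building block, recorded in Example \ref{Ex:Lww}, is that for every rational $A$ and every $\Sigma$-term $\tau$, the inequality $\ell(\tau) > A$ is expressed by a countable disjunction of formulas of the form $\neg L_q(\tau)$. Taking negations and countable conjunctions of such expressions over suitable rationals yields $\Lw$-formulas for $\ell(\tau) \ge A$, $\ell(\tau) < A$, and $\ell(\tau) \le A$, and applying the same technique to the three terms in a Gromov product yields $\Lw$-formulas expressing each of the inequalities $(\tau_1, \tau_2)_{\tau_3}^\ell > A$, $\ge A$, $< A$, $\le A$; the variables appearing in the $\tau_i$ remain free throughout.

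For $\sigma_{\mathrm{hyp}}$, I use Proposition \ref{Prop:HypPL}(a), noting that the real constant $\delta \ge 0$ can be replaced by an integer $n \in \NN$. The inequality $(x, z)_t^\ell \ge \min\{(x, y)_t^\ell, (y, z)_t^\ell\} - n$ fails precisely when there exists a rational $A$ with $(x, y)_t^\ell \ge A$, $(y, z)_t^\ell \ge A$, and $(x, z)_t^\ell < A - n$; negating this countable disjunction yields an $\Lw$-formula $\phi_n(x, y, z, t)$ equivalent to the $4$-point inequality with parameter $n$. Then
$$\sigma_{\mathrm{hyp}} = \bigvee_{n \in \NN} \forall x\, \forall y\, \forall z\, \forall t\; \phi_n(x, y, z, t)$$
satisfies $\Mod(\sigma_{\mathrm{hyp}}) = HypPL(G)$ by Proposition \ref{Prop:HypPL}(a).

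For $\sigma_{\mathrm{gt}}$, I use Proposition \ref{Prop:HypPL}(b), replacing the real constants by a positive rational $\lambda$ and a natural number $C$ (any real witnesses produce rational/integer witnesses by density). The universal conditions ``for all $k$'' and ``for all $m, n$'' translate into countable conjunctions over the concrete terms $x_1^k$, $x_2^k$, $(x_1^m)^{-1} x_2^n$ in two free variables $x_1, x_2$, giving a sentence of the form
$$\sigma_{\mathrm{gt}} = \bigvee_{\lambda \in \QQ_{>0}} \bigvee_{C \in \NN} \exists x_1\, \exists x_2\, \Big( \bigwedge_{i\in \{1,2\}} \bigwedge_{k \in \NN} \psi_{\lambda, k}^i(x_1, x_2) \;\wedge\; \bigwedge_{m, n \in \NN} \chi_{C, m, n}(x_1, x_2) \Big),$$
where $\psi_{\lambda, k}^i$ and $\chi_{C, m, n}$ are the $\Lw$-formulas constructed above. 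Both $\sigma_{\mathrm{hyp}}$ and $\sigma_{\mathrm{hyp}} \wedge \sigma_{\mathrm{gt}}$ are $\Lw$-sentences, so Proposition \ref{Prop:LE} completes the proof. The only real work is the careful translation of each real-valued metric inequality into a countable boolean combination of the $L_q$ predicates applied to the appropriate group-theoretic terms; no conceptual obstacle is anticipated beyond what is already exercised in Example \ref{Ex:Lww}.
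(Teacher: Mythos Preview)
Your proposal is correct and follows essentially the same approach as the paper: characterize $HypPL(G)$ and $HypPL_{gt}(G)$ via Proposition~\ref{Prop:HypPL}, encode the resulting conditions as $\Lw$-sentences, and apply Proposition~\ref{Prop:LE}. The only cosmetic difference is that the paper encodes the \emph{failure} of the four-point inequality (via a general continuous-function trick with sets $A_\delta\subseteq (\QQ^2)^6$) and then negates, whereas you encode the inequality directly; your route is slightly more streamlined but the content is identical.
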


\begin{proof}
    It is easy to verify that conditions listed in parts (a) and (b) of Proposition~\ref{Prop:HypPL} can be encoded by sentences in $\L_{\omega_1,\omega}$. For example, elaborating the idea behind Example \ref{Ex:Lww} (b), we can obtain the sentence encoding the first condition as follows. Set $x_1=x$, $x_2=y$, $x_3=z$, and $x_4=t$. Part (a) of Proposition~\ref{Prop:HypPL} tells us that there exists a continuous function $f\colon \RR^6\to \RR$ such that $\ell \notin HypPL(G)$ if and only if for any $\delta\ge 0$ there exist $x_1, \ldots, x_4\in G$ such that 
\begin{equation}\label{Eq:faij}
f\Big(\big(\ell(x_i^{-1}x_j)\big)_{1\le i< j \le 4}\Big)>  \delta.
\end{equation}
Given $\delta\ge 0$, we define a subset $A_\delta \subseteq (\QQ^2)^6$  by the formula
\begin{equation*}
A_\delta = \left\{ \big((a_{i,j},b_{i,j})\big)_{1\le i< j \le 4} \in (\QQ^2)^6\; \left| \; 
\begin{aligned}
& \text{ for any } i<j \text{ and any } \,(c_{i,j})_{1\le i< j \le 4}\in \RR^6, \\
&\hspace{14mm}\text{ we have }   
a_{i,j}<b_{i,j} \text{ and }  \\
& a_{i,j}<c_{i,j}\le b_{i,j} \; \Longrightarrow \;
         f\big((c_{i,j})_{1\le i< j \le 4}\big)> \delta
\end{aligned}
         \right.\right\}
\end{equation*}
and let 
$$
\sigma_\delta(x_1,\cdots,x_4) = \bigvee_{\big((a_{i,j},b_{i,j})\big)_{1\le i< j \le 4} \in A_\delta}\;\; \bigwedge_{1\le i< j \le 4} \Big(\neg L_{a_{i,j}}(x_i^{-1}x_j) \wedge L_{b_{i,j}}(x_i^{-1}x_j)\Big).$$ Since $f$ is continuous, $\sigma_\delta(x_1, \ldots, x_4)$ is true if and only if (\ref{Eq:faij}) holds. Now, letting 
$$
\sigma = \bigwedge_{\delta\in \QQ\cap [0, \infty)} \exists\, x_1 \; \ldots \exists\, x_4 \; \sigma_{\delta} (x_1, \ldots x_4),
$$
we obtain $\Mod(\lnot \sigma)=HypPL(G)$.

Similarly, conditions listed in part (b) of Proposition~\ref{Prop:HypPL} can be rewritten as an $\L_{\omega_1, \omega}$-sentence using formulas from Example \ref{Ex:Lww}.  Therefore, $HypPL(G)$ and $HypPL_{gt}(G)$ are Borel by Proposition~\ref{Prop:LE}.
\end{proof}

\begin{rem}
In a similar manner, one can define Borel subsets of $PL(G)$ corresponding to elliptic, linear, parabolic, and quasi-parabolic $G$-actions on hyperbolic spaces. Since these subsets are not used in our paper, we leave this as an exercise for the interested reader.
\end{rem}

Combining Proposition \ref{Prop:HypPLBorel} with part (c) of Proposition \ref{Prop:PLGen}, we obtain the following corollary, confirming part (c) of Proposition \ref{Prop:Hyp}.

\begin{cor}\label{Cor:Hyp}
For every countable group $G$, $Hyp(G)$ and $Hyp_{gt}(G)$ are Borel subsets of $Gen(G)$.
\end{cor}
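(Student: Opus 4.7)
The plan is to realize $Hyp(G)$ and $Hyp_{gt}(G)$ as preimages of the Borel sets $HypPL(G)$ and $HypPL_{gt}(G)$ under the Borel map $\varphi\colon Gen(G)\to PL(G)$ given by $X\mapsto \ell_X$, whose Borel measurability was established in Proposition \ref{Prop:PLGen}(c). Concretely, I will prove the equalities
$$
Hyp(G)=\varphi^{-1}\big(HypPL(G)\big)\qquad\text{and}\qquad Hyp_{gt}(G)=\varphi^{-1}\big(HypPL_{gt}(G)\big),
$$
from which the corollary follows immediately by combining Proposition \ref{Prop:HypPLBorel} with the fact that the preimage of a Borel set under a Borel map is Borel.

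The forward inclusions are routine: for any $X\in Gen(G)$, one has $\ell_X=\ell_{G\act Cay(G,X),\,1}$, so if $Cay(G,X)$ is hyperbolic (respectively of general type), then $\ell_X$ lies in $HypPL(G)$ (respectively $HypPL_{gt}(G)$) by Definition \ref{Defn:PLHyp}. For the reverse inclusion $\varphi^{-1}(HypPL(G))\subseteq Hyp(G)$, I will observe that the pseudo-metric $\d_{\ell_X}$ on $G$ coincides with the restriction of the word metric $\d_X$ to $G$. Hence $\ell_X\in HypPL(G)$ combined with Proposition \ref{Prop:HypPL}(a) furnishes a four-point constant $\delta$ for $\d_X$ on $G$; since the vertex set $G$ is $\tfrac12$-dense in the geodesic space $Cay(G,X)$, a standard approximation argument (cf.\ \cite[Chapter III.H, 1.22]{BH}) upgrades this to hyperbolicity of $Cay(G,X)$ itself with constant $\delta+O(1)$, giving $X\in Hyp(G)$.

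For the reverse inclusion $\varphi^{-1}(HypPL_{gt}(G))\subseteq Hyp_{gt}(G)$, suppose $\ell_X\in HypPL_{gt}(G)$. The previous paragraph already yields $X\in Hyp(G)$, so it remains to upgrade ``hyperbolic'' to ``general type''. By Definition \ref{Defn:PLHyp}, there exists a general type action $G\act S$ on a hyperbolic space and a basepoint $s\in S$ such that $\ell_X(g)=\d_S(s,gs)$. The orbit map $\iota\colon (G,\d_X)\to S$, $g\mapsto gs$, is then a $G$-equivariant isometric embedding. Picking two independent loxodromic elements $g_1,g_2\in G$ for $G\act S$, their linear growth in $S$ transfers verbatim to $Cay(G,X)$, so each $g_i$ is loxodromic there; moreover, the Gromov products $(g_1^{\pm m},g_2^{\pm n})_1$ computed in $(G,\d_X)$ agree with $(g_1^{\pm m}s,g_2^{\pm n}s)_s$ in $S$ and are therefore uniformly bounded, which forces $\{g_1^+,g_1^-,g_2^+,g_2^-\}$ to be four distinct points of $\partial Cay(G,X)$. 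Hence $G\act Cay(G,X)$ admits two independent loxodromic elements and is of general type.

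The main obstacle I anticipate is a clean execution of the last paragraph: one has to verify that the genuinely geometric features of the action (existence of independent loxodromics, distinctness of boundary fixed points) transfer from the abstract space $S$ realizing $\ell_X$ to the Cayley graph, using only the isometric orbit map. This transfer is conceptually straightforward because it only invokes the equality of Gromov products under an isometric embedding and the characterization of $\pm\infty$ points of loxodromic elements via Gromov products, but it is the one place where the argument is not purely formal. Everything else is combining the density argument for hyperbolicity, the Borel measurability of $\varphi$, and Proposition \ref{Prop:HypPLBorel}.
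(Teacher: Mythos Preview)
Your proposal is correct and follows exactly the approach the paper intends: the paper's one-line proof simply says to combine Proposition~\ref{Prop:HypPLBorel} with Proposition~\ref{Prop:PLGen}(c), and what you have written is precisely the verification that $Hyp(G)=\varphi^{-1}(HypPL(G))$ and $Hyp_{gt}(G)=\varphi^{-1}(HypPL_{gt}(G))$ that makes this combination work. Your identification of the density step (for the reverse inclusion on hyperbolicity) and the transfer of independent loxodromics via the isometric orbit map (for general type) is exactly the content left implicit in the paper.
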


\begin{rem}
    In a similar way, we can prove the analogs of Proposition \ref{Prop:HypPLBorel} and Corollary~\ref{Cor:Hyp} for $G$-actions on spaces with virtually any ``explicitly definable" geometry. The use of $\L_{\omega_1,\omega}$ provides a convenient universal formalization of our approach, but can be avoided in most particular cases.
\end{rem}

\paragraph{7.2. Complexity of $(Hyp_{gt}(G),\sim)$ and $(HypPL_{gt}(G), \ePL)$.} \label{Sec:PMT}
In what follows, we consider the spaces of functions $\RR^\Omega$, where $\Omega$ is some set; we always endow these spaces with the product topology. By \cite[Proposition 3.3]{Kec}, the space $\RR^\Omega$ is Polish whenever $\Omega $ is countable. By $\bf 0$ we denote the zero function $\Omega\to \RR$. Given two functions $f,g\in \RR^\Omega\setminus\{ \bf 0\}$, we write $f\approx g$ if there is $c\in \RR$ such that $f(\omega)=cg(\omega)$ for all $\omega\in \Omega$.   Clearly, $\approx$ is an equivalence relation on the subspace $\RR^\Omega\setminus\{ \bf 0\}$ of $\RR^\Omega$. 

\begin{lem}\label{Lem:smooth}
    For any countable set $\Omega$,  the equivalence relation $\approx$ on  $\RR^\Omega\setminus\{ \bf 0\}$ is smooth.
\end{lem}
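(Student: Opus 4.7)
The plan is to construct an explicit Borel map $\Phi \colon \RR^\Omega \setminus \{\mathbf{0}\} \to Y$ into a standard Borel space $Y$ such that $f \approx g$ if and only if $\Phi(f) = \Phi(g)$; this is precisely the content of Definition \ref{def:smooth}. The natural invariant of an equivalence class $[f]_\approx$ is the ``projectivization" of $f$, i.e., the unique normalized representative of its homothety class, and the main task is to choose the normalization in a Borel way.

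First, fix an enumeration $\Omega = \{\omega_0, \omega_1, \omega_2, \ldots\}$. For $f \in \RR^\Omega \setminus \{\mathbf{0}\}$, set $n(f) = \min\{n \in \NN \mid f(\omega_n) \neq 0\}$; this is well-defined since $f$ is not identically zero. Then define
$$
\Phi(f) = \Big(n(f),\; f/f(\omega_{n(f)})\Big) \;\in\; \NN \times \RR^\Omega.
$$
The target space $\NN \times \RR^\Omega$ is Polish as a product of a countable discrete space and the Polish space $\RR^\Omega$.

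The verification that $\Phi$ is a Borel reduction amounts to three routine checks. For the reduction property, note that if $f = cg$ with $c \neq 0$, then $f$ and $g$ have identical supports, so $n(f) = n(g)$; moreover the common normalized representative $f/f(\omega_{n(f)}) = g/g(\omega_{n(g)})$ coincides. Conversely, if $\Phi(f) = \Phi(g)$, then $n(f) = n(g) =: n$ and $f/f(\omega_n) = g/g(\omega_n)$, which gives $f = (f(\omega_n)/g(\omega_n))\, g$, hence $f \approx g$. For Borel measurability, observe that the level set $A_n := \{ f \mid n(f) = n\}$ is Borel, being the intersection of the closed sets $\{f \mid f(\omega_i) = 0\}$ for $i < n$ with the open set $\{f \mid f(\omega_n) \neq 0\}$. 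On each $A_n$, the map $f \mapsto f/f(\omega_n)$ is continuous (division by a non-vanishing coordinate functional), so $\Phi\vert_{A_n}$ is continuous. Since $\{A_n\}_{n \in \NN}$ is a Borel partition of $\RR^\Omega \setminus \{\mathbf{0}\}$, $\Phi$ is Borel on the whole space.

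I do not anticipate any genuine obstacle here; the only subtlety is that one must choose the normalization in a Borel fashion, which is handled by picking the first non-zero coordinate with respect to a fixed enumeration of the countable set $\Omega$. This is the standard device used to show that the projectivization relation on a countable-dimensional real vector space is smooth.
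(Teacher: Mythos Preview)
Your proof is correct, but it takes a different route from the paper. The paper argues indirectly: it shows that $\approx$ is a \emph{closed} subset of $P\times P$ (where $P=\RR^\Omega\setminus\{\mathbf 0\}$) by a short limit argument, and then invokes the Harrington--Kechris--Louveau theorem that every closed equivalence relation on a Polish space is smooth. You instead build an explicit Borel reduction to equality on $\NN\times\RR^\Omega$ by normalizing at the first nonzero coordinate.

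Your approach is more elementary and self-contained, since it avoids the nontrivial black box of the $G_\delta$ dichotomy; it also produces a concrete complete invariant, which fits nicely with the paper's narrative about classifying actions by points in a projective space. The paper's approach is slightly shorter and illustrates a general principle (closed implies smooth) that applies uniformly in many situations without having to design a normalization by hand. Both are entirely standard.
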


\begin{proof}
We first note that $P=\RR^\Omega\setminus\{ \bf 0\}$ is Polish being a $G_\delta $ subset of $\RR^\Omega$. Further, we show that $\approx$ is a closed subset of $P\times P$. Indeed, suppose a sequence $(p_i, q_i)$ converges to $(p,q)$ in $P \times P$ and  $q_i=c_ip_i$ for some $c_i\in \RR$.  Consider any $\omega\in \Omega$ such that $p(\omega)\ne 0$. By convergence, we have $p_i(\omega)\to p(\omega )$ and $q_i(\omega)\to q(\omega)$. In particular, there is $N\in \NN$ such that $p_i(\omega)\ne 0$ for all $i\ge N$ and we can write $c_i=q_i(\omega)/p_i (\omega)$. Therefore the sequence $(c_i)$ converges to $q(\omega)/p(\omega)$, which easily implies the equality $q=cp$. 

It remains to use the well-known fact that every $G_\delta $ (in particular, every closed) equivalence relation on a Polish space is smooth (see \cite[Theorem 1.1]{HKL}).
\end{proof}

\begin{lem}\label{Lem:TauBorel}
    Let $G$ be a countable group, then the map $ PL(G)\to \RR^G$ defined by  $\ell \mapsto \tau_\ell$, where $\tau_\ell(g)=\liminf_{n\to \infty} \ell(g^n)/n$ for each $g \in G$, is Borel.
\end{lem}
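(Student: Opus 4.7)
The plan is to reduce the question to showing that each coordinate projection of the map $\ell \mapsto \tau_\ell$ is a Borel function from $PL(G)$ to $\RR$, and then to exhibit each such coordinate as a countable infimum of continuous functions.

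First I would recall that since $G$ is countable, the product space $\RR^G$ carries the product Borel structure, which coincides with the Borel $\sigma$-algebra of the product topology. Consequently, a map $\Phi \colon PL(G) \to \RR^G$ is Borel if and only if, for every $g \in G$, the composition $\pi_g \circ \Phi \colon PL(G) \to \RR$ with the evaluation $\pi_g(f) = f(g)$ is Borel. Thus it suffices to prove that, for every fixed $g \in G$, the map $\ell \mapsto \tau_\ell(g)$ is Borel.

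Next I would use the subadditivity built into the definition of $PL(G)$: for every $\ell \in PL(G)$ and all $m,n \in \NN$, axiom (c) of Definition~\ref{Def:PLF} yields $\ell(g^{m+n}) \le \ell(g^m) + \ell(g^n)$, so the sequence $(\ell(g^n))_{n\ge 1}$ is subadditive. By Fekete's lemma (as recorded in \eqref{Eq:Liminf} for orbital distance functions), this forces
\[
\tau_\ell(g) \;=\; \liminf_{n\to\infty} \frac{\ell(g^n)}{n} \;=\; \inf_{n\ge 1} \frac{\ell(g^n)}{n}.
\]
For each fixed $n \ge 1$, the function $\varphi_n \colon PL(G) \to \RR$ defined by $\varphi_n(\ell) = \ell(g^n)/n$ is continuous, since $PL(G)$ is equipped with the topology of pointwise convergence on $G$ and $\varphi_n$ is a scalar multiple of a single coordinate evaluation. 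Hence $\ell \mapsto \tau_\ell(g) = \inf_{n\ge 1} \varphi_n(\ell)$ is a countable pointwise infimum of continuous functions, so it is upper semicontinuous and, in particular, Borel.

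There is no real obstacle here; the only thing to be careful about is the reduction from Borelness into $\RR^G$ to Borelness of each coordinate, which relies on countability of $G$, and the use of subadditivity to rewrite the $\liminf$ as an infimum so that a single application of ``countable inf of continuous is Borel" finishes the argument.
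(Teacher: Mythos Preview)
Your proof is correct and follows essentially the same approach as the paper: both arguments reduce to showing each coordinate map $\ell \mapsto \tau_\ell(g)$ is Borel by observing that the evaluations $\ell \mapsto \ell(g^n)/n$ are continuous and that a countable limit operation preserves measurability. The only cosmetic difference is that you invoke subadditivity and Fekete's lemma to rewrite the $\liminf$ as an infimum, whereas the paper simply cites that a $\liminf$ of measurable functions is measurable; both routes are equally direct.
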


\begin{proof}
    For any $g \in G$ and $n \in \NN$, the map $PL(G) \ni \ell \mapsto \ell(g^n)/n \in \RR$ is continuous. Hence, for any $g \in G$, the map $PL(G) \ni \ell \mapsto \tau_\ell(g) \in \RR$ is Borel since limit inferior of measurable functions is measurable, and the claim follows.
\end{proof}

\begin{lem}\label{Lem:red}
    For any countable group $G$, the equivalence relation $\ePL$ on $PL(G)$ is Borel reducible to $E_{K_\sigma}$.
\end{lem}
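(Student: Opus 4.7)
Assume $G$ is infinite (the finite case is immediate). A natural first attempt is to invoke Theorem~\ref{Thm:RKL}(a) by showing that $\ePL$ is $K_\sigma$ in $PL(G)\times PL(G)$. This fails in general: for $G=\bigoplus_{i\in \NN}\ZZ$, a pseudo-length can be prescribed arbitrarily on the standard generators, so $PL(G)$ is not $\sigma$-compact, and hence the diagonal of $\ePL$ is not $K_\sigma$. The plan is therefore to construct a Borel reduction $\Phi\colon PL(G)\to \Pi$ directly.

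The key trick is to convert the multiplicative quasi-order $\lPL$ into an additive one via a logarithm, and then truncate so that values fit into $\{1,\ldots,n\}$. Fix a sequence $(h_n)_{n\in \NN}$ of elements of $G$ in which every element of $G$ appears infinitely often (e.g.\ a bijective enumeration of $G\times \NN$ projected to $G$), and set
$$
\Phi(\ell)(n) \;=\; 1 + \min\!\bigl(n-1,\ \lfloor \log_2(\ell(h_n)+1)\rfloor\bigr).
$$
Since $\lfloor \log_2(\ell(h_n)+1)\rfloor \ge 0$, we indeed have $\Phi(\ell)(n)\in \{1,\ldots,n\}$; each coordinate is a Borel function of $\ell$, so $\Phi$ is Borel.

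For the forward implication, assume $\ell_1 \ePL \ell_2$ with constant $C$. Then $\ell_i(g)+1 \le (C+1)(\ell_j(g)+1)$ for all $g\in G$ and $i,j\in\{1,2\}$, so $|\log_2(\ell_1(h_n)+1)-\log_2(\ell_2(h_n)+1)|\le \log_2(C+1)$. Taking floors adds at most $1$ and applying the $1$-Lipschitz map $\min(n-1,\cdot)$ preserves the bound, yielding $|\Phi(\ell_1)(n)-\Phi(\ell_2)(n)|\le \log_2(C+1)+1$ uniformly in $n$, i.e.\ $\Phi(\ell_1)\,E_{K_\sigma}\,\Phi(\ell_2)$.

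The main obstacle is the backward direction, and this is precisely where the repetition property of $(h_n)$ is essential: a one-to-one enumeration would allow $\ell_1(h_n),\ell_2(h_n)$ both to exceed $2^{n-1}$, forcing $\Phi(\ell_1)(n)=\Phi(\ell_2)(n)=n$ and concealing arbitrarily large multiplicative discrepancies. With each $g\in G$ appearing infinitely often in $(h_n)$, however, for every fixed $g$ we can choose an index $n$ with $h_n=g$ and $n-1 > \max_{i=1,2}\lfloor \log_2(\ell_i(g)+1)\rfloor$, so the cutoff is inactive at such $n$. The hypothesis $\sup_n|\Phi(\ell_1)(n)-\Phi(\ell_2)(n)|\le K$ then gives $|\lfloor\log_2(\ell_1(g)+1)\rfloor-\lfloor\log_2(\ell_2(g)+1)\rfloor|\le K$, hence $\ell_i(g)\le 2^{K+1}\ell_j(g)+2^{K+1}-1$ for all $g\in G$, which proves $\ell_1\ePL\ell_2$.
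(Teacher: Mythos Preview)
Your proof is correct. Both you and the paper use the same core idea, namely that the map $\ell\mapsto \log_2(\ell(\cdot)+1)$ converts $\ePL$ into the $\ell^\infty$-orbit equivalence relation, but the executions differ. The paper simply enumerates $G$ bijectively, lands in $\RR^\omega$, and then invokes Rosendal's result that the orbit equivalence relation of the shift action $\ell^\infty\act\RR^\omega$ is Borel bi-reducible with $E_{K_\sigma}$; this is a two-line argument modulo the citation. You instead unwind that last reduction by hand: the truncation $1+\min(n-1,\lfloor\cdot\rfloor)$ together with the infinite-repetition enumeration is precisely the standard trick for embedding $\ell^\infty$-equivalence into $\Pi$, and your discussion correctly identifies why the repetition is essential for the backward direction. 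Your version is self-contained and more elementary, at the cost of a few more lines; the paper's version is shorter but relies on the black box from \cite{Ros}.
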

\begin{proof}
We enumerate the group $G=\{g_1, g_2, \ldots\}$ and consider the map sending a function $\ell \in PL(G)$ to the function $f_\ell \in \RR^\omega$ (where $\RR^\omega$ denotes the set of all infinite sequences of reals) defined by $f_\ell (i) = \log_2 (\ell(g_i)+1)$. It is straightforward to see that this function reduces $\sim$ to the orbit equivalence relation of the natural action of $\ell^\infty$ on $\RR^\omega$. The latter OER is Borel reducible (in fact, bi-reducible) to $E_{K\sigma}$ as observed by Rosendal \cite{Ros}.
\end{proof}

The following immediate corollary of Theorem \ref{Thm:Iso} and Proposition \ref{Prop:EmbPi} and \ref{Prop:IsoEqDef} subsumes Theorem \ref{Thm:Aniso}.

\begin{cor}\label{Cor:PiEmb}
Let $G\curvearrowright S$ be a general type action of a group $G$ on a hyperbolic space $S$. Suppose that $G\act S$ is not weakly isotropic. Then there exists a Borel map $f\colon \Pi \to Hyp_{gt}(G)$ such that the following conditions hold.
\begin{enumerate}
    \item[(a)] For every $X\in f(\Pi)$, we have $G\curvearrowright Cay(G,X) \lA G\curvearrowright S$.
    \item[(b)] For all $r,s\in \Pi$, we have $f(r)\preccurlyeq f(s)$ if and only if $r\, Q_{K_\sigma}\,s$.
\end{enumerate} 
In particular, for any anisotropic weakly hyperbolic group $G$, $E_{K_\sigma}$ is Borel reducible to the restriction of $\sim$ to $Hyp_{gt}(G)$.
\end{cor}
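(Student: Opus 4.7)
The plan is to deduce this corollary almost immediately from the two major results already established earlier in this section, namely Theorem \ref{Thm:Iso} and Proposition \ref{Prop:EmbPi}. The bridge between them is the equivalence of conditions (a) and (c) in Theorem \ref{Thm:Iso}, which converts the hypothesis ``not weakly isotropic'' into the algebraic hypothesis required by Proposition \ref{Prop:EmbPi}.

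First I would handle the main statement. Given a general type action $G \curvearrowright S$ that is not weakly isotropic, Theorem \ref{Thm:Iso} (the failure of condition (c)) provides two loxodromic elements $g, h \in \L(G\curvearrowright S)$ that are not equivalent with respect to $\sGS$. This is precisely the hypothesis of Proposition \ref{Prop:EmbPi}, so I would simply invoke it to obtain the Borel map $f \colon \Pi \to Hyp_{gt}(G)$ satisfying (a) and (b). No new geometric work is needed at this stage.

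For the ``in particular'' clause, suppose $G$ is an anisotropic weakly hyperbolic group. By definition, there exists $X \in Hyp_{gt}(G)$ such that the cobounded action $G \curvearrowright Cay(G, X)$ fails to be isotropic. Since this action is cobounded, the full list of equivalences in Theorem \ref{Thm:Iso} applies; in particular (a) $\Leftrightarrow$ (h) shows that ``not isotropic'' coincides with ``not weakly isotropic'' for cobounded actions. Applying the first part of the corollary to $S = Cay(G, X)$ therefore yields a Borel map $f \colon \Pi \to Hyp_{gt}(G)$ with $f(r) \preccurlyeq f(s) \Leftrightarrow r \, Q_{K_\sigma} \, s$. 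Because $E_{K_\sigma}$ is by definition the equivalence relation induced by the quasi-order $Q_{K_\sigma}$, and $\sim$ is the equivalence relation induced by $\preccurlyeq$, the same map $f$ automatically satisfies $f(r) \sim f(s) \Leftrightarrow r \, E_{K_\sigma} \, s$, so $f$ is a Borel reduction of $E_{K_\sigma}$ to the restriction of $\sim$ to $Hyp_{gt}(G)$.

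There is no real obstacle here — the bulk of the work was already completed in Sections \hyperref[Sec:ModGA]{5} and \hyperref[Sec:IWHG]{6}. The only point worth double-checking is the transition from condition (a) to condition (c) in Theorem \ref{Thm:Iso}: one must be careful that Proposition \ref{Prop:EmbPi} requires simply two non-equivalent loxodromic elements (not, for instance, two independent non-equivalent ones), since Lemma \ref{Lem:BF} inside the proof of Proposition \ref{Prop:EmbPi} already upgrades non-equivalence to the Bestvina–Fujiwara condition. Given that, the proof of the corollary is essentially a two-line citation.
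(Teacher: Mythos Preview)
Your proposal is correct and matches the paper's own proof, which records the corollary as an immediate consequence of Theorem \ref{Thm:Iso}, Proposition \ref{Prop:EmbPi}, and Proposition \ref{Prop:IsoEqDef}. The only cosmetic difference is that for the ``in particular'' clause the paper cites Proposition \ref{Prop:IsoEqDef} directly rather than unpacking the definition of anisotropic and invoking (a) $\Leftrightarrow$ (h) of Theorem \ref{Thm:Iso}, but this is exactly the content of that proposition.
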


Finally, we obtain a strengthened version of Theorem \ref{Thm:main}. 

\begin{thm}\label{Thm:main-full}
Let $G$ be a countable weakly hyperbolic group. The restriction of $\sim$ to $Hyp_{gt}(G)$ and the restriction of $\ePL$ to $HypPL_{gt}(G)$ are simultaneously Borel bi-reducible to one of the relations $=_n$ ($n\in \NN$), $=_\NN$, $=_\RR$, $E_{K_\sigma}$, and each possibility realizes for a suitable group $G$.  
In particular, the restriction of $\sim$ to $Hyp_{gt}(G)$ is Borel bi-reducible to the restriction of $\ePL$ to $HypPL_{gt}(G)$, and these equivalence relations are either simultaneously smooth or not classifiable by countable structures. 
\end{thm}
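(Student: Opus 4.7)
The plan is to combine the dichotomy of Proposition \ref{Prop:IsoEqDef} — every countable weakly hyperbolic group is either isotropic or anisotropic — with the results already established for each alternative, and then verify the realizability of each complexity class via the examples of Section 5.4.

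In the isotropic case, I would first invoke Theorem \ref{Thm:LoxRigAct} (together with Proposition \ref{Prop:HypPL}, which shows that every general type hyperbolic pseudo-length arises as an orbital distance) to conclude that two general type actions of $G$ are equivalent precisely when their translation length functions are proportional. The Borel map $HypPL_{gt}(G)\to \RR^G\setminus\{\mathbf 0\}$ sending $\ell\mapsto \tau_\ell$ — Borel by Lemma \ref{Lem:TauBorel} — would then reduce the restriction of $\ePL$ to $HypPL_{gt}(G)$ to the proportionality relation $\approx$, which is smooth by Lemma \ref{Lem:smooth}. Precomposing with the Borel map $X\mapsto \ell_X$ from Proposition \ref{Prop:PLGen}(c) transfers smoothness to $(Hyp_{gt}(G),\sim)$, thereby recovering Corollary \ref{Cor:Iso}. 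Since smooth BERs on standard Borel spaces form the initial segment $=_1,\ldots,=_n,\ldots,=_\NN,=_\RR$ of the poset of BERs (recalled after Definition \ref{def:smooth}), the bi-reducibility class of each of the two relations is one of $=_n$, $=_\NN$, or $=_\RR$ in the isotropic case.

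In the anisotropic case, Corollary \ref{Cor:PiEmb} supplies a Borel reduction from $E_{K_\sigma}$ into the restriction of $\sim$ to $Hyp_{gt}(G)$. For the opposite direction, Lemma \ref{Lem:red} furnishes a Borel reduction from $\ePL$ on all of $PL(G)$ to $E_{K_\sigma}$, and composing with $X\mapsto \ell_X$ yields the analogous reduction for $\sim$ on $Gen(G)$, which in particular restricts to $Hyp_{gt}(G)$. The same two ingredients, applied in the natural order (the reduction provided by Corollary \ref{Cor:PiEmb} followed by the word-length map into $HypPL_{gt}(G)$, and Lemma \ref{Lem:red} in the other direction), give bi-reducibility of $(HypPL_{gt}(G),\ePL)$ with $E_{K_\sigma}$. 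Because in both the isotropic and the anisotropic cases the two relations $(Hyp_{gt}(G),\sim)$ and $(HypPL_{gt}(G),\ePL)$ end up bi-reducible to the same target, they are Borel bi-reducible to each other.

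Realizability is immediate from the examples tabulated in Section 5.4: the direct sums $I^n$ of Example \ref{Ex:I} realize $=_n$ for $n\in\NN$ and $=_\NN$ for $n=\aleph_0$; the group $SL_2(\QQ(x_1,x_2,\ldots))$ realizes $=_\RR$ by Corollary \ref{Prop:F}; and any acylindrically hyperbolic group realizes $E_{K_\sigma}$ by Corollary \ref{Cor:WWPD}. The last clause — that the equivalence relations are either smooth or not classifiable by countable structures — follows from Theorem \ref{Thm:RKL}(b), since $E_{K_\sigma}$ is not Borel reducible to an orbit equivalence relation of a Polish group action. The real substance of the argument does not lie in this synthesis but in the preceding machinery, and the principal obstacle I expect is the one already overcome in Corollary \ref{Cor:PiEmb}: it is driven by Proposition \ref{Prop:EmbPi}, whose proof in turn rests on the compression-along-quasi-axes construction of Section 5 and the small-cancellation strengthening of Bestvina–Fujiwara encapsulated in Proposition \ref{Prop:BF}.
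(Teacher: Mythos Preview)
Your synthesis is largely correct and matches the paper's approach, but there is one genuine gap in the isotropic case. You establish that both $(Hyp_{gt}(G),\sim)$ and $(HypPL_{gt}(G),\ePL)$ are smooth, and you note that each is therefore Borel bi-reducible to one of $=_n$, $=_\NN$, or $=_\RR$. But you never verify that they are bi-reducible to the \emph{same} one. Your later sentence ``Because in both the isotropic and the anisotropic cases the two relations \ldots\ end up bi-reducible to the same target'' is unjustified for isotropic $G$: two smooth relations with different numbers of classes are not bi-reducible, and the reduction $X\mapsto\ell_X$ only gives $|Hyp_{gt}(G)/{\sim}|\le |HypPL_{gt}(G)/{\ePL}|$.

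The missing step, which the paper supplies, is the reverse inequality. Given $\ell\in HypPL_{gt}(G)$, realize it as $\ell_{G\act S,s}$ via Proposition~\ref{Prop:HypPL} (you mention this but do not use it further). Since $G$ is isotropic, Proposition~\ref{Prop:IsoEqDef} and the equivalence (a)$\Leftrightarrow$(g) of Theorem~\ref{Thm:Iso} yield some $X_\ell\in Hyp_{gt}(G)$ with $G\act Cay(G,X_\ell)\eA G\act S$, hence $\ell_{X_\ell}\ePL\ell$. The assignment $[\ell]\mapsto [X_\ell]$ is then a well-defined injection $HypPL_{gt}(G)/{\ePL}\hookrightarrow Hyp_{gt}(G)/{\sim}$, giving the reverse cardinality bound and hence the same smooth bi-reducibility class. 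Once you insert this step, your argument agrees with the paper's.
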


\begin{proof}
Suppose first that $G$ is isotropic. Theorem \ref{Thm:LoxRigAct} and Lemmas \ref{Lem:smooth} and \ref{Lem:TauBorel} imply that the restriction of $\ePL$ to $HypPL_{gt}(G)$ is smooth.  Hence, it is Borel bi-reducible to one of the relations $=_n$ ($n\in \NN$), $=_\NN$, $=_\RR$. Further, Proposition \ref{Prop:PLGen} (c) provides us with a Borel reduction of $\sim $ to $\ePL$, which sends $Hyp_{gt}(G)$ to $HypPL_{gt}(G)$ by the definition of a hyperbolic length function of general type. Thus, the restriction of $\sim$ to $Hyp_{gt}(G)$ is Borel reducible to the restriction of $\ePL$ to $HypPL_{gt}(G)$; in particular, the former relation is also smooth. Conversely, let $\ell\in HypPL_{gt}(G)$ and let $G\act S$ be a $G$-action on a hyperbolic space such that, for some $s\in S$, we have $\ell(g)=\d_S(s, gs)$ for all $g\in G$. Combining the equivalence of conditions (a) and (g) in Theorem \ref{Thm:Iso} and Proposition~\ref{Prop:IsoEqDef}, we obtain $X_\ell\in Hyp_{gt}(G)$ such that $G\act S \eA G\act Cay(G,X_\ell)$. Clearly, for any $\ell_1, \ell_2\in HypPL_{gt}(G)$, we have $\ell_1 \ePL \ell_2$ if and only if $X_{\ell_1}\sim X_{\ell_2}$. This implies that $|Hyp_{gt}(G)/\sim| \ge |HypPL_{gt}(G)/\ePL|$. Since both the restriction of $\sim$ to $Hyp_{gt}(G)$ and the restriction of $\ePL$ to $HypPL_{gt}(G)$ are smooth, we obtain that the latter relation is Borel reducible to the former. This completes the proof of the theorem in the case when $G$ is isotropic. 

Suppose now that $G$ is anisotropic. Corollary \ref{Cor:PiEmb} guarantees that $E_{K_\sigma}$ is Borel reducible to the restriction of $\sim$ to $Hyp_{gt}(G)$, which in turn is Borel reducible to the restriction of $\ePL$ to $HypPL_{gt}(G)$ as explained in the previous paragraph. By Lemma \ref{Lem:red}, the equivalence relation $\ePL$ (and hence its restriction to $HypPL_{gt}(G)$) is Borel reducible to $E_{K_\sigma}$; therefore, all these relations are Borel bi-reducible with each other. 

To complete the proof of the theorem, it suffices to note that the claim about the realization of all complexity levels follows immediately from the discussion in Section \hyperref[Sec:Ex]{6.4}.
\end{proof}

\paragraph{7.3. An application to hyperbolic structures.}\label{Sec:AtoHS} Let $(R, \preceq)$ denote the poset $\Pi/E_{K_\sigma}$ endowed with the order $\preceq$ induced by $Q_{K_\sigma}$. To prove Corollary \ref{Cor:HS}, we will need the following.

\begin{lem}\label{Lem:PosetEmb}
The poset $(R, \preccurlyeq)$ has height and width $2^{\aleph_0}$ and contains every poset of cardinality at most $\aleph_1$.
\end{lem}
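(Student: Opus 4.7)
The proof splits into three parts, one for each assertion. For the chain, I will define $r_\alpha \in \Pi$ by $r_\alpha(n) := \max(\lfloor \alpha n\rfloor, 1)$ for $\alpha \in (0,1]$. Elementary estimates give $r_\alpha(n) - r_\beta(n) = (\alpha - \beta) n + O(1)$, which is bounded above iff $\alpha \le \beta$; hence $\alpha \mapsto [r_\alpha]$ is a strictly $\preccurlyeq$-increasing injection from $(0,1]$ into $R$, yielding a chain of cardinality $2^{\aleph_0}$.

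For the antichain, I will fix an almost disjoint family $\{A_\xi : \xi < 2^{\aleph_0}\}$ of infinite subsets of $\NN$ and set $r_\xi(n) := n$ when $n \in A_\xi$ and $r_\xi(n) := 1$ otherwise. For $\xi \neq \eta$, both $A_\xi \setminus A_\eta$ and $A_\eta \setminus A_\xi$ are infinite, so $r_\xi(n) - r_\eta(n) = n-1$ is unbounded on $A_\xi \setminus A_\eta$ and symmetrically $r_\eta(n) - r_\xi(n)$ is unbounded on $A_\eta \setminus A_\xi$; hence $[r_\xi]$ and $[r_\eta]$ are $\preccurlyeq$-incomparable, producing an antichain of cardinality $2^{\aleph_0}$.

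For the universality claim, my strategy is to reduce to a classical set-theoretic fact via the same formula. I would define $\phi \colon \mathcal{P}(\NN) \to \Pi$ by $\phi(A)(n) := n$ for $n \in A$ and $\phi(A)(n) := 1$ otherwise. The identity $\sup_n(\phi(A)(n) - \phi(B)(n)) = \sup_{n \in A \setminus B}(n-1)$ gives $\phi(A)\, Q_{K_\sigma}\, \phi(B) \iff A \subseteq^* B$, so $\phi$ descends to an order embedding $(\mathcal{P}(\NN)/{=^*}, \subseteq^*) \hookrightarrow (R, \preccurlyeq)$. It then suffices to invoke the classical fact that every poset of cardinality at most $\aleph_1$ embeds into $(\mathcal{P}(\NN)/{=^*}, \subseteq^*)$: this is proved by transfinite recursion of length $\omega_1$ in which, enumerating the target as $(p_\alpha)_{\alpha<\omega_1}$, at stage $\alpha$ one constructs $B_\alpha \subseteq \NN$ realizing countably many $\subseteq^*$ and $\not\subseteq^*$ constraints against previously built sets, using $\sigma$-directedness of $\mathcal{P}(\NN)/{=^*}$ in both directions together with a diagonal bookkeeping. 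The main obstacle lies precisely in the negative constraints: while $\sigma$-closure handles the positive (comparability) requirements immediately, preserving all previously imposed non-containments throughout the recursion requires a careful bookkeeping to select, at each stage, elements witnessing the needed asymptotic gaps. Since this universality theorem is classical in infinitary combinatorics, the proof will cite it rather than reproduce the full recursion.
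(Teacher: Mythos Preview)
Your proof is correct and follows essentially the same route as the paper. Both arguments hinge on the same explicit embedding $A \mapsto f_A$ (your $\phi$) of $(\mathcal{P}(\NN)/{=^*}, \subseteq^*)$ into $(R,\preccurlyeq)$ and then invoke a classical universality result; the paper cites Parovichenko's theorem (phrased for Boolean algebras of size $\le \aleph_1$) where you cite the poset version directly, and the paper extracts height and width from the same embedding rather than giving your separate explicit chain $r_\alpha(n)=\max(\lfloor \alpha n\rfloor,1)$.
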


\begin{proof}
Recall that every Boolean algebra can be thought of as a poset (or, more precisely, a complemented distributive lattice). Let $(X, \le)$ be a poset of cardinality at most $\aleph_1$. The map $x\mapsto \{ y\in X \mid y\le x\}$ embeds $(X, \preceq)$ in the Boolean algebra $\mathcal P(X)$. Let $B(X)$ be the subalgebra of $\mathcal P(X)$ generated by the image of $X$ in $\mathcal P(X)$. Clearly, $|B(X)|\le \aleph_0\aleph_1=\aleph_1$. By the Parovichenko theorem \cite{Par}, every Boolean algebra of cardinality at most $\aleph_1$ embeds in the Boolean algebra $\mathcal P(\NN)/fin$, where $fin$ is the ideal consisting of finite sets. It is also well-known and easy to see that $\mathcal P(\NN)/fin$ has height and width $2^{\aleph_0}$.

Thus, to prove the lemma, it suffices to construct an embedding of $\mathcal P(\NN)/fin$ in $(R, \preccurlyeq)$. The existence of such an embedding is well-known and follows, for example, from the Glimm-Effros dychotomy. For the convenience of the reader, we provide a brief hands-on argument. For every $A\subseteq \NN$, let $f_A\in \Pi$ be the function defined by
$$
f_A(n)=\left\{ \begin{array}{l}
                 n,\; {\rm if}\; n\in A, \\
                 1, \; {\rm otherwise}.
               \end{array}
\right.
$$
Clearly, $f_A\, Q_{K_\sigma} f_B$ for some $A,B\subseteq \NN$ if and only if $|A\vartriangle B|<\infty $. Therefore, the map $A\mapsto f_A$ induces an embedding of posets $\mathcal P(\NN)/fin \to R$.
\end{proof}

We are now ready to prove our main result about hyperbolic structures.

\begin{proof}[Proof of Corollary \ref{Cor:HS}]
   If $G$ is isotropic, every $X\in Hyp_{gt}(G)$ represents a minimal element in the poset $\H_{gt} (G)$ by Theorem \ref{Thm:Iso} and Proposition~\ref{Prop:IsoEqDef}. Therefore, $\H_{gt} (G)$ is an antichain. Since $\sim$ is Borel, $\H_{gt} (G)=Hyp_{gt}(G)/\sim$ has cardinality $1, 2, \ldots, \aleph_0$, or $2^{\aleph_0}$ by the Silver dichotomy (see \cite[Theorem 35.20]{Kec}).  As shown in Section \hyperref[Sec:Ex]{6.4}, all possibilities can be realized.      
    
    Further, if $G$ is anisotropic, then $\H_{gt}(G)$ contains the poset $(R,\preceq)$ by Corollary \ref{Cor:PiEmb}, and the required claim follows from Lemma \ref{Lem:PosetEmb}.
\end{proof}

\vspace{5mm}

\begin{center}
{\Large\bf Appendix. Bounded generation of $SL_2$ over finitely generated rings of algebraic numbers.}

\medskip

 D. Osin, A. Rapinchuk
\end{center}

By $\overline{\QQ}$ we denote the algebraic closure of $\QQ$ in $\CC$. For a square-free integer $d\ne 1$, let $\mathcal{O}_d$ be the ring of algebraic integers in the quadratic field $\QQ(\sqrt{d})$. 
Our goal is to prove the following generalization of the main result of \cite{MRS} (see also \cite{KMR}). 
\begin{thm}\label{Thm:MSR}
For any finitely generated subring $R$ of $\overline \QQ$, the following conditions are equivalent: 
\begin{enumerate}
\item[(a)] the group $SL_2(R)$ is boundedly generated;
\item[(b)] the unit group $R^{\times}$ is infinite;
\item[(c)] there is no negative integer $d$ such that $R\subseteq {\mathcal O}_d$.
\end{enumerate}
\end{thm}

Our main reference for definitions and basic results in commutative algebra used below is \cite{L}. For consistency, we accept the definitions of places and valuations given in Sections VII.3 and XII.4 there. (Note that Lang's terminology is different from the one used in \cite{MRS}.) We will need the following.

\begin{lem}[{\cite[Proposition 3.6, Ch. VII]{L}}]\label{Lem:IC}
Let $A$ be a subring of a field $K$. An element $x\in K$ is integral over $A$ if and only if it belongs to every valuation ring of $K$ containing $A$. 
\end{lem}

Here is the key lemma that will enable us to derive Theorem \ref{Thm:MSR} from the main result of \cite{MRS}. 

\begin{lem}\label{Lem:main}
Let $R$ be a finitely generated subring of $\overline \QQ$ with the field of fractions $K$, and let $O$ be the integral closure of $R$ in $K$. 
\begin{enumerate}
\item[(a)]  $R$ is contained in all but finitely many valuation rings of $K$.
\item[(b)] There exists a finite set $S$ of (equivalence classes of) places of $K$ such that $O$ is the ring of $S$-integers of $K$, i.e.,  $O=\{ a\in K \mid \sigma(a) <\infty \; \forall\, \sigma \notin S\}$.     
\item[(c)] There exists a nonzero ideal $I\lhd O$ such that 
$I\subseteq R$. 
\end{enumerate}
\end{lem}
\begin{proof}
Let $X \subseteq R$ be a finite generating set. Then $K = \QQ(X)$, hence a finite extension of $\QQ$. For any $a \in K$, one can find $n(a) \in \NN$ such that $n(a) \cdot a$ lies in the ring $O_K$ of algebraic integers in $K$. Set $n = \prod_{x \in X} n(x)$. Then $n \cdot X \subseteq O_K$. It follows from Ostrowski's Theorem that every place of $K$ lies above one of the $p$-adic places of $\QQ$, and moreover, since the extension $K/\QQ$ is finite, for each prime $p$, the field $K$ has only finitely many places lying above the $p$-adic place (cf. \cite[Corollary 4.9, Ch. XII]{L}). Let $S_0$ be the finite set of places of $K$ lying above the $p$-adic place for some $p \mid n$. Then for any place $\sigma$ of $K$ which is not in $S_0$, we have $\sigma(n) \neq 0$, implying that for every $x \in X$, 
$$
\sigma(x) = \sigma(nx)/\sigma(n) < \infty
$$ 
as $\sigma$ is finite on $O_K$ by Lemma \ref{Lem:IC} applied to $A = \ZZ$. It follows that $X$, and hence $R$, is contained in the valuation ring associated to every place $\sigma \notin S_0$, proving (a). 

To prove (b), we let $S$ denote the subset of $S_0$ consisting of places that are {\it not} finite on $R$, and then let 
$$
O_S = \{ a \in K \, \vert \, \sigma(a) < \infty \ \forall \sigma \notin S \}
$$ 
denote the corresponding ring of $S$-integers. It follows from Lemma \ref{Lem:IC} that $O_S$ is integral over $R$. On the other hand, the fact that a valuation ring is integrally closed implies that $O_S$, 
being an intersection of valuation rings, is also integrally closed. Thus, $O_S$ coincides with the integral closure $O$ of $R$ in $K$. 

Finally, it is well-known that $O = O_S$ is a finitely generated ring (in fact, it is the localization of $O_K$, which is a finitely generated $\ZZ$-module, with respect to finitely many elements corresponding 
to the places in $S$). Since O is finitely generated as an $R$-algebra and integral over $R$, it is a finitely generated $R$-module. We let $a_1, \ldots , a_{\ell}$ denote an arbitrary finite generating set. Since $O$ lies in the field of fractions $K$, for every $a \in O$ one can find a non-zero $r(a) \in R$ such that $r(a) \cdot a \in R$. Letting $r = \prod_{i = 1}^{\ell} r(a_i)$ we obtain that the ideal $I= rO$ is a subset of $R$, as required.  
\end{proof}

Before proceeding to the proof of Theorem \ref{Thm:MSR}, we recall the well-known fact that for any nonzero ideal $I \subseteq O = O_S$ the quotient $O/I$ is finite. One way to prove this is to observe first that a nonzero ideal $I \subseteq O$ has nonzero intersection with the ring of integers $O_K$, hence with $\ZZ$, and if $m > 0$ is an integer contained in $I$ then it is sufficient to prove the finiteness of $O/mO$. Let $\gamma \colon O \to O/mO$ be the canonical homomorphism. Then for any finitely generated additive subgroup $H \subseteq O/mO$ there exists a finitely generated subgroup $\widetilde{H} \subseteq O$ such that $\gamma(\widetilde{H}) = H$. Being a finitely generated torsion-free abelian group, $\widetilde{H}$ is free of rank at most $[K : \QQ]$, which implies that $|H| \leq m^{[K : \QQ]}$. Since this is true for every finitely generated subgroup of $O/mO$, we conclude that $O/mO$ is finite of order $\leq m^{[K : \QQ]}$.

\begin{proof}[Proof of Theorem \ref{Thm:MSR}]
We begin by establishing two facts. As in Lemma \ref{Lem:main}, we let $K$ denote the field of fractions of $R$, and $O$ the integral closure of $R$ in $K$. Then 

\begin{equation}\label{Eq:A1}
[O^{\times} : R^{\times}] < \infty
\end{equation}
and
\begin{equation}\label{Eq:A2}
 [SL_2(O) : SL_2(R)] < \infty
\end{equation}

\noindent Both statements critically depend on the fact that there exists a nonzero ideal $I \subseteq O$ that is contained in $R$ (see Lemma \ref{Lem:main} (c)) and that the quotient $O/I$ is finite. The canonical homomorphism $O \to O/I$ induces group homomorphisms 
$$
\phi \colon O^{\times} \to (O/I)^{\times} \ \ \text{and} \ \ \psi \colon SL_2(O) \to SL_2(O/I). 
$$
Since the quotient $O/I$ is finite, the congruence subgroups $U(I) = \ker \phi$ and $SL_2(O , I) = \ker \psi$ are of finite index in $O^{\times}$ and $SL_2(O)$, respectively. For the proof of (\ref{Eq:A1}), we observe that $U(I) = O^{\times} \cap \bigl( 1 + I \bigr)$ is contained in $R^{\times}$. Indeed, we clearly have $U(I) \subseteq R$; since $U(I) = U(I)^{-1}$ we actually get the inclusion $U(I) \subset R^{\times}$. So, (\ref{Eq:A1}) follows. Similarly, 
one proves the inclusion $SL_2(O , I) \subseteq SL_2(R)$, yielding (\ref{Eq:A2}). 

It follows from (\ref{Eq:A1}) that condition (b) in the theorem is equivalent to the condition that the unit group $O^{\times}$ is infinite. Likewise, it follows from (\ref{Eq:A2}) and \cite[Proposition 7]{T} that condition (a) in the theorem is equivalent to the fact that the group $SL_2(O)$ is boundedly generated. Since it was proved in \cite{MRS,KMR} that $SL_2(O)$ is boundedly generated if and only if the unit group $O^{\times}$ is infinite, we obtain the equivalence of conditions (a) and (b) in the theorem. Finally, it follows from the Dirichlet Theorem for $S$-units (cf., for example, \cite[Theorem 3.12]{Nar}) that the only rings of algebraic $S$-integers with finitely many units are $\ZZ$ and $\mathcal{O}_d$ for $d < 0$. This yields the equivalence of conditions (b) and (c) and completes the proof of the theorem.  
\end{proof} 

We mention one immediate corollary. Recall that the property of being acylindrically hyperbolic is a very strong negation of bounded generation. For example, an acylindrically hyperbolic group $G$ cannot be boundedly generated by any amenable subgroups and has an infinite-dimensional space of quasi-characters and second bounded cohomology with coefficients in a broad range of $G$-modules, including $\RR$ and $\ell^p(G)$ for any $p\ge 1$ (see \cite{Osi16, HO}).

\begin{cor}
For any finitely generated subring $R$ of $\overline \QQ$, the group $SL_2(R)$ is either boundedly generated or acylindrically hyperbolic.
\end{cor}

\begin{proof}
If $SL_2(R)$ is not boundedly generated, then $R\subseteq \mathcal O_d$ for some integer $d<0$. It is well-known that the standard action $SL_2(\mathcal O_d)\act \mathbb H^3$ is proper; in particular, it satisfies the Bestvina--Fujiwara weak proper discontinuity condition \cite{BF}. Since $SL_2(\ZZ)$ is contained in $SL_2(R)$, the action $SL_2(R)\act \mathbb H^3$ is also non-elementary. Therefore, $SL_2(R)$ is acylindrically hyperbolic by \cite[Theorem 1.2]{Osi16}. 
\end{proof}

\addcontentsline{toc}{section}{References}

\vspace{5mm}

\noindent \textbf{Denis Osin: } Stevenson Center 1326, Department of Mathematics,  Vanderbilt University, Nashville, TN 37240, USA.

\noindent E-mail: \emph{denis.v.osin@vanderbilt.edu}

\bigskip

\noindent \textbf{Koichi Oyakawa: } 970 Evans Hall, Department of Mathematics, University of California, Berkeley, CA 94720, USA.

\noindent E-mail: \emph{koichi.oyakawa@berkeley.edu}

\bigskip

\noindent \textbf{Andrei Rapinchuk: } 307 Kerchof Hall, Department of Mathematics, University of Virginia, Charlottesville, VA 22903, USA. 

\noindent E-mail: \emph{asr3x@virginia.edu}

\end{document}